\documentclass[12pt, reqno]{amsart}
\usepackage{amssymb, mathrsfs}
\usepackage{latexsym}
\usepackage{amsmath}
\usepackage{amsthm}
\usepackage{amsfonts}
\usepackage{dsfont, upgreek}
\usepackage{mathtools}
\usepackage{epsfig}
\usepackage{cite}
\usepackage{amscd}
\usepackage{graphicx}
\usepackage{tikz-cd}
\usepackage{enumitem}
\setlist[enumerate]{itemsep=0.5ex}

\usepackage{stmaryrd}

\usepackage{color}
\usepackage{tikz}
\usetikzlibrary{patterns}

\usepackage[left=1.4in,right=1.4in,top=1.2in,bottom=1.2in]{geometry}

\usepackage{tikz}
\usetikzlibrary{decorations.markings}
\usetikzlibrary{arrows.meta}

\usepackage{extarrows}

\usepackage{adjustbox}
\usepackage{pgfplots}
\usepgfplotslibrary{colormaps}
\usepackage{slashed}

\usepackage[colorlinks=true,linkcolor=blue,citecolor=blue]{hyperref}

\usepackage[colorinlistoftodos,prependcaption,textsize=tiny]{todonotes}  

\usepackage[all,cmtip]{xy}
\theoremstyle{plain}
\newtheorem{theorem}{Theorem}[section]
\newtheorem{proposition}[theorem]{Proposition}
\newtheorem{lemma}[theorem]{Lemma}
\newtheorem{corollary}[theorem]{Corollary}

\theoremstyle{definition} 
\newtheorem{definition}[theorem]{Definition}

\newtheorem*{claim*}{Claim}
\newtheorem{claim}[theorem]{Claim}

\theoremstyle{remark} 
\newtheorem{remark}[theorem]{Remark}

\numberwithin{equation}{section}

\newcommand{\Sc}{\mathrm{Sc}}

\newcommand{\R}{\mathbb{R}}
\newcommand{\tr}{\mathrm{tr}}

\newcommand{\sph}{\mathbb{S}}
\newcommand{\n}{\mathbf n}
\newcommand{\second}{\mathrm{II}}

\newcommand{\interior}[1]{%
	{\kern0pt#1}^{\mathrm{\,o}}%
}

\makeatletter
\let\save@mathaccent\mathaccent
\newcommand*\if@single[3]{%
	\setbox0\hbox{${\mathaccent"0362{#1}}^H$}%
	\setbox2\hbox{${\mathaccent"0362{\kern0pt#1}}^H$}%
	\ifdim\ht0=\ht2 #3\else #2\fi
}
\newcommand*\rel@kern[1]{\kern#1\dimexpr\macc@kerna}
\newcommand*\wideaccent[2]{\@ifnextchar^{{\wide@accent{#1}{#2}{0}}}{\wide@accent{#1}{#2}{1}}}
\newcommand*\wide@accent[3]{\if@single{#2}{\wide@accent@{#1}{#2}{#3}{1}}{\wide@accent@{#1}{#2}{#3}{2}}}
\newcommand*\wide@accent@[4]{%
	\begingroup
	\def\mathaccent##1##2{%
		\let\mathaccent\save@mathaccent
		\if#42 \let\macc@nucleus\first@char \fi
		\setbox\z@\hbox{$\macc@style{\macc@nucleus}_{}$}%
		\setbox\tw@\hbox{$\macc@style{\macc@nucleus}{}_{}$}%
		\dimen@\wd\tw@
		\advance\dimen@-\wd\z@
		\divide\dimen@ 3
		\@tempdima\wd\tw@
		\advance\@tempdima-\scriptspace
		\divide\@tempdima 10
		\advance\dimen@-\@tempdima
		\ifdim\dimen@>\z@ \dimen@0pt\fi
		\rel@kern{0.6}\kern-\dimen@
		\if#41
		#1{\rel@kern{-0.6}\kern\dimen@\macc@nucleus\rel@kern{0.4}\kern\dimen@}%
		\advance\dimen@0.4\dimexpr\macc@kerna
		\let\final@kern#3%
		\ifdim\dimen@<\z@ \let\final@kern1\fi
		\if\final@kern1 \kern-\dimen@\fi
		\else
		#1{\rel@kern{-0.6}\kern\dimen@#2}%
		\fi
	}%
	\macc@depth\@ne
	\let\math@bgroup\@empty \let\math@egroup\macc@set@skewchar
	\mathsurround\z@ \frozen@everymath{\mathgroup\macc@group\relax}%
	\macc@set@skewchar\relax
	\let\mathaccentV\macc@nested@a
	\if#41
	\macc@nested@a\relax111{#2}%
	\else
	\def\gobble@till@marker##1\endmarker{}%
	\futurelet\first@char\gobble@till@marker#2\endmarker
	\ifcat\noexpand\first@char A\else
	\def\first@char{}%
	\fi
	\macc@nested@a\relax111{\first@char}%
	\fi
	\endgroup
}
\makeatother

\newcommand\overbar{\wideaccent\overline}

\makeatletter
\newcommand*{\transpose}{%
	{\mathpalette\@transpose{}}%
}
\newcommand*{\@transpose}[2]{%
	\raisebox{\depth}{$\m@th#1\intercal$}%
}
\makeatother

\begin{document}
\title[Scalar-mean rigidity theorem]{Scalar-mean  rigidity theorem and Llarull's theorem for nonspin manifolds}

\author{Gaoming Wang}
\address[Gaoming Wang]{Beijing Institute of Mathematical Sciences and Applications}
\email{wanggaoming@bimsa.cn}
\thanks{}

\author{Jinmin Wang}
\address[Jinmin Wang]{Institute of Mathematics, Chinese Academy of Sciences}
\email{jinmin@amss.ac.cn}
\thanks{}

\author{Zhizhang Xie}
\address[Zhizhang Xie]{Department of Mathematics, Texas A\&M University}
\email{xie@tamu.edu}
\thanks{}

\author{Bo Zhu}
\address[Bo Zhu]{Yau Mathematical Sciences Center, Tsinghua University }
\email{zhub@tsinghua.edu.cn}
\thanks{}
 
\begin{abstract}
We prove Llarull's scalar curvature rigidity theorem for spheres and
scalar-mean rigidity for strictly convex Euclidean domains in all dimensions
without the spin assumption. 
\end{abstract}
\maketitle

\section{Introduction}

A main purpose of this paper is to prove Llarull's scalar curvature rigidity
theorem for spheres in all dimensions without spin assumption.
Llarull's theorem states that a closed spin
manifold with scalar curvature at least that of the unit round sphere cannot
admit a distance non-increasing map of nonzero degree to the sphere unless the
map is an isometry \cite[Theorem B]{Llarull}. Listing subsequently strengthened this result by
replacing the separate scalar curvature comparison and distance-nonincreasing assumptions
with a single inequality relating the scalar curvature to the pointwise
dilation of the map \cite{Listing:2010te}. 
These results reveal a beautiful
interplay between metric invariant, curvature, and topology. Recall that their proofs use the Dirac operator and hence rely on spin assumption. Removing this assumption has been a central problem
in scalar curvature geometry. 

\medskip

In this paper, we prove  Llarull's theorem 
in all dimensions without the spin assumption. More precisely, we have the following main theorem of the paper. 
\begin{theorem}\label{thm:llarull}
	Let $(M^n,g)$ be a smooth connected closed  Riemannian manifold, and let $F\colon (M,g)\to (\sph^n,g_{\sph^n})$ be a smooth map with non-zero degree, where 
    $(\sph^n,g_{\sph^n})$ is the standard $n$-dimensional round unit sphere with the round metric $g_{\sph^n}$. If
	\begin{itemize}
		\item $F$ is $1$-Lipschitz, namely $g\geq F^*g_{\sph^n}$, and
		\item $\Sc(g)\geq n(n-1)$.
	\end{itemize}
	Then $F$ is an isometry.
\end{theorem}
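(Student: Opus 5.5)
Since the Dirac operator is unavailable, the plan is to reduce Theorem~\ref{thm:llarull} to a scalar-mean rigidity statement for a convex Euclidean domain with boundary, which the title advertises, and to prove that statement by a non-spin method: $\mu$-bubbles or capillary/free-boundary minimal slicing, or harmonic-function level-set (spacetime harmonic) techniques.

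\textbf{Step 1: pass to a manifold with boundary.} Pick a regular value $p\in\sph^n$ of $F$ and let $B\subset\sph^n$ be the complement of a small geodesic ball around the antipode $-p$. Then $B$ is a spherical cap, which is strictly convex and conformal (via stereographic projection) to a Euclidean ball. Set $\Omega=F^{-1}(B)$, perturbed so that it is a smooth domain. Since $F$ has nonzero degree, $F|_\Omega\colon(\Omega,\partial\Omega)\to(B,\partial B)$ has nonzero relative degree. We record:
\begin{itemize}
\item the interior inequality $\Sc(g)\ge n(n-1)=\Sc(g_{\sph^n})$;
\item the $1$-Lipschitz control.
\end{itemize}
The boundary mean curvature of $\partial\Omega$ is not controlled, so instead of shrinking the cap, I would work directly with the warped-product structure $g_{\sph^n}=dr^2+\sin^2 r\, g_{\sph^{n-1}}$.

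\textbf{Step 2: the comparison.} Take the distance function $\rho=r\circ F$ from $F^{-1}(p)$ as a $\mu$-bubble weight, following Gromov's band-width and Cecchini--Zeidler arguments. Consider a warped $\mu$-bubble functional
\[
\mathcal A(\Sigma)=\int_\Sigma \sin^{n-1}\!\rho \;-\;\int_{\Omega_\Sigma} h(\rho)\,\sin^{n-1}\!\rho,
\]
with $h$ built from the model mean curvature $(n-1)\cot\rho$ of the distance spheres. The goal is to use the stability inequality, $\Sc\ge n(n-1)$ and $|\nabla\rho|\le1$ to show the following. Either a minimizer $\Sigma$ exists whose induced metric carries a $1$-Lipschitz nonzero-degree map to $\sph^{n-1}$ with $\Sc\ge(n-1)(n-2)$ in a suitably warped or spectral sense, or all inequalities are equalities.

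This invites an induction on dimension using a spectral (warped) version of the statement. I would therefore strengthen the theorem to: if $-\gamma\Delta+\tfrac12\Sc\ge \tfrac{n(n-1)}2$ spectrally, with the same map hypotheses, then rigidity holds. The base case is $n=2$, handled by Gauss--Bonnet together with area comparison, $\mathrm{Area}(M)\ge|\deg F|\cdot 4\pi$.

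\textbf{Step 3: rigidity.} In the equality case, the chain of inequalities forces $|\nabla\rho|=1$ and umbilic slices with the model mean curvature. This shows that $F$ is a local isometry on the complement of $F^{-1}(\pm p)$, hence everywhere by continuity. A local isometry of nonzero degree onto the simply connected $\sph^n$ (for $n\ge2$) is a covering, and therefore a global isometry.

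\textbf{Main obstacle.} The hardest part is Step 2: proving existence and regularity of the warped $\mu$-bubble near the singular weights at $\rho=0,\pi$, and passing the nonzero-degree $1$-Lipschitz condition down to the slice. The slice map to $\sph^{n-1}$ is only Lipschitz in the warped sense, and regularity of the bubble fails in high dimensions (roughly $n\ge 8$). I would first try the harmonic-map/level-set approach. Take $u$ solving a Dirichlet problem on $\Omega$ with boundary values $\cos\rho$, and apply a Stern-type integral formula relating $\int|\nabla^2u|^2/|\nabla u|+\tfrac12(\Sc-\Sc_{\text{level}})|\nabla u|$ to boundary terms. This route is dimension-free and avoids the regularity problem.
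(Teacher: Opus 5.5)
Your proposal is a research plan rather than a proof. The two steps that carry the content, closing the induction and handling singular slices, are exactly the ones you defer to ``the main obstacle,'' and the fallback you offer does not work.

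\textbf{The induction in Step~2 does not close.} On a slice $\Sigma$ of your warped $\mu$-bubble, the natural map to $\sph^{n-1}$ is $P\circ F$, where $P$ projects $\sph^n\setminus\{\pm p\}$ onto the equator. On horizontal directions $dP$ is $1/\sin r$ times an isometry, so $P\circ F$ has dilation up to $\|dF\|/\sin\rho$ and is not $1$-Lipschitz. Hence your spectral strengthening ``with the same map hypotheses'' is not inherited by the slice. What the slice does inherit is a Listing-type condition coupling curvature to pointwise dilation, and there is no known way to iterate warped $\mu$-bubbles under it. This is why the Cecchini--Wang--Xie--Zhu argument stops at $n=4$, where the slice is a spin $3$-manifold.

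The missing idea is to go \emph{up} one dimension instead of slicing $M$. On $(-\infty,0]\times M$ with $dt^2+g$ (conformal to the punctured cone over $M$), take a conformal factor built from the first eigenfunction of a Robin problem with boundary coefficient $\frac{n-1}{2n}\sqrt{\frac{n}{n-1}(\Sc(g)-\delta)}$. This turns a strict version of $\Sc(g)\geq n(n-1)\|dF\|^2$ into a complete manifold with $\Sc\geq\delta$ and boundary $M$ satisfying $H\geq\|dF\|_{\tr}+\delta$.

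This trace-norm boundary condition \emph{does} descend. For a capillary $\mu$-bubble with contact angle prescribed as $J=\psi\circ F$, one has $\|dF\|_{\tr}\geq\langle\mathbf n,\nabla^{\partial M}(\psi\circ F)\rangle+\sin J\,\|d(P\circ F)\|_{\tr}$. The first term absorbs the term $\langle\mathbf n,\nabla^{\partial M}(\psi\circ F)\rangle/\sin J$ that the variable contact angle contributes to the second variation. The factor $\sin J$ exactly cancels the $1/\sin\psi$ dilation of $P$. The descent ends at a surface with boundary, where Gauss--Bonnet together with $\int_{\partial}\|dF\|\geq 2\pi|\deg F|$ concludes. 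Llarull's statement is then the special case $\|dF\|\leq1$ of Listing's, with the rescaling constant forced to equal $1$.

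\textbf{Singularities and rigidity are also unresolved.} Minimizing capillary bubbles are singular in high dimensions. Your harmonic-function route is not dimension-free: Stern's identity controls the level-set term $\Sc_{\Sigma_t}$ only after integrating over each level set and invoking Gauss--Bonnet. That requires $\Sigma_t$ to be two-dimensional, i.e.\ $n=3$; for $n\geq4$ there is no topological bound on $\int_{\Sigma_t}\Sc_{\Sigma_t}$.

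The paper instead conformally blows up the singular set $S$ of each $m$-dimensional bubble to infinity, using a Green's-function potential $W\sim d(\cdot,S)^{-3/2}$. Its construction needs $\dim_{\mathcal A}S\leq m-4$, i.e.\ packing bounds uniform in centers and scales, including at the capillary boundary. To survive both the blow-up and the next slicing, the hypothesis is carried in weighted form in dimension $k$: $\Sc-2\Delta f-\mu|\nabla f|^2\geq\delta$ and $H+\nu(f)\geq\|dF\|_{\tr}+\delta$, with $\mu>\frac{k-1}{k}$. The boundary map is also modified to be constant near the new ends without changing its degree.

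Finally, your Step~3 equality analysis presupposes a smooth bubble or foliation that you have not produced. The paper avoids equality cases in the descent altogether. It proves only a strict comparison and obtains rigidity from a conformal dichotomy: if $F$ is not an isometry up to scaling, $g$ can be deformed so that $\Sc-\delta\geq(\|dF\|+\delta)^2n(n-1)+\delta$, which the strict comparison rules out.
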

In fact, the same techniques allow us to prove Listing's strengthening in all dimensions without the spin assumption. 
\begin{theorem}\label{thm:listing}
	Let $(M^n,g)$ be a smooth connected closed Riemannian manifold of dimension $n\geq 2$, and let $F\colon (M,g)\to (\sph^n,g_{\sph^n})$ be a smooth map with non-zero degree, where 
    $(\sph^n,g_{\sph^n})$ is the standard $n$-dimensional round unit sphere with the round metric $g_{\sph^n}$. Assume that
\[ \Sc(g)\geq \|dF\|^2\cdot n(n-1),\]
where $\|dF\|$ denotes the pointwise operator norm
    of $dF$ with respect to $g$ and $g_{\sph^n}$.
	Then $F$ is an isometry up to a constant rescaling of $g$.
\end{theorem}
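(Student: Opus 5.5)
The plan is to reduce Theorem~\ref{thm:listing} to the scalar-mean rigidity theorem for strictly convex Euclidean domains, which I will assume is available in the form the abstract suggests. Let $\Omega\subset\R^{n+1}$ be a strictly convex domain with boundary $\partial\Omega$ and mean curvature $H_{\partial\Omega}$. Let $(X,\bar g)$ be a compact manifold with boundary $\partial X$. Suppose $f\colon (X,\partial X)\to(\Omega,\partial\Omega)$ has nonzero degree, $\Sc(\bar g)\ge 0$, and
\[
H_{\partial X}\ \ge\ \|d(f|_{\partial X})\|\cdot H_{\partial\Omega}\circ f ,
\]
possibly with a tangential-norm refinement. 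Then $f$ is an isometry.

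\textbf{Construction.} Take $\Omega$ to be the unit ball, so $\partial\Omega=\sph^n$ and $H\equiv n$. Build the collar $X=M\times[1-\epsilon,1]$ with the warped metric
\[
\bar g = dr^2 + \phi(r)^2 g,
\]
and the map $f(x,r)=r\,F(x)$. The slice $\{r=1\}$ with $\phi(1)=1$ is $(M,g)$.

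\textbf{Curvature of the collar.} The Gauss equation gives
\[
\Sc(\bar g)=\phi^{-2}\Sc(g)-2n\frac{\phi''}{\phi}-n(n-1)\frac{\phi'^2}{\phi^2}.
\]
The mean curvature of the outer boundary is $n\phi'/\phi$.

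The cone metric $\phi=r$ makes $\Sc(\bar g)\ge0$ exactly when $\Sc(g)\ge n(n-1)$. In Listing's version the hypothesis is only the pointwise bound $\Sc(g)\ge n(n-1)\|dF\|^2$. Here I would use a rescaled cone, adapted pointwise, or rather compare boundary data. On the outer boundary, $\|d(f|_{\partial X})\|=\|dF\|$ and $H=n$. So the needed inequality is
\[
n \;\ge\; n\|dF\| ,
\]
which fails where $\|dF\|>1$.

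\textbf{Fixing the mismatch.} Instead, replace the cone by the Schwarzschild-type or "doubling" trick. Glue the collar to its reflection, or cap the inner boundary, so that only one outer boundary with the prescribed data remains. Then invoke the convex-domain theorem with the refined hypothesis. Rescaling $g$ by the maximal stretch handles the constant-rescaling conclusion. The inner boundary $\{r=1-\epsilon\}$ must also be dealt with, since the convex-domain theorem needs the degree condition on all of $\partial X$. I would instead take $X=M\times[0,1]$ with $f$ mapping onto the closed ball. The inner slice is sent to the origin, so $f$ is not a map of pairs onto $\partial\Omega$ there. This must be fixed by allowing the inner boundary to be mean-convex-free, i.e. by arranging that the scalar-mean theorem tolerates a boundary component mapped to a point.

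\textbf{Equality case.} Once the rigidity theorem yields that $f$ is an isometry onto $\Omega$, restricting to $r=1$ gives that $F$ is an isometry, after the rescaling used.

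\textbf{Main obstacle.} The hardest step is producing a compact fill-in with $\Sc\ge0$ and correct boundary behavior from only $\Sc(g)\ge n(n-1)\|dF\|^2$, without spin. I would first try conformally modifying $g$ by $\|dF\|^2$. This normalizes the Lipschitz constant to 1 at the cost of a gradient term in the scalar curvature. That term is then absorbed via a first-eigenfunction argument for the conformal Laplacian on $M$. Equality in the eigenvalue estimate forces $\|dF\|$ to be constant.
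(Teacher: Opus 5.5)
Your proposal has a genuine gap, and it sits exactly at the two steps you flag as obstacles. First, there is no compact fill-in to which Theorem~\ref{thm:scmean_convexdomain} applies. The full cone $M\times[0,1]$ is singular at the vertex unless $g$ is round. Truncating it does not help. In $dr^2+r^2g$ the slices have mean curvature $n/r$ with respect to $\partial_r$. So any closed hypersurface $\Sigma$ enclosing the vertex, compared at a point of maximal $r$ with the slice $\{r=\max_\Sigma r\}$, has mean curvature at most $-n/\max_\Sigma r<0$ with respect to the outward normal of the collar. A boundary component with $\|dF\|=0$ would instead need $H\ge 0$.

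Nor can the theorem be weakened to ``tolerate'' such a component. Take the Euclidean annulus $\{1/2\le|x|\le1\}\subset\R^{n+1}$, with the identity on the outer sphere and the inner sphere collapsed to a point. It has $\Sc=0$, equality on the outer boundary and boundary degree one, yet it is not a ball. The idea you are missing is to give up compactness. The paper works on the half-cylinder $N=(-\infty,0]\times M$ with $dt^2+g$, which is conformal to the punctured cone with the vertex pushed to $t=-\infty$. It then applies the complete noncompact Theorem~\ref{thm:noncompactScMean}, which is a degree obstruction under strict inequalities, not a rigidity theorem.

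Second, the pointwise dilation is not handled. A warped product has constant $H=n\phi'/\phi$ on the slice, which cannot dominate $n\|dF\|$. The conformal normalization $\|dF\|^2g$ also fails. It degenerates where $dF=0$, and $\|dF\|$ is only Lipschitz. Its scalar curvature contains $-2(n-1)\Delta\log\|dF\|$, a second-order term in a prescribed function. The hypothesis does not control that term, and no first-eigenfunction argument can absorb it.

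The paper instead solves a linear Robin problem on $N$ with $x$-dependent boundary data:
\[
-\tfrac{4n}{n-1}\Delta u+\Sc(g)\,u=\lambda u,\qquad \partial_tu=\tfrac{n-1}{2n}\sqrt{\tfrac{n}{n-1}\bigl(\Sc(g)-\delta\bigr)}\,u\quad\text{at }t=0 .
\]
Since $\Sc(g)$ is independent of $t$, the boundary term equals an integral of $\partial_t(\varphi^2)$ over $N$, and completing the square gives $\lambda\ge\delta$. Testing with $e^{st}\psi$ places the bottom of the spectrum below the essential spectrum, which yields a bounded positive ground state $u$. Then $(\varepsilon+u)^{4/(n-1)}(dt^2+g)$ is complete and has $\Sc$ bounded below by a positive constant. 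It also satisfies $H_{\partial N}\ge\|dF\|_{\tr}+c$ with $c>0$, because the boundary coefficient is pointwise at least $n(\|dF\|+\delta)\ge\|dF\|_{\tr}+n\delta$. This is the pointwise-adapted cone you were looking for.

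Finally, rigidity does not come from applying a rigidity theorem to a fill-in, nor from rescaling by the maximal stretch. It comes from the conformal dichotomy of \cite{WangXie24,WangWangXie}: unless $F$ is an isometry up to scaling, one obtains the strict inequality $\Sc(g)-\delta\ge n(n-1)(\|dF\|+\delta)^2+\delta$. The construction above then contradicts $\deg F\neq0$.
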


An early step toward removing the spin assumption was the work of
Cecchini--Wang--Xie--Zhu \cite{CWXZLlarull4}, who established Llarull's
theorem in dimension four without spin assumption. Their argument
uses Gromov's $\mu$-bubbles and a suitable conformal deformation to reduce
the problem to Listing's theorem on a three-dimensional hypersurface.
Since every oriented three-manifold is spin, Listing's theorem applies
and completes the proof. In higher dimensions, however, the hypersurfaces
arising from this construction need not be spin, so the reduction does
not eliminate the original obstruction. Moreover, the resulting curvature
inequality involves the pointwise dilation of the map and therefore
requires Listing's strengthening rather than Llarull's theorem in its
original form. It is unclear how to iterate the $\mu$-bubble construction
under this more general curvature condition to carry out dimension descent.

Inspired by Gromov's work, Wang--Wang--Zhu \cite{WangWangZhu} established
scalar-mean curvature rigidity for Euclidean balls in dimensions up to
four without spin assumption, using capillary hypersurfaces and
dimension descent. Their work suggested a new approach to spherical
rigidity: passing from spheres to balls by taking metric cones.
Indeed, the  metric cone over the unit round $n$-sphere is
the Euclidean unit $(n+1)$-ball. More generally, the cone metric
$g_C=dr^2+r^2g$ on $(0,1]\times M$ satisfies
\[
    \Sc(g_C)=r^{-2}\bigl(\Sc(g)-n(n-1)\bigr),
\]
and its outer boundary $\{1\}\times M$ has mean curvature $n$. Thus, the scalar curvature lower
bound in Llarull's theorem becomes nonnegative scalar curvature on
the cone, while the original distance-nonincreasing map supplies
the boundary comparison with the Euclidean ball. This leads naturally
to a reformulation of Llarull's theorem as a scalar-mean curvature
rigidity problem in one higher dimension.

However, There are still two major obstacles to carrying out this strategy. First, the
cone over a general Riemannian manifold has a singularity at its vertex,
so the standard existence and regularity theory for capillary
hypersurfaces in smooth ambient manifolds does not apply directly.
Second, in higher dimensions,
capillary hypersurfaces may themselves develop singularities, including
boundary singularities, even when the ambient manifold is smooth. These singularities obstruct a direct iteration of the
Schoen--Yau dimension-descent argument \cite{SY_pmt_1,Schoen_Yau_imcompressible,Schoen_Yau_psc_higher}.

Here, in this paper, we overcome both obstacles by conformally blowing up the singular sets
and establishing a weighted scalar-mean curvature comparison theorem
for complete manifolds. The blow-up sends the singular sets to infinity
and equips the regular parts with complete conformal metrics. The
weighted formulation allows us to preserve the curvature inequalities
under conformal blow-up and to pass them to capillary hypersurfaces.
We can therefore carry out the dimension-descent argument in arbitrary
dimensions without  spin assumption.

Our approach builds on recent developments in conformal blow-up methods
for scalar curvature problems. Chu--Lee--Zhu \cite{ChuLeeZhu24} used conformal blow-up to study scalar
curvature rigidity for $L^\infty$-metrics with isolated singularities.
Wang--Xie \cite{WangXie24} extended this approach to singular sets of
dimension up to roughly half the ambient dimension.
Bi--Hao--He--Shi--Zhu \cite{BHHSZ} subsequently adapted the method to
minimal-hypersurface approaches to the positive mass theorem. Inspired by the work of Bi--Hao--He--Shi--Zhu \cite{BHHSZ},
Brendle--Wang \cite{BrendleWang} introduced a weighted scalar curvature
framework compatible with both conformal blow-up and dimension descent
to prove the Riemannian positive mass theorem in all dimensions.
Further developments
include results on Schoen's conjecture for $L^\infty$-metrics on tori
\cite{WangWangXie}, the positive mass theorem in the presence of
low-codimension singularities \cite{KhuriWangWang}, and generalized 
Geroch conjecture with no spin assumption \cite{BiZhu}. We adapt and extend this framework to scalar-mean curvature comparison,
where one must also control the boundary geometry and the singularities
of capillary hypersurfaces.

\begin{remark}
    In dimensions $n\geq 3$, Llarull's theorem in the spin setting holds
    under the weaker assumption that $F$ is area-nonincreasing
    \cite[Theorem C]{Llarull}. Likewise, Listing's theorem admits an
    area-dilation version, with $\|dF\|^2$ replaced by
    $\|\wedge^2 dF\|$ \cite{Listing:2010te}. The corresponding extensions
    to nonspin manifolds remain open in dimensions at least four.
\end{remark}

As explained above, our approach to Theorem \ref{thm:llarull} proceeds
through scalar-mean curvature comparison for manifolds with boundary.
Such comparison problems naturally couple the scalar curvature in the
interior with the mean curvature of the boundary
\cite{Gromov_mean_light_scalar}. In the spin setting, index-theoretic
methods have yielded scalar-mean rigidity theorems for smooth, compact
Riemannian manifolds with nonempty boundary (see for example
\cite{Lottboundary,Wang:2022vf,MR3257837,Simone24,Wang:2021tq}).
Related comparison results have also been established for certain
possibly degenerate warped product metrics
\cite{Cecchini2021:vs,ChaiWan24} and, more generally, for suitable
conformal metrics \cite{WangXieConformal}.

Gromov conjectured that scalar-mean rigidity should hold without the
spin assumption and proposed an approach based on capillary
$\mu$-bubbles \cite[Section 5.8.1]{Gromov_four_lectures}.
Building on this approach and the work of Wang--Wang--Zhu
\cite{WangWangZhu}, we combine capillary $\mu$-bubbles with conformal
blow-up and weighted scalar-mean curvature comparison to carry out
dimension descent in arbitrary dimensions. In particular, we obtain
the following scalar-mean rigidity theorem without the spin assumption.

\begin{theorem}\label{thm:scmean_ball}
	Let $(M^n,\partial M,g)$ be a smooth compact connected Riemannian manifold with boundary of dimension $n\geq 3$, and let $F\colon (\partial M,g)\to (\sph^{n-1},g_{\sph^{n-1}})$ be a  smooth map with non-zero degree.  Suppose that
    \[
        \Sc(g)\geq 0 \textup{ on } M
        \textup{ and }
        H_{\partial M}(g)\geq (n-1)\|dF\|
        \textup{ on } \partial M,
    \]
    where  $H_{\partial M}(g)$ is the mean curvature with respect to
    the outward unit normal.
    Then there exists a constant $c>0$ such that $(M,c\cdot g )$ is isometric
    to the Euclidean unit ball and
    \[
        F\colon (\partial M,c\cdot g)
        \longrightarrow (\sph^{n-1},g_{\sph^{n-1}})
    \]
    is an isometry.
\end{theorem}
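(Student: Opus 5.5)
Theorem~\ref{thm:scmean_ball} will be proved by induction on the dimension $n$, descending through capillary $\mu$-bubbles in the spirit of Gromov and of \cite{WangWangZhu}. The induction cannot be run on the statement as written, because the hypersurfaces produced along the way are singular and only carry weighted curvature conditions. So the first step is to enlarge the statement to a \emph{weighted} scalar-mean comparison for possibly complete, noncompact manifolds. The data for this weighted statement are:
\begin{itemize}
\item a positive weight $u$;
\item a weighted scalar curvature $\Sc(g)-\gamma|\nabla\log u|^2+\dots\geq 0$, of the kind Brendle--Wang used for the positive mass theorem;
\item a weighted boundary mean curvature bounded below by $(n-1)\|dF\|$;
\item the boundary map $F$ to $\sph^{n-1}$, extended to a map of the manifold into the unit ball that is proper on the ends.
\end{itemize}
Theorem~\ref{thm:scmean_ball} is the special case $u\equiv1$ with $M$ compact.

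\textbf{Step 1: flexibility.} If the weighted scalar curvature is somewhere strictly positive, or the mean-curvature inequality is somewhere strict, I will perturb the metric and weight (a conformal change fixing the boundary comparison, or a boundary-localized deformation) so that both inequalities become strict everywhere. This reduces rigidity to a nonexistence statement under strict inequalities.

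\textbf{Step 2: nonexistence by descent.} Pull back a height function $x_n$ on the ball via the extension of $F$. I then minimize a weighted capillary $\mu$-bubble functional of the form
\[
\int_\Sigma u^{\gamma}\,d\mathcal H^{n-1}-\int_{\partial M\cap\Omega}u^{\gamma}\cos\theta\,d\mathcal H^{n-2}-\int_\Omega u^\gamma h\,d\mathcal H^n .
\]
The contact angle $\theta$ is taken as the angle that a hyperplane slice $\{x_n=t\}$ makes with the unit sphere. The potential $h$ is chosen so that the barrier condition holds near the poles of the sphere.

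The nonzero degree of $F$ forces the minimizer $\Sigma$ to be nontrivial: restricting $F$ to $\partial\Sigma$ gives a map of nonzero degree onto a latitude sphere $\sph^{n-2}$. After rescaling, this turns $\Sigma$ into a competitor in dimension $n-1$ for the ball $B^{n-1}$. The stability inequality for the capillary bubble, combined with the Gauss equation and the boundary identity for capillary surfaces, then shows two things:
\begin{itemize}
\item $\Sigma$ with a new weight $u\phi$ satisfies the weighted scalar inequality;
\item $\partial\Sigma$ satisfies $H_{\partial\Sigma}\geq (n-2)\|d(F|_{\partial\Sigma})\|$ relative to the latitude sphere.
\end{itemize}
Here $\phi$ is the first eigenfunction. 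The $\cos\theta$ term exactly compensates the second fundamental form of $\partial M$ via the convexity of the round sphere in the ball.

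The base case is $n=3$. There $\Sigma$ is a surface with boundary, and Gauss--Bonnet together with the weighted inequalities gives the contradiction, as in \cite{WangWangZhu}.

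\textbf{Step 3: singularities.} In high dimensions $\Sigma$ may have interior and boundary singular sets of codimension roughly $\geq 7$ (respectively, an appropriate capillary bound). I will perform a conformal blow-up $\tilde g=w^{4/(n-3)}g_\Sigma$ that sends the singular set to infinity. The function $w$ is built from a Green's-type function for the conformal Laplacian with Neumann/Robin boundary condition, with $w\to\infty$ at the singular set. The weight is absorbed into the weighted structure, so the resulting complete manifold satisfies the hypotheses of the enlarged inductive statement. This is exactly why the completeness and weighted versions are needed.

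\textbf{Step 4: rigidity.} Once the strict-inequality version is excluded, equality must hold everywhere. Then $\Sc=0$, the boundary is umbilic with $H=(n-1)\|dF\|$, and $F$ is a local isometry up to scale. Gauss' equation on $\partial M$ gives constant curvature, and the Obata/Reilly-type argument identifies $M$ with a ball.

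The main obstacle is Step 3: the boundary singularities of capillary minimizers. I need to show they have small enough dimension and admit a conformal blow-up that makes the metric complete while preserving both the weighted interior scalar inequality and the boundary mean-curvature inequality, with a capillary contact angle that is non-constant. My first attempt would be to adapt Brendle--Wang's weighted slicing with $\gamma$ chosen small and to control the boundary term by a Robin-type condition on $w$.
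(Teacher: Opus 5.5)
Your overall architecture matches the paper's: a weighted comparison on complete manifolds, strictification, capillary $\mu$-bubble descent with a latitude-dependent contact angle, Gauss--Bonnet for the bubble in ambient dimension three, and reweighting by a positive eigenfunction. But Step~3 is left open, and the construction you do specify cannot work in high dimensions.

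\textbf{Step 3: the conformal exponent.} With $\tilde g=w^{4/(m-2)}g_\Sigma$, $m=n-1$, completeness forces $w\gtrsim r^{-(m-2)/2}$, where $r=d(\cdot,S)$. A Green-potential-type supersolution, which is what keeps the curvature sign, blows up at rate at most about $r^{2-k}$ near a set of codimension $k$. So you need roughly $\dim S<(m-2)/2$. This fails for large $m$, because boundary singular sets of capillary minimizers are only known to have codimension $4$--$5$, not $7$. Changing the weight does not help, since the required growth rate is dictated by the metric exponent.

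The paper's device is a dimension-independent exponent. For $\Sc_{\mu,f}=\Sc-2\Delta f-\mu|\nabla f|^2$ it takes $\hat g=W^{4/3}g$ together with $\hat\rho=\rho W^{-2(m-2)/3}$. This leaves a term $B|\nabla W|^2/W^2$ with $B=\frac{4}{9}\bigl((m-2)(m-3)+3-\mu(m-2)^2\bigr)$, so $\mu$ must lie in the window $\frac{m}{m+1}<\mu\le\frac{m}{m+1}+\frac{9}{(m+1)(m-2)^2}$. The lower bound is what the Schoen--Yau estimate on the bubble needs, so the coefficient cannot simply be taken small.

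Completeness then needs only $W\gtrsim r^{-3/2}$. This, together with $\mathcal LW/W\gtrsim r^{-2}$ and $\nu_{\partial\Sigma}(W)/W\gtrsim r^{-1}$, comes from the Neumann Green function of $\mathcal L+1$ applied to sources $\asymp r^{-7/2}$ in $\Sigma$ and $\asymp r^{-5/2}$ on $\partial\Sigma$. The two-sided bounds require a scale-uniform (Assouad) codimension-$4$ bound on $S$, not merely a Hausdorff bound.

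\textbf{Step 3: the boundary map.} The gain $\nu_{\partial\Sigma}(W)/W\gtrsim r^{-1}$ also pays for homotoping the boundary map, via a logarithmic cutoff, to a pole near the new ends at $S\cap\partial M$. The induction needs this so that $\partial\Sigma\setminus B_+$ is precompact and the next bubble can be confined by a barrier potential. An extension of $F$ into the compact ball that is ``proper on the ends'' does not provide it.

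Likewise, $F(\partial\Sigma)$ does not lie on a single latitude sphere, so no constant rescaling works. One composes with the equatorial projection $P$, whose dilation $1/\sin\psi$ is matched by the factor $1/\sin J$ in the capillary boundary term. Also, the nonpositive potential must vanish over $F^{-1}$ of the hemisphere where $\cos J>0$, because the boundary stability term contains $-\cot J\,H_\Sigma$.

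\textbf{Step 4: rigidity.} Equality in $\Sc\ge0$ and $H\ge(n-1)\|dF\|$ gives neither conformality of $dF$ nor umbilicity.
\begin{itemize}
\item Equality in an operator-norm inequality says nothing about the smaller singular values of $dF$. The paper proves the nonexistence theorem with the trace norm, so strictness anywhere in $\|dF\|_{\tr}\le(n-1)\|dF\|$ is also excluded. Only this makes $dF$ a pointwise homothety with factor $h=H/(n-1)$.
\item Umbilicity and Ricci-flatness need a separate deformation argument. Along $g+tk$ the lowest Robin eigenvalue satisfies $\operatorname{vol}(M)\,\lambda'(0)=-\int_M\langle\operatorname{Ric},k\rangle-\int_{\partial M}\langle\second-h\,g|_{T\partial M},k^\top\rangle$. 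So unless $\operatorname{Ric}\equiv0$ and $\second=h\,g|_{T\partial M}$, one could make the eigenvalue positive and contradict the strict theorem.
\item Only then do Codazzi and $\operatorname{Ric}(\cdot,\nu)=0$ give $(n-2)\,dh=0$. With Kasue's theorem for connectedness of $\partial M$, $F$ is then an isometry after one global rescaling.
\item Finally, Reilly's formula for the harmonic extensions of $x_i\circ F$, which uses $\operatorname{Ric}=0$, forces vanishing Hessians and the isometry with the ball.
\end{itemize}
Your appeal to the Gauss equation and to Obata/Reilly presupposes exactly this Ricci-flatness and constancy of the dilation, and nothing in your argument supplies either.
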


More generally, the same scalar-mean curvature rigidity result holds
for smooth, bounded, strictly convex Euclidean domains.

\begin{theorem}\label{thm:scmean_convexdomain}
    Let $(M^n,\partial M, g)$ be a smooth compact connected Riemannian
    manifold  with  boundary of dimension $n\geq 3$, and let
    $\Omega\subset\mathbb R^n$ be a bounded, strictly convex domain
    with smooth boundary, equipped with the Euclidean metric
    $g_{\mathrm{eu}}$. Let
    $F\colon (\partial M,g)
    \to (\partial\Omega,g_{\mathrm{eu}})$
    be a smooth map of nonzero degree. Assume that
    \[
        \Sc(g)\geq 0  \textup{ on } M,
        H_{\partial M}(g)\geq
        \|dF\|\bigl(H_{\partial\Omega}(g_{\mathrm{eu}})\circ F\bigr)
        \quad \text{on } \partial M.
    \]
    Then there exists a constant $c>0$ such that $(M,c\cdot g)$ is isometric
    to $(\overline{\Omega},g_{\mathrm{eu}})$ and
    \[
        F\colon (\partial M,c\cdot g_{\partial M})
        \longrightarrow (\partial\Omega,g_{\mathrm{eu}})
    \]
    is an isometry. In particular, $g$ is flat.
\end{theorem}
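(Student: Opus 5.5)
Theorem \ref{thm:scmean_convexdomain} will be proved by dimension descent with capillary $\mu$-bubbles, set up inside a weighted scalar-mean comparison framework on complete manifolds so that singularities can be removed by conformal blow-up. The target $\Omega$ is strictly convex, so its Gauss map $\nu_\Omega\colon\partial\Omega\to\sph^{n-1}$ is a diffeomorphism. Composing gives $\nu_\Omega\circ F\colon\partial M\to\sph^{n-1}$, which has nonzero degree, and we use it to define a capillary angle function $\theta$ on $\partial M$. For a fixed unit vector $e\in\sph^{n-1}$, let $\theta(x)$ be the angle between $e$ and $\nu_\Omega(F(x))$.

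The capillary model in $\overline\Omega$ is the family of hyperplane slices $\{x\cdot e=t\}$. Each slice meets $\partial\Omega$ at exactly this prescribed angle. Each is flat, totally geodesic, and its boundary sits inside the strictly convex hypersurface $\partial\Omega\cap\{x\cdot e=t\}$ of $\R^{n-1}$.

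The descent step runs as follows.
\begin{enumerate}
\item Minimize a weighted capillary functional
\[
\mathcal A(\Sigma)=\int_\Sigma w\,d\mathcal H^{n-1}-\int_{\partial\Sigma\subset\partial M}\cos\theta\, w\, d\mathcal H^{n-2}.
\]
Here $\Sigma$ ranges over relative hypersurfaces in the homology class dual to the pullback of a level set $\{x\cdot e=t\}\cap\partial\Omega$.
\item The nonzero degree of $F$ guarantees that this class is nontrivial. It also gives a nonzero-degree map from $\partial\Sigma$ to $\partial\Omega\cap\{x\cdot e=t\}$.
\item Use the second variation (stability), rewritten through the Gauss equation and the capillary boundary identity. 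This produces a positive weight on $\Sigma$ for which $\Sigma$ satisfies the same type of hypotheses in one lower dimension: weighted scalar curvature $\ge 0$, and weighted mean curvature of $\partial\Sigma$ at least $\|d(F|_{\partial\Sigma})\|$ times the mean curvature of the model boundary.
\item Check the pointwise inequality that makes step 3 work. It combines $H_{\partial M}\ge\|dF\|\,H_{\partial\Omega}\circ F$ with the fact that the mean curvature of the model slice boundary in the slice equals $H_{\partial\Omega}$ minus the normal-normal second fundamental form term. This is the usual Gromov/Li computation with the operator norm absorbed.
\end{enumerate}

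We iterate this step down to dimension $2$ (or $3$). There the statement is a weighted Gauss--Bonnet inequality, which forces equality at the bottom level.

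Singularities are handled in two places. The minimizers may have interior or boundary singular sets of codimension at least $7$ (or the corresponding boundary codimension). We remove these by a conformal blow-up $\tilde g=u^{4/(n-3)}g$ with $u\to\infty$ near the singular set. This makes the regular part complete while keeping the weighted inequalities, which is exactly why all statements are formulated in the weighted, complete form. So the induction hypothesis is really a weighted, complete-manifold version of the theorem, and Theorem \ref{thm:scmean_convexdomain} is its special case with constant weight.

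Rigidity comes from the equality case. If some inequality were strict at a point, a perturbation, for example a small conformal change increasing the mean curvature, would produce a contradiction with the non-strict comparison statement. Hence all inequalities are equalities. Propagating equality through the descent shows that every slice is totally geodesic with vanishing $\mathrm{Ric}(\nu,\nu)$ and with boundary umbilicity data matching the model, for all directions $e$ and levels $t$. Varying $e$ over the sphere then gives $\mathrm{Ric}\equiv 0$ and $\second_{\partial M}=\|dF\|\,F^*\second_{\partial\Omega}$. Equality in $\|dF\|$ forces $F$ to be a homothety with constant factor. A standard developing argument, using the flat metric with boundary second fundamental form equal to the pulled-back convex one, then identifies $(M,cg)$ with $\overline\Omega$.

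The main obstacle I expect is making the conformal blow-up compatible with the capillary boundary condition at boundary singular points. The blow-up has to make $\Sigma$ complete while keeping both the weighted boundary mean curvature inequality and the angle condition. It also has to keep the descended problem in a class for which the existence of capillary $\mu$-bubbles (with barriers at infinity) still holds. My first attempt would be a weight $u=\sum_k\epsilon_k G_k$ built from Green's functions with Neumann-type boundary conditions adapted to the weighted conformal Laplacian. I would verify that the boundary term produced by $\partial_\nu u$ has the favourable sign.
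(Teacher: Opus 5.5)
\textbf{Gap 1: the inductive class is not preserved.} The problem is in your steps 2--4. You induct on an operator-norm comparison $H\geq\|dF\|\,(H_{\mathrm{model}}\circ F)$ against a convex model, but the bubble does not inherit this.

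First, $\partial\Sigma$ is the boundary of a minimizer, not the preimage of a level set. So $F(\partial\Sigma)$ is not contained in $\partial\Omega\cap\{x\cdot e=t\}$, and you need a projection. Moreover, the slices $\Omega_t=\Omega\cap\{x\cdot e=t\}$ at different heights are non-similar convex domains, so no single lower-dimensional model is singled out.

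Second, and decisively, take $\theta=\psi\circ G\circ F$, with $G$ the Gauss map and $\psi$ the angle to $e$. The boundary term of the stability inequality then needs a lower bound for $\|dF\|\,(H_{\partial\Omega}\circ F)-\langle\mathbf n,\nabla^{\partial M}\theta\rangle$. Here $\langle\mathbf n,\nabla^{\partial M}\theta\rangle$ involves $\second_{\partial\Omega}(dF(\mathbf n),\cdot)$, not just $|dF(\mathbf n)|$. What one can actually prove is the trace-norm splitting $\|dF\|\,(H_{\partial\Omega}\circ F)\geq\|d(G\circ F)\|_{\tr}\geq\langle\mathbf n,\nabla^{\partial M}\theta\rangle+\sin\theta\,\|d(P\circ G\circ F)|_{T\partial\Sigma}\|_{\tr}$, where $P$ projects $\sph^{n-1}\setminus\{\pm e\}$ onto the equator. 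This is a trace-norm comparison with the round $\sph^{n-2}$. To return to your class you would need $\|d(G_t\circ\hat F)\|_{\tr}\geq\|d\hat F\|\,(H_{\partial\Omega_t}\circ\hat F)$ for some $\hat F\colon\partial\Sigma\to\partial\Omega_t$, and H\"older gives exactly the reverse inequality.

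The paper avoids this by using the Gauss map and H\"older only once, at the top level, to get $H_{\partial M}\geq\|d(G\circ F)\|_{\tr}$. It then runs the entire descent in the trace-norm/round-sphere class (Theorem~\ref{thm:noncompactScMean}). That class is stable under slicing because the equator is again a round sphere and the trace norm splits additively (Lemma~\ref{lemma:FcomposeProjection}).

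\textbf{Gap 2: the rigidity argument.} The descent yields a contradiction only from \emph{strict} inequalities. The gap $\delta>0$ is what builds the barrier $\Psi$, keeps the bubble away from the preimages of $\pm e$ (where the contact angle degenerates), and absorbs the errors from the conformal blow-up and from modifying the boundary map. So there is no non-strict comparison statement available to contradict. ``Propagating equality through the descent for all $e$ and $t$'' also has no content: the bubbles are singular minimizers rather than model slices, and after blow-up they carry no equality information about $g$.

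The paper instead proves the strict statement by descent, then argues as follows.
\begin{enumerate}
\item If $\Sc\not\equiv0$ or $H_{\partial M}\neq\|d(G\circ F)\|_{\tr}$ somewhere, the lowest eigenvalue of the associated Robin problem is positive, and a conformal change produces strict inequalities (Lemma~\ref{lemma:conformalRobin}).
\item A Kazdan-type first variation of this eigenvalue under $g+tk$ forces $\mathrm{Ric}_g=0$ and $\second_{\partial M}=h^{-1}F^*\second_{\partial\Omega}$, with $dF$ a homothety of factor $h$.
\item Codazzi makes $h$ constant.
\item Harmonic extensions of the coordinate functions, together with Reilly's formula, give the isometry onto $\overline\Omega$.
\end{enumerate}

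\textbf{Secondary issues.} These are less central but still needed.
\begin{enumerate}
\item Boundary singular sets of capillary minimizers are only known to have codimension at least $4$ (or $5$), not $7$.
\item The Green-function blow-up needs uniform Assouad-type covering bounds for the singular set, not just a Hausdorff dimension bound.
\item The conformal exponent must be matched to the weighted spectral form. The paper uses $W^{4/3}$ with $\hat\rho=\rho W^{-2(m-2)/3}$ and $\mu$ close to $m/(m+1)$, not $u^{4/(n-3)}$.
\end{enumerate}
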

 

We remark either Theorem \ref{thm:scmean_ball} or  Theorem \ref{thm:scmean_convexdomain} implies the Riemannian positive mass theorem 
in all dimensions. If $n=2$ in Theorem \ref{thm:scmean_ball} and Theorem \ref{thm:scmean_convexdomain}, then $M$ is flat by the Gauss--Bonnet theorem, but $F$ need not be an isometry. 

All our main results follow from a scalar-mean curvature comparison
theorem for complete, possibly noncompact manifolds
(Theorem~\ref{thm:noncompactScMean}). For
Theorem~\ref{thm:scmean_ball}, conformal deformations convert a failure
of rigidity into strict scalar and mean curvature inequalities.
For Theorem~\ref{thm:scmean_convexdomain}, we first compose the boundary
comparison map with the Gauss map to obtain a trace norm comparison
with the sphere, and then apply the same deformation argument.
For Theorem~\ref{thm:llarull}, we begin with the Riemannian product
$M\times[0,\infty)$, which is conformally equivalent to the punctured
 cone over $M$, and apply conformal deformations to obtain
a complete noncompact manifold whose boundary satisfies the required
trace norm comparison. The complete-manifold formulation thus provides
a common framework for both  compact scalar-mean rigidity and spherical rigidity problems.

Here is an outline of our proof of the scalar-mean curvature comparison
theorem for complete, possibly noncompact manifolds
(Theorem~\ref{thm:noncompactScMean}). To carry out dimension
reduction, we first establish a weighted scalar-mean comparison theorem
(Theorem~\ref{thm:noncompactScMeanWeighted}) involving weighted scalar
curvature in the interior and weighted mean curvature on the boundary.
The stability inequality for a capillary $\mu$-bubble gives a spectral
form of this comparison.   We then construct the blow-up function using a
Green function.  To estimate this blow-up function near the singular set, we use an Assouad dimension bound, inspired by \cite{CheegerNaber,NaberValtorta,BiZhu},  which gives quantitative covering information at different scales. After the conformal blow-up, we modify the boundary comparison map while preserving the degree information needed for induction. Finally, we use a positive function obtained from the spectral inequality to adjust the weight and obtain the desired pointwise weighted inequalities in one lower dimension. 

The paper is organized as follows. In Section \ref{sec:strict-noncompact}, we reduce the main theorems to one trace scalar-mean comparison theorem for complete manifolds (Theorem \ref{thm:noncompactScMean}). In Section \ref{sec:weighted}, we introduce weighted scalar curvature and mean curvature, and formulate a weighted scalar-mean comparison theorem for non-compact manifolds (Theorem \ref{thm:noncompactScMeanWeighted}), which implies Theorem \ref{thm:noncompactScMean}. We also describe the dimension reduction scheme and prove its two-dimensional base case. Section~\ref{sec:dimensionReduction} carries out the induction and proves Theorem \ref{thm:noncompactScMeanWeighted}.

\subsection*{AI disclosure:}

AI assistance was used to help check the Assouad dimension estimates for the singular set of a capillary $\mu$-bubble, as well as various other computations throughout the paper. The authors have independently reviewed, re-derived, and verified all mathematical arguments and computations and take full responsibility for their correctness.

	\subsection*{Acknowledgments}

	We thank Zhichao Wang and Jian Wang for helpful comments. Jinmin Wang is partially supported by NSFC 12501169. Zhizhang Xie is partially by NSF DMS-2247322. Bo Zhu is partially supported by NSFC 12501066.

\section{Reduction to a noncompact trace comparison}
\label{sec:strict-noncompact}
In this section, we reduce the main theorems to a scalar-mean comparison theorem for complete, possibly noncompact manifolds. We first state the common comparison result and then give the three reductions. 
\begin{theorem}\label{thm:noncompactScMean}
	Let $(M^n,\partial M,g)$ be a complete Riemannian manifold with compact boundary. Let $F\colon \partial M\to \sph^{n-1}$ be a smooth map. If there exists $\delta>0$ such that
	\begin{itemize}
		\item $H_{\partial M}(g)\geq \|dF\|_\tr+\delta$, where $\|\cdot\|_\tr$ denotes the trace norm, and
		\item $\Sc(g)\geq \delta$,
	\end{itemize}
	then the degree of $F$ is zero.
\end{theorem}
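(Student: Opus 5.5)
We argue by contradiction: assume $\deg F\neq 0$, and aim to descend in dimension until we reach a two-dimensional situation that contradicts Gauss--Bonnet. The statement does not survive the descent in its unweighted form, so we first embed it into a weighted version (the later Theorem~\ref{thm:noncompactScMeanWeighted}). The data there are a positive weight $w$ on $M$. The interior quantity is a weighted scalar curvature of the form $\Sc(g)-2\Delta\log w-c_n|\nabla\log w|^2$, and the boundary quantity is the weighted mean curvature $H_{\partial M}+\partial_\nu\log w$. Taking $w\equiv1$ recovers Theorem~\ref{thm:noncompactScMean}, so it suffices to prove the weighted statement by induction on $n$.

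The base case $n=2$ is handled as follows.
\begin{itemize}
\item Write the weighted inequalities in integrated form and integrate by parts, so that the weight terms drop out up to boundary terms of favorable sign.
\item Since $\Sc\ge\delta$ and the boundary is compact, a Bonnet--Myers type argument, or one with a $\mu$-bubble cutoff, lets us reduce to a compact region $\Omega$ with $\partial M\subset\Omega$.
\item On $\Omega$, Gauss--Bonnet gives $2\pi\chi(\Omega)\ge \tfrac12\int\Sc+\int_{\partial M} H_{\partial M} > \int_{\partial M}|dF|\ge 2\pi|\deg F|$.
\item Since $\chi(\Omega)\le 1$, this forces $\deg F=0$.
\end{itemize}

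For the inductive step from $n$ to $n-1$, we fix an equator $\sph^{n-2}\subset\sph^{n-1}$ and the height function $h=x_n\circ F$ on $\partial M$. We extend $h$ to $M$ and use it to set up a capillary $\mu$-bubble problem. The bubble $\Sigma^{n-1}$ is chosen to meet $\partial M$ at the prescribed angle $\theta=\arccos(h)$, coming from the spherical geometry, and the $\mu$-term is chosen so that the weighted stability inequality yields the following:
\begin{itemize}
\item the interior inequality $\Sc_\Sigma\ge\delta'$ in spectral form;
\item on $\partial\Sigma\subset\partial M$, the inequality $H_{\partial\Sigma}\ge\|d(\pi\circ F|_{\partial\Sigma})\|_\tr+\delta'$, where $\pi$ denotes the retraction onto $\sph^{n-2}$ coming from the capillary angle.
\end{itemize}
The trace norm is exactly what allows this boundary term to descend: for the standard sphere, the difference between the trace of the second fundamental form and the contribution from the angle derivative is controlled by the trace norm, as in Gromov's spherical capillary computation. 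The degree is preserved because $\Sigma$ is homologous to the relevant relative cycle, so $\deg(F|_{\partial\Sigma})=\deg F\neq0$. The first eigenfunction of the stability operator then becomes the new weight, turning the spectral inequality into pointwise weighted inequalities on $\Sigma$. Noncompactness is handled by the lower bound $\Sc\ge\delta$, which confines the $\mu$-bubble to a compact region near $\partial M$.

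The main obstacle is singularities: in dimension $n\ge8$, capillary minimizers have interior and boundary singular sets $S$. To deal with them, we build a blow-up function $u$ from a Green-type function that tends to $\infty$ along $S$, and replace the metric on $\Sigma\setminus S$ by the complete metric $u^{4/(n-3)}g_\Sigma$, or equivalently absorb $u$ into the weight. This keeps the weighted scalar and mean inequalities with a slightly smaller $\delta$. Controlling $u$ near $S$ requires a covering bound, namely an Assouad-dimension estimate $\dim S\le n-8$ or its boundary analogue, so that the error terms are integrable and the perturbation is small.

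We must also modify $F$ near the blown-up region, so that the new complete manifold still carries a boundary map of the same nonzero degree. This is possible because the singular set has high codimension and misses a generic fiber. This step is where the weighted framework is essential: it is what makes the induction hypothesis, applied in the complete, noncompact setting, close the argument.
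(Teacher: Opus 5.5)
Your outline follows the paper's architecture: a weighted statement, a capillary $\mu$-bubble with contact angle $\psi\circ F$, projection to the equator to descend the trace norm, a Green-function blow-up, modification of the map, and a positive solution as the new weight. However, the blow-up step as you formulate it fails in high dimensions. With $\hat g=u^{4/(m-2)}g_\Sigma$ on $\Sigma^m$, $m=n-1$, completeness forces $u\gtrsim r^{-(m-2)/2}$ with $r=d(\cdot,S)$. Keeping a uniform positive scalar curvature bound then forces $-\Delta u\gtrsim u^{(m+2)/(m-2)}\gtrsim u\,r^{-2}$ near $S$. Since $-\Delta r^{-\alpha}\approx\alpha(k-2-\alpha)r^{-\alpha-2}$ near a set of codimension $k$, such supersolutions require $(m-2)/2<k-2$, i.e.\ $k>(m+2)/2$. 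The bound $\dim S\le n-8$ you quote is only the interior one. Capillary boundary singularities can have dimension $m-5$ (the paper works with $m-4$), so the Yamabe-exponent blow-up already breaks down for $m\geq 8$. Merely ``absorbing $u$ into the weight'' gives no completeness.

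The missing idea is the mixed change $\hat g=W^{4/3}g$, $\hat\rho=\rho W^{-2(m-2)/3}$. It leaves $\hat\rho\,|\nabla^{\hat g}\varphi|_{\hat g}^2\,d\mathcal H^m_{\hat g}$ invariant and transforms $\Sc_{\mu,f}$ as in \eqref{eq:weightedSc}. The exponent no longer depends on $m$, and the coefficient $B$ of $|\nabla W|^2/W^2$ is nonnegative precisely when $\mu$ is close enough to $\frac{m}{m+1}$. Completeness then needs only $W\sim r^{-3/2}$, which the Green potential with interior source $\sigma^{-7/2}$ and boundary source $\sigma^{-5/2}$ produces once $\dim_{\mathcal A}S\le m-4$, uniformly up to the capillary boundary. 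This is also why the threshold $\mu>\frac{m}{m+1}$ in your unspecified $c_n$ matters: it drives both the Schoen--Yau absorption of the barrier and $B\geq 0$. It is also why the blow-up is not ``a slightly smaller $\delta$'': the terms $\mathcal LW/W\gtrsim r^{-2}$ and $\nu_{\partial Y}(W)/W\gtrsim r^{-1}$ are large and must dominate the factors $W^{4/3}$ and $W^{2/3}$.

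Several other steps also need the paper's specific arrangements.

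\emph{Noncompact boundary.} After the blow-up, $\partial\Sigma\setminus S$ is noncompact, so the inductive statement must allow noncompact boundary. The paper uses the collapse lemma so that $F^{-1}(\pm)$ have interior, which keeps $\partial\Sigma$ away from the poles where your projection $\pi$ is undefined. It makes the modified map equal to a pole outside a compact set. It builds the barrier $\Psi$ to vanish near $F^{-1}$ of the closed lower hemisphere, so that the term $-\Psi\cos J/\sin J$ in the boundary stability term is nonpositive. Saying ``$\Sc\geq\delta$ confines the bubble near $\partial M$'' does not address this.

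\emph{Map modification.} It must preserve the strict trace-norm inequality, not just the degree. The paper uses a logarithmic cutoff in $r$ whose $\hat g$-gradient, of size $W^{-2/3}/(r\log(r_+/r_-))$, is absorbed by the boundary gain $\delta'W^{-2/3}r^{-1}$ coming from the boundary source of the Green potential.

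\emph{Order of steps.} The spectral-to-pointwise step must come last, by exhaustion on the complete blown-up manifold, because the Green-function estimates use that the drift in $\mathcal L$ is the smooth ambient $\nabla f$.

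\emph{Singularities and base case.} Capillary boundary singularities are only excluded for $m\leq 4$, not for $n\leq 7$. The base case needs no Bonnet--Myers reduction: the last bubble, in ambient dimension three, is a compact smooth surface, and Lemma~\ref{lemma:n=2} applies to it directly.
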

Here the manifold is allowed to be noncompact, and the boundary comparison is in terms of the trace norm. We now derive all the main results from Theorem~\ref{thm:noncompactScMean}.

\subsection{From scalar-mean comparison to trace norm comparison}

We first deduce Theorems~\ref{thm:scmean_ball} and~\ref{thm:scmean_convexdomain} from Theorem~\ref{thm:noncompactScMean}. The key step is a conformal deformation that makes both curvature inequalities strict. We recall the construction from \cite[Proofs of Theorems~4.1 and~4.2]{WangWangZhu}.

Let $(M^n,\partial M,g)$ be a compact manifold with boundary and $n\geq 3$, and let $F\colon \partial M\to\sph^{n-1}$ be a smooth map. Consider the following Robin problem for the lowest eigenvalue:
\begin{equation} \label{eq: NB_H}
	\begin{cases}
		-\displaystyle \frac{4(n-1)}{n-2}\Delta u + \Sc(g) u = \lambda u,\\
		\displaystyle\frac{\partial u}{\partial \nu}=-\frac{n-2}{2(n-1)}(H(g)-\| dF\|_{\tr})u,
	\end{cases}
\end{equation}
where $\nu$ is the outward unit normal vector field along $\partial M$.
\begin{lemma}\label{lemma:conformalRobin}
	If the lowest eigenvalue of \eqref{eq: NB_H} is positive, then there exist a metric $\hat g$ conformal to $g$ and a constant $\delta>0$ such that
	\[
		\Sc(\hat g)\geq\delta \quad\text{on }M,
		\qquad
		H_{\partial M}(\hat g)\geq\|dF\|_{\tr,\hat g}+\delta \quad\text{on }\partial M.
	\]
	In particular, $(M^n,\partial M,\hat g)$ and $F$ satisfy the hypotheses of Theorem~\ref{thm:noncompactScMean}.
\end{lemma}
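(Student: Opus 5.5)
Let $u>0$ be the first eigenfunction of \eqref{eq: NB_H} with eigenvalue $\lambda>0$. Positivity of $u$ up to the boundary comes from standard variational and elliptic theory for the Robin problem, with the Hopf lemma giving $u>0$ on $\partial M$. Set $\hat g=u^{4/(n-2)}g$. The conformal transformation laws give
\[
\Sc(\hat g)=u^{-\frac{n+2}{n-2}}\Bigl(-\tfrac{4(n-1)}{n-2}\Delta u+\Sc(g)u\Bigr)=\lambda u^{-\frac{4}{n-2}},
\]
and on the boundary, with $\nu$ the $g$-outward normal,
\[
H_{\partial M}(\hat g)=u^{-\frac{2}{n-2}}\Bigl(H(g)+\tfrac{2(n-1)}{n-2}u^{-1}\partial_\nu u\Bigr)=u^{-\frac{2}{n-2}}\|dF\|_{\tr,g}.
\]
So the Robin condition exactly cancels the excess $H(g)-\|dF\|_{\tr}$. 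Next, the trace norm scales like a norm of $dF$. If $\hat g=\phi^2 g$ with $\phi=u^{2/(n-2)}$, then $\|dF\|_{\tr,\hat g}=\phi^{-1}\|dF\|_{\tr,g}$, since the singular values of $dF$ relative to $\hat g|_{\partial M}$ are those relative to $g$ divided by $\phi$. Hence $H_{\partial M}(\hat g)=\|dF\|_{\tr,\hat g}$: we get equality, not strict inequality, on the boundary, while the interior scalar curvature is bounded below by a positive constant because $u$ is bounded on compact $M$.

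To gain the boundary margin $\delta$, I would perturb the problem before solving it. The first Robin eigenvalue depends continuously on the boundary coefficient, since it is the infimum of the Rayleigh quotient
\[
\frac{\int_M \bigl(\tfrac{4(n-1)}{n-2}|\nabla v|^2+\Sc\, v^2\bigr)+2(n-1)\int_{\partial M}(H-\|dF\|_{\tr})v^2}{\int_M v^2}.
\]
By the trace inequality $\int_{\partial M}v^2\le \epsilon\int_M|\nabla v|^2+C_\epsilon\int_M v^2$, replacing $H-\|dF\|_{\tr}$ by $H-\|dF\|_{\tr}-\eta$ lowers the eigenvalue by an amount that tends to $0$ as $\eta\to0$. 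Fix $\eta>0$ small enough that the perturbed eigenvalue $\lambda_\eta$ is still positive, and let $u_\eta>0$ be its eigenfunction. Repeating the computation gives
\[
\Sc(\hat g)=\lambda_\eta u_\eta^{-4/(n-2)}\ge\lambda_\eta(\max u_\eta)^{-4/(n-2)}>0,
\]
\[
H(\hat g)=u_\eta^{-2/(n-2)}\bigl(\|dF\|_{\tr,g}+\eta\bigr)=\|dF\|_{\tr,\hat g}+\eta\, u_\eta^{-2/(n-2)}.
\]
The last term is at least $\eta(\max u_\eta)^{-2/(n-2)}>0$. Take $\delta$ to be the smaller of the two positive constants.

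It remains to check that $(M,\hat g)$ fits Theorem~\ref{thm:noncompactScMean}. It is compact, hence complete, with compact boundary. The main technical point is the regularity of $u_\eta$, since $\|dF\|_{\tr}$ is only Lipschitz, being a sum of singular values. The Robin coefficient is therefore only Lipschitz, so $u_\eta\in C^{2,\alpha}$ up to the boundary rather than $C^\infty$, and $\hat g$ is merely $C^{2,\alpha}$. If smoothness is required, I would first replace $\|dF\|_{\tr}$ by a smooth function $\ge\|dF\|_{\tr}$ that is uniformly within $\eta/2$ of it. The eigenvalue stays positive for small errors by the same continuity argument, and the pointwise inequalities survive with $\delta$ slightly reduced. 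I expect this regularity and approximation step, rather than the conformal computation itself, to be the only delicate part.
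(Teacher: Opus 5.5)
Your proof is correct, but it gets the boundary margin by a different mechanism than the paper. The paper solves \eqref{eq: NB_H} as stated. This gives $g_u=u^{4/(n-2)}g$ with $\Sc(g_u)\geq\delta_1>0$ and the boundary equality $H_{\partial M}(g_u)=\|dF\|_{\tr,g_u}$, exactly as in your first step. The paper then applies a second, explicit conformal factor $(1+\varepsilon w)^{4/(n-2)}$ with $\nu_{g_u}(w)=1$. The interior margin $\delta_1$ absorbs the $O(\varepsilon)$ change in scalar curvature, and the normal derivative of $w$ adds $\frac{2(n-1)}{n-2}\varepsilon(1+\varepsilon w)^{-n/(n-2)}$ to the mean curvature. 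You instead spend the spectral gap $\lambda>0$ up front. You lower the Robin coefficient by $\eta$ and use continuity of the first eigenvalue (trace inequality plus coercivity), so a single conformal change gives both strict inequalities. The paper's route is more elementary: there is no perturbed eigenvalue problem, and the margin is explicit. Yours is one step and lets you fold the smoothing of the Robin coefficient into the same perturbation.

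That smoothing point improves on the paper. The paper's eigenfunction has the same defect, since its Robin coefficient also contains $\|dF\|_{\tr}$, which fails to be $C^1$ where $dF$ drops rank. The paper passes over this.

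Two small corrections:
\begin{enumerate}
\item The boundary coefficient in your Rayleigh quotient should be $2$, not $2(n-1)$, because $\frac{4(n-1)}{n-2}\cdot\frac{n-2}{2(n-1)}=2$. This does not affect the argument.
\item With a merely Lipschitz coefficient you do not get $u_\eta\in C^{2,\alpha}$ up to the boundary. A $C^2$ solution would force $\|dF\|_{\tr}=H-\eta+\frac{2(n-1)}{n-2}u_\eta^{-1}\partial_\nu u_\eta$ to be $C^1$. One only gets $W^{2,p}$ for every $p<\infty$, hence $C^{1,\alpha}$, regularity there.
\end{enumerate}
Your smoothing step makes the second point moot. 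With a smooth $h\geq\|dF\|_{\tr}$ in the Robin condition, $H_{\partial M}(\hat g)=u^{-2/(n-2)}(h+\eta)\geq\|dF\|_{\tr,\hat g}+\eta u^{-2/(n-2)}$. So the boundary margin is not even reduced; only the eigenvalue shrinks slightly.
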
 
\begin{proof}
	Let $\lambda>0$ be the lowest eigenvalue of \eqref{eq: NB_H}, and let $u$ be a corresponding positive eigenfunction.
	We first consider the conformal metric on $M$ given by
	$$(M, g_u) := (M, u^{\frac{4}{n-2}}g).$$
	The scalar curvature of $g_u$ on $M$ is given by
		\begin{equation}
			\Sc(g_u)=u^{-\frac{n+2}{n-2}}\Big(-\frac{4(n-1)}{n-2}\Delta u+\Sc(g)u\Big)=\lambda u^{-\frac{4}{n-2}}	\geq\delta_1>0
		\end{equation}
		for some $\delta_1>0$, since $M$ is compact. On the boundary, we have
		\begin{equation}
			\| dF\|_{\tr,g_u}=u^{-\frac{2}{n-2}}\| dF\|_{\tr,g|_{\partial M}}.
		\end{equation}
	The mean curvature of $\partial M$ with respect to $g_u$ is given by
    \begin{equation}
		\begin{split}
		H_{\partial M}(g_u)&=u^{-\frac{2}{n-2}}\big(H_{\partial M}(g)+\frac{2(n-1)}{n-2}\frac{1}{u}\frac{\partial u}{\partial \nu}\big)\\
		&=u^{-\frac{2}{n-2}}\| dF\|_{\tr,g|_{\partial M}}=\|dF\|_{\tr,g_u}.
		\end{split}
	\end{equation}

	\vspace{1mm}
	
	Next, we deform $(M^n,\partial M,g_u)$ to increase the boundary mean curvature by taking advantage of the positive scalar curvature in the interior. Let $\nu_{g_u}$ be the outward unit normal to $\partial M$ with respect to $g_u$, and let $w$ be a smooth function on $M$ such that
	$\nu_{g_u}(w)=1.$
	For sufficiently small $\varepsilon>0$, we have $1+\varepsilon w>0$ and may define
	\[
		g_w=(1+\varepsilon w)^{\frac{4}{n-2}}g_u.
	\]
 The scalar curvature of $g_w$ on $(M, g_w)$ is given by
		\begin{equation}
			\Sc(g_w)=(1+\varepsilon w)^{-\frac{n+2}{n-2}}\Big(\varepsilon\big(-\frac{4(n-1)}{n-2}\Delta^{g_u}w+\Sc(g_u)w\big)+\Sc(g_u)\Big),
		\end{equation}
		which is strictly positive if we choose $\varepsilon>0$ to be sufficiently small. On the boundary, we have
		\begin{equation}
			\| dF\|_{\tr,g_w}=(1+\varepsilon w)^{-\frac{2}{n-2}}\| dF\|_{\tr,g_u}.
		\end{equation}
The mean curvature of $\partial M$ with respect to $g_w$ is given by
		\begin{equation}
			\begin{split}
				H(g_w)&=(1+\varepsilon w)^{-\frac{2}{n-2}}\big(H(g_u)+{\frac{2(n-1)}{n-2}\frac{\varepsilon}{1+\varepsilon w} \nu_{g_u}(w)} \big)\\
				&=\| dF\|_{\tr,g_w}+\frac{2(n-1)}{n-2}\varepsilon\cdot (1+\varepsilon w)^{-\frac{n}{n-2}}.
			\end{split}
		\end{equation}
	
	To summarize, we obtain a compact manifold $(M,\partial M,g_w)$ that satisfies all the assumptions of Theorem \ref{thm:noncompactScMean}.
\end{proof}

We now apply Lemma~\ref{lemma:conformalRobin} to the rigidity theorem for the Euclidean ball (Theorem \ref{thm:scmean_ball}). Since the conformal deformation leaves $F$ unchanged, a positive lowest eigenvalue would contradict Theorem~\ref{thm:noncompactScMean} whenever $\deg F\neq 0$.
\begin{lemma}\label{lemma:sc->ball}
	Theorem \ref{thm:noncompactScMean} implies  Theorem \ref{thm:scmean_ball}.
\end{lemma}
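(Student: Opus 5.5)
We argue by contradiction with Theorem~\ref{thm:noncompactScMean}, via Lemma~\ref{lemma:conformalRobin}, and then run a rigidity analysis of the borderline case. Assume $\deg F\neq 0$, $\Sc(g)\geq 0$ and $H_{\partial M}(g)\geq (n-1)\|dF\|$. Let $0\le s_1\le\dots\le s_{n-1}$ be the singular values of $dF$ at a point. Then
\[
\|dF\|_{\tr}=\sum_i s_i\le (n-1)s_{n-1}=(n-1)\|dF\|,
\]
so $H_{\partial M}(g)\geq\|dF\|_{\tr}$ on $\partial M$.

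\textbf{Step 1: the lowest Robin eigenvalue is zero.} Let $\lambda$ be the lowest eigenvalue of \eqref{eq: NB_H} and $u>0$ a corresponding eigenfunction. Multiplying the equation by $u$ and integrating by parts gives
\[
\lambda\int_M u^2=\int_M\Big(\tfrac{4(n-1)}{n-2}|\nabla u|^2+\Sc(g)u^2\Big)+2\int_{\partial M}\big(H(g)-\|dF\|_{\tr}\big)u^2 .
\]
Every term on the right is nonnegative, so $\lambda\ge0$. If $\lambda>0$, Lemma~\ref{lemma:conformalRobin} produces a metric satisfying the hypotheses of Theorem~\ref{thm:noncompactScMean}. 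The conformal change does not alter $F$, so $\deg F=0$, a contradiction. Hence $\lambda=0$, and every term above vanishes. This gives three facts.
\begin{itemize}
\item $u$ is constant.
\item $\Sc(g)\equiv0$.
\item $H=\|dF\|_{\tr}=(n-1)\|dF\|$ on $\partial M$. The second equality forces all singular values to be equal, so $F^*g_{\sph^{n-1}}=\varphi^2 g_{\partial M}$ with $\varphi=\|dF\|$.
\end{itemize}

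\textbf{Step 2: non-conformal perturbations give $\mathrm{Ric}\equiv0$ and umbilic boundary.} The same contradiction applies to any metric $g'$ close to $g$ that still satisfies $\Sc\ge0$ and $H\geq\|dF\|_{\tr}$ with one inequality somewhere strict: its lowest eigenvalue is positive, so $\deg F=0$. We use this twice.
\begin{itemize}
\item Suppose $\mathrm{Ric}\neq0$ somewhere. Take $g_t=g-t\eta\,\mathrm{Ric}$ with $\eta$ a cutoff supported in the interior. The first variation of $\Sc$ has $L^2$-pairing with a positive function equal to $\int\eta|\mathrm{Ric}|^2>0$, modulo divergence terms. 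This makes the Robin eigenvalue positive to first order while leaving the boundary data unchanged, a contradiction. Hence $\mathrm{Ric}\equiv0$.
\item Suppose the trace-free second fundamental form $\mathring A$ is nonzero somewhere. Use a symmetric $2$-tensor $h$ that vanishes on $\partial M$ but has prescribed normal derivative proportional to $\mathring A$. This changes $A$ in trace-free directions while keeping $g|_{\partial M}$, and hence $\|dF\|_{\tr}$, fixed. Combining it with the conformal correction of Lemma~\ref{lemma:conformalRobin} should again make $\lambda>0$. I expect this to be the main obstacle: $H$ is untouched to first order, so one must track second-order or interior scalar-curvature contributions. If it does not close, I would instead use the Gauss equation together with $\Sc\equiv0$ and the equality case to extract umbilicity.
\end{itemize}
Granting umbilicity, $A=\varphi\, g_{\partial M}$. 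The Codazzi equation with $\mathrm{Ric}(\nu,\cdot)=0$ gives $\operatorname{div}A-dH=0$, that is, $d\varphi-(n-1)d\varphi=0$. Since $n\ge3$, $\varphi$ is constant.

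\textbf{Step 3: identifying the ball.} Set $c=\varphi^2$. Then $F\colon(\partial M,c\,g)\to\sph^{n-1}$ is a local isometry, hence a covering. Since $\sph^{n-1}$ is simply connected for $n\ge3$, $F$ is an isometry. Now $(M,c\,g)$ is Ricci-flat with boundary isometric to the unit round sphere and second fundamental form equal to the metric. The normal-exponential (Heintze--Karcher/Kasue) comparison gives
\[
\mathrm{vol}(M)\le\int_{\partial M}\int_0^1(1-t)^{n-1}\,dt=\frac{|\partial M|}{n},
\]
with equality only for the flat unit ball. Equality is forced because the Jacobi-field comparison is saturated: $\mathrm{Ric}=0$ and $A=g$ exactly, so every normal geodesic reaches the focal distance $1$. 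Hence $(M,c\,g)$ is isometric to the Euclidean unit ball.
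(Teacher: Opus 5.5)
Your Step~1 and the Ricci-flatness bullet are correct and match the paper. The umbilicity step, however, fails for a structural reason, not just because first order is delicate. Write $\gamma=g|_{T\partial M}$ and $h=\|dF\|=H/(n-1)$ (your $\varphi$). Along $g_t=g+tk$, with constant eigenfunction at $t=0$, the first variation is
\begin{align*}
\operatorname{vol}_g(M)\,\lambda'(0)
&=\int_M\frac{d\Sc(g_t)}{dt}\,d\mathcal H^n_g+2\int_{\partial M}\frac{d}{dt}\bigl(H(g_t)-\|dF\|_{\tr,g_t}\bigr)\,d\mathcal H^{n-1}_g\\
&=-\int_M\langle\mathrm{Ric}_g,k\rangle\,d\mathcal H^n_g-\int_{\partial M}\langle\second-h\gamma,k^\top\rangle_\gamma\,d\mathcal H^{n-1}_g .
\end{align*}
So if $\mathrm{Ric}_g\equiv0$ and $k|_{\partial M}=0$, then $\lambda'(0)=0$. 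The change of $H$ produced by $\partial_\nu k$ is cancelled exactly by the boundary terms of $\int_M\frac{d}{dt}\Sc(g_t)$. Lemma~\ref{lemma:conformalRobin} cannot rescue this, since it needs $\lambda>0$ as input.

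Your Gauss-equation fallback also fails. It gives $\Sc(\gamma)=H^2-|\second|^2\le(n-1)(n-2)h^2$. The reverse inequality needed for umbilicity is available only once $h$ is constant, and that itself comes from Codazzi plus umbilicity.

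The missing idea is to deform the boundary metric itself. Take $k$ supported near $\partial M$ with $k^\top=-(\second-h\gamma)=-\mathring{\second}$. This is $\gamma$-trace-free, so $\frac{d}{dt}\|dF\|_{\tr,g_t}=-\frac h2\tr_\gamma k^\top=0$. The formula then gives $\operatorname{vol}_g(M)\lambda'(0)=\int_{\partial M}|\mathring{\second}|^2>0$, a contradiction.

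Step~3 has two further gaps.

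\emph{Connectedness.} Codazzi makes $h$ constant only on each component of $\partial M$. Before rescaling by a single $c$ and calling $F$ an isometry, you must show $\partial M$ is connected. The paper uses Kasue's theorem: $\mathrm{Ric}\ge0$, $H\ge0$, and $H>0$ on some component, which holds because $\deg F\neq0$.

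\emph{Heintze--Karcher.} This only yields $\operatorname{vol}(M)\le|\partial M|/n$, and nothing forces equality. Along a normal geodesic, the shape operator of the parallel hypersurfaces satisfies $S'=S^2+R(\cdot,N)N$. Ricci-flatness kills only the trace of the curvature term. For $n\ge4$ its trace-free (Weyl) part makes $S$ non-umbilic for $t>0$, so the Jacobian comparison is strict. Equality would also require the cut distance to equal the focal distance everywhere. So saturation presupposes the flatness you are trying to prove.

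You need an argument that uses $F$ to produce flatness. The paper takes harmonic extensions $u_i$ of $x_i\circ F$ and applies Reilly's formula, using $H=n-1$, $\second=\gamma$, $\Delta^{\partial M}u_i=-(n-1)u_i$ and $\sum_i|\nabla^{\partial M}u_i|^2=n-1=(n-1)\sum_iu_i^2$. This gives
\[
-\sum_i\int_M|\operatorname{Hess}u_i|^2\,d\mathcal H^n_g=(n-1)\sum_i\int_{\partial M}\bigl(\nu(u_i)-u_i\bigr)^2\,d\mathcal H^{n-1}_g\ge0 .
\]
Hence all Hessians vanish, and $(u_1,\dots,u_n)$ is an isometry onto the unit ball extending $F$.
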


\begin{proof}	
	Assume that $n\geq 3$ and that $(M^n,\partial M,g)$ satisfies all the assumptions of Theorem \ref{thm:scmean_ball}. We show that either the rigidity conclusion holds or we obtain a contradiction to Theorem \ref{thm:noncompactScMean}.

	By H\"older's inequality, we have
	\begin{equation}\label{eq:HOlder}
		\|dF\|_\tr\leq \|dF\|\cdot (n-1).
	\end{equation}
	Therefore, we have
	\begin{equation}\label{eq:sc>=,H>=}
		\Sc(g) \geq 0\text{ on }M \textup{ and } \ H(g) \geq  \|dF\|\cdot (n-1)\geq \| dF\|_{\tr} \text{ on }\partial M.
	\end{equation}
	If any of the above inequalities is strict at any point, then the lowest eigenvalue of \eqref{eq: NB_H} is strictly positive. This contradicts Theorem \ref{thm:noncompactScMean} by Lemma \ref{lemma:conformalRobin}. Therefore, we have
	\begin{equation}
		\Sc(g) =0\text{ on }M \textup{ and }  H(g) = \|dF\|\cdot (n-1)=\| dF\|_{\tr} \text{ on }\partial M.
	\end{equation}
	Equality in H\"older's inequality implies that $dF$ is a homothety. Thus $$h\coloneqq \|dF\|=\frac{H(g)}{n-1}$$
	is a smooth function on $\partial M$. In this case, \eqref{eq: NB_H} becomes the standard Neumann problem, whose lowest eigenvalue is zero, with  eigenfunction $u_0 =1$.
	
	The classical work of Kazdan \cite{MR675736} relates the first variation of the lowest Neumann eigenvalue to the Ricci curvature. Here, we consider a deformation of \eqref{eq: NB_H}. More precisely, let $g_t=g+tk$ for a symmetric $2$-tensor field $k$, and consider \eqref{eq: NB_H} with respect to $g_t$:
	\begin{equation} \label{eq: NB_Ht}
		\begin{cases}
			-\displaystyle \frac{4(n-1)}{n-2}\Delta^{g_t} u_t + \Sc(g_t) u_t = \lambda_t u_t,\\
			\displaystyle\nu_t(u_t)=-\frac{n-2}{2(n-1)}(H(g_t)-\| dF\|_{\tr,g_t})u_t,
		\end{cases}
	\end{equation}
	Differentiating \eqref{eq: NB_Ht}, the variations of the Laplacian and normal derivative acting on the constant $u_0=1$ vanish. Integrating the differentiated interior equation and using the differentiated boundary
	condition gives
	\begin{equation}\label{eq:variation_lambda}
		\begin{split}
\operatorname{vol}_g(M)\frac{d\lambda_t}{dt}=\int_M \frac{d\Sc(g_t)}{dt}d\mathcal H^n_g+2\int_{\partial M} \frac{d(H(g_t)-\|dF\|_{\tr,g_t})}{dt}d\mathcal H^{n-1}_g
		\end{split}
	\end{equation}
	at $t=0$. 
   
   Write $\gamma=g|_{T\partial M}$ and $\gamma_t=g_t|_{T\partial M}$.  For the symmetric $2$-tensor $k$, we write  $k^\top=k|_{T\partial M}$. Recall that 
	\begin{equation}\label{eq:variation_scalar}
	    \frac{d\Sc(g_t)}{dt}=-\langle\operatorname{Ric}_g,k\rangle
		+\operatorname{div}_g\operatorname{div}_gk-\Delta_g\operatorname{tr}_gk.
	\end{equation}
and 
\begin{equation}\label{eq:variation_mean}
    2\frac{dH(g_t)}{dt}
=\nu(\operatorname{tr}_g k)
-(\operatorname{div}_g k)(\nu)
-\operatorname{div}_{\partial M}X
-\langle\second,k^\top\rangle_\gamma.
\end{equation}
where $\second$ denotes the second fundamental form of $\partial M$ and $X$ is the tangential vector field determined by
$\gamma(X,V)=k(\nu,V)$ for $V\in T\partial M$.
   
Adding \eqref{eq:variation_scalar} and \eqref{eq:variation_mean} and using
$\int_{\partial M}\operatorname{div}_{\partial M}X\,dA_g=0$,
we obtain
\begin{equation}\label{eq:variation_scalar_mean}
		\begin{split}
		   &  \int_M \frac{d\Sc(g_t)}{dt}\,d\mathcal H_g^n+2\int_{\partial M}\frac{dH(g_t)}{dt} \,d\mathcal H_g^{n-1} \\
		  = & -\int_M\langle\operatorname{Ric}_g,k\rangle\,d\mathcal H_g^n
		-\int_{\partial M}\langle\second,k^\top\rangle_\gamma\,d\mathcal H_g^{n-1}.
		\end{split}
	\end{equation}
  Note that 
    \[
		\|dF\|_{\tr,g_t}
		=h\,\operatorname{tr}\big((\gamma_t^{-1}\gamma)^{1/2}\big).
	\]Differentiating the trace-norm formula gives
	\begin{equation}\label{eq:d_of_dF}
	    \frac{d{\|dF\|}_{\tr, g_t}}{dt}=-\frac h2\operatorname{tr}_\gamma k^\top.
	\end{equation}
    where $h = \|dF\|_g =  \frac{H(g)}{n-1}$.
	Combining \eqref{eq:variation_lambda}, \eqref{eq:variation_scalar_mean} and \eqref{eq:d_of_dF}  yields
	\begin{equation}
		\begin{split}
			\operatorname{vol}_g(M)\frac{d\lambda_t}{dt}
			={}&-\int_M\langle\operatorname{Ric}_g,k\rangle\,d\mathcal H_g^n -\int_{\partial M}\langle\second-h\gamma,k^\top\rangle_\gamma\,d\mathcal H_g^{n-1}.
		\end{split}
	\end{equation}
    
	
	Now if $\operatorname{Ric}_g\ne 0$, then there exists a smooth $2$-tensor field $k$ vanishing on $\partial M$ such that the lowest eigenvalue of \eqref{eq: NB_H} for the  deformed metric $g_t$ is positive. If $\operatorname{Ric}_g\equiv 0$ but $\second \neq h\gamma$, one also finds a smooth $2$-tensor field $k$ supported near $\partial M$ such that the lowest eigenvalue of \eqref{eq: NB_H} for the  deformed metric $g_t$ is positive. In either case, we obtain a new metric on $M$ that contradicts Theorem \ref{thm:noncompactScMean}. Therefore, we conclude that $(M,\partial M,g)$ is Ricci-flat, and $\partial M$ is umbilic, namely $\second= h \gamma = h\cdot g_{\partial M}$.
	
	\begin{claim}
	     $(M,\partial M,g)$ is isometric to a Euclidean ball.
	\end{claim}  
    By the Codazzi equation, 
	\begin{equation}
		\begin{split}
			\langle R_g(X,Y)Z,\nu\rangle&=\big((\nabla_Y \second)(X,Z)-(\nabla_X \second)(Y,Z)\big)\\&=Y(h)\langle X,Z\rangle-X(h)\langle Y,Z\rangle
		\end{split}
	\end{equation}
	for tangent vectors $X,Y,Z$ in $T(\partial M)$. Setting $Z=X$ and taking the trace over $X$, we obtain
	\begin{equation}
		0=\langle\operatorname{Ric}_g(Y),\nu\rangle=(n-1)Y(h)-Y(h)=(n-2)Y(h).
	\end{equation}
	Since $n\geq 3$, we obtain that $h$ is a constant on each connected component of the boundary.  At least one component has $h>0$, since $h = \|dF\|_g$ and $\deg F\neq0$. It follows from Kasue's splitting theorem \cite{Kasue1983} that $\partial M$ is connected.  


    Since $dF$ is a homothety and $h = \|dF\|_g $ is a constant function on $\partial M$, after possibly scaling the metric, we obtain that $F\colon \partial M\to \sph^{n-1}$ is a local isometry, thus a global isometry since $\mathbb S^{n-1}$ is simply connected for $n\geq 3$. After this normalization, we have 
   \begin{equation}
       \operatorname{Ric}_{g}=0,\qquad
		\second_{g}= g|_{T\partial M},\qquad
		H_{\partial M}(g)=n-1.
   \end{equation}
    Therefore,
	\begin{equation}\label{eq:integralMean}
		\int_{\partial M} H_{\partial M}(g)d\mathcal H_{g}^{n-1}= (n-1)\cdot\operatorname{Vol}(\sph^{n-1}).
	\end{equation}

    Now we finish the proof using Ricci flatness. Let $x_i\colon \R^n\to \R$ be the coordinate functions. Restrict each $x_i$ to $\sph^{n-1}$, and solve
	\[
		\Delta^{g}u_i=0 \quad\textup{on }M,
		\qquad u_i|_{\partial M}=x_i\circ F,\qquad 1\leq i\leq n.
	\]
 We obtain $n$ harmonic functions $u_i$ with $u_i|_{\partial M}=x_i\circ F$. Sinc $F\colon (\partial M,g|_{T\partial M})
\to(\mathbb S^{n-1},g_{\mathbb S^{n-1}})$
is an isometry, and 
\[
\Delta_{\mathbb S^{n-1}}x_i=-(n-1)x_i,
\qquad
|\nabla^{\mathbb S^{n-1}}x_i|^2=1-x_i^2,
\]
pulling back by $F$ gives
\begin{equation}\label{eq:boundarylaplace}
\Delta^{\partial M}u_i=-(n-1)u_i,
\qquad
\sum_{i=1}^n|\nabla^{\partial M}u_i|^2=n-1,
\qquad
\sum_{i=1}^n u_i^2=1
\textup{ on }\partial M.
\end{equation}

By the Reilly formula \cite{Reilly1977}, since $\operatorname{Ric}_g\equiv 0$, we have
	\begin{equation}\label{eq:Bochner}
		\begin{split}
			&-\int_{M}|\operatorname{Hess}(u_i)|^2d\mathcal H^n_g\\
			=&~\int_{\partial M}\left( H_{\partial M}(g)\cdot (\nu(u_i))^2+
			2\nu(u_i)\cdot \Delta^{\partial M}u_i+\second(\nabla^{\partial M} u_i,\nabla^{\partial M} u_i)
			\right)d\mathcal H^{n-1}_g.
		\end{split}
	\end{equation}

	Using the fact $H_{\partial M}(g)=n-1$ and $\second=g|_{\partial M}$, 
 substituting  \eqref{eq:boundarylaplace} into \eqref{eq:Bochner}
and summing over $i$, we obtain
	\begin{equation}
		\begin{split}
			&-\sum_{i=1}^n\int_{M}|\operatorname{Hess}(u_i)|^2d\mathcal H^n_g\\
			=&~(n-1)\sum_{i=1}^n\int_{\partial M}\left( (\nu(u_i))^2-
			2u_i\nu(u_i)+u_i^2
			\right)d\mathcal H^{n-1}_g\geq 0.
		\end{split}
	\end{equation}
	It follows that $u_i$ has vanishing Hessian, and $\nu(u_i)=u_i$ for every $i=1,\ldots,n$. Consequently, the map $U=(u_1,\cdots,u_n)\colon M\to \R^n$ induces an isometry between $M$ and the unit ball, and $U|_{\partial M}=F$ by construction. This finishes the proof.
\end{proof}
\begin{remark}
    For $n=3$, Shi--Tam in \cite[Theorem 1]{Shi_Tam} prove the required rigidity from the equation \eqref{eq:integralMean}. For $n\leq 7$, see \cite[Theorem 2]{Shi_Tam_extension}. In fact, these results reduce the comparison of the integral of the mean curvature to the positive mass theorem. Therefore, in arbitrary dimension, the rigidity follows from the positive mass theorem in \cite{BrendleWang} and the rigidity statement in \cite{KhuriWangWang}.  
\end{remark}

\begin{lemma}\label{lemma:sc->convexDomain}
	Theorem \ref{thm:noncompactScMean} implies Theorem \ref{thm:scmean_convexdomain}
\end{lemma}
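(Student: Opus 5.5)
We compose $F$ with the Gauss map of $\partial\Omega$ and then repeat the argument of Lemma~\ref{lemma:sc->ball}. Let $\mathbf n\colon\partial\Omega\to\sph^{n-1}$ be the outward Gauss map. Since $\Omega$ is bounded, smooth and strictly convex, $\mathbf n$ is a diffeomorphism (in particular of degree one). Its differential is the shape operator $A$, which is positive definite with eigenvalues $\kappa_1,\dots,\kappa_{n-1}>0$ and $\tr A=H_{\partial\Omega}$. Set $G=\mathbf n\circ F\colon\partial M\to\sph^{n-1}$. Then $\deg G=\pm\deg F\neq0$.

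\textbf{Step 1: trace-norm bound.} At each point we have $dG=A\circ dF$. By the von Neumann trace inequality,
\[
\|dG\|_\tr=\|A\,dF\|_\tr\le \sum_i \sigma_i(A)\sigma_i(dF)\le \|dF\|\,\tr A=\|dF\|\,(H_{\partial\Omega}\circ F).
\]
The hypothesis then gives $\Sc(g)\ge0$ and $H_{\partial M}(g)\ge \|dG\|_\tr$.

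\textbf{Step 2: strictness forces equality.} If $\Sc(g)>0$ somewhere, or if $H_{\partial M}>\|dG\|_\tr$ somewhere, then the lowest eigenvalue of \eqref{eq: NB_H} with $F$ replaced by $G$ is positive. Lemma~\ref{lemma:conformalRobin} then produces a metric contradicting Theorem~\ref{thm:noncompactScMean}. Hence $\Sc\equiv0$, and all inequalities in Step~1 are equalities.

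Equality in $\sigma_i(dF)\le\|dF\|$ for all $i$ (all weights $\kappa_i>0$) forces $dF$ to be a homothety with factor $h=\|dF\|$, so $dF=h\,O$ with $O$ an isometry. The remaining equality $\|AO\|_\tr=\tr A$ holds automatically. Consequently $H_{\partial M}=h\,(H_{\partial\Omega}\circ F)$ and $\|dG\|_\tr=h\,(H_{\partial\Omega}\circ F)$.

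\textbf{Step 3: variational argument.} We now rerun the variational argument of Lemma~\ref{lemma:sc->ball}. Under $g_t=g+tk$, the trace norm $\|dG\|_{\tr,g_t}=\tr\big((dG^{*}dG)^{1/2}\big)$ computed with $\gamma_t$ varies, at a point where $dF=hO$, by
\[
\frac{d}{dt}\|dG\|_{\tr,g_t}=-\tfrac12\langle \tilde A,k^\top\rangle_\gamma ,
\]
where $\tilde A$ is the pullback of $h A$ by $O$, i.e.\ $\tilde A=F^*\second_{\partial\Omega}$ scaled appropriately. Plugging this into \eqref{eq:variation_lambda} and \eqref{eq:variation_scalar_mean} yields
\[
\operatorname{vol}\,\dot\lambda=-\int_M\langle\operatorname{Ric},k\rangle-\int_{\partial M}\langle \second-\tilde A,k^\top\rangle .
\]
The same perturbation argument then gives $\operatorname{Ric}\equiv0$ and $\second_{\partial M}=h\,F^*\second_{\partial\Omega}$, with $\gamma=h^2F^*g_{\mathrm{eu}}$.

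\textbf{Step 4: rigidity (the main obstacle).} The hard part is showing that $h$ is constant and that $M$ is isometric to $\overline\Omega$; umbilicity is no longer available. For constancy of $h$, use Gauss and Codazzi on $\partial M$. The Gauss equation with $\operatorname{Ric}=0$ and $\second=h F^*\second_\Omega$ relates the intrinsic curvature of $\gamma=h^2F^*g_{\mathrm{eu}}$ to that of $F^*g_{\mathrm{eu}}$. Contracting Codazzi gives $\operatorname{div}\second=dH$, and comparing with the Codazzi identity on $\partial\Omega$ shows that the terms involving $dh$ must vanish. Here we use $n\ge3$ and strict convexity: if $\nabla h\ne0$, the combination obtained involves $(\second_\Omega-H_\Omega\gamma)(\nabla h,\cdot)$, which is nondegenerate since $H_\Omega>\kappa_i$. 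Hence $h$ is locally constant. As in Lemma~\ref{lemma:sc->ball}, Kasue's theorem gives connectedness of $\partial M$. After rescaling to $h=1$, $F$ is a local isometry, hence an isometry since $\partial\Omega\cong\sph^{n-1}$ is simply connected.

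Finally, solve $\Delta u_i=0$ with $u_i=x_i\circ F$ on $\partial M$. By Gauss and Codazzi, $\partial M$ and $\partial\Omega$ have matching intrinsic metric and second fundamental form. Apply the Reilly formula to the $u_i$, using $\Delta^{\partial M}x_i=-H\,\nu_i$ and $\sum_i|\nabla^{\partial M}x_i|^2=n-1$. The boundary integrand becomes a sum of squares of the form $H(\nu u_i-\ldots)^2$ plus nonnegative terms, forcing $\operatorname{Hess}u_i=0$. Then $U=(u_i)$ is an isometry onto $\overline\Omega$. Alternatively, appeal to Shi--Tam type rigidity together with the positive mass theorem, as in the remark above.
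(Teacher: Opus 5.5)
Your outline follows the paper's proof step for step. You compose with the Gauss map, bound the trace norm by $\|dF\|\,(H_{\partial\Omega}\circ F)$, and use Lemma~\ref{lemma:conformalRobin} to force $\Sc\equiv0$ and $dF=hO$. You then rerun the first variation to get $\operatorname{Ric}_g\equiv0$ and $\second_{\partial M}=\tilde A$, use Codazzi to make $h$ constant, and finish with Kasue, rescaling, and harmonic coordinates via Reilly. Steps~1--2 and your formula for $\operatorname{vol}\,\dot\lambda$ are fine. However, the explicit formulas you extract at the end of Step~3 are wrong, and they are exactly the input to Step~4. As a result, the Codazzi identity you describe is not the one that actually comes out.

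\textbf{The formulas in Step~3.} Since $dF=hO$ with $O$ an isometry, $F^*g_{\mathrm{eu}}=h^2\gamma$, so $\gamma=h^{-2}F^*g_{\mathrm{eu}}$. Your $\tilde A(X,Y)=h\,\second_{\partial\Omega}(OX,OY)$ therefore equals $h^{-1}F^*\second_{\partial\Omega}$. This is the paper's conclusion: $\second_{\partial M}=h^{-1}F^*\second_{\partial\Omega}$ on $\{h>0\}$, and $\second_{\partial M}=0$ where $h=0$. Your versions $\second=hF^*\second_{\partial\Omega}$ and $\gamma=h^2F^*g_{\mathrm{eu}}$ would give $\tr_\gamma\second=h^{-1}H_{\partial\Omega}\circ F$. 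That contradicts your own Step~2 identity $H_{\partial M}=h\,H_{\partial\Omega}\circ F$, and it already fails for a round ball of radius $\neq1$.

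\textbf{The Codazzi step.} On $\{h>0\}$ put $g_0=F^*g_{\mathrm{eu}}$ and $\second_0=F^*\second_{\partial\Omega}$. Then $\gamma=h^{-2}g_0$, $\second=h^{-1}\second_0$, $H=h\,\tr_{g_0}\second_0$, and $\operatorname{div}_{g_0}\second_0=d(\tr_{g_0}\second_0)$ by Codazzi for $\partial\Omega\subset\R^n$. The contracted Codazzi identity $\operatorname{div}_\gamma\second=dH$ holds because $\operatorname{Ric}_g(\nu,\cdot)=0$. Applying the conformal-change formula for the divergence turns it into
\[
(n-2)\,\second_0\bigl(\nabla^{g_0}h,\,\cdot\,\bigr)=0 .
\]
So the argument uses positive definiteness of $\second_{\partial\Omega}$ itself (strict convexity) together with $n\geq3$. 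The expression $(\second_{\partial\Omega}-H_{\partial\Omega}\gamma)(\nabla h,\cdot)$ does not arise.

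This gives $dh=0$ on the open set $\{h>0\}$, so $h$ is constant on each component of $\partial M$ by continuity. From there, nonzero degree, Kasue's theorem and simple connectivity finish as you say. In the Reilly step, the summed boundary term is exactly
\[
\int_{\partial M}H\sum_i(\nu u_i-\n_i\circ F)^2\,d\mathcal H^{n-1}_g\geq0 .
\]
This uses $\sum_i\second(\nabla^{\partial M}u_i,\nabla^{\partial M}u_i)=H$ and $\sum_i(\n_i\circ F)^2=1$, rather than $\sum_i|\nabla^{\partial M}u_i|^2=n-1$.
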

\begin{proof}
	Assume that $n\geq 3$ and that $(M^n,\partial M,g)$ satisfies all the assumptions of Theorem \ref{thm:scmean_convexdomain}. We show that either the rigidity conclusion holds or we obtain a contradiction to Theorem \ref{thm:noncompactScMean}.
	
	Let us first reduce the boundary comparison to the trace-norm comparison against the standard sphere. Let
	\begin{equation}
		G\colon \partial\Omega\to \sph^{n-1},~G(x)=\n(x)
	\end{equation}
	be the Gauss map, where $\n$ denotes the outward unit normal vector to $\partial\Omega$. We have
	\begin{equation}
		\langle dG(X),Y\rangle=\second_{\partial\Omega}(X,Y)
	\end{equation}
	for any tangent vectors $X,Y$ in $T_x\partial\Omega=T_{G(x)}\sph^{n-1}$, where $\second_{\partial\Omega}$ denotes the second fundamental form of $\partial\Omega$. Therefore,
	\begin{equation}
		\|dG\|_{\tr}=H_{\partial\Omega}(g_{eu}),
	\end{equation}
	since $\second_{\partial\Omega}\geq 0$ by assumption.
	Therefore, if we define $\widehat F=G\circ F$, then by H\"older's inequality
	\begin{equation}
		\|d\widehat F\|_{\tr}\leq \|dF\|\cdot(\|dG\|_{\tr}\circ F)=\|dF\|\cdot(H_{\partial\Omega}\circ F).
	\end{equation}
	Hence, by assumption,
	\begin{equation}
		H(g)\geq \|d\widehat F\|_\tr,
	\end{equation}
	for the map $\widehat F\colon (\partial M,g)\to (\sph^{n-1},g_{\sph^{n-1}})$ with nonzero degree. Therefore, by Lemma \ref{lemma:conformalRobin}, a failure of either of the following equalities 
    \begin{enumerate}
        \item $\Sc(g) = 0\text{ on }M,$
        \item $\ H_{\partial M}(g) = \| d\widehat F\|_{\tr} = \|dF\|\cdot(H_{\partial\Omega}(g_{eu})\circ F)\text{ on }\partial M$.
    \end{enumerate}
	leads to a contradiction with Theorem \ref{thm:noncompactScMean}. Therefore, we conclude that $\Sc(g)\equiv 0$, $dF$ is a homothety, and
	$$h\coloneqq \|dF\|=\frac{H_{\partial M}(g)}{H_{\partial\Omega}(g_{eu})\circ F}$$
	is smooth, since $\partial\Omega$ is strictly convex.
	
	The remaining argument is the same as that in the proof of Lemma \ref{lemma:sc->ball}. We only sketch the key steps here. Deforming the metric $g$ to $g+tk$ and computing the variation of the lowest eigenvalue, we obtain that $g$ is Ricci-flat and the second fundamental form of $\partial M$ satisfies
	\begin{equation}
		\second_{\partial M}=\begin{cases}
			0,& \textup{ where } h=0,\\
			h^{-1}F^*(\second_{\partial\Omega}),& \textup{ where } h\ne 0.
		\end{cases}
	\end{equation}
	The right-hand side above is indeed a smooth tensor field, since $h$ is smooth and $F^*(\second_{\partial\Omega})$ is $O(h^2)$ when $h\to 0$. The Codazzi equation then implies that $h$ is equal to one after rescaling if necessary. By solving for $n$ harmonic functions on $M$ whose boundary values are the coordinate functions on $\partial M=\partial\Omega$, we obtain that $M$ is isometric to $\Omega$. This finishes the proof.
\end{proof}
\subsection{Deduction of scalar rigidity on spheres from non-compact scalar-mean comparison} 
We now deduce Theorem \ref{thm:listing} from the noncompact comparison in Theorem~\ref{thm:noncompactScMean}.

\begin{lemma}
	Theorem~\ref{thm:noncompactScMean} implies Theorem~\ref{thm:listing}.
\end{lemma}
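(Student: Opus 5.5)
We argue by contradiction along the cone strategy from the introduction. Suppose $(M^n,g)$ and $F$ satisfy the hypotheses of Theorem~\ref{thm:listing}, $\deg F\neq 0$, but $F$ is not an isometry up to rescaling. The plan is to build from this a complete manifold with compact boundary and a boundary map to $\sph^{n}$ that meets the strict hypotheses of Theorem~\ref{thm:noncompactScMean} in dimension $n+1$.

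\textbf{Step 1: make the inequality strict.} Use a first-eigenvalue argument analogous to Lemma~\ref{lemma:conformalRobin}, but on the closed manifold $M$. Consider the operator
\[
-\tfrac{4(n-1)}{n-2}\Delta+\Sc(g)-n(n-1)\|dF\|^2 .
\]
For $n=2$, use Gauss--Bonnet or a direct conformal change instead. The lowest eigenvalue is $\geq 0$. If it is positive, the conformal metric $g_u$ satisfies $\Sc(g_u)\geq n(n-1)\|dF\|_{g_u}^2+\delta$, since $\|dF\|^2$ scales by $u^{-4/(n-2)}$, exactly as $\Sc$ does.

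If it is zero, a Kazdan-type variation as in Lemma~\ref{lemma:sc->ball} applies. One deforms $g$ by $k$ and uses that $\|dF\|^2$ is only Lipschitz, working with one-sided derivatives. This forces $\operatorname{Ric}_g=(n-1)\|dF\|^2 g$ and $dF$ to be a homothety of constant factor. After rescaling, $F$ is a local isometry, hence an isometry, since $\sph^n$ is simply connected and $\deg F\neq0$.

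This rigidity step looks delicate because $\|dF\|$ is not smooth. I expect it can be handled by perturbing $g$ only in directions that decrease $\|dF\|$ to first order.

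\textbf{Step 2: build the cone.} Assume $\Sc(g)\geq n(n-1)\|dF\|^2+\delta$. Form the manifold $N=[0,\infty)\times M$ with
\[
g_N=e^{-2t}(dt^2+g),
\]
which is the punctured cone $dr^2+r^2g$ with $r=e^{-t}$. Its boundary $\{0\}\times M$ has mean curvature $n$ with respect to the outward normal, pointing toward the removed vertex side reversed, i.e.\ $r=1$ is the outer sphere. The scalar curvature is
\[
\Sc(g_N)=r^{-2}\bigl(\Sc(g)-n(n-1)\bigr),
\]
which can be negative where $\|dF\|<1$. Also $g_N$ is incomplete at the vertex $t\to\infty$. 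So I would instead use a warped metric
\[
g_N=\phi(t)^2\bigl(dt^2+\psi(t)^2 g\bigr)
\]
or, more simply, $dt^2+\rho(t)^2\lambda^2 g$ with $\lambda$ a constant. The point is to exploit the slack. Take $N=[0,\infty)\times M$ with
\[
g_N=dt^2+\rho(t)^2 g,\qquad \rho(0)=1,\quad \rho'(0)=-1,
\]
and $\rho$ decreasing to a positive constant with $\rho'\to0$. Then:
\begin{itemize}
\item $g_N$ is complete, with compact boundary $\{0\}\times M$.
\item The boundary mean curvature, with the outward normal $-\partial_t$, is $-n\rho'/\rho=n$.
\item The scalar curvature is
\[
\Sc(g_N)=\rho^{-2}\Sc(g)-2n\rho''/\rho-n(n-1)\rho'^2/\rho^2 .
\]
\end{itemize}

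\textbf{Step 3: the boundary map.} Extend $F$ to the boundary map $F\colon\{0\}\times M\to\sph^n$. Its trace norm on the $n$-dimensional boundary satisfies $\|dF\|_\tr\leq n\|dF\|$. The needed inequality $n\geq n\|dF\|+\delta$ fails where $\|dF\|\geq1$.

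So the boundary comparison must come from the interior slack. Apply a further conformal factor $w$ near the boundary, as in the second half of Lemma~\ref{lemma:conformalRobin}, trading interior positivity for boundary mean curvature. Before that, rescale $g$ by the constant $\sup\|dF\|^2$, so that $\|dF\|\leq1$. Note that $\Sc(g)\geq n(n-1)\|dF\|^2$ alone does not give $\Sc\geq n(n-1)$.

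The real difficulty is that pointwise $\|dF\|$ varies. The fix is to build the warping into the cone itself, using the conformal form
\[
g_N=e^{-2t}(dt^2+g),
\]
and replacing $g$ by $\|dF\|^2$-weighted data via a conformal change $v^{4/(n-1)}g_N$. Here $v$ solves the Robin problem \eqref{eq: NB_H} on $N$, with $H-\|dF\|_\tr$ as boundary data. Then use the Step 1 slack to show that its bottom spectrum is positive, and finish by Lemma~\ref{lemma:conformalRobin}-type deformations and a cutoff at infinity preserving completeness.

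\textbf{Main obstacle.} The main obstacle is this last construction. One must produce, on the noncompact cone, a complete conformal metric with uniform $\Sc\geq\delta$ and $H\geq\|dF\|_\tr+\delta$, while controlling the behaviour near the vertex end. I would first try $\rho(t)=e^{-t}$ for $t\leq T$, glued to a cylindrical end, and check that the Step 1 $\delta$-slack absorbs the gluing error.
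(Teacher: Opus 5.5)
\textbf{There is a genuine gap: Steps~2--3 never produce the metric to which Theorem~\ref{thm:noncompactScMean} is applied.} You label this the ``main obstacle'' and leave it open, and the fallback you propose cannot work. (Your Step~1 plays the role of the dichotomy that the paper imports from \cite{WangXie24,WangWangXie}. Your sketch of its equality case, with one-sided Kazdan variations forcing $\operatorname{Ric}_g=(n-1)\|dF\|^2g$, is unsupported, but that is not the main issue.)

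For any warped product $dt^2+\rho(t)^2g$, in particular $\rho=e^{-t}$ glued to a cylinder, the following two constraints clash:
\begin{enumerate}
\item The boundary mean curvature $-n\rho'(0)/\rho(0)$ is constant along $M$, so it must dominate $\max_M\|dF\|_{\tr}$.
\item The interior inequality $\rho^{-2}\Sc(g)>2n\rho''/\rho+n(n-1)(\rho'/\rho)^2$ must hold at the point where $\Sc(g)$ is smallest. If $n(n-1)\rho'(0)^2\geq\min_M\Sc(g)$, this forces $\rho''<0$ from the start, and then $\rho$ reaches zero in finite time.
\end{enumerate}
So such metrics need roughly $\min_M\Sc(g)>\frac{n-1}{n}\max_M\|dF\|_{\tr}^2$. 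Listing's \emph{pointwise} hypothesis does not give this, and no $\delta$-slack absorbs it. The conformal factor has to vary with $x\in M$.

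\textbf{The missing idea, as in the paper, is a Robin eigenfunction on the complete half-cylinder.} Take $N=(-\infty,0]\times M$ with $\bar g=dt^2+g$. This is conformal to the punctured cone, but complete, with $H=0$ and $\Sc(\bar g)=\Sc(g)$. First strictify to $\Sc(g)-\delta\geq n(n-1)(\|dF\|+\delta)^2+\delta$. Then use as conformal factor a Robin eigenfunction with boundary condition $\partial_t u=\frac{n-1}{2n}a\,u$, where $a=\sqrt{\frac{n}{n-1}(\Sc(g)-\delta)}$. This $a$ is smooth, independent of $t$, and satisfies $a\geq n(\|dF\|+\delta)\geq\|dF\|_{\tr}+n\delta$ pointwise.

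Because $a$ does not depend on $t$, $\int_{\partial N}a\varphi^2=2\int_N a\varphi\,\partial_t\varphi$. The energy then becomes a sum of squares:
\[
Q(\varphi)=\frac{4n}{n-1}\int_N\Bigl(|\nabla^g\varphi|^2+\Bigl|\partial_t\varphi-\sqrt{\tfrac{n-1}{4n}(\Sc(g)-\delta)}\,\varphi\Bigr|^2+\tfrac{n-1}{4n}\delta\varphi^2\Bigr)\geq\delta\|\varphi\|_{L^2}^2 .
\]

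It remains to show that this bottom of the spectrum is an eigenvalue. Test functions $e^{st}\psi$ place it strictly below $\inf\sigma_{\mathrm{ess}}=\mu_0$, the first eigenvalue of $-\frac{4n}{n-1}\Delta^g+\Sc(g)$ on $M$. This gives a bounded positive $u$ with
\[
\Sc(\bar g_u)=\lambda u^{-4/(n-1)},\qquad H(\bar g_u)=u^{-2/(n-1)}a\geq\|dF\|_{\tr,\bar g_u}+n\delta u^{-2/(n-1)}.
\]
Finally, $w=\varepsilon+u$ restores completeness at the end $t\to-\infty$, where $u$ decays. For small $\varepsilon$ both strict inequalities survive, using $\Sc(g)\geq\delta$ and $\inf_{\partial N}u>0$.

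Your closing suggestion of a Robin eigenfunction as conformal factor points in this direction. But you pose it on the incomplete cone and give no argument for positivity of the spectrum or for existence of a positive eigenfunction, so it leaves out exactly the content of the lemma.
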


\begin{proof}
The dichotomy argument in \cite[Lemma 2.1]{WangXie24} and \cite[Theorem 2.3]{WangWangXie} based on a conformal change shows that if $(M,g)$ in Theorem \ref{thm:listing} fails to be isometric to a sphere, then one finds another metric where the inequality is strict. Adjusting
	the parameter if necessary, we argue by contradiction and assume
	that there exist a closed manifold $(M^n,g)$ and a map
	$F\colon M\to\sph^n$ of non-zero degree such that
	\begin{equation}
		\Sc(g)-\delta\geq(\|dF\|+\delta)^2\cdot n(n-1)+\delta
	\end{equation}
	for some $\delta>0$.
	
	Set $N=(-\infty,0]\times M$ and equip it with the product metric
	$$\overbar g=dt^2+g.$$
	Clearly, $\Sc(\overbar g)=\Sc(g)$, $\partial N=M$, and
	$H(\overbar g)=0$.
	
	We want a conformal metric
	on $N$ with uniformly positive scalar curvature and strict mean curvature comparison, when compared to the standard Euclidean unit ball, as in Lemma ~\ref{lemma:conformalRobin}. We therefore consider the bottom eigenvalue of the
	Robin problem:
	\begin{equation}\label{eq:noncompactNeumann}
		\begin{cases}
			\displaystyle
			-\frac{4n}{n-1}\Delta^{\overbar g}u+\Sc(\overbar g)u=\lambda u
			&\text{in }N^\circ,\\[4pt]
			\displaystyle
			\frac{\partial u}{\partial \nu}=\frac{n-1}{2n}
			\sqrt{\frac{n}{n-1}(\Sc(\overbar g)-\delta)}\,u
			&\text{on }\partial N,
		\end{cases}
	\end{equation}
	where $\nu=\partial_t$ is the outward unit normal vector to $\partial N$.
	
	\begin{claim}\label{claim:ncptRobin}
	    There exist a bounded positive function $u\in C^\infty(N)$ and a number
	$\lambda\geq\delta>0$ satisfying \eqref{eq:noncompactNeumann}.
	\end{claim} 
	
	Assuming the claim with $ 0< u\leq C$ for some $C>0$, consider the conformal metric
	$\overbar g_u=u^{\frac{4}{n-1}}\overbar g$ on $N$.
	The same computation as in Lemma~\ref{lemma:conformalRobin} gives
	\begin{equation}
		\Sc(\overbar g_u)=\lambda u^{-\frac{4}{n-1}}>0,
	\end{equation}
	and, for $0<\delta'\leq\delta$,
	\begin{equation}
		\begin{split}
			H_{\partial N}(\overbar g_u)
			&=u^{-\frac{2}{n-1}}\cdot\frac{2n}{n-1}\frac{\frac{\partial u}{\partial \nu}}{u}
			=u^{-\frac{2}{n-1}}
			\sqrt{\frac{n}{n-1}(\Sc(\overbar g)-\delta)}\\
			&\geq u^{-\frac{2}{n-1}}\cdot n(\|dF\|_g+\delta')
			\geq\|dF\|_{\tr,\overbar g_u}
			+n\delta'u^{-\frac{2}{n-1}}.
		\end{split}
	\end{equation}
	However, $\overbar g_u$ need not be complete. To address this, set
	$$
	w=\varepsilon+u,\qquad
	\overbar g_w=w^{\frac{4}{n-1}}\overbar g
	$$
	for some $\varepsilon >0$. Clearly, $w\geq\varepsilon>0$.
	\begin{itemize}
		\item For every $\varepsilon>0$, $(N,\overbar g_w)$ is complete.
		
		\item The scalar curvature of $\overbar g_w$ is given by
		\begin{equation}
			\begin{split}
				\Sc(\overbar g_w)
				=w^{-\frac{n+3}{n-1}}
				\bigl(\lambda u+\varepsilon\Sc(g)\bigr)
				\geq\delta w^{-\frac{4}{n-1}} \geq \delta (C+\varepsilon)^{-\frac{4}{n-1}} >0.
			\end{split}
		\end{equation}
		
		\item The mean curvature of $\overbar g_w$ satisfies
		\begin{equation}
			\begin{split}
				H_{\partial N}(\overbar g_w)
				&=w^{-\frac{2}{n-1}}\cdot\frac{u}{w}
				\cdot\sqrt{\frac{n}{n-1}(\Sc(g)-\delta)}\\
				&\geq w^{-\frac{2}{n-1}}\frac{u}{w}
				\cdot n(\|dF\|+\delta)\\
				&\geq\|dF\|_{\tr,\overbar g_w}
				+w^{-\frac{n+1}{n-1}}
				\bigl(nu\delta-\varepsilon n\|dF\|\bigr).
			\end{split}
		\end{equation}
	\end{itemize}
	Since $\partial N=M$ is compact,
	$\inf(u|_{\partial N})>0$ and $\|dF\|$ is bounded there.
	Thus the last term is strictly positive for sufficiently small
	$\varepsilon>0$.
	
	It remains to prove the claim. It suffices to verify the positivity of the
	corresponding energy functional, given by
	\begin{equation*}
		Q(\varphi)=\int_N
		\Bigl(\frac{4n}{n-1}|\nabla^{\overbar g}\varphi|^2
		+\Sc(g)\varphi^2\Bigr)d\mathcal H^{n+1}_{\overbar g}
		-2\int_{\partial N}
		\sqrt{\frac{n}{n-1}(\Sc(g)-\delta)}\,
		\varphi^2d\mathcal H^n_g
	\end{equation*}
	for $\varphi\in C_c^\infty(N)$. Since $\Sc(g)$ is independent of
	$t\in(-\infty,0]$, we have
	\begin{equation*}
		-\int_{\partial N}
		\sqrt{\frac{n}{n-1}(\Sc(g)-\delta)}\,
		\varphi^2d\mathcal H^n_g
		=-2\int_N
		\sqrt{\frac{n}{n-1}(\Sc(g)-\delta)}\,
		\varphi(\partial_t\varphi)d\mathcal H^{n+1}_{\overbar g}.
	\end{equation*}
	Using
	$|\nabla^{\overbar g}\varphi|^2
	=|\nabla^g\varphi|^2+(\partial_t\varphi)^2$,
	we obtain
	\begin{equation*}
		\begin{split}
			Q(\varphi)
			={}&\frac{4n}{n-1}\int_N
			\Bigl(
			|\nabla^g\varphi|^2+(\partial_t\varphi)^2
			+\frac{n-1}{4n}(\Sc(g)-\delta)\varphi^2\\
			&\qquad
			-\sqrt{\frac{n-1}{n}(\Sc(g)-\delta)}\,
			\varphi(\partial_t\varphi)
			+\frac{n-1}{4n}\delta\varphi^2
			\Bigr)d\mathcal H^{n+1}_{\overbar g}\\
			={}&\frac{4n}{n-1}\int_N
			\Bigl(
			|\nabla^g\varphi|^2+\frac{n-1}{4n}\delta\varphi^2
			+\Bigl|\partial_t\varphi
			-\sqrt{\frac{n-1}{4n}(\Sc(g)-\delta)}\,\varphi\Bigr|^2
			\Bigr)d\mathcal H^{n+1}_{\overbar g}.
		\end{split}
	\end{equation*}
	Consequently,
	\begin{equation}
		Q(\varphi)\geq \delta 
		\int_N\varphi^2d\mathcal H^{n+1}_{\overbar g}.
	\end{equation} 
    On the other hand, since $\Sc(g)$ is a uniformly bounded function on $N$, we have
    \begin{equation}
        Q(\varphi)\leq C\|\varphi\|_{H^1(N)}^2
    \end{equation}
    for any $\varphi\in C_c^\infty(N)$.

    Therefore, the bottom of the spectrum of the operator $-\frac{4n}{n-1}\Delta^{\overbar g}+\Sc(\overbar g)$ subject to the Robin boundary condition in \eqref{eq:noncompactNeumann} satisfies
	\[
		\lambda_0
		=\inf_{\varphi\in H^1(N)\setminus\{0\}}
		\frac{Q(\varphi)}{\|\varphi\|_{L^2(N)}^2}
		\geq\delta.
	\]
    Minimizing this energy in $H^1(N)$ gives the desired function $u\in C^\infty(N)\cap H^1(N)$ satisfying \eqref{eq:noncompactNeumann}. 
    We show this infimum is an eigenvalue. Let $\mu_0$ and $\psi>0$ be the first eigenvalue and an associated eigenfunction with $\|\psi\|_{L^2(M)} = 1$ of
	\[
		-\frac{4n}{n-1}\Delta^g+\Sc(g)
	\]
	on $M$. For each $s>0$, testing with $\varphi(t,x)=e^{st}\psi(x)$, which clearly lies in $H^1(N) = H^1((-\infty, 0]\times M)$,  gives
	\[
		\begin{aligned}
			\frac{Q(\varphi)}{\|\varphi\|_{L^2(N)}^2}
			={}&\mu_0+\frac{4n}{n-1}s^2 -4s\int_M\sqrt{\frac{n}{n-1}\bigl(\Sc(g)-\delta\bigr)}
			\,\psi^2\,d\mathcal H^n_g.
		\end{aligned}
	\]
	The integral is strictly positive, so this quotient is less than $\mu_0$ for all sufficiently small $s>0$.
	On $N$, separation of variables shows that the essential spectrum starts at $\mu_0$.  Changing the boundary condition at the compact boundary changes the resolvent
by a compact operator \cite{MR138874}.
Consequently, Weyl's theorem
\cite[Theorem~XIII.14]{MRBS78}
implies that the essential spectrum remains the same.  Since $\lambda_0< \mu_0$, it follows that $\lambda_0$ is an eigenvalue. Its eigenfunction can be chosen positive, and elliptic regularity makes it smooth up to $\partial N$. Uniform local elliptic estimates on the product cylinder, together with its finite $L^2$ norm, give
	\[
		0<u\leq C\qquad\text{on }N.
	\]
	This proves Claim \ref{claim:ncptRobin} with $\lambda=\lambda_0$. 
\end{proof}

\section{Weighted scalar-mean formulation}\label{sec:weighted}
In this section, we shall formulate a weighted scalar-mean comparison theorem
(see Theorem~\ref{thm:noncompactScMeanWeighted}) used in the dimension-reduction
argument.  We first establish the two-dimensional
base case in this section. and then the general inductive step will be carried out in Section~\ref{sec:dimensionReduction}.

We first modify the map in Theorem~\ref{thm:noncompactScMean} in preparation
for the capillary construction. The positive gap in the assumption
$H_{\partial M}(g)\geq \|dF\|_\tr+\delta$ allows us to compose $F$ with
a map that collapses sufficiently small neighborhoods of the two poles
of $\mathbb S^{n-1}$ to the corresponding poles, while preserving the same
inequality for the modified map, possibly with a smaller positive
constant $\delta$. This yields the following lemma. See for example
\cite[Lemma 3.1]{WangWangZhu}.
\begin{lemma}\label{lm:collapse}
	Let $\{\pm\}$ be the two poles of $\sph^{n-1}$. In Theorem \ref{thm:noncompactScMean}, we may furthermore assume that $F^{-1}(\pm)=B_\pm\subset \partial M$ are two closed sets with non-empty interior.
\end{lemma}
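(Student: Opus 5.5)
The idea is to replace $F$ by $\Phi_\eta\circ F$, where $\Phi_\eta\colon\sph^{n-1}\to\sph^{n-1}$ is a collapsing map close to the identity, and to absorb the small loss in the trace-norm comparison into the gap $\delta$.

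\emph{Construction of $\Phi_\eta$.} Write points of $\sph^{n-1}$ away from $\pm$ in polar form $(\theta,\omega)$, where $\theta\in[0,\pi]$ is the distance to the north pole $+$ and $\omega\in\sph^{n-2}$. For small $\eta>0$ pick a smooth nondecreasing function $\rho_\eta\colon[0,\pi]\to[0,\pi]$ with the following properties:
\begin{itemize}
\item $\rho_\eta\equiv 0$ on $[0,\eta]$ and $\rho_\eta\equiv\pi$ on $[\pi-\eta,\pi]$;
\item $\rho_\eta$ is smooth and surjective;
\item $0\le\rho_\eta'\le 1+C\eta$ and $\sin\rho_\eta(\theta)\le(1+C\eta)\sin\theta$.
\end{itemize}
One way is to set $\rho_\eta(\theta)=\pi\,\frac{\theta-\eta}{\pi-2\eta}$ on $[2\eta,\pi-2\eta]$ and smooth it near the ends. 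The sine bound near $\theta=\eta$ holds because $\rho_\eta$ vanishes there and grows no faster than slope $1+C\eta$, so $\rho_\eta(\theta)\le(1+C\eta)(\theta-\eta)\le(1+C\eta)\theta$. Define $\Phi_\eta(\theta,\omega)=(\rho_\eta(\theta),\omega)$. This is smooth, because it is constant on the polar caps, and it is homotopic to the identity through $\rho_{t\eta}$, so it has degree one.

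\emph{Dilation estimate.} In an orthonormal frame adapted to $(\theta,\omega)$, the differential $d\Phi_\eta$ is diagonal with entries $\rho_\eta'$ and $\sin\rho_\eta/\sin\theta$, the latter repeated $n-2$ times. Hence $\|d\Phi_\eta\|\le 1+C\eta$. By the Hölder-type inequality used in Lemma~\ref{lemma:sc->convexDomain},
\[
\|d(\Phi_\eta\circ F)\|_\tr\le\|d\Phi_\eta\|\,\|dF\|_\tr\le(1+C\eta)\|dF\|_\tr .
\]
Since $\partial M$ is compact, $\|dF\|_\tr\le K$ there. Taking $\eta$ with $CK\eta\le\delta/2$ gives
\[
H_{\partial M}(g)\ge\|d(\Phi_\eta\circ F)\|_\tr+\delta/2,
\]
and $\Sc(g)\ge\delta\ge\delta/2$ is unchanged. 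Moreover $\deg(\Phi_\eta\circ F)=\deg F$. So proving Theorem~\ref{thm:noncompactScMean} for $\Phi_\eta\circ F$ with $\delta/2$ proves it for $F$.

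\emph{Nonempty interior.} Let $F'=\Phi_\eta\circ F$. Then $F'^{-1}(+)=F^{-1}(\overline{B_\eta(+)})$, which is closed. Suppose $\deg F\ne0$, which is the only case that matters. Then $F$ is surjective, so $F^{-1}(B_\eta(+))$ is a nonempty open subset of $F'^{-1}(+)$, and the same argument applies at the south pole. If $\deg F=0$ there is nothing to prove.

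The only delicate step is making the constants uniform. The choice of $\rho_\eta$ must give $\|d\Phi_\eta\|\le1+C\eta$ with $C$ independent of $\eta$, including in the smoothing regions near the caps. The bound $\sin\rho_\eta\le(1+C\eta)\sin\theta$ handles those regions, since there $\rho_\eta\le\theta$ up to the factor $1+C\eta$. The compactness of $\partial M$ then provides the bound $K$ needed to absorb the error into $\delta$.
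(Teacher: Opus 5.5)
Your argument is correct and matches the paper's approach: compose $F$ with a degree-one map $\Phi_\eta$ that collapses small polar caps to the poles, bound $\|d\Phi_\eta\|\le 1+O(\eta)$, and absorb the resulting $O(\eta)\sup_{\partial M}\|dF\|_{\tr}$ loss into $\delta$ using compactness of $\partial M$ (the paper only sketches this and cites \cite[Lemma~3.1]{WangWangZhu}). One addition is worth making: by Sard, choose $\eta$ so that $\eta$ and $\pi-\eta$ are regular values of $d_{\mathbb S^{n-1}}(+,F(\cdot))$; then $B_\pm$ are smooth compact domains, so every connected component has nonempty interior, which is the form actually required in Theorem~\ref{thm:noncompactScMeanWeighted}.
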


We now introduce the following definition of weighted scalar curvature.
\begin{definition}\label{def:weightedscalar}
	Let $(M^n,g)$ be a Riemannian manifold. We define
	$$\Sc_{\mu,f}(g)=\Sc(g)-2\Delta f-\mu|\nabla f|^2.$$
\end{definition}

The corresponding weighted mean curvature  is 
\[ H_{\partial M}(g)+\nu(f),\] where $\nu$ is the outward unit normal vector of $\partial M$. With this convention, the weighted scalar-mean comparison takes the following form.

\begin{theorem}\label{thm:noncompactScMeanWeighted}
	Let $(M^n,\partial M,g)$ be a complete Riemannian manifold with possibly non-compact boundary. Denote by $\{\pm\}$ the two poles of $\sph^{n-1}$. Let $F\colon \partial M\to \sph^{n-1}$ be a smooth map and $F^{-1}(\pm)=B_\pm$.  Assume that  every connected component of $B_\pm$ has non-empty interior in $\partial M$, and $\partial M\setminus B_+$ is pre-compact. If there exist $f\in C^\infty(M)$, $\delta>0$ and $\mu>\frac{n-1}{n}$
	such that
\begin{itemize}
	\item $\Sc_{\mu,f}(g)=\Sc(g)-2\Delta f-\mu |\nabla f|^2\geq\delta$, and
	\item $H_{\partial M}(g)-\|dF\|_\tr+\nu (f)\geq \delta$, 
\end{itemize}
then the degree of $F$ is zero.
\end{theorem}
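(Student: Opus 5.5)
We will prove Theorem~\ref{thm:noncompactScMeanWeighted} by induction on $n$, arguing by contradiction: assume $\deg F\neq 0$ and construct, inside $M$, a capillary $\mu$-bubble $\Sigma^{n-1}$ whose boundary lies in $\partial M$ and separates $B_+$ from $B_-$. The restriction of $F$ to $\partial\Sigma$, followed by a collapse of the equatorial sphere, gives a map $\partial\Sigma\to\sph^{n-2}$ of the same nonzero degree. We will then show that $\Sigma$, after a conformal blow-up, again satisfies the hypotheses of the theorem in dimension $n-1$.

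\textbf{Base case $n=2$.} Here $\partial M$ is a union of curves and $F$ maps into $\sph^1$; the condition on $B_\pm$ and precompactness of $\partial M\setminus B_+$ mean the relevant part of $\partial M$ is a union of arcs and circles. We first absorb the weight. Integrate $\Sc_{\mu,f}\ge\delta$ against a test function. Apply Gauss--Bonnet with geodesic curvature $k_g=H_{\partial M}$ on a compact exhaustion region cut out by the sublevel sets of a proper function, and use $\int\Delta f=\int_{\partial}\nu(f)$. This gives
\[
2\pi\chi\ \ge\ \tfrac{\delta}{2}\,\mathrm{Area}+\int_{\partial M}\|dF\|_\tr+(\text{positive}),
\]
where the $\mu|\nabla f|^2$ term has a favorable sign. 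Since $\int_{\partial M\setminus B_\pm}\|dF\|_\tr\ge 2\pi|\deg F|$, this contradicts $\chi\le 1$ for a surface with boundary. The delicate point in this step is arranging the exhaustion so that the artificial boundary terms are controlled.

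\textbf{Inductive step.} We choose a warping function $h$ on $M$ built from a signed distance to $B_\pm$, adapted to the weight $f$. Then we minimize the weighted capillary functional
\[
\mathcal A(\Omega)=\int_{\partial\Omega\cap \interior M}e^{f}\,d\mathcal H^{n-1}-\int_{\partial\Omega\cap\partial M}e^f\cos\theta\,d\mathcal H^{n-1}-\int_\Omega h\,e^f,
\]
where the contact angle $\theta$ is prescribed by $F$ via the height function $x_n\circ F$, as in Gromov's capillary bubbles. Completeness and the gap $\delta$ provide barriers, so a minimizer exists. Its reduced boundary $\Sigma$ is smooth away from a closed singular set $S$, which includes possible boundary singularities. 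The second variation, combined with the Gauss equation and the capillary boundary formula, yields a spectral inequality on $\Sigma$. It is an operator of the form $-\Delta+\frac12(\Sc_{\mu',f'}-\delta')$ with a Robin boundary term $H_{\partial\Sigma}-\|d(F|_{\partial\Sigma})\|_\tr+\nu(f')-\delta'$. The requirement $\mu>\frac{n-1}{n}$ is what allows completing the square to pass to a new constant $\mu'>\frac{n-2}{n-1}$.

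\textbf{Main obstacle: the singular set $S$.} We will handle it with a conformal blow-up. We first bound the Assouad dimension of $S$, including its boundary part, using capillary regularity and quantitative stratification. This lets us build a positive function $G$, comparable to a Green-type function of the stability operator, that tends to $\infty$ at $S$ with controlled gradient. The metric $G^{2/(n-3)}g_\Sigma$, or a weighted analogue when $n-1=2$, is then complete on $\Sigma\setminus S$. We incorporate $\log G$ and the first eigenfunction $u$ of the spectral inequality into a new weight $f'=f+\alpha\log u+\beta\log G$. This converts the spectral inequality into pointwise inequalities $\Sc_{\mu',f'}\ge\delta''$ and $H-\|dF\|_\tr+\nu(f')\ge\delta''$.

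Finally, the map on $\partial\Sigma$ near $S$ is modified by collapsing to the poles, which keeps the degree and the hypothesis on $B_\pm$. The induction hypothesis then gives degree zero, a contradiction. The hardest estimate is showing that the blow-up contributes terms of favorable sign near boundary singularities, so that $\delta''>0$ survives.
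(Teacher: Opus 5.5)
Your outline follows the paper's architecture: a weighted capillary $\mu$-bubble, projection of $F|_{\partial\Sigma}$ to the equator, an Assouad bound on $S$ including boundary points, a Green-potential blow-up, reweighting by the logarithm of a positive eigenfunction, and collapsing the boundary map near $S$. However, the blow-up step fails as you state it. You use $G^{2/(n-3)}g_\Sigma$, an exponent of order $1/(m-2)$ with $m=n-1=\dim\Sigma$, and reserve a weighted analogue for $m=2$. In the worst case allowed by $\dim_{\mathcal A}S\le m-4$, a Green-type potential of $S$ blows up at most like $r^{2-k}$, with $r=d_Y(\cdot,S)$ and $k=4$. The same holds for any positive supersolution, which the unweighted conformal formula requires. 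In the model $S=\R^{m-k}\subset\R^m$ one has $-\Delta r^{-\alpha}\ge 0$ iff $\alpha\le k-2$, and Lemma~\ref{lemma:integralOfGreen} produces exactly $r^{2-k+\varepsilon}$. The length element $G^{1/(m-2)}\lesssim r^{-(k-2)/(m-2)}$ is then integrable along curves running into $S$ unless $k\ge m$. So your metric is incomplete as soon as $S$ has positive dimension. Even the conformal-Laplacian exponent $4/(m-2)$ needs $k-2\ge (m-2)/2$, which is the half-dimension restriction of earlier blow-up arguments. Consequently, in high dimensions $\Sigma\setminus S$ cannot be fed back into the induction hypothesis, which requires completeness.

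The missing idea is the Brendle--Wang device that the paper uses. Take a dimension-independent exponent, $\hat g=W^{4/3}g$, and absorb the rest into the weight, $\hat\rho=\rho W^{-2(m-2)/3}$. Then the weighted Dirichlet energy $\int\hat\rho|\nabla^{\hat g}\varphi|_{\hat g}^2\,d\mathcal H^m_{\hat g}=\int\rho|\nabla\varphi|^2\,d\mathcal H^m_g$ is unchanged. The weighted scalar curvature transforms as in \eqref{eq:weightedSc}. The residual term $B|\nabla W|^2/W^2$ is nonnegative only after lowering $\mu$ into the window \eqref{eq:conformal-mu-range}; this is legitimate, since decreasing $\mu$ increases $\Sc_{\mu,f}$. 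This sign constraint is also what caps the admissible exponent near $4/3$ in high dimensions. Completeness of $W^{4/3}g$ needs $W^{2/3}\gtrsim r^{-1}$, i.e.\ $W\gtrsim r^{-3/2}$, and this is precisely what codimension four buys. The Green potential with $k=4$, $\varepsilon=1/2$ satisfies $u\sim r^{-3/2}$ and $\mathcal Lu/u\gtrsim r^{-2}$, which dominates $W^{4/3}\sim r^{-2}$ once $\eta$ is small. It also satisfies $\nu_{\partial Y}u/u\gtrsim r^{-1}$, and the surplus $\delta'W^{-2/3}r^{-1}$ pays for the logarithmic cutoff used to collapse the boundary map to $p_+$ near $S$. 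Thus the weight is not a low-dimensional patch but the mechanism that makes the blow-up complete in every dimension. Your noncompact $n=2$ base case is also unnecessary: the paper stops at ambient dimension $3$, where the bubble is a compact smooth surface, and applies the spectral Gauss--Bonnet argument of Lemma~\ref{lemma:n=2}.
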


By taking $f\equiv 0$, Theorem \ref{thm:noncompactScMeanWeighted} together with Lemma \ref{lm:collapse} implies Theorem \ref{thm:noncompactScMean}. 

The proof of Theorem~\ref{thm:noncompactScMeanWeighted} proceeds by
dimension reduction. We begin with a two-dimensional version for compact
manifolds, which suffices for the base case of the induction because
the available regularity theory for capillary hypersurfaces rules out
singularities in low dimensions. Also, this version assumes a weaker spectral
condition.
\begin{lemma}\label{lemma:n=2}
		Let $(M^2,\partial M,g)$ be a compact surface with boundary, and let $F\colon \partial M\to \sph^1$ be a smooth map. If there exist $f\in C^\infty(M)$, $\delta>0$ and $\mu>\frac{1}{2}$
	such that
	such that
\[
\begin{split}
0\leq{}&\int_M e^f\left[
|\nabla\varphi|^2+\left(\frac12\Sc_{\mu,f}(g)-\delta\right)\varphi^2
\right]d\mathcal H_g^2\\
&+\int_{\partial M}e^f\big[
H_{\partial M}(g)-\|dF\|+\nu(f)-\delta
\big]\varphi^2\,d\mathcal H_g^1, 
\end{split}
\]
then the degree of $F$ is zero.
\end{lemma}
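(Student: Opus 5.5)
The plan is to test the spectral inequality with the single function $\varphi=e^{-f/2}$. This choice cancels the weights and reduces the hypothesis to a Gauss--Bonnet inequality. I will work one connected component of $M$ at a time. The hypothesis is understood to hold for all smooth $\varphi$ on $M$, in particular for $\varphi$ supported on one component. The degree of $F$ is the sum of the degrees of its restrictions to $\partial M_i$ over the components $M_i$ of $M$. So it suffices to show $\deg(F|_{\partial M_i})=0$ for each $i$. Components with empty boundary contribute nothing, so assume $M$ is connected with $\partial M\neq\emptyset$.

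With $\varphi=e^{-f/2}$ we have $e^f\varphi^2=1$ and $e^f|\nabla\varphi|^2=\tfrac14|\nabla f|^2$. Substituting $\Sc_{\mu,f}(g)=\Sc(g)-2\Delta f-\mu|\nabla f|^2$, the hypothesis becomes
\[
0\leq \int_M\Big(\tfrac12\Sc(g)-\Delta f+\big(\tfrac14-\tfrac\mu2\big)|\nabla f|^2-\delta\Big)d\mathcal H^2_g
+\int_{\partial M}\big(H_{\partial M}(g)-\|dF\|+\nu(f)-\delta\big)d\mathcal H^1_g .
\]
By the divergence theorem, $-\int_M\Delta f=-\int_{\partial M}\nu(f)$, so the $\nu(f)$ terms cancel exactly. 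Since $\mu>\tfrac12$, the gradient term is nonpositive and can be dropped. In dimension two, $\tfrac12\Sc(g)$ is the Gaussian curvature $K$ and $H_{\partial M}(g)$ is the geodesic curvature $k_g$ of $\partial M$ with respect to the outward normal. The Gauss--Bonnet theorem then gives
\[
\int_{\partial M}\|dF\|\,d\mathcal H^1_g< 2\pi\chi(M),
\]
with strict inequality coming from the $\delta$ terms. Here, for a map between one-dimensional spaces, the trace norm and the operator norm both equal $|F'|$, which is consistent with the notation in the statement.

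On the other hand, $\partial M$ is a disjoint union of circles $C_1,\dots,C_k$. We have $\deg F=\sum_j \deg(F|_{C_j})$, and for each circle $\int_{C_j}|F'|\,ds\geq 2\pi|\deg(F|_{C_j})|$, since the length of the image curve is at least $2\pi$ times its winding number. Hence $\int_{\partial M}\|dF\|\geq 2\pi|\deg F|$. A connected compact surface with nonempty boundary has $\chi(M)\leq1$, with equality only for the disk. Combining the two displays gives $2\pi|\deg F|<2\pi\chi(M)\leq 2\pi$, so $|\deg F|<1$ and $\deg F=0$. If $\chi(M)\leq0$ the inequality is already contradictory, so that case cannot occur.

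The only point needing care, which I expect to be the main (mild) obstacle, is making sure the admissible test functions include $e^{-f/2}$ localized to one component. This is immediate for a compact surface, where the statement implicitly quantifies over all smooth $\varphi$. One should also confirm that the sign conventions for $\nu$ and $H$ match the Gauss--Bonnet boundary term $\int k_g$. Both are fixed by the outward normal convention, so for instance the unit disk has $H=1$ and $\int_{\partial M}H=2\pi$.
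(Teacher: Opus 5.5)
Your proof is correct and is essentially the paper's argument: test with $\varphi=e^{-f/2}$, cancel the $\nu(f)$ terms by the divergence theorem, drop the gradient term using $\mu>\frac12$, and combine Gauss--Bonnet with $\int_{\partial M}\|dF\|\geq 2\pi|\deg F|$. Your component-by-component reduction is a small but genuine improvement. The paper's step $\chi(M)\leq 1$ tacitly assumes $M$ is connected with nonempty boundary, which does hold in the application, where the lemma is used on a single component of the bubble.
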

\begin{proof}
	Let us take $\varphi=e^{-f/2}$. By assumption,
	\begin{equation}
\begin{split}
			0\leq& \int_M(\frac 1 4|\nabla f|^2+\frac 1 2\Sc(g)-\Delta f-\frac 1 2\mu|\nabla f|^2-\delta)d\mathcal H^2_g
		\\&+\int_{\partial M}(H_{\partial M}(g)-\|dF\|+\nu(f)-\delta)d\mathcal H^1_g\\
		\leq &\int_M(\frac 1 2\Sc(g)-\delta)d\mathcal H^2_g+\int_{\partial M}(H_{\partial M}(g)-\|dF\|-\delta)d\mathcal H^1_g,
\end{split}
	\end{equation}
	where the last inequality uses the divergence theorem. By the Gauss--Bonnet theorem, we have 
	\begin{equation}
		\int_M\frac 1 2\Sc(g)d\mathcal H^2_g+\int_{\partial M}H_{\partial M}(g)d\mathcal H^1_g=2\pi\chi(M)\leq 2\pi.
	\end{equation}
	By the definition of the degree, we have
    \[ \int_{\partial M}F^*(d\theta) = \deg(F) \]
    where $d\theta$ is the length form of the circle. To summarize, we have 
	\begin{equation}
		2\pi\geq 2\pi \chi(M) > \int_{\partial M}\|dF\|d\mathcal H^1_g \geq \Big|\int_{\partial M}F^*(d\theta)\Big|=2\pi\cdot |\deg(F)|.
	\end{equation}
	It follows that $|\deg(F)|<1$. Therefore $\deg F=0$.
\end{proof}

\section{dimension reduction for weighted scalar-mean comparison}\label{sec:dimensionReduction}

In this section, we prove Theorem~\ref{thm:noncompactScMeanWeighted}
by dimension reduction. We first outline the argument.
\begin{enumerate}
    \item Starting from an $n$-dimensional manifold, we construct a potential $\Psi$ on $M^n$ that serves as a barrier for the noncompact end. We use this potential to define a weighted energy functional and obtain a compact minimizing capillary bubble $Y^{n-1}$, which may have singularities.
      \item Set $m=n-1=\dim Y$. The Laplacian and Green function are well defined on $Y$, and its singular set is shown to have Assouad dimension at most $m-4$. This bound allows us to conformally
    blow up the singular set, producing a complete smooth manifold. The stability inequality for $Y$ also yields a spectral weighted scalar-mean positivity with threshold $\mu>\frac{m}{m+1}$. We modify the boundary map induced by $F$ so that it maps the noncompact ends of $\partial Y$ to a pole of $\mathbb S^{m}$.
    \item An energy minimization argument on $Y$ then gives pointwise weighted scalar-mean positivity. This reduces the problem from dimension $n$ to dimension $m=n-1$.
	\item Iterating this procedure reduces the problem to dimension
    two, where Lemma~\ref{lemma:n=2} applies.
\end{enumerate}

\subsection{Weighted capillary $\mu$-bubble}
In this subsection, we introduce weighted capillary $\mu$-bubbles,
whose existence follows by adapting the classical theory of capillary
hypersurfaces.

We begin with two lemmas that provide a barrier function $\Psi$ on $M$ with
vanishing normal derivative along $\partial M$. This function will be
used in the construction of the weighted capillary $\mu$-bubbles.
\begin{lemma}\label{lemma:functionZeroDerivative}
    Let $(M^n,g)$ be a complete Riemannian manifold with smooth boundary.
    Then there exists a smooth, proper, nonnegative function
    $h\colon M\to[0,\infty)$ such that
    \[
        |dh|_g\leq1 \quad\text{on }M,
        \qquad
        \partial_\nu h=0 \quad\text{on }\partial M,
    \]
    where $\nu$ is the outward unit normal.
\end{lemma}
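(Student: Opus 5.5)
We start from a smoothed distance function and then correct it near the boundary so that its normal derivative vanishes. Fix a point $p\in M$ and let $\rho=d_g(p,\cdot)$. By completeness, $\rho$ is proper and $1$-Lipschitz. Standard smoothing (for instance convolution in charts with a partition of unity, or the Greene--Wu approximation) yields a smooth proper function $\rho_1\ge 0$ with $|\rho_1-\rho|\le 1$ and $|d\rho_1|\le 2$. This step is routine.

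The boundary may be noncompact, so the correction must be done locally and uniformly. For each $x\in\partial M$ choose $r(x)\in(0,1]$ such that the normal exponential map $E\colon \partial M\times[0,r(x))\to M$, $E(y,s)=\exp_y(-s\nu(y))$, is a diffeomorphism near $x$ whose differential is bi-Lipschitz with constant $2$. The function $r(x)$ can be taken continuous and positive, and then replaced by a smaller smooth positive function $r$. On the collar $U=\{E(y,s): s<r(y)\}$, set $\tilde\rho(E(y,s))=\rho_1(y)$, which is constant along normal geodesics. Take a cutoff $\eta(y,s)=\chi(s/r(y))$ with $\chi\equiv1$ on $[0,1/3]$ and $\chi\equiv0$ on $[2/3,\infty)$. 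Define $h_0=\eta\tilde\rho+(1-\eta)\rho_1$.

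Near $\partial M$ we have $h_0=\tilde\rho$, so $\partial_\nu h_0=0$. The function $h_0$ is smooth and proper, because it differs from $\rho_1$ by a bounded amount. For the gradient we use
\[
dh_0=\eta\,d\tilde\rho+(1-\eta)\,d\rho_1+(\tilde\rho-\rho_1)\,d\eta .
\]

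\textbf{Main obstacle.} The difficulty is bounding $|dh_0|$ uniformly when the collar width $r(y)$ degenerates along a noncompact boundary. In that case $|d\eta|\sim 1/r$ can be large, so we need $|\tilde\rho-\rho_1|\lesssim r$. This holds because $\rho_1$ is $2$-Lipschitz and the points are at distance at most $r(y)$ apart, giving $|\tilde\rho-\rho_1|\le 2r(y)\cdot 2$. Together with the bi-Lipschitz control on $E$ and a bound on $|dr|$ (achievable by choosing $r$ with $|dr|\le 1$), this gives $|dh_0|\le C$ for a universal constant $C$, say $C=20$.

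Finally, set $h=h_0/C$ and add a constant if necessary to ensure $h\ge0$. Then $h$ is smooth, proper and nonnegative, with $|dh|\le1$ and $\partial_\nu h=0$ on $\partial M$.
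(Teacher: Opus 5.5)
Your proof is correct and takes a different route from the paper.

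\textbf{The paper's route.} The paper first replaces $g$, inside a collar of variable width, by a metric $g'$ with three properties: it is the product $dt^2+g_0$ near $\partial M$, it agrees with $g$ on $TM|_{\partial M}$, and it is uniformly equivalent to $g$. It then doubles $(M,g')$ to a smooth complete manifold, smooths a distance function there, and averages the result with its reflection $\iota$. In that approach $\partial_\nu h=0$ comes for free, because $\partial M$ is the fixed-point set of the isometry $\iota$ and $h\circ\iota=h$. The gradient bound comes from the smoothing on the double together with $C^{-1}g\le g'\le Cg$.

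\textbf{Your route.} You stay on $(M,g)$, build a function that is exactly constant along normal geodesics near $\partial M$, and glue it to $\rho_1$. This avoids both the metric modification and the doubling. The price is explicit control of the cross term $(\tilde\rho-\rho_1)\,d\eta$. Your key observation handles it: $|\tilde\rho-\rho_1|\le 2s\le 2r$ along a normal geodesic of length $s$, and this offsets $|d\eta|\lesssim 1/r$. The bound on $|dr|$, together with $s\le r$, controls the $s\,dr/r^2$ part of $d\eta$. So the degenerating collar width along a noncompact boundary causes no harm.

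\textbf{Two routine points to state in a write-up.} First, $r$ must be chosen so that $E$ is injective on all of $\{(y,s): s<r(y)\}$. This is the variable-width collar theorem; a local diffeomorphism near each $x$ is not enough, and without injectivity $\tilde\rho$ is not well defined. Second, extending $\eta\tilde\rho$ by zero is smooth because $\{E(y,s): s\le 2r(y)/3\}$ is closed in $M$. This follows from completeness and the continuity of $r$.

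The specific value $C=20$ is immaterial; any fixed constant works.
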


\begin{proof}
    Let us choose a collar neighborhood of the boundary in normal coordinates, whose width may vary along
    $\partial M$. In these coordinates, write
    \[
        g=dt^2+g_t,
    \]
    where $g_0$ is the induced metric on $\partial M$. After possibly shrinking
    the collar size pointwise, the metrics $dt^2+g_t$ and $dt^2+g_0$ are
    uniformly equivalent in this collar neighborhood. A smooth interpolation between the two metrics 
    produces a metric $g'$ that equals $dt^2+g_0$ near $\partial M$,
    agrees with $g$ outside the collar neighborhood, and satisfies
    \[
        C^{-1}g\leq g'\leq Cg
    \]
    for some $C\geq1$. Moreover, $g'$ agrees with $g$ as a bilinear
    form on $TM|_{\partial M}$. In particular, $g'$ is complete and
    has the same unit normal vector field as $g$ along $\partial M$.

    Let $DM$ be the double of $M$, equipped with the smooth complete
    metric $\widehat g$ obtained by doubling $g'$. The reflection
    $\iota\colon DM\to DM$ is an isometry. Standard smoothing of
    distance functions (up to rescaling)  gives a
    smooth, proper, nonnegative function $h_1$ on $DM$ with
    $|dh_1|_{\widehat g}\leq1$. Define
    \[
        h_2:=\frac12\bigl(h_1+h_1\circ\iota\bigr).
    \]
    Then $h_2$ is a smooth, proper and nonnegative function on $M$ such that $
        |dh_2|_{\widehat g}\leq1.$
    Moreover, $h_2\circ\iota=h_2$, so its normal derivative vanishes
    along the fixed-point set $\partial M$.

    After a rescaling if necessary, $h \coloneqq  h_2|_{M}$ is the desired function. In  particular,  since the unit normal vectors for $g$ and $g'$ along
    $\partial M$ coincide, we have  $\partial_\nu h=0$.
\end{proof}

\begin{lemma}\label{lemma:barrier}
Assume the hypotheses of Theorem
\ref{thm:noncompactScMeanWeighted}. Let
$\mathbb S^{n-1}_-$ denote the closed hemisphere centered at the
pole $\{-\}$, and set
\[
A:=F^{-1}(\mathbb S^{n-1}_-)\subset\partial M.
\]
For every $\alpha_1,\alpha_2>0$, there exist a relatively open set
$U\subset M$ with compact closure and a function
$\Psi:M\to[-\infty,0]$ such that
\begin{enumerate}
    \item $U=\{\Psi>-\infty\}$, and $\Psi|_U$ is smooth,
    \item on $U$,
    \[
        \alpha_1\Psi^2-|\nabla\Psi|_g+\alpha_2
        \geq \frac{\alpha_2}{2},
    \]
    \item $\partial_\nu\Psi=0$ on $U\cap\partial M$,
    where $\nu$ is the outward unit normal  of $\partial M$,
    \item $\Psi\equiv0$ on a relative open neighborhood of $A$
    in $M$,
    \item $\Psi=-\infty$ on $M\setminus U$, and 
    $\Psi(x)\to-\infty$ as $x\in U$ approaches $\overbar U\backslash U$,  where $\overbar U$ denotes the closure of $U$ in $M$.
\end{enumerate}
In particular, $\Psi=-\infty$ outside a compact subset of $M$.
\end{lemma}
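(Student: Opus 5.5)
Let $h\colon M\to[0,\infty)$ be the smooth proper function from Lemma~\ref{lemma:functionZeroDerivative}, with $|dh|\leq 1$ and $\partial_\nu h=0$ on $\partial M$. We set $\Psi=\phi\circ h$, where $\phi$ is a one-variable profile: $\phi\equiv 0$ on $[0,T_0]$, $\phi$ is decreasing, and $\phi\to-\infty$ as $t\uparrow T_1$. We put $U=\{h<T_1\}$, which has compact closure because $h$ is proper. The chain rule then gives property (3), since $\partial_\nu\Psi=\phi'(h)\,\partial_\nu h=0$. It also gives $|\nabla\Psi|\leq|\phi'(h)|$, so property (2) reduces to the ODE inequality
\[
\alpha_1\phi^2-|\phi'|+\tfrac{\alpha_2}{2}\geq 0\quad\text{on }[0,T_1).
\]
Properties (1) and (5) are built into the construction: $\Psi$ is smooth on $U$, and $\Psi\to-\infty$ as $h\to T_1$. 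The level set $\{h=T_1\}$ is exactly $\overbar U\setminus U$ inside $M$, since $\overbar U\subset\{h\le T_1\}$ and any point with $h=T_1$ is a limit of points with $h<T_1$ (we may take $T_1$ a regular value).

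For property (4) we need $A\subset\{h<T_0\}$ with a neighborhood. The set $A=F^{-1}(\mathbb S^{n-1}_-)$ is closed and disjoint from $B_+=F^{-1}(+)$. It is therefore contained in $\partial M\setminus B_+$, which is precompact by hypothesis, so $A$ is compact. Hence $h\le T_0-1$ on $A$ for some $T_0$, and the open set $\{h<T_0\}$ is a relatively open neighborhood of $A$ in $M$ on which $\Psi\equiv0$.

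It remains to build $\phi$. On $[T_0,T_1)$ we want $\phi'=-(\alpha_1\phi^2+\alpha_2/2)$. Its solution starting from $\phi(T_0)=0$ is
\[
\phi(t)=-\sqrt{\tfrac{\alpha_2}{2\alpha_1}}\tan\Big(\sqrt{\tfrac{\alpha_1\alpha_2}{2}}\,(t-T_0)\Big),
\]
which blows down to $-\infty$ at $T_1=T_0+\pi/\sqrt{2\alpha_1\alpha_2}$. The one real obstacle is smoothness at $T_0$: this solution has $\phi'(T_0)=-\alpha_2/2\neq0$, so gluing it to the zero function leaves a kink.

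To remove the kink, we instead solve $\phi'=-\eta(t)(\alpha_1\phi^2+\alpha_2/2)$, where $\eta$ is a smooth cutoff with $0\le\eta\le1$, $\eta=0$ on $(-\infty,T_0]$, and $\eta=1$ on $[T_0+1,\infty)$. The ODE inequality still holds because $\eta\le1$. The resulting $\phi$ is smooth, vanishes for $t\le T_0$, and is $\le0$ and decreasing. It follows the $\tan$ profile after $T_0+1$, so it reaches $-\infty$ at some finite $T_1$. Smoothness of $\phi$ on $(-\infty,T_1)$ follows from the smoothness of the ODE, and $\Psi=\phi\circ h$ is smooth on $U$. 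Finally, we extend $\Psi$ by $-\infty$ on $M\setminus U$. Since $\overbar U\subset\{h\le T_1\}$ is compact, $\Psi=-\infty$ outside a compact set, as claimed.
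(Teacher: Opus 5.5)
Your proposal is correct and is essentially the paper's proof. The paper likewise composes the proper function $h$ of Lemma~\ref{lemma:functionZeroDerivative} with the solution of the cut-off Riccati equation $v'=\zeta(t)\bigl(\alpha_1v^2+\frac{\alpha_2}{2}\bigr)$, namely $v=b\tan\bigl(c\int_0^t\zeta\bigr)$; it sets $\Psi=-v(h-T_0)$ on $U=\{h<T_0+T\}$ and obtains compactness of $A$ from the precompactness of $\partial M\setminus B_+$, just as you do. Your regular-value remark is harmless but unnecessary, since $\overline U\setminus U\subset\{h=T_1\}$ already suffices for property (5).
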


\begin{proof}
We use the same notation as in Theorem \ref{thm:noncompactScMeanWeighted}. Note that  
\[
A\subset\partial M\setminus B_+.
\]
Moreover, $A$ is closed in $\partial M$, hence closed in $M$.
The hypothesis that $\partial M\setminus B_+$ is precompact
therefore implies that $A$ is compact.

By Lemma \ref{lemma:functionZeroDerivative}, there exists a smooth,
proper, nonnegative function $h:M\to[0,\infty)$ satisfying
\[
|\nabla h|_g\leq1,
\qquad
\partial_\nu h=0\quad\text{on }\partial M.
\]

Choose $\zeta\in C^\infty(\mathbb R)$ such that
\[
\zeta(t)=0, \quad \forall t\leq0; \qquad
0<\zeta(t) < 1,\quad \forall t>0; \qquad
\zeta(t)=1,\quad \forall t\geq1.
\]
Define
\[
\varphi(t):=\int_0^t\zeta(s)\,ds,
\qquad
b:=\sqrt{\frac{\alpha_2}{2\alpha_1}},
\qquad
c:=\sqrt{\frac{\alpha_1\alpha_2}{2}}.
\]
There is a unique finite $T>0$ satisfying
\[
\varphi(T)=\frac{\pi}{2c}.
\]
For $t<T$, define
\[
v(t):=b\tan\bigl(cI(t)\bigr).
\]
Then $v\in C^\infty((-\infty,T))$, $v\geq0$, $v(t) = 0$ for all $t\leq 0$ and 
\[
\lim_{t\to  T}v(t)=+\infty.
\]
In particular, $v$ is smooth across $t=0$.

Since $bc=\alpha_2/2$ and $\alpha_1b^2=\alpha_2/2$,
differentiation gives
\[
v'(t)
=bc\,\zeta(t)\sec^2\bigl(c\varphi(t)\bigr)
=\zeta(t)\left(\alpha_1v(t)^2+\frac{\alpha_2}{2}\right).
\]
It follows that 
\[
0\leq v'(t)\leq\alpha_1v(t)^2+\frac{\alpha_2}{2}, 
\qquad \forall t<T.
\]

By compactness of $A$, choose $T_0>0$ such that
$A\subset\{h<T_0\}$. Set
\[
L:=T_0+T,\qquad
U:=\{h<L\},\qquad
K:=\{h\leq L\},
\]
and define
\[
\Psi(x):=
\begin{cases}
-v\bigl(h(x)-T_0\bigr),&x\in U,\\
-\infty,&x\in M\setminus U.
\end{cases}
\]
Properness of $h$ implies that $K$ is compact, and
$\overline U\subset K$. The function $\Psi|_U$ is smooth and vanishes on the relative
open neighborhood $\{h<T_0\}$ of $A$.

On $U$, we have 
\[
\begin{aligned}
|\nabla\Psi|_g
&=v'(h-T_0)|\nabla h|_g \leq v'(h-T_0)\\
&\leq\alpha_1v(h-T_0)^2+\frac{\alpha_2}{2} =\alpha_1\Psi^2+\frac{\alpha_2}{2}.
\end{aligned}
\]
 Likewise,
on $U\cap\partial M$,
\[
\partial_\nu\Psi
=-v'(h-T_0)\,\partial_\nu h=0.
\]
Finally, as $x\in U$ approaches $\overbar U\backslash U$, $h(x)-T_0$ goes to $T$
from below, and hence
\[
\Psi(x)=-v\bigl(h(x)-T_0\bigr)\to -\infty.
\]
This completes the proof. 
\end{proof}

The constants  $\alpha_1,\alpha_2$ will be fixed later, only depending on the given $\mu$, $\delta$ and the dimension $n$ in Theorem \ref{thm:noncompactScMeanWeighted}. 

Recall that $\{-\}$ is a pole of $\sph^{n-1}$. Consider the distance function 
\begin{equation}\label{eq:distance}
    \psi(y)=d_{\sph^{n-1}}(-,y) 
\end{equation}
for $y\in\sph^{n-1}$, 
and define the boundary weight
\begin{equation}\label{eq:capillary-boundary-weight}
	\mu_\partial
	=\cos(\psi\circ F).
\end{equation}
Thus $\mu_\partial$ is smooth, $|\mu_\partial|\leq1$, and
\[
 \{\mu_\partial=1\}=B_-,\qquad
 \{\mu_\partial=-1\}=B_+.
\]

Let $\mathcal C$ be the collection of Caccioppoli sets $\Omega$ in $M$ such that
\begin{itemize}
	\item $\mathcal H^{n-1}_g(\partial^*\Omega\cap\mathring M) <\infty$, where $\partial^*\Omega$ is the reduced boundary of $\Omega$,
	\item $B_-\subset\Omega$, and
	\item $\Omega\subset U = \{\Psi>-\infty\}$.
\end{itemize}
In particular, any such $\Omega$ is relatively compact in $M$.
We consider the functional
\begin{equation}
	\mathcal A(\Omega)=\int_{\partial^*\Omega\cap\mathring M}\rho d\mathcal H^{n-1}_g-\int_{\Omega}\rho \Psi d\mathcal H^{n}_g-\int_{\partial^*\Omega\cap\partial M}\rho\mu_\partial d\mathcal H^{n-1}_g,
\end{equation}
where $\rho=e^f$. 

\begin{lemma}\label{lemma:existence}
	In the metric-measure sense, a minimizer  of the functional $\mathcal A$ exists in the class $\mathcal C$.  A minimizing representative $\Omega$ can be chosen such that
    \begin{enumerate}
        \item there exists an open set $V_-\subset M$ such that
\[
B_-\subset V_-\subset\Omega,
\]
\item $\overbar \Omega$ is compact and $\overbar \Omega\subset U\setminus B_+$.
    \end{enumerate}
Set $
 Y=\overbar{\partial^*\Omega\cap M^\circ}$. Then $Y$ is compact, $Y\cap(B_-\cup B_+)=\emptyset$, and there
exists $\tau>0$ such that
\[
 |\mu_\partial|\leq1-\tau\quad\textup{ on } Y\cap\partial M.
\]
\end{lemma}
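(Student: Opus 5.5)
We will prove existence by the direct method in the class $\mathcal C$, and then use comparison arguments near $B_\pm$ and near $\overbar U\setminus U$ to obtain the regularity of the representative.

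\emph{Step 1: existence.} Every $\Omega\in\mathcal C$ lies in the compact set $\overbar U$, on which $\rho=e^f$ is bounded above and below by positive constants. Since $\Psi\leq 0$, the bulk term $-\int_\Omega\rho\Psi$ is nonnegative. The boundary term is at least $-\int_{\partial M\cap\overbar U}\rho\,d\mathcal H^{n-1}$, which is finite. Hence $\mathcal A$ is bounded below on $\mathcal C$. The class is nonempty: take a small relatively open neighborhood of $A\supset B_-$ on which $\Psi\equiv0$ (Lemma~\ref{lemma:barrier}(4)); it has smooth boundary and finite $\mathcal A$.

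For a minimizing sequence $\Omega_k$, the perimeters $\int_{\partial^*\Omega_k\cap\mathring M}\rho$ are uniformly bounded. The trace of $\chi_{\Omega_k}$ on $\partial M$ is controlled by the relative perimeter plus volume on the compact set $\overbar U$, so $BV$ compactness gives $\Omega_k\to\Omega$ in $L^1$. The interior area term is lower semicontinuous. The subtle term is $-\int\rho\mu_\partial$ on $\partial M$, because traces are not $L^1$-continuous. Since $|\mu_\partial|\leq1$, the standard capillary lower semicontinuity applies: $\int_{\partial^*\Omega\cap\mathring M}\rho-\int_{\partial M}\rho\mu_\partial\chi_\Omega$ is lower semicontinuous as in the classical capillary theory of Giusti and De~Philippis--Maggi. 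The integrand $\rho$ is continuous and the angle term is bounded by $1$, which is the borderline case, and we would treat it by a collar argument as in \cite{WangWangZhu}. The bulk term $-\int_\Omega\rho\Psi$ is lower semicontinuous by Fatou, since $-\rho\Psi\geq0$. The constraint $B_-\subset\Omega$ passes to the limit in the trace sense. The constraint $\Omega\subset U$ is handled by the fact that $\Psi=-\infty$ off $U$, so any mass there would cost infinite energy.

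\emph{Step 2: the representative and properties (1)--(2).} This is the main obstacle, and we plan to argue by comparison and cut-and-paste.

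Near $B_-$, where $\mu_\partial=1$, take a small relative collar $V$ of $B_-$; such collars exist since the components of $B_-$ have interior. Replacing $\Omega$ by $\Omega\cup V$ changes the boundary term by $-\int\rho\mu_\partial$ over the added boundary portion, which gains nearly the full area there. The extra interior area is comparable to the area of $\partial\Omega\cap V$ cut away, and the bulk term does not increase since $\Psi=0$ near $A$. Using $\mu_\partial\to1$ and a calibration-type argument with the vector field $\rho\,\nabla d_{\partial M}$, we get $\mathcal A(\Omega\cup V)\leq\mathcal A(\Omega)$. Hence we may choose the representative to contain an open $V_-\supset B_-$.

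Symmetrically, near $B_+$, where $\mu_\partial=-1$, subtracting a collar $W$ of $B_+$ decreases the energy. Since $\overbar U$ is compact, $\Psi\to-\infty$ at $\overbar U\setminus U$ forces the minimizer to stay a positive distance from $\overbar U\setminus U$. The reason is a truncation argument: cutting $\Omega$ by sublevel sets $\{\Psi>-s\}$ reduces the bulk cost by roughly $s\cdot\mathrm{vol}$, while the added area is controlled by coarea and $|\nabla\Psi|\leq\alpha_1\Psi^2+\alpha_2/2$ from Lemma~\ref{lemma:barrier}(2). These comparisons give (2): $\overbar\Omega$ is compact and $\overbar\Omega\subset U\setminus B_+$.

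\emph{Step 3: properties of $Y$.} Taking $\Omega$ to be the set of density-one points (or using the open representative), $Y=\overbar{\partial^*\Omega\cap\mathring M}\subset\overbar\Omega$ is compact. By (1), $Y$ avoids the open set $V_-\supset B_-$, and by (2) it avoids $B_+$. Thus $Y\cap\partial M$ is a compact set disjoint from $B_\pm=\{\mu_\partial=\pm1\}$. By continuity of $\mu_\partial$, we get $|\mu_\partial|\leq1-\tau$ on $Y\cap\partial M$ for some $\tau>0$.
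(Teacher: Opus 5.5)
There is a genuine gap in Step~2, and Step~2 is where the content of the lemma lies. Nowhere do you use the boundary hypothesis $H_{\partial M}(g)+\nu(f)\geq\|dF\|_{\tr}+\delta$. Since $\|dF\|_{\tr}\geq0$, it gives $H_{\partial M}(g)+\nu(f)\geq\delta$ on all of $\partial M$. Equivalently, $\partial M$ is strictly mean convex for $\widetilde g=\rho^{2/(n-1)}g$, the metric in which $\mathcal A$ becomes an unweighted capillary functional. This is exactly the input the paper's proof uses for the local boundary comparisons near $B_\pm$.

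Where $\mu_\partial=\pm1$ the contact angle degenerates to $0$ or $\pi$. Adding a layer of thickness $t$ over $B_-$, or removing one over $B_+$, therefore changes $\mathcal A$ only at first order in $t$. The sign of that first-order term is the sign of $-(H_{\partial M}+\nu(f))$. Your own calibration field shows this, since $\operatorname{div}(\rho\nabla d_{\partial M})=-\rho\,(H_{\partial M}(g)+\nu(f))$ along $\partial M$.

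Without this sign, (1) is false. Suppose $\partial M$ were flat near $B_-$, with $\rho$ constant and $\Psi=0$ there. The divergence theorem with the parallel unit normal field shows that interface area dominates wetted area. Hence every competitor contributes $\int\rho(1-\mu_\partial)\geq0$ near $B_-$, and strictly more if it wets a neighborhood of $B_-$, because $\mu_\partial<1$ off $B_-$. Meanwhile slabs of thickness $t$ over $B_-$ cost only $O(t)$. So no minimizer contains a neighborhood of $B_-$, and minimizing sequences lose the wetting of $B_-$ in the limit.

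Consequently, ``$\mu_\partial\to1$'' alone cannot give $\mathcal A(\Omega\cup V)\leq\mathcal A(\Omega)$. There is a further problem with the collar itself. Its side area, and the part of its footprint where $\mu_\partial<1$, are not comparable to the removed area of $\partial\Omega\cap V$ as you assert. They must be absorbed by the order-$t$ gain from strict mean convexity, organized as a barrier or foliation argument with the equidistant hypersurfaces $\{d_{\partial M}=t\}$, as in the references the paper cites. The same applies at $B_+$.

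The same example exposes a problem in Step~1. You assert that the constraint $B_-\subset\Omega$ passes to the limit in the trace sense. But, as you note yourself for the wetting term, traces are not continuous under $L^1$ convergence, and thin layers over $B_-$ can vanish in the limit. The constraint becomes closed only once the mean-convexity comparison lets you restrict to sets containing a fixed neighborhood of $B_-$.

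Your truncation near $\overline{U}\setminus U$ is reasonable in spirit. It also needs $\mu_\partial=-1$ on the part of $\partial M$ where $\Psi$ is large, so that discarding wetted area there does not cost energy. The paper arranges this by choosing $T_0$ with $\overline{\partial M\setminus B_+}\subset\{h<T_0\}$. Step~3 is correct and coincides with the paper's concluding argument.
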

\begin{proof}
In the construction of $\Psi$, choose $T_0$ so that
$\overline{\partial M\setminus B_+}\subset\{h<T_0\}$ and the
 $L=T_0+T$ is regular value of $h$. Set $U=\{h<L\}$.
Thus $\overline U$ is compact, $\Psi=0$ near $B_-$, and
$\mu_\partial=-1$ on $\partial M\cap\{h\geq T_0\}$.

Recall that $\rho =e^f$. Set 
\[
\widetilde g=\rho^{2/(n-1)}g,
\qquad
\widetilde\Psi=\rho^{-1/(n-1)}\Psi.
\]
The functional becomes the following unweighted capillary functional:
\[
	\mathcal A(\Omega)
	=\mathcal H_{\widetilde g}^{n-1}
	(\partial^*\Omega\cap\mathring M)
	-\int_\Omega\widetilde\Psi\,d\mathcal H_{\widetilde g}^{n}
	-\int_{\partial^*\Omega\cap\partial M}
	\mu_\partial\,d\mathcal H_{\widetilde g}^{n-1}.
	\]
Moreover, we have 
\begin{equation}\label{eq:strictmean}
    \begin{split}
H_{\partial M}(\widetilde g)-\|dF\|_{\operatorname{tr},\widetilde g}
&=\rho^{-1/(n-1)}
\bigl(H_{\partial M}(g)+\nu(f)-\|dF\|_{\operatorname{tr},g}\bigr)\\
&\geq\rho^{-1/(n-1)}\delta>0.
\end{split}
\end{equation}

Since every member of $\mathcal C$ is supported in the fixed compact set
$\overline U$, the classical existence of minimizer argument applies. See for example 
 \cite[Lemma~2.4]{WangWangZhu}.

The standard local boundary comparisons using the strict inequality \eqref{eq:strictmean} show that a minimizer contains a small neighborhood of $B_-$  and disjoint from some neighborhood of $B_+\cap U$. 
See for example \cite[proof of Theorem~1.3, Steps~4--5]{Wu_capillarysurfaces}
and \cite[Appendix~B, Claim~1]{WangWangZhu} for the comparison argument.

Consequently, $Y = \overbar{\partial^*\Omega\cap M^\circ}$ is compact and $Y\cap(B_-\cup B_+)=\emptyset$.
Continuity of $\mu_\partial$ therefore gives
$|\mu_\partial|\leq1-\tau$ on $Y\cap\partial M$ for some $\tau>0$.
\end{proof}

Fix a minimizer $\Omega$ given by Lemma~\ref{lemma:existence}, and set
\[
Y=\overline{\partial^*\Omega\cap M^\circ},\qquad m=n-1.
\]
Orient the regular part of $Y$ by the unit normal $\nu_Y$ pointing
out of $\Omega$, and let $\nu$ be the outward unit normal of
$\partial M$. 
 All variation formulas in the following lemma are taken on the regular part of
$Y$.

\begin{lemma}\label{lemma:variation}
With the same notation as above, let $\nu$ be the outward unit normal of $\partial M$. Then the following assertions hold.
\begin{enumerate}[label=\textup{(\roman*)}]
\item There is a closed singular set $S\subset Y$ such that\footnote{By \cite[Corollary~1.17]{WangZhangRegularity}, one in fact has $\mathcal H_g^{m-5+\varepsilon}(S)=0$ for every $\varepsilon>0$ and $m\geq5$.}
\[
\mathcal H_g^{m-4+\varepsilon}(S)=0
\quad\text{for every }\varepsilon>0\text{ if }m\geq4,
\]
while $S=\emptyset$ if $m\leq3$. The regular part
$Y_{\mathrm{reg}}:=Y\setminus S$ is a smooth embedded hypersurface
with boundary $
\partial Y_{\mathrm{reg}}
\coloneqq Y_{\mathrm{reg}}\cap\partial M.$

\item On $Y_{\mathrm{reg}}$, its mean curvature satifies 
\[
H_Y(g)+\nu_Y(f)-\Psi=0.
\]
\item Along $\partial Y_{\mathrm{reg}}$, we define the contact angle
		$J\in(0,\pi)$ by $\cos J\coloneqq -\langle\nu_Y,\nu\rangle$.
		Then $\cos J=\mu_\partial=\cos(\psi\circ F),$ thus 
		\begin{equation*}
			J=\psi\circ F.
		\end{equation*}

\item Let $\mathbf n$ be the outward unit normal to $\partial Y_{\mathrm{reg}}$ as the boundary of $\partial M\cap \Omega$. For every smooth function $\varphi$ on $Y_{\mathrm{reg}}$
with compact support in $Y_{\mathrm{reg}}$,
\[
\begin{split}
0\leq{}&\int_{Y_{\mathrm{reg}}}\rho\Bigl[
|\nabla^Y\varphi|^2
-\bigl(\operatorname{Ric}_g(\nu_Y,\nu_Y)+|A_Y|^2
-\nabla_g^2f(\nu_Y,\nu_Y)+\nu_Y(\Psi)\bigr)\varphi^2
\Bigr]\,d\mathcal H_g^m\\
&+\int_{\partial Y_{\mathrm{reg}}}\rho\Bigl[
H_{\partial Y}(g)-\frac{H_{\partial M}(g)}{\sin J}
-(\cot J)H_Y(g)
+\frac{\langle\mathbf n,
\nabla^{\partial M}(\psi\circ F)\rangle}{\sin J}
\Bigr]\varphi^2\,d\mathcal H_g^{m-1}.
\end{split}
\]
Here $\psi$ is the distance function from 
\eqref{eq:distance}, $\nabla_g^2f$ is the ambient Hessian, and
$H_{\partial Y}(g)$ denotes the mean curvature of
$\partial Y_{\mathrm{reg}}\subset Y_{\mathrm{reg}}$.
\end{enumerate}
\end{lemma}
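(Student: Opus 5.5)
The plan is to reduce everything to the unweighted capillary problem via the conformal change already used in Lemma~\ref{lemma:existence}, namely $\widetilde g=\rho^{2/(n-1)}g$ with $\rho=e^f$. Under this change $\mathcal A$ becomes the standard capillary functional
\[
\mathcal H^{m}_{\widetilde g}(\partial^*\Omega\cap\mathring M)-\int_\Omega\widetilde\Psi\,d\mathcal H^n_{\widetilde g}-\int_{\partial^*\Omega\cap\partial M}\mu_\partial\,d\mathcal H^{m}_{\widetilde g},
\]
with smooth bounded prescribed mean curvature $\widetilde\Psi$ on a neighborhood of $\overline\Omega\subset U$. By Lemma~\ref{lemma:existence} the contact-angle function satisfies $|\mu_\partial|\leq 1-\tau$ on $Y\cap\partial M$, so it stays uniformly away from $0,\pi$.

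For (i) I will invoke the regularity theory for minimizing capillary boundaries with prescribed mean curvature. In the interior this is the classical Federer/Simons theory, which gives $\dim S\leq m-7$. At the boundary, with angle bounded away from $0,\pi$, I will use De Philippis--Maggi type regularity, together with the known classification of stable capillary cones in low dimensions (Li--Zhou--Zhu, Chodosh--Edelen--Li). These give $S=\emptyset$ for $m\leq3$ and $\mathcal H^{m-4+\varepsilon}(S)=0$ in general. The lower-order terms $\widetilde\Psi$ and the smooth angle are handled by the standard almost-minimality argument, since they perturb the area by $O(r^{n})$ on balls of radius $r$. Because the conformal change is smooth on a neighborhood of $Y$, $S$ is the same set for $g$, and Hausdorff measure statements transfer between $g$ and $\widetilde g$.

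For (ii)--(iii) I will compute the first variation of $\mathcal A$ along a vector field $X$ compactly supported away from $S$ and tangent to $\partial M$ on $\partial M$. The interior term gives $\int\rho(H_Y+\nu_Y(f)-\Psi)\langle X,\nu_Y\rangle$, so vanishing for normal variations yields (ii). The boundary term gives $\int_{\partial Y}\rho\,\langle X,\mathbf n_Y-\mu_\partial\mathbf n\rangle$, where $\mathbf n_Y$ is the conormal of $\partial Y\subset Y$; vanishing forces $\langle\nu_Y,\nu\rangle=-\mu_\partial$, which gives (iii). The $\cos J$ relation then follows from $J=\psi\circ F\in(0,\pi)$, since $0<\psi\circ F<\pi$ away from $B_\pm$.

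For (iv) I will take the second variation along $X=\varphi\,\overline\nu$, where $\overline\nu$ is the field extending $\nu_Y$ that is tangent to $\partial M$. The interior part is the weighted Jacobi operator: the variation of $\rho(H_Y+\nu_Y(f)-\Psi)$ produces $\operatorname{Ric}(\nu_Y,\nu_Y)+|A_Y|^2-\nabla^2f(\nu_Y,\nu_Y)+\nu_Y(\Psi)$, and the term $\nabla^Y f\cdot\nabla^Y\varphi$ is absorbed by integrating against $\rho$. The boundary term follows the standard capillary computation (Ros--Souam, Li--Zhou--Zhu, and \cite{WangWangZhu} Appendix), which yields $\frac{\varphi^2}{\sin J}\bigl(\cos J\,A_Y(\mathbf n_Y,\mathbf n_Y)-\second_{\partial M}(\overline\nu,\overline\nu)\bigr)$ plus the derivative of $\mu_\partial$ in the direction of motion. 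The derivative of $\mu_\partial=\cos(\psi\circ F)$ along $\mathbf n$ contributes $-\sin J\,\langle\mathbf n,\nabla^{\partial M}(\psi\circ F)\rangle$, which after dividing by $\sin^2 J$ gives the stated last term. I will then convert $A_Y(\mathbf n_Y,\mathbf n_Y)$ and $\second_{\partial M}(\overline\nu,\overline\nu)$ into $H_{\partial Y}$, $H_Y$ and $H_{\partial M}$ using $H_Y=A_Y(\mathbf n_Y,\mathbf n_Y)+\operatorname{tr}_{\partial Y}A_Y$ and the corresponding decomposition of $H_{\partial M}$. The $f$-terms on the boundary cancel because of (ii)/(iii) and the choice $\rho=e^f$, and $\partial_\nu\Psi=0$ from Lemma~\ref{lemma:barrier}, so no $\Psi$-derivative appears on $\partial Y$.

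I expect the main obstacle to be the boundary regularity dimension estimate in (i), since it depends on citing the capillary cone classification. The boundary algebra in (iv) is the other delicate point, but it is routine.
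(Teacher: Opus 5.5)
Your proposal follows the same route as the paper, whose proof is essentially by citation: pass to the unweighted capillary functional via $\widetilde g=\rho^{2/(n-1)}g$, obtain (i) from Chodosh--Edelen--Li's boundary regularity (using $|\mu_\partial|\le 1-\tau$ on $Y$) together with classical interior regularity, and read off (ii)--(iv) from the standard first and second variation formulas (Wang--Wang--Zhu, Lemmas A.3--A.4). Two small corrections for when you write out (iv): first, a variation field tangent to $\partial M$ cannot extend $\nu_Y$ when $J\neq\pi/2$, so along $\partial Y$ it must equal $\frac{\varphi}{\sin J}\mathbf n$ modulo $T\partial Y$; second, with your convention $H_Y=\operatorname{tr}A_Y$ your sign on the $\cos J\,A_Y$ term is flipped, and the boundary coefficient is $-\frac{1}{\sin J}\bigl(\cos J\,A_Y(\mathbf n_Y,\mathbf n_Y)+\second_{\partial M}(\mathbf n,\mathbf n)\bigr)$, which becomes the stated expression via the identity $\operatorname{tr}_{\partial Y}\second_{\partial M}=\sin J\,H_{\partial Y}-\cos J\,\operatorname{tr}_{\partial Y}A_Y$.
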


\begin{proof}
	By Lemma \ref{lemma:existence}, on the minimizer $Y$ we have
	$|\mu_\partial|<1$.  The first assertion follows from \cite[Theorem~1.1]{chodosh2024_improvedregularity}
	 and the classical interior
	regularity of minimizing hypersurfaces.  As seen in the
	proof of Lemma \ref{lemma:existence}, the functional can be transformed
	to an unweighted one.  Therefore the first and second variation
	formulas are standard.  See for example \cite[Lemmas~A.3--A.4]{WangWangZhu}. This shows assertions (ii), (iii) and (iv).
\end{proof}

\subsection{Schoen--Yau estimate on capillary bubble}
We now convert the stability inequality of Lemma~\ref{lemma:variation}
into a strict spectral scalar--mean inequality needed for the inductive
step. 

Recall that $m=n-1$ and
\begin{equation}\label{eq:wcomparisons}
    \Sc_{\mu,f}(g)\geq\delta,
\qquad
H_{\partial M}(g)+\nu(f)
\geq\|dF\|_{\operatorname{tr},g}+\delta,
\qquad
\mu>\frac{m}{m+1}.
\end{equation}
where $\nu$ is the outward unit normal of $\partial M$.
Since $m\mu-m+1>0$, we may set
\[
\delta_0:=\frac{(m+1)\mu-m}{2(m\mu-m+1)}>0.
\]
We fix the constants in Lemma~\ref{lemma:barrier} by taking
$\alpha_1=\delta_0$ and $\alpha_2=\delta/4$, and let $\Psi$ be the corresponding function given by Lemma \ref{lemma:barrier}.  In particular, we have 
\begin{equation}\label{eq:sy-barrier-estimate}
\delta_0\Psi^2-|\nabla^M\Psi|+\frac{\delta}{4}\geq0
\end{equation}
on the region where $\Psi$ is finite. Throughout this
subsection, $\Omega$ denotes a minimizer from Lemma \ref{lemma:variation}, using this $\Psi$.  

Recall that $Y = \overbar{\partial^*\Omega\cap M^\circ}$. All calculations below take place on the regular part $Y_{\mathrm{reg}}$ of $Y$. Let us write 
\[
\Sc^Y_{\mu,f}(g)
:=\Sc_Y(g)-2\Delta^Y f-\mu|\nabla^Y f|^2.
\]
The Gauss formula gives us
\begin{equation}
	-\operatorname{Ric}_g(\nu_Y,\nu_Y)-|A_Y|^2=\frac 1 2 \Sc_Y(g)-\frac 1 2 \Sc(g)-\frac 1 2 |A_Y|^2-\frac 1 2 H_Y^2.
\end{equation}
The decomposition of the ambient Laplacian gives 
\begin{equation}
	\nabla_g^2f(\nu_Y,\nu_Y)=-\Delta^Y(f)+\Delta^M(f)-H_Y\nu_Y(f).
\end{equation}
Put $a:=\nu_Y(f)$, so that $H_Y=\Psi-a$ by
Lemma~\ref{lemma:variation}.
Combining the above two equations, we get
\begin{equation}\label{eq:wscalarbound}
	\begin{split}
		&-\operatorname{Ric}_g(\nu_Y,\nu_Y)-|A_Y|^2+\nabla_g^2f(\nu_Y,\nu_Y)-\nu_Y(\Psi)\\
		=&\frac 1 2 \Sc^Y(g)-\frac 1 2 \Sc(g)-\frac 1 2 |A_Y|^2-\frac 1 2 H_Y^2
		-\Delta^Y(f)+\Delta^M(f)-H_Ya-\nu_Y(\Psi)\\
		=&\frac 1 2\Sc^Y_{\mu,f}(g)-\frac 1 2\Sc_{\mu,f}(g)-\Big(\frac{\mu}{2}a^2+\frac 1 2 |A_Y|^2+\frac 1 2 H_Y^2
		+H_Ya\Big)-\nu_Y(\Psi)
	\end{split}
\end{equation}
where we used $|\nabla^M f|^2=|\nabla^Y f|^2+\nu_Y(f)^2 = |\nabla^Y f|^2+a^2$.

The Cauchy--Schwarz inequality gives
\begin{equation}
	|A_Y|^2\geq \frac{H_Y^2}{m}.
\end{equation}
Since $H_Y = \Psi -a$, it follows that 
\begin{align*}
\frac{\mu}{2}a^2+\frac12|A_Y|^2+\frac12H_Y^2+H_Ya
&\geq\frac{\mu}{2}a^2+\frac{m+1}{2m}(\Psi-a)^2+(\Psi-a)a\\
&=\frac{m+1}{2m}\Psi^2-\frac1m\Psi a+\frac{m\mu-m+1}{2m}a^2\\
&=\frac{m\mu-m+1}{2m}\left(a-\frac{\Psi}{m\mu-m+1}\right)^2
  +\delta_0\Psi^2\\
&\geq\delta_0\Psi^2.
\end{align*}
Plugging  the above inequality into \eqref{eq:wscalarbound} and applying \eqref{eq:sy-barrier-estimate}, we obtain 
\begin{equation}
\begin{split}
	&-\operatorname{Ric}_g(\nu_Y,\nu_Y)-|A_Y|^2+\nabla_g^2f(\nu_Y,\nu_Y)-\nu_Y(\Psi)\\
	\leq&~\frac 1 2\Sc^Y_{\mu,f}(g)-\frac 1 2\Sc_{\mu,f}(g)-\delta_0\Psi^2+|\nabla^M \Psi|\\
	\leq&~\frac 1 2\Sc^Y_{\mu,f}(g)-\frac 1 2\Sc_{\mu,f}(g)+\frac{\delta}{4}\leq\frac 1 2\Sc^Y_{\mu,f}(g)-\frac{\delta}{4} .
\end{split}
\end{equation}

Let us consider the boundary term in (iv) of Lemma \ref{lemma:variation}. Using  $H_Y=\Psi-\nu_Y(f)$ and the second inequality of \eqref{eq:wcomparisons},  we have
\begin{equation}\label{eq:bdrymeancomparison}
	\begin{split}
		&\frac{-H_{\partial M}(g)-H_Y\cos J+\langle\mathbf  n,\nabla^{\partial M}(\psi\circ F)\rangle}{\sin J}\\
		\leq&\frac{-\Psi\cos J+\nu_Y(f) \cos J-\|dF\|_{\operatorname{tr},g}+\nu(f)-\delta+\langle\mathbf  n,\nabla^{\partial M}(\psi\circ F)\rangle}{\sin J}
	\end{split}
\end{equation}
Since the contact angle between $Y$ and $\partial M$ is precisely $J$, we have
\begin{equation}
	\nu=-(\cos J)\nu_Y+(\sin J)\nu_{\partial Y}
\end{equation}
where $\nu_{\partial Y}$ is the outward unit normal to $\partial Y_{\mathrm{reg}}$ as the boundary of $Y_{\mathrm{reg}}.$
Thus
\begin{equation}
	\frac{(\cos J)\nu_Y(f)+\nu(f)}{\sin J}=\nu_{\partial Y}(f).
\end{equation}
Combining this with \eqref{eq:bdrymeancomparison}, we obtain 
\begin{equation}\label{eq:SY-boundary-preliminary}
	\begin{aligned}
		&H_{\partial Y}(g)-\frac{H_{\partial M}(g)}{\sin J}
		-(\cot J)H_Y
		+\frac{\langle\mathbf n,\nabla^{\partial M}(\psi\circ F)\rangle}{\sin J}\\
		&\leq H_{\partial Y}(g)
		+\nu_{\partial Y}(f)
		+
		\frac{-\|dF\|_{\tr,g}
		+\langle\mathbf n,\nabla^{\partial M}(\psi\circ F)\rangle
		-\Psi\cos J-\delta}{\sin J}.
	\end{aligned}
\end{equation}

Let $m\geq2$ and let $P\colon\sph^m\setminus\{\pm\}\to\sph^{m-1}$ be the projection onto the equator. In polar coordinates $g_{\sph^m}=d\psi^2+\sin^2 \psi\,g_{\sph^{m-1}}$ with $\psi$ from \eqref{eq:distance},  we have 
\[
	\qquad P(\psi,z)=z.
\]
Define
\begin{equation}\label{eq:boundarymap}
    \hat F=P\circ F|_{\partial Y}.
\end{equation}
The image $F(\partial Y)$ stays a positive distance from the poles $\{\pm\}$, so $\hat F$ is Lipschitz on $\partial Y$ and smooth on its regular part.

With the notation of  Lemma \ref{lemma:variation}, we define
\[
K:=S\cap\partial M.
\]
The orientation of $M$ (resp. $\Omega$)  induces an  orientation on
$\partial M$ (resp. $Y_{\mathrm{reg}}$). The orientation of $Y$ in turn induces an orientation of  $\partial Y_{\mathrm{reg}}$. We orient the equator of $\sph^m$ as the boundary
of the hemisphere containing $\{+\}$. 

If $K$ is nonempty, Lemma~\ref{lemma:variation}(i)
and the fact that $\hat F$ is Lipschitz give
\[
\dim_{\mathcal H}\hat F(K)\leq m-4<m-2.
\]
Consequently, $\mathbb S^{m-1}\setminus\hat F(K)$ is connected.
The restriction
\[
\hat F\colon  \partial Y_{\mathrm{reg}}\setminus\hat F^{-1}(\hat F(K))
\longrightarrow\mathbb S^{m-1}\setminus\hat F(K)
\]
is proper.
Define $\deg(\hat F)$ to be the degree of this proper smooth map.
For $K=\emptyset$, this is the usual degree of the map on $\partial Y_{\mathrm{reg}}$.

\begin{lemma}\label{lemma:FcomposeProjection}
	With the above orientations, we have $\deg(\hat F)=\deg(F)$. Moreover, on $\partial Y_{\mathrm{reg}}$,  we have
\begin{equation}\label{eq:meridional-trace-estimate}
\frac{-\|dF\|_{\operatorname{tr},g}
 +\langle\mathbf n,\nabla^{\partial M}(\psi\circ F)\rangle}
 {\sin J}
\leq-\|d\hat F\|_{\operatorname{tr},g}.
\end{equation}
\end{lemma}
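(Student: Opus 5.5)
The proof has two independent parts: a pointwise linear-algebra estimate at each point of $\partial Y_{\mathrm{reg}}$, and a topological degree computation.

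\textbf{The pointwise inequality.} Fix $x\in\partial Y_{\mathrm{reg}}$ and work in the orthogonal splitting
\[
T_x\partial M=\R\mathbf n\oplus T_x\partial Y .
\]
At $F(x)$, write the tangent space of $\sph^m$ in polar coordinates $(\psi,z)$ as $\R\partial_\psi\oplus T_z\sph^{m-1}$. On the second factor the round metric is $\sin^2\psi\, g_{\sph^{m-1}}$, and $dP$ is the identity in $z$.

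The contact angle is $J=\psi\circ F$ by Lemma~\ref{lemma:variation}(iii), so
\[
\|d\hat F\|_{\tr,g}=\frac{1}{\sin J}\,\bigl\|\pi\circ dF|_{T\partial Y}\bigr\|_{\tr}.
\]
Here $\pi$ is orthogonal projection onto $\partial_\psi^\perp$, and the trace norm is taken with the round metric. The quantity $\langle\mathbf n,\nabla^{\partial M}(\psi\circ F)\rangle$ is the $\partial_\psi$-component of $dF(\mathbf n)$.

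The claim therefore reduces to the following fact for a linear map $L\colon V=\R e\oplus W\to \R\epsilon\oplus E$:
\[
\|L\|_{\tr}\;\geq\;\langle L e,\epsilon\rangle+\|\pi L|_W\|_{\tr}.
\]
To prove it, use the duality $\|L\|_{\tr}=\sup_{\|Q\|\le1}\tr(Q^*L)$. Choose $Q$ with $Qe=\epsilon$, and on $W$ let $Q$ be the partial isometry into $E$ from the polar decomposition of $\pi L|_W$. This $Q$ has operator norm $1$, since it maps orthogonal pieces isometrically to orthogonal pieces. It gives
\[
\tr(Q^*L)=\langle Le,\epsilon\rangle+\|\pi L|_W\|_{\tr}.
\]
Multiplying through by $1/\sin J$ yields \eqref{eq:meridional-trace-estimate}. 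I expect this step to be routine.

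\textbf{The degree equality.} This is the main obstacle, because of the singular set $K$.

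Let $\Sigma=\overline{\partial^*\Omega\cap\partial M}=\Omega\cap\partial M$. It contains a neighborhood of $B_-$ and avoids $B_+$, by Lemma~\ref{lemma:existence}. Its topological boundary in $\partial M$ is $\partial Y=\partial Y_{\mathrm{reg}}\cup K$.

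Pick a regular value $z\in\sph^{m-1}\setminus\hat F(K)$ of $\hat F|_{\partial Y_{\mathrm{reg}}}$, chosen generic so that $F$ is also transverse to the meridian arc
\[
\gamma_z=\{(\psi,z):0\leq\psi\leq\pi\},
\]
which joins $-$ to $+$. Then $F^{-1}(\gamma_z)$ is a compact oriented $1$-manifold in $\partial M\setminus B_+$ near the relevant region, with endpoints in $B_\pm$ (after slightly shrinking $B_\pm$ to regular sets). Intersect it with $\Sigma$. Since $F^{-1}(\gamma_z)$ misses $K$ (because $z\notin\hat F(K)$), each arc meets $\partial\Sigma$ only at regular points, and transversally. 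These boundary points are exactly $\hat F^{-1}(z)$.

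Counting signed endpoints of $F^{-1}(\gamma_z)\cap\Sigma$ then gives
\[
\#_{\pm}\hat F^{-1}(z)=\#_{\pm}\bigl(F^{-1}(\gamma_z)\cap B_-\text{ endpoints}\bigr)=\deg F .
\]
The last equality is because $F^{-1}(-)$ counted with sign equals $\deg F$, using a regular value just off the pole and the collapse construction of Lemma~\ref{lm:collapse}.

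The orientation conventions — the equator oriented as the boundary of the $+$ hemisphere, and $\partial Y$ oriented as the boundary of $Y_{\mathrm{reg}}$, equivalently of $\Sigma$ in $\partial M$ — have to be matched carefully to get the sign right. The set $\hat F(K)$ has dimension at most $m-4$, so it does not disconnect $\sph^{m-1}$. Hence this count is independent of $z$, and it computes the degree defined above.
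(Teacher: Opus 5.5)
Your proposal is correct; the pointwise estimate follows the paper's route, and the degree equality takes a different, Poincar\'e-dual route.

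\textbf{Pointwise estimate.} This is the paper's argument in dual form. The paper picks orthonormal bases $u_1,\dots,u_{m-1}$ of $\{\mathbf n\}^\perp$ and $v_1,\dots,v_{m-1}$ of $\{e\}^\perp$ that realize $\|\pi\circ dF|_{\{\mathbf n\}^\perp}\|_{\tr}$. It completes them by $u_m=\mathbf n$, $v_m=e$ and uses $\|A\|_{\tr}\geq\sum_i|\langle Au_i,v_i\rangle|$. Your contraction $Q$ is exactly the map $u_i\mapsto v_i$, so the two arguments coincide.

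\textbf{Degree equality.} The paper only asserts that it follows from Stokes' theorem applied to suitable forms. The natural realization goes as follows. Take a top form $\omega$ on $\sph^{m-1}$ of integral one, supported in $\sph^{m-1}\setminus\hat F(K)$. Take $\tilde\beta(\psi)$ vanishing near $\psi=0$ and equal to $1$ on $\psi(F(\partial Y))$. Then integrate $F^*d\bigl(\tilde\beta(\psi)P^*\omega\bigr)$ over the wetted region $\Sigma$. This form vanishes near $K$ by continuity of $P\circ F$, so Stokes applies on the smooth part of $\partial\Sigma$.

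Your meridian-arc count is the Poincar\'e-dual version of this. Its advantage is that the mechanism is visible: each preimage arc starting in $F^{-1}(\{\psi\le\epsilon\})\subset V_-\subset\Sigma$ must leave $\Sigma$ through $\partial Y_{\mathrm{reg}}$, and $K$ is avoided simply by choosing $z\notin\hat F(K)$. Its cost is that you need several genericity conditions at once:
\begin{itemize}
\item $z$ regular for $\hat F$ and for $P\circ F$;
\item $(\epsilon,z)$ and $(\pi-\epsilon,z)$ regular for $F$;
\item compactness of the truncated preimage, which comes from precompactness of $\partial M\setminus B_+$.
\end{itemize}
You also need orientation bookkeeping for preimage manifolds. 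The form version needs only a support condition, and the sign falls out of Stokes. The transversality of the arcs to $\partial Y_{\mathrm{reg}}$, which you assert without proof, does hold. Since $d(P\circ F)|_{T\partial Y}$ is an isomorphism, $\ker d(P\circ F)\cap T\partial Y=0$.

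\textbf{Orientation correction.} The two orientations of $\partial Y_{\mathrm{reg}}$ that you call equivalent are in fact opposite. $Y_{\mathrm{reg}}$ is oriented by $\nu_Y$ pointing out of $\Omega$, and $\Sigma\subset\partial M$ by $\nu$, which is also outward for $\Omega$ along $\Sigma$. They are two faces of $\partial\Omega$ meeting along $\partial Y$, so they induce opposite orientations on the common edge.

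Consequently, with the $\partial\Sigma$ orientation your count gives $-\deg F$. To test this, take $M$ the unit ball, $F=\mathrm{id}$, and $\Sigma$ the closed hemisphere around $-$. Then $\partial\Sigma$ carries the boundary orientation of that hemisphere, which is opposite to the orientation of the equator as the boundary of the hemisphere containing $+$. It is precisely this reversal that produces $\deg\hat F=+\deg F$ for the paper's orientation of $\partial Y_{\mathrm{reg}}$. Only $\deg\hat F\neq0$ is used downstream, so nothing else is affected.
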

\begin{proof}
	For $p\in \partial Y_{\mathrm{reg}}$, define
\[
e:=\nabla^{\mathbb S^m}\psi\big|_{F(p)}.
\]
Let $\pi$ denote the orthogonal projection onto $\{e\}^\perp$ in
$T_{F(p)}\mathbb S^m$. Since
$T_p\partial Y_{\mathrm{reg}}=\{\mathbf n\}^\perp\subset T_p\partial M$, we may choose
orthonormal bases $u_1,\ldots,u_{m-1}$ of $\{\mathbf n\}^\perp$ and
$v_1,\ldots,v_{m-1}$ of $\{e\}^\perp$ such that 
\[ \|\pi\circ dF_p|_{\{\mathbf n\}^\perp}\|_{\operatorname{tr}}  = \sum_{i}^{m-1} \langle \pi\circ  dF_p (u_i), v_i\rangle.  \] Completing these bases by
$u_m=\mathbf n$ and $v_m=e$, the definition of the trace norm gives
\begin{align*}
\|dF_p\|_{\operatorname{tr}}
&\geq \sum_{i=1}^{m}
       |\langle dF_p(u_i),v_i\rangle|\\
&=|\langle dF_p(\mathbf n),e\rangle|
  +\|\pi\circ dF_p|_{\{\mathbf n\}^\perp}\|_{\operatorname{tr}}\\
&\geq
  \langle\mathbf n,\nabla^{\partial M}(\psi\circ F)\rangle
  +\|\pi\circ dF_p|_{\{\mathbf n\}^\perp}\|_{\operatorname{tr}}.
\end{align*}

Note that $dP(e)=0$ and  $dP|_V$ is
$1/\sin\psi$ times an isometry. Since
$J=\psi\circ F$ by Lemma~\ref{lemma:variation}(iii), we obtain
\[
\|\pi\circ dF_p|_W\|_{\operatorname{tr}}
=\sin J\,\|d\hat F_p\|_{\operatorname{tr}}.
\]
Consequently,
\[
\frac{-\|dF\|_{\operatorname{tr}}
+\langle\mathbf n,\nabla^{\partial M}(\psi\circ F)\rangle}
{\sin J}
\leq-\|d\hat F\|_{\operatorname{tr}}.
\]
	The identity $\deg(\hat F)=\deg(F)$ follows from a direct application of Stokes' theorem to suitable differential forms.
\end{proof}

By construction, $\Psi=0$ wherever $\psi\circ F\leq\pi/2$, and $\Psi\leq0$ everywhere. Hence
\[
	-\Psi\cos(\psi\circ F)=-\Psi \cos J \leq0\quad\text{on }\partial Y.
\]
Using Lemma~\ref{lemma:FcomposeProjection} and $0<\sin J\leq1$ in \eqref{eq:SY-boundary-preliminary}, we obtain
\[
	\begin{aligned}
		&H_{\partial Y}(g)-\frac{H_{\partial M}(g)}{\sin J}
		-(\cot J)H_Y
		+\frac{\langle\mathbf n,\nabla^{\partial M}(\psi\circ F)\rangle}{\sin J}\\
		&\quad\leq H_{\partial Y}(g)
		+\nu_{\partial Y}f
		-\|d\hat F\|_{\tr,g}-\delta.
	\end{aligned}
\]

To summarize, we have obtained the following spectral inequality on $Y$:

\begin{lemma}\label{lemma:2ndVariationImproved}
For every smooth function $\varphi$ with compact support in
$Y_{\mathrm{reg}}$,
\[
\begin{split}
0\leq{}&
\int_{Y_{\mathrm{reg}}}\rho\left[
|\nabla^Y\varphi|^2
+\left(\frac12\Sc^Y_{\mu,f}(g)
-\frac{\delta}{4}\right)\varphi^2
\right]\,d\mathcal H_g^m\\
&+\int_{\partial Y_{\mathrm{reg}}} \rho\left[
H_{\partial Y}(g)-\|d\hat F\|_{\operatorname{tr},g}
+\nu_{\partial Y}(f)-\delta
\right]\varphi^2\,d\mathcal H_g^{m-1}.
\end{split}
\]
Here $f=\log\rho$ and $\mu>m/(m+1)$.
\end{lemma}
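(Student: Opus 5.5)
The inequality is obtained by inserting the two pointwise estimates just derived into the stability inequality of Lemma~\ref{lemma:variation}(iv). Fix $\varphi$ smooth with compact support in $Y_{\mathrm{reg}}$. Since $\operatorname{supp}\varphi$ avoids the singular set $S$, every computation of this subsection holds pointwise on $\operatorname{supp}\varphi$ and on $\operatorname{supp}\varphi\cap\partial Y_{\mathrm{reg}}$. Because $\rho=e^f>0$ and $\varphi^2\geq 0$, any pointwise upper bound for the coefficients of $\varphi^2$ in (iv) gives an upper bound for the whole right-hand side. That bound is then still $\geq 0$.

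The interior term is handled first. The coefficient of $\varphi^2$ in (iv) is
\[
-\operatorname{Ric}_g(\nu_Y,\nu_Y)-|A_Y|^2+\nabla_g^2f(\nu_Y,\nu_Y)-\nu_Y(\Psi).
\]
Using the Gauss equation, the splitting of $\Delta^M f$, and $H_Y=\Psi-\nu_Y(f)$ from Lemma~\ref{lemma:variation}(ii), we reach \eqref{eq:wscalarbound}. The quadratic term is then controlled by $|A_Y|^2\geq H_Y^2/m$ and completing the square with $\delta_0$. Next we use $-\nu_Y(\Psi)\leq|\nabla^M\Psi|$ together with the barrier estimate \eqref{eq:sy-barrier-estimate}. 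Since $\Sc_{\mu,f}(g)\geq\delta$, this bounds the coefficient by $\tfrac12\Sc^Y_{\mu,f}(g)-\tfrac\delta4$.

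Next the boundary term. Since $\sin J>0$ and $H_{\partial M}(g)\geq\|dF\|_{\tr,g}-\nu(f)+\delta$, we get \eqref{eq:bdrymeancomparison}. The angle decomposition $\nu=-(\cos J)\nu_Y+(\sin J)\nu_{\partial Y}$ turns the $f$-terms into $\nu_{\partial Y}(f)$, which gives \eqref{eq:SY-boundary-preliminary}. Two inputs then finish the boundary estimate:
\begin{enumerate}
\item Lemma~\ref{lemma:FcomposeProjection} replaces the trace-norm term by $-\|d\hat F\|_{\tr,g}$.
\item We have $-\Psi\cos J\leq 0$, because $\Psi=0$ on the neighborhood of $A=F^{-1}(\sph^{n-1}_-)$ (where $\cos J\geq0$) and $\Psi\leq 0$ elsewhere (where $\cos J<0$).
\end{enumerate}
Finally, $-\delta/\sin J\leq-\delta$ since $0<\sin J\leq1$. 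The boundary coefficient is therefore bounded by $H_{\partial Y}(g)+\nu_{\partial Y}(f)-\|d\hat F\|_{\tr,g}-\delta$.

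Summing the two bounds gives the claimed inequality. The one point needing care is the sign argument $-\Psi\cos J\leq0$. It relies on property (4) of Lemma~\ref{lemma:barrier}, namely that $\Psi$ vanishes on a neighborhood of all of $A$ and not only near $B_-$, combined with $J=\psi\circ F$ from Lemma~\ref{lemma:variation}(iii). The remaining steps are direct substitutions. The support restriction means no integrability issues near $S$ arise at this stage.
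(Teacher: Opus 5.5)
Your proposal is correct and follows the paper's own derivation essentially step for step. In the interior you use the Gauss equation and the Laplacian splitting with $H_Y=\Psi-\nu_Y(f)$, complete the square to get $\delta_0\Psi^2$, and absorb it with the barrier estimate. On the boundary you use the angle decomposition, Lemma~\ref{lemma:FcomposeProjection}, the sign $-\Psi\cos J\leq 0$ from property (4) of Lemma~\ref{lemma:barrier} together with $J=\psi\circ F$, and $0<\sin J\leq 1$.
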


If $m=2$, then $Y$ is a compact smooth surface by
Lemmas~\ref{lemma:existence} and~\ref{lemma:variation}(i).
Since $\deg(\hat F)=\deg(F)\neq0$, some connected component
of $Y$ has a boundary map of nonzero degree. On this component,
Lemma~\ref{lemma:2ndVariationImproved} implies the hypothesis
of Lemma~\ref{lemma:n=2}, with $\delta$ replaced by $\delta/4$. This gives a contradiction.
Thus, in the remaining inductive step, we assume $m\geq3$.

\subsection{Volume and singular-set estimates}
\label{subsec:singular-set-inputs}

We collect the estimates used in the conformal construction below. Their proofs are given in Section~\ref{sec:singular-set-estimates}.
Recall $Y_{\mathrm{reg}}=Y\setminus S$ is the regular part of $Y$ and $\partial Y_{\mathrm{reg}}=\partial Y\setminus S$.
Throughout Subsections~\ref{subsec:singular-set-inputs}--\ref{subsec:spectral-to-pointwise},
functions in $C_c^\infty(Y_{\mathrm{reg}})$ are smooth up to the regular boundary,
 Integrals over $Y$ and $\partial Y$
are taken over their regular parts.

Let $d_Y$ be the length metric induced by $g$ on $Y$, with
$d_Y(x,y)=+\infty$ if no rectifiable curve in $Y$ joins $x$ to $y$. We 
write $B_r^Y(x)$ for intrinsic balls in $Y$, $B_r^M(x)$ for ambient balls,
and $N_r^M(E)$ for the least number of open ambient $r$-balls
needed to cover $E$. Compactness of $Y$ implies that $d_Y$ is
lower semicontinuous in the ambient topology. In particular,
the intrinsic balls are Borel sets.

\begin{definition}[Assouad dimension]\label{def:Assouad}
Let $E$ be a subset of a metric space.
Its \emph{Assouad dimension} $\dim_{\mathcal A}E$ is the infimum
of the exponents $q\geq0$ for which there exists a constant $C_q$
such that
\[
N_r\bigl(E\cap B_R(x)\bigr)
\leq C_q\left(\frac Rr\right)^q
\qquad\text{for all }x\in E,\quad 0<r<R.
\]
Here $N_r$ denotes the covering number by open $r$-balls, and
$C_q$ is independent of $x$, $r$, and $R$.
Equivalently, the cardinality of every $r$-separated subset of
$E\cap B_R(x)$ satisfies the same bound, after changing the
constant.
\end{definition}

Assouad dimension controls local covering numbers uniformly over
all locations and scales. For totally bounded sets,
\[
\dim_{\mathcal H}E
\leq\overbar{\dim}_{\mathcal M}E
\leq\dim_{\mathcal A}E,
\]
where $\overbar{\dim}_{\mathcal  M}$ denotes upper Minkowski
dimension. A metric space has finite Assouad dimension if and
only if it is doubling  \cite{Fraser_2020}.
These dimensions can differ even for the compact set
$E=\{0\}\cup\{1/j:j\in\mathbb N\}$:
\[
\dim_{\mathcal H}E=0,\qquad
\overline{\dim}_{\mathcal M}E=\frac12,\qquad
\dim_{\mathcal A}E=1.
\]
Indeed, the points with $N\leq j\leq2N$ have successive gaps
comparable to $N^{-2}$ and lie in an interval of length comparable
to $N^{-1}$. At these scales, local covering numbers are comparable
to $R/r$, despite the smaller global Minkowski dimension.

Since $\rho$ is smooth and positive near
the compact set $Y$, the length metrics on $Y$ induced by $g$ and
$\widetilde g=\rho^{2/(n-1)}g$ are bi-Lipschitz equivalent.
They therefore give the same Assouad dimensions.

Lemma~\ref{prop:localVolume} constants $c,C,r_0>0$
such that, for every $x\in Y$ and $0<r<r_0$,
\[
 cr^m\leq\mathcal H_g^m(B_r^Y(x))\leq Cr^m,
 \qquad
 \mathcal H_g^{m-1}(B_r^Y(x)\cap\partial Y)\leq Cr^{m-1}.
\]

For $m\leq3$, Lemma~\ref{lemma:variation} gives $S=\emptyset$.
Assume $m\geq4$. For every $\varepsilon>0$, there are
$C_\varepsilon,R_\varepsilon>0$ such that the estimates below hold
for all $0<r<R<R_\varepsilon$, uniformly in the centers and separated
sets.

\smallskip
\noindent\emph{Intrinsic packing.}
For $s\in S$ and any $r$-separated set $A\subset S$, meaning that
$d_Y(a,b)\geq r$ for distinct $a,b\in A$, one has
\begin{equation}\label{eq:intrinsic-packing-summary}
 \#\bigl(A\cap B_R^Y(s)\bigr)
 \leq C_\varepsilon(R/r)^{m-4+\varepsilon}
\end{equation}
(see Proposition~\ref{prop:Assouad}).
Consequently, $\dim_{\mathcal A}(S,d_Y)\leq m-4$.

\smallskip
\noindent\emph{Tubular volume and boundary area.}
For every $x\in Y$,
\begin{align}
 \mathcal H_g^m\bigl(B_R^Y(x)
 \cap\{y:d_Y(y,S)\leq r\}\bigr)
 &\leq C_\varepsilon R^{m-4+\varepsilon}r^{4-\varepsilon},
       \label{eq:interior-tubular-summary}\\
 \mathcal H_g^{m-1}\bigl(B_R^Y(x)
 \cap\{y:d_Y(y,S)\leq r\}\cap\partial Y\bigr)
 &\leq C_\varepsilon R^{m-4+\varepsilon}r^{3-\varepsilon}.
       \label{eq:boundary-tubular-summary}
\end{align}
These estimates follow from \eqref{eq:Assouad-tubular} and
Corollary~\ref{cor:boundary-tubular}. The extension to arbitrary
centers is given at the end of Section~\ref{sec:singular-set-estimates}.

\subsection{Conformal blow-up}
Let $Y$ be as in the previous subsection, with singular set $S$ and
regular part $Y_{\mathrm{reg}}=Y\setminus S$. To continue the
dimension-descent argument, we must first equip $Y_{\mathrm{reg}}$
with a complete metric by conformal blow-up. In this subsection, we
derive the conformal transformation formulas for weighted scalar
curvature and weighted mean curvature and determine how the inequality
in Lemma~\ref{lemma:2ndVariationImproved} transforms under such a
blow-up. The blow-up function will be constructed in the next
subsection.

As established above, we assume $m\geq3$. Decreasing $\mu$ increases $\Sc_{\mu,f}(g)$ (see Definition \ref{def:weightedscalar}) and therefore preserves
the inequality of Lemma~\ref{lemma:2ndVariationImproved} on the
already chosen minimizer. Replacing $\mu$ by a smaller value if
necessary, we may thus assume that
\begin{equation}\label{eq:conformal-mu-range}
\frac{m}{m+1}<\mu
\leq\frac{m}{m+1}+\frac{9}{(m+1)(m-2)^2}
=\frac{(m-2)(m-3)+3}{(m-2)^2}.
\end{equation}

Let $W\geq1$ be a smooth function on $Y_{\mathrm{reg}}$, and define
\[
\hat g=W^{4/3}g,\qquad
\hat\rho=\rho W^{-2(m-2)/3},\qquad
\hat f=\log\hat\rho=f-\frac23(m-2)\log W.
\]
A direct computation gives
\begin{equation*}
	|\nabla^{\hat g}\varphi|_{\hat g}^2=W^{-4/3}|\nabla \varphi|^2.
\end{equation*}
and
\begin{equation*}
	\Sc(\hat g)=W^{-4/3}\left(\Sc(g)-\frac 4 3(m-1)\frac{\Delta W}{W}+(m-1)\Big(\frac 4 3-\frac{4}{9}(m-2)\Big)\frac{|\nabla W|^2}{W^2}\right).
\end{equation*}
Moreover, 
\begin{equation}
\begin{split}
		\Delta^{\hat g}\hat f=&W^{- 4/3}\left(\Delta f-\frac 2 3(m-2)\frac{\Delta W}{W}+\frac 2 3(m-2)\langle\nabla f,\frac{\nabla W}{W}\rangle\right.\\
		&+ \left. \frac 2 3(m-2)\Big( 1- \frac 2 3(m-2)\Big)\frac{|\nabla W|^2}{W^2}\right)
\end{split}
\end{equation}
and
\begin{equation}
	|\nabla^{\hat g}\hat f|_{\hat g}^2=W^{-4/3}\left(|\nabla f|^2+\frac 4 9(m-2)^2\frac{|\nabla W|^2}{|W^2|}-\frac 4 3(m-2)\langle\nabla f,\frac{\nabla W}{W}\rangle\right).
\end{equation}

Combine the equations above, we obtain
\begin{equation}\label{eq:weightedSc}
	\begin{split}
		&\Sc_{\mu,\hat f}(\hat g)=\Sc(\hat g)-2\Delta^{\hat g}(\hat f)-\mu|\nabla^{\hat g}(\hat f)|_{\hat g}^2\\
		=&~W^{-4/3}\Big(\Sc_{\mu,f}(g)-\frac 4 3\frac{\Delta W}{W}+\frac 4 3(m-2)(\mu -1)\langle\nabla f,\frac{\nabla W}{W}\rangle +B\frac{|\nabla W|^2}{W^2}\Big).
	\end{split}
\end{equation}
where
\[
B:=\frac49\left((m-2)(m-3)+3-\mu(m-2)^2\right).
\]
The upper bound in \eqref{eq:conformal-mu-range} is precisely
the condition $B\geq0$.

Recall that $\nu_{\partial Y}$ is the outward unit normal of
$\partial Y_{\mathrm{reg}}$ in $(Y_{\mathrm{reg}},g)$, and let
\[
\hat\nu_{\partial Y}=W^{-2/3}\nu_{\partial Y}
\]
be the corresponding unit normal for $\hat g$. We have 
\begin{equation*}
	H_{\partial Y}(\hat g)=W^{-2/ 3}\left(H_{\partial Y}(g)+\frac 2 3(m-1)\frac{\nu_{\partial Y}(W)}{W}\right),
\end{equation*}
\[
	\hat \nu_{\partial Y}(\hat f)=W^{-2/ 3}\left( \nu_{\partial Y}(f)-\frac 2 3(m-2)\frac{\nu_{\partial Y}(W)}{W}\right), 
\]
\[  \|d\hat F\|_{\tr,\hat g}=W^{- 2/3}\|d\hat F\|_{\tr,g}\,.\]
It follows that 
\begin{equation}
	\begin{split}
		&H_{\partial Y}(\hat g)-\|d\hat F\|_{\tr,\hat g}+\nu_{\partial Y}(\log\hat\rho)\\
		=&~
		W^{-2/3}\Big(H_{\partial Y}(g)-\|d\hat F\|_{\tr,g}+\nu_{\partial Y}(f)+\frac 2 3\frac{\nu_{\partial Y}(W)}{W}	
		\Big)
	\end{split}
\end{equation}

 The weighted volume and boundary area measures satisfy
\begin{equation}
	\hat\rho d\mathcal H^{m}_{\hat g}=\rho W^{-\frac 2 3(m-2)}\cdot W^{\frac 2 3 m}d\mathcal H^{m}_{ g}=W^{4/3}\rho d\mathcal H^{m}_{ g},
\end{equation}
and
\begin{equation}
	\hat\rho d\mathcal H^{m-1}_{\hat g}=\rho W^{- \frac 2 3 (m-2)}\cdot W^{\frac 2 3 (m-1)}d\mathcal H^{m-1}_{ g}=W^{2/3}\rho d\mathcal H^{m-1}_{ g}.
\end{equation}

The above discussion gives the following consequence of
Lemma~\ref{lemma:2ndVariationImproved}.

\begin{lemma}\label{lemma:conformalWeightedSpectral}
For every $\varphi\in C_c^\infty(Y_{\mathrm{reg}})$, we have 
\[
\begin{aligned}
0\leq{}&
\int_{Y}\hat\rho\left(
|\nabla^{\hat g}\varphi|_{\hat g}^2
+\big(
\frac12\Sc_{\mu,\hat f}(\hat g)
-\frac{\delta}{4}W^{-4/3}
\big)\varphi^2
\right)\,d\mathcal H_{\hat g}^m\\
&+\int_{\partial Y}\hat\rho\left(
H_{\partial Y}(\hat g)
-\|d\hat F\|_{\operatorname{tr},\hat g}
+\hat\nu_{\partial Y}(\hat f)
-\delta W^{-2/3}
\right)\varphi^2\,d\mathcal H_{\hat g}^{m-1}\\
&-\frac23\int_{Y_{\mathrm{reg}}}\hat\rho W^{-4/3}\left(
-\frac{\Delta W}{W}
+(m-2)(\mu-1)
\langle\nabla f,\frac{\nabla W}{W}\rangle
\right)\varphi^2\,d\mathcal H_{\hat g}^m\\
&-\frac23\int_{\partial Y_{\mathrm{reg}}}\hat\rho W^{-2/3}
\frac{\nu_{\partial Y}(W)}{W}
\varphi^2\,d\mathcal H_{\hat g}^{m-1}.
\end{aligned}
\]
Here $\mu$ satisfies \eqref{eq:conformal-mu-range},
$\hat g=W^{4/3}g$, $\hat\rho=\rho W^{-2(m-2)/3}$,
and $\hat f=\log\hat\rho$.
\end{lemma}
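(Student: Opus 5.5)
The plan is to start from the inequality of Lemma~\ref{lemma:2ndVariationImproved} applied to the same test function $\varphi\in C_c^\infty(Y_{\mathrm{reg}})$. Nothing new is needed beyond rewriting each integrand in terms of the hatted quantities. The conversion is an identity-level substitution, so the whole proof consists of matching terms.

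\emph{Interior term.} Use the measure identity $\hat\rho\,d\mathcal H^m_{\hat g}=W^{4/3}\rho\,d\mathcal H^m_g$, together with $|\nabla^{\hat g}\varphi|^2_{\hat g}=W^{-4/3}|\nabla\varphi|^2$. These give
\[
\int_Y\rho|\nabla^Y\varphi|^2\,d\mathcal H^m_g=\int_Y\hat\rho|\nabla^{\hat g}\varphi|_{\hat g}^2\,d\mathcal H^m_{\hat g}.
\]
Next solve \eqref{eq:weightedSc} for $\Sc^Y_{\mu,f}(g)$:
\[
\frac12\Sc^Y_{\mu,f}(g)=\frac12W^{4/3}\Sc_{\mu,\hat f}(\hat g)+\frac23\frac{\Delta W}{W}-\frac23(m-2)(\mu-1)\Big\langle\nabla f,\frac{\nabla W}{W}\Big\rangle-\frac B2\frac{|\nabla W|^2}{W^2}.
\]
Since $B\geq0$ by \eqref{eq:conformal-mu-range}, dropping the last term only increases the right-hand side, which is the direction compatible with the inequality $0\leq\cdots$. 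Multiplying by $\rho\,d\mathcal H^m_g=W^{-4/3}\hat\rho\,d\mathcal H^m_{\hat g}$ produces three pieces. The first is $\frac12\Sc_{\mu,\hat f}(\hat g)$ in the hatted measure. The second is the term $-\frac{\delta}{4}W^{-4/3}$. The third is exactly the interior correction integral with the factor $-\frac23\hat\rho W^{-4/3}(\cdots)$.

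\emph{Boundary term.} Use $\hat\rho\,d\mathcal H^{m-1}_{\hat g}=W^{2/3}\rho\,d\mathcal H^{m-1}_g$ and the displayed identity for $H_{\partial Y}(\hat g)-\|d\hat F\|_{\tr,\hat g}+\hat\nu_{\partial Y}(\hat f)$. The latter shows that
\[
H_{\partial Y}(g)-\|d\hat F\|_{\tr,g}+\nu_{\partial Y}(f)
\]
equals $W^{2/3}$ times the hatted weighted mean-curvature expression, minus $\frac23\nu_{\partial Y}(W)/W$. The $-\delta$ term becomes $-\delta W^{-2/3}$ after the change of measure. The $\nu_{\partial Y}(W)/W$ piece becomes the last boundary correction integral.

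The only delicate point is bookkeeping. One must check the signs and powers of $W$ in the mean-curvature identity: the coefficient $\frac23(m-1)-\frac23(m-2)=\frac23$. One must also check that the discarded term $-\frac B2|\nabla W|^2/W^2$ has the favorable sign. Since $\varphi$ has compact support in $Y_{\mathrm{reg}}$ and $W\geq1$ is smooth there, all integrals are finite and no integration by parts is required. This concludes the proof.
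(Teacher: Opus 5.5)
Your proposal is correct and is the same derivation the paper uses. You substitute the conformal identity \eqref{eq:weightedSc} and the boundary mean-curvature identity into Lemma~\ref{lemma:2ndVariationImproved}, convert measures via $\hat\rho\,d\mathcal H^m_{\hat g}=W^{4/3}\rho\,d\mathcal H^m_g$ and $\hat\rho\,d\mathcal H^{m-1}_{\hat g}=W^{2/3}\rho\,d\mathcal H^{m-1}_g$, and drop $-\frac B2|\nabla W|^2/W^2$ using $B\geq0$ from \eqref{eq:conformal-mu-range}; your signs and powers of $W$, including the net boundary coefficient $\frac23$, all check out.
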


\subsection{Construction of the blow-up function via Green function}
The preceding subsection leaves the conformal factor $W$ to be
chosen. We construct it using the Green function, with growth and
derivative estimates that ensure completeness and control the
interior and boundary terms in the conformal spectral inequality.
If $S=\emptyset$, we take $W=1$ and the construction of this subsection is not needed.
Throughout this subsection, we therefore assume $S\neq\emptyset$,
so that $m\geq4$.

  The third term in the inequality  of Lemma~\ref{lemma:conformalWeightedSpectral} leads to the following  operator:
\begin{equation}
	\mathcal Lu\coloneqq -\Delta^Y u+ (m-2)(\mu-1)\langle\nabla^Y f,\nabla u\rangle.
\end{equation}
This is a smooth linear elliptic operator on $Y_{\mathrm{reg}} = Y \backslash S$.
Since $f = \log \rho$ is smooth on an ambient neighborhood of the compact
set $Y$, the weight
\[
\omega:=\rho^{-(m-2)(\mu-1)}
\]
is bounded above and below by positive constants.
 Moreover, we have
\begin{equation}
\mathcal Lu=-\omega^{-1}\operatorname{div}_Y(\omega\nabla^Yu).
\end{equation}

We will construct $W\geq1$ so that $\hat g=W^{4/3}g$ is complete
and the quantities
\[
W^{-4/3}\left(
\frac{\delta}{4}+\frac23\frac{\mathcal LW}{W}
\right),
\qquad
W^{-2/3}\left(
\delta+\frac23\frac{\nu_{\partial Y}(W)}{W}
\right)
\]
have uniform positive lower bounds. Throughout this subsection, $A\lesssim B$ means $A\leq \alpha B$,
where $\alpha>0$ may change from line to line and depends only on
the fixed geometric data and parameters.

\begin{lemma}\label{lemma:Green}
The operator  $\mathcal L+1$ subject to the Neumann boundary condition  has a nonnegative
symmetric Green kernel $G(x,y)$ with respect to the measure
$\omega\,d\mathcal H_g^m$ such that $G(x,y)$ is smooth off the diagonal
on the regular part $Y_{\mathrm{reg}}$. 
For an interior pole $x\in Y_{\mathrm{reg}}\setminus \partial Y_{\mathrm{reg}}$, it satisfies
\[
(\mathcal L_y+1)G(x,y)=\delta_x \textup{ and }
\nu_{\partial Y}(G(x,\cdot))=0 \textup{ on } \partial Y_{\mathrm{reg}},
\]
where the equation is understood in the weighted weak sense:
\[
\int_{Y_{\mathrm{reg}}}\omega(y)\left[
\langle\nabla_yG(x,y),\nabla\varphi(y)\rangle
+G(x,y)\varphi(y)
\right]\,d\mathcal H_g^m(y)=\varphi(x)
\]
for every $\varphi\in C_c^\infty(\partial Y_{\mathrm{reg}})$.

There exist constants $C_1,C_2,r_0>0$ such that 
    \begin{enumerate}
        \item For  $x,y\in Y_{\mathrm{reg}} $, we have 
\[
0\leq G(x,y)\leq C_1d_Y(x,y)^{2-m}.
\]    
\item For every $x\in Y_{\mathrm{reg}}$ and $0<r<r_0$,
\[
\int_{B_r^Y(x)\cap Y_{\mathrm{reg}}}
G(x,y)\omega(y)\,d\mathcal H_g^m(y)\geq C_2r^2.
\]
\end{enumerate}
The second estimate also holds without the factor $\omega(y)$,
after  choosing a different constant $C_2$.
\end{lemma}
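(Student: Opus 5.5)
I will construct $G$ as the integral kernel of the resolvent $(\mathcal L+1)^{-1}$ of a Dirichlet form. I then derive the pointwise bound from a Moser iteration against the volume estimates of Lemma~\ref{prop:localVolume}, and the integral lower bound from a local comparison with the kernel on a small ball.

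\emph{Construction.} Consider on $L^2(Y_{\mathrm{reg}},\omega\,d\mathcal H^m_g)$ the form
\[
\mathcal E(u,v)=\int_{Y_{\mathrm{reg}}}\omega\bigl(\langle\nabla u,\nabla v\rangle+uv\bigr)\,d\mathcal H^m_g .
\]
Its domain is the closure $H^1$ of $C_c^\infty(Y_{\mathrm{reg}})$, with functions smooth up to the regular boundary and otherwise free there, so the Neumann condition is natural. Since $\omega$ is bounded above and below, $\mathcal E$ is a closed, symmetric, coercive and Markovian form. The associated operator is $\mathcal L+1$, with natural condition $\nu_{\partial Y}u=0$ on $\partial Y_{\mathrm{reg}}$, because $\mathcal Lu=-\omega^{-1}\operatorname{div}_Y(\omega\nabla u)$.

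The resolvent $R=(\mathcal L+1)^{-1}$ is positivity preserving (Beurling--Deny) and symmetric with respect to $\omega\,d\mathcal H^m_g$. Once we show $R\colon L^1\to L^\infty_{\mathrm{loc}}$ off the diagonal, the kernel $G(x,y)\ge0$ exists. I would write $G=\int_0^\infty e^{-t}p_t\,dt$, where $p_t$ is the heat kernel of the form. Local elliptic regularity near interior points and near points of $\partial Y_{\mathrm{reg}}$ (Neumann) gives smoothness of $G$ off the diagonal on $Y_{\mathrm{reg}}$ and the weak identity stated in the lemma.

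\emph{Upper bound (1).} Here the singular set must be handled. Because $\mathcal H^{m-2}(S)=0$ (indeed codimension $\ge4$), $C_c^\infty(Y_{\mathrm{reg}})$ is dense in $H^1(Y)$, so $S$ is removable for the form. Thus we are really working on the metric measure space $(Y,d_Y,\mathcal H^m_g)$. The key inputs are:
\begin{itemize}
\item \emph{Volume doubling}, which is exactly Lemma~\ref{prop:localVolume}: $cr^m\le\mathcal H^m(B^Y_r)\le Cr^m$ for $r<r_0$.
\item \emph{A local Sobolev (or scale-invariant Poincaré) inequality} on $Y$, including near $\partial Y$ and near $S$.
\end{itemize}
For the Sobolev inequality I would use the Michael--Simon Sobolev inequality for the stationary varifold $Y$ in $M$. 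It has bounded mean curvature away from where $\Psi$ blows up, and $\overline\Omega\subset U$ is compact, so $\Psi$ is bounded near $Y$. In its capillary/free-boundary version, the inequality holds for functions not vanishing on $\partial Y$, since the contact angle is bounded away from $0,\pi$ by Lemma~\ref{lemma:existence}.

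With doubling and Sobolev in hand, Grigor'yan--Saloff-Coste theory gives Gaussian bounds $p_t(x,y)\lesssim V(x,\sqrt t)^{-1}e^{-d^2/Ct}$ for $t<r_0^2$. For large $t$ I would use compactness and the $e^{-t}$ factor. Integrating in $t$ yields $G(x,y)\lesssim d_Y(x,y)^{2-m}$ for $m\ge3$; at large distances the bound holds because $Y$ is compact and $d_Y$ is bounded on components, while $G$ vanishes between different components.

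\emph{Lower bound (2).} Test the weak identity with a cutoff $\varphi$ equal to $1$ on $B^Y_{r/2}(x)$, supported in $B^Y_r(x)$, with $|\nabla\varphi|\lesssim r^{-1}$. This gives
\[
1=\int\omega\langle\nabla G,\nabla\varphi\rangle+\int\omega G\varphi .
\]
To bound the gradient term I would instead use the near-diagonal lower bound $p_t(x,y)\gtrsim V(x,\sqrt t)^{-1}$ for $d(x,y)\le\sqrt t$, which also follows from the parabolic Harnack inequality. This gives $G(x,y)\gtrsim d^{2-m}$ on $B^Y_r(x)$ for $r<r_0$. Integrating against the volume lower bound yields $\int_{B_r}G\omega\gtrsim\sum_k 2^{-k(2-m)}r^{2-m}\cdot(2^{-k}r)^m\gtrsim r^2$. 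The version without $\omega$ follows from $\omega\simeq1$.

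\emph{Main obstacle.} The hard step is establishing the uniform local Sobolev/Poincaré inequality with Neumann boundary, up to the boundary singularities $K=S\cap\partial M$. I would first try the capillary Michael--Simon inequality together with the $H^1$-density across $S$ coming from its capacity-zero property.
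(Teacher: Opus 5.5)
Your construction of $G$ through the Dirichlet form and your upper bound (1) are sound and parallel the paper. The Sobolev inequality \eqref{eq:Assouad-Sobolev} (Michael--Simon plus the trace estimate), together with the volume bounds of Lemma~\ref{prop:localVolume}, gives a Gaussian \emph{upper} bound for the heat kernel. The paper gets there by Sobolev $\Rightarrow$ Nash $\Rightarrow$ ultracontractivity $\Rightarrow$ Gaussian upper bound rather than through relative Faber--Krahn, and integrating in $t$ gives $G\lesssim d_Y^{2-m}$.

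The gap is in (2). You derive it from a pointwise near-diagonal lower bound $p_t(x,y)\gtrsim V(x,\sqrt t)^{-1}$, which you attribute to the parabolic Harnack inequality. By the Grigor'yan--Saloff-Coste theorem, that Harnack inequality (equivalently, two-sided Gaussian bounds) requires, besides doubling, a scale-invariant Neumann Poincar\'e inequality on intrinsic balls. It must hold uniformly, including for balls that contain interior and boundary singular points, because (2) is needed for all $x\in Y_{\mathrm{reg}}$ and all $r<r_0$, including $r\geq d_Y(x,S)$.

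Nothing in your proposal supplies this:
\begin{itemize}
\item The capillary Michael--Simon inequality is a Sobolev inequality with a lower-order term and carries no connectivity information. It holds uniformly across thin necks, where Poincar\'e fails.
\item Local elliptic theory on $Y_{\mathrm{reg}}$ gives Harnack constants that degenerate as balls approach $S$.
\end{itemize}
A uniform Poincar\'e inequality on a singular capillary minimizer would be a Bombieri--Giusti-type theorem, which neither your proposal nor the paper proves. Your first idea, testing the weak identity with a cutoff, also gives nothing. By (1), $\int\omega G\varphi=O(r^2)$, so the gradient term equals $1-O(r^2)$, and you would need to estimate it to within $O(r^2)$.

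The missing idea is that an \emph{integrated} lower bound needs only the Gaussian upper bound plus conservativeness. Write $dV_\omega=\omega\,d\mathcal H^m_g$. Since $S$ has zero $2$-capacity and $Y$ has finite volume, the constant $1$ lies in the form domain and $e^{-t\mathcal L}1=1$. Hence the kernel $p_t$ of $e^{-t(\mathcal L+1)}$ satisfies $\int_{Y_{\mathrm{reg}}}p_t(x,y)\,dV_\omega(y)=e^{-t}$.

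Next, sum the Gaussian upper bound over the dyadic annuli $\{2^jr\le d_Y(x,\cdot)<2^{j+1}r\}$ and use $V_\omega(B^Y_s(x))\le Cs^m$. This gives $\int_{Y_{\mathrm{reg}}\setminus B_r^Y(x)}p_t(x,\cdot)\,dV_\omega\le Ce^{-r^2/(ct)}$ for $0<t\le r^2$.

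Choose $\alpha\in(0,1]$ with $Ce^{-1/(c\alpha)}\le\frac{1}{4}$, and $r_0$ with $\alpha r_0^2\le\log\frac{4}{3}$. Then $\int_{B_r^Y(x)}p_t(x,\cdot)\,dV_\omega\ge\frac{1}{2}$ for $0<t\le\alpha r^2$ and $r<r_0$. Integrating over $t\in[0,\alpha r^2]$ yields $\int_{B_r^Y(x)}G(x,\cdot)\,dV_\omega\ge\frac{\alpha}{2}r^2$. This is the paper's argument, and it never requires a pointwise lower bound on $p_t$ or $G$.
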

\begin{proof}
The existence of a Green function and its various estimates follow from \cite[Theorem 5.2]{Guo24}
(see also \cite{MR0657523}). Here we briefly
sketch an argument using heat kernel estimates.

Let $dV_\omega=\omega\,d\mathcal H_g^m$. All $L^p$ norms and
inner products below are taken with respect to this measure.
Since $S$ has zero $m$-dimensional measure,
$L^2(Y,dV_\omega)$ is canonically identified with
$L^2(Y_{\mathrm{reg}},dV_\omega)$.
	The symmetric form induced by   $\mathcal L + 1$ (subject to the Neumann boundary condition) is given 
	\begin{equation}
Q(\varphi)=\int_Y (|\nabla\varphi|^2+\varphi^2)dV_\omega,
	\end{equation}
	which satisfies that $Q(\varphi)\gtrsim \|\varphi\|_{L^2}^2$.
	By Lemma \ref{prop:localVolume} and Proposition \ref{prop:Assouad}, the singular set $S$ has vanishing $2$-capacity. There is a unique associated nonnegative self-adjoint operator, still denoted by $\mathcal L+1$,  on $L^2(Y,dV_\omega)$. For details, see for example \cite[Chapter 1]{Fukushima_1994}. 

    Applying \eqref{eq:Assouad-Sobolev} to
$|\varphi|^{2(m-1)/(m-2)}$ and using H\"older's inequality give
\[
\left(
\int_{Y_{\mathrm{reg}}}|\varphi|^{2m/(m-2)}dV_\omega
\right)^{(m-2)/m}
\lesssim Q(\varphi).
\]
Interpolation between $L^1$ and $L^{2m/(m-2)}$ then yields a Nash-type inequality
\[
\|\varphi\|_2^{2+4/m}
\lesssim Q(\varphi)
\|\varphi\|_1^{4/m}.
\]
	By \cite[Theorem 2.1]{MR898496}, the heat operator $P_t=e^{-t\mathcal L}$ satisfies
	\begin{equation}
		\|P_t\|_{L^1\to L^\infty}\lesssim e^t t^{-m/2}.
	\end{equation}
	Thus it follows from \cite[Theorem 1.2]{MR4249776} that the Schwartz kernel $q_t(x,y)$ of the operator $e^{-t\mathcal L}$ on $Y$ exists and satisfies
	\begin{equation}
		0\leq q_t(x,y)\leq C e^t t^{-\frac{m}{2}}\exp(-\frac{d_Y(x,y)^2}{ct})
	\end{equation}
	for some $c>0$. Spectral functinal calculus shows that  
    $P_t(1)=1$, which implies that 
\[
\int_{Y_{\mathrm{reg}}}q_t(x,y)\,dV_\omega(y)=1.
\]
Since $e^{-t(\mathcal L+1)}=e^{-t}P_t$, 
	the Schwartz kernel $p_t(x,y)$ of $e^{-t(\mathcal L +1)}$ on $Y$ exists and satisfies
	\begin{equation}\label{eq:p_t(x,y)}
		0\leq p_t(x,y) \leq C t^{-\frac{m}{2}}\exp(-\frac{d_Y(x,y)^2}{ct})
	\end{equation}
and 
\begin{equation}\label{eq:heatkernelintegral}
\int_{Y_{\mathrm{reg}}}p_t(x,y)\,dV_\omega(y)=e^{-t}.
\end{equation}

	Now the Green function $G(x, y)$ is defined by 
	\begin{equation}
		G(x,y)\coloneqq \int_0^\infty p_t(x,y)dt.
	\end{equation}
	Integrating both sides of \eqref{eq:p_t(x,y)}, we obtain
	\begin{equation}
		0\leq G(x,y)\lesssim d_Y(x,y)^{2-m}
	\end{equation}
	as desired. See also \cite[Theorem 5.2 (iii)]{Guo24}.

    By Lemma~\ref{prop:localVolume}, the boundedness of $\omega$,
and the finite volume of $Y$, after enlarging the constant we have
\[
V_\omega(B_s^Y(x))\leq C_Vs^m
\qquad\text{for every }s>0.
\]
For $j\geq0$, set
\[
A_j=\{y\in Y_{\mathrm{reg}}:2^jr\leq d_Y(x,y)<2^{j+1}r\}.
\]
By \eqref{eq:p_t(x,y)}, for $r>0$ and $0<t\leq r^2$,
\[
\begin{aligned}
\int_{A_j}p_t(x,y)\,dV_\omega(y)
&\leq C_0t^{-m/2}(2^{j+1}r)^m
       \exp\!\left(-\frac{4^jr^2}{c_0t}\right)\\
&\leq C_1
       \exp\!\left(-\frac{4^jr^2}{2c_0t}\right),
\end{aligned}
\]
where the last inequality follows from
$z^{m/2}e^{-z}\leq C_me^{-z/2}$.

Set $c_1=2c_0$. For $a=r^2/(c_1t)\geq1/c_1$, we have
\[
\sum_{j=0}^{\infty}e^{-4^ja}
\leq \frac{e^{-a}}{1-e^{-3a}}
\leq \frac{e^{-a}}{1-e^{-3/c_1}}.
\]
Consequently, summing over $A_j$ gives
\[
\int_{Y_{\mathrm{reg}}\setminus B_r^Y(x)}p_t(x,y)\,dV_\omega(y)
\leq C_2\exp\!\left(-\frac{r^2}{c_1t}\right),
\qquad 0<t\leq r^2.
\]

Choose $0<\alpha\leq1$ such that
\[
C_2\exp\!\left(-\frac{1}{c_1\alpha}\right)\leq\frac14,
\]
and then choose $r_0>0$ such that
$\alpha r_0^2\leq\log(4/3)$.
For $0<r<r_0$ and $0<t\leq\alpha r^2$, the  identity \eqref{eq:heatkernelintegral}
gives 
\[
\begin{aligned}
\int_{B_r^Y(x)\cap Y_{\mathrm{reg}}}p_t(x,y)\,dV_\omega(y)
&=e^{-t}
  -\int_{Y_{\mathrm{reg}}\setminus B_r^Y(x)}p_t(x,y)\,dV_\omega(y)\\
&\geq\frac34-\frac14=\frac12.
\end{aligned}
\]
Integrating along $0\leq t\leq \alpha r^2$, we obtain that, we obtain
\[
\begin{aligned}
\int_{B_r^Y(x)\cap Y_{\mathrm{reg}}}G(x,y)\,dV_\omega(y)
&\geq
\int_0^{\alpha r^2}
\int_{B_r^Y(x)\cap Y_{\mathrm{reg}}}p_t(x,y)\,dV_\omega(y)\,dt\\
&\geq\frac{\alpha}{2}r^2.
\end{aligned}
\]
The unweighted estimate also follows because $\omega$ is bounded above and below by positive constants.
\end{proof}

We next integrate the above Green function against interior and boundary densities to
construct a function with the required growth rate and boundary derivative estimate
for the conformal construction.

\begin{lemma}\label{lemma:integralOfGreen}
	Assume that the Assouad dimension of $S$ is at most $m-k$, where
	$k>2$. Let $r(x)=d_Y(x,S)$. For every $0<\varepsilon<k-2$, there exist a nonnegative function $u\in C^\infty(Y_{\mathrm{reg}})$, smooth
up to $\partial Y_{\mathrm{reg}}$, and constants $c,C,r_0>0$ such that,
\begin{enumerate}
\item $c\,r(x)^{-k+2+\varepsilon}\leq u(x)\leq Cr(x)^{-k+2+\varepsilon}$ whenever
$0<r(x)<r_0$,
\item $c\,r(x)^{-k+\varepsilon}\leq\mathcal Lu(x)\leq Cr(x)^{-k+\varepsilon}$
whenever $0<r(x)<r_0$,
\item on $\partial Y_{\mathrm{reg}}\cap\{0<r<r_0\}$,
\[
\frac{\nu_{\partial Y}(u)}{u}\geq\frac{c}{r}.
\]
\end{enumerate}
Here $\nu_{\partial Y}$ denotes the outward unit normal
of $\partial Y_{\mathrm{reg}}$ in $Y_{\mathrm{reg}}$.
\end{lemma}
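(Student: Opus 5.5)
Our approach is to build $u$ as a Green potential of a density concentrated near $S$, in the spirit of \cite{BrendleWang,BiZhu}. We fix $\varepsilon$ and, using the Green kernel $G$ of Lemma~\ref{lemma:Green} for $\mathcal L+1$ with Neumann condition, set
\[
u_0(x)=\int_{Y_{\mathrm{reg}}}G(x,y)\,\sigma(y)\,\omega(y)\,d\mathcal H^m_g(y),\qquad \sigma(y)=\chi\bigl(r(y)\bigr)\,\widetilde r(y)^{-k+\varepsilon},
\]
where $\widetilde r$ is a smoothed version of $r=d_Y(\cdot,S)$ (comparable to $r$, with $|\nabla \widetilde r|\lesssim 1$), and $\chi$ is a cutoff equal to $1$ on $[0,r_0]$ and $0$ on $[2r_0,\infty)$. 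The function $\sigma$ lies in $L^1$ because of \eqref{eq:interior-tubular-summary}. Since $\varepsilon<k-2$ and $k>2$, the exponent $-k+\varepsilon$ is integrable against the tube volumes $r^{k-\varepsilon'}$ once we choose $\varepsilon'<\varepsilon$. Weakly, $(\mathcal L+1)u_0=\sigma$ with $\nu_{\partial Y}u_0=0$, and $u_0$ is smooth on $Y_{\mathrm{reg}}$ up to the boundary by elliptic regularity.

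\textbf{Step 1: two-sided bounds for $u_0$.} For the upper bound, take $x$ with $r(x)=\rho$ and split the integral dyadically into the regions $\{y: d_Y(x,y)\sim 2^j\rho\}$ and $\{y:r(y)\sim 2^{-i}\rho\}$. For the near part, $B_{\rho/2}^Y(x)$, we have $\sigma\sim\rho^{-k+\varepsilon}$ and $\int_{B}G\lesssim\rho^2$ by Lemma~\ref{lemma:Green}(1) together with the volume bound of Lemma~\ref{prop:localVolume}. For the far part, we use $G\lesssim d^{2-m}$ and \eqref{eq:interior-tubular-summary} with $R=2^{j}\rho$ and $r=2^{-i}\rho$. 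The double sum converges to $\lesssim \rho^{2-k+\varepsilon}$ exactly because $\varepsilon'<\varepsilon<k-2$; this is the place where the Assouad bound, rather than a Hausdorff bound, is essential. The lower bound follows from Lemma~\ref{lemma:Green}(2): on $B^Y_{\rho/2}(x)$ we have $\sigma\gtrsim\rho^{-k+\varepsilon}$, so $u_0(x)\gtrsim\rho^{2-k+\varepsilon}$.

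\textbf{Step 2: $\mathcal L u$ and the boundary derivative.} Since $\mathcal Lu_0=\sigma-u_0$ and $u_0\lesssim r^{2-k+\varepsilon}\ll r^{-k+\varepsilon}$ for small $r$, item (2) holds for $u_0$ after shrinking $r_0$. However, $\nu_{\partial Y}u_0=0$, so we correct the boundary term by setting
\[
u=u_0\,\bigl(1+\eta\,\phi\bigr),
\]
where $\phi$ is a function, defined near $\partial Y_{\mathrm{reg}}$, with $\nu_{\partial Y}\phi\sim r^{-1}$, $|\phi|\le 1/2$, $|\nabla\phi|\lesssim r^{-1}$ and $|\nabla^2\phi|\lesssim r^{-2}$. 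For instance, $\phi=\beta(t/\widetilde r)$, where $t$ is the $g$-distance to $\partial Y$, rescaled at the scale $\widetilde r$, and $\beta(s)=-\tfrac12\,\zeta(s)\,s$ for a cutoff $\zeta$. With this choice $\nu_{\partial Y}u/u=\eta\,\nu\phi\ge c/r$. The new terms in $\mathcal Lu$ are $\eta(\phi\mathcal Lu_0-2\langle\nabla u_0,\nabla\phi\rangle+u_0\mathcal L\phi)$. These are bounded by $\eta r^{-k+\varepsilon}$ provided we have the gradient estimate $|\nabla u_0|\lesssim r^{1-k+\varepsilon}$, which we obtain from scaled boundary Schauder/gradient estimates on the regular balls $B^Y_{r/2}(x)$. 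Choosing $\eta$ small then preserves (1) and (2).

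\textbf{Main obstacle.} The main difficulty is Step 2 near the boundary singular set $K=S\cap\partial M$. There we need uniform scale-invariant regularity of $Y_{\mathrm{reg}}$ on balls of radius comparable to $r(x)$, namely curvature estimates $|A_Y|\lesssim r^{-1}$ and control on the geometry of $\partial Y$, which we expect to get from the compactness and regularity theory for capillary minimizers via a blow-up argument. These estimates are what justify both the gradient bound on $u_0$ and the construction of $\phi$ with the stated derivative bounds. If the required curvature estimate turns out to be too delicate, the fallback is to use \eqref{eq:boundary-tubular-summary} and place an additional boundary density in the Green potential, chosen to produce the positive Neumann flux directly.
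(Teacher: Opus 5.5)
\textbf{The gap is in Step~2.} Your Step~1 is correct and matches the paper's treatment of the interior potential: the dyadic splitting in $d_Y(x,\cdot)$ and in $r$, the tube bound with a loss $\tau<\varepsilon$, and Lemma~\ref{lemma:Green}(2) for the lower bound. The correction $u=u_0(1+\eta\phi)$ in Step~2, however, needs three things:
\begin{enumerate}
\item the gradient bound $|\nabla u_0|\lesssim r^{1-k+\varepsilon}$;
\item smoothness of the distance $t$ to $\partial Y$ on $\{t\lesssim r\}$;
\item $|\nabla^2\widetilde r|\lesssim r^{-1}$, which enters $\mathcal L\phi$ through $\Delta(t/\widetilde r)$ and which you did not list.
\end{enumerate}
All three require that $Y_{\mathrm{reg}}$ have bounded geometry on balls of radius comparable to $r(x)=d_Y(x,S)$. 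That means $|A_Y|\lesssim r^{-1}$, together with control of $\partial Y\subset Y$ at that scale.

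Nothing in the paper supplies this, and the blow-up argument you have in mind cannot. Rescale at scale $r(x_i)$ around points with $r(x_i)|A_Y|(x_i)\to\infty$. The limiting minimizer may be singular at the base point, because smooth minimizers can converge to singular ones: the Hardt--Simon leaves converge to the Simons cone. So singular points of the limit need not come from singular points of $Y$. The natural scale for your Schauder and gradient estimates is the regularity scale, which can be much smaller than $d_Y(x,S)$. The Assouad and tubular bounds control $S$, not the regularity scale. As written, Step~2 does not go through.

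\textbf{Your fallback is the paper's proof, and it needs no regularity input.} Take any smooth $\sigma$ with $\frac12\min\{r,r_1\}\le\sigma\le2\min\{r,r_1\}$; no derivative bounds are needed, since $\sigma$ is never differentiated. Set
\[
u(x)=\int_{Y_{\mathrm{reg}}}G(x,y)\sigma(y)^{-k+\varepsilon}\,\omega\,d\mathcal H^m_g(y)+\int_{\partial Y_{\mathrm{reg}}}G(x,y)\sigma(y)^{-k+1+\varepsilon}\,\omega\,d\mathcal H^{m-1}_g(y).
\]
Then $(\mathcal L+1)u=\sigma^{-k+\varepsilon}$ and $\nu_{\partial Y}u=\sigma^{-k+1+\varepsilon}$.

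The only extra work is the upper bound for the boundary source, which has two ingredients:
\begin{enumerate}
\item The boundary tubular estimate \eqref{eq:boundary-tubular-summary}: its tube exponent $k-1-\tau$ exactly compensates $\sigma^{-k+1+\varepsilon}$, again giving $R^{m-k+\varepsilon}$ on $B^Y_R$.
\item The bound $\int_{\partial Y\cap B^Y_s(x)}G(x,y)\,d\mathcal H^{m-1}_g\lesssim s$, which follows from $G\lesssim d_Y^{2-m}$ and \eqref{eq:local-boundary-area}.
\end{enumerate}
Item (2) of the lemma then follows from $u\lesssim r^2\sigma^{-k+\varepsilon}$, exactly as in your Step~2. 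Item (3) is immediate:
\[
\nu_{\partial Y}u/u=\sigma^{-k+1+\varepsilon}/u\gtrsim r^{-k+1+\varepsilon}/r^{-k+2+\varepsilon}=r^{-1}.
\]

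A minor point: your source $\chi(r)\widetilde r^{-k+\varepsilon}$ is only Lipschitz, because $r$ is. The resulting potential would therefore be $C^{2,\alpha}$ rather than $C^\infty$, so use the smooth comparison function inside the cutoff as well.
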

\begin{proof}
Let us denote 
\[
dV_\omega=\omega\,d\mathcal H_g^m,\qquad
dA_\omega=\omega\,d\mathcal H_g^{m-1}.
\]
Choose $0<\tau<\varepsilon$.
By Lemma~\ref{prop:localVolume} and the covering arguments in
Proposition~\ref{prop:Assouad} and
Corollary~\ref{cor:boundary-tubular}, there exist $C,R_\tau>0$
such that, for every $x\in Y$ and $0<t\leq R<R_\tau$,
\begin{align}
\int_{B_R^Y(x)\cap\{r\leq t\}}dV_\omega
&\leq C R^{m-k+\tau}t^{k-\tau},
\label{eq:Green-volume-tube}\\
\int_{\partial Y_{\mathrm{reg}}\cap B_R^Y(x)\cap\{r\leq t\}}dA_\omega
&\leq C R^{m-k+\tau}t^{k-1-\tau}.
\label{eq:Green-boundary-tube}
\end{align}
Fix $0<r_1<\min\{R_\tau, r_0\}$, where $r_0$ is the constant from
Lemmas~\ref{lemma:Green} and \ref{prop:localVolume}.
Choose a smooth positive function $\sigma$ on $Y_{\mathrm{reg}}$,
smooth up to $\partial Y_{\mathrm{reg}}$, such that
\begin{equation}\label{eq:Green-sigma}
\frac12\min\{r(x),r_1\}\leq\sigma(x)
\leq2\min\{r(x),r_1\}.
\end{equation}
Such a function can be obtained by a smooth partition of unity.

With $G$ as in Lemma \ref{lemma:Green}, we define
\begin{equation}\label{eq:u=}
u(x)=
\int_{Y_{\mathrm{reg}}}G(x,y)\sigma(y)^{-k+\varepsilon}\,dV_\omega(y)
+\int_{\partial Y_{\mathrm{reg}}} G(x,y)\sigma(y)^{-k+1+\varepsilon}\,dA_\omega(y).
\end{equation}
Since $G\geq0$, we have $u\geq0$.

We first prove the lower bound for $u$.
If $0<r(x)<r_1$ and $y\in B_{r(x)/4}^Y(x)$, then
\[
r(y)\leq r(x)+d_Y(x,y)\leq\frac54r(x),
\qquad
\sigma(y)\leq2r(y)\leq\frac52r(x).
\]
Part (2) of Lemma~\ref{lemma:Green} gives
\begin{equation}\label{eq:Green-potential-lower}
\begin{aligned}
u(x)
&\geq c r(x)^{-k+\varepsilon}
\int_{B_{r(x)/4}^Y(x)}G(x,y)\,dV_\omega(y)\\
&\geq c r(x)^{-k+2+\varepsilon}.
\end{aligned}
\end{equation}

For the upper bound of $u$, let $0<R<r_1$ and $j\geq0$.
On the set $T_j = \{2^{-j-1}R<r\leq2^{-j}R\}$, \eqref{eq:Green-sigma} gives
\[
2^{-j-2}R\leq\sigma\leq2^{-j+1}R.
\]
Using \eqref{eq:Green-volume-tube} and
\eqref{eq:Green-boundary-tube}, we obtain
\begin{align}
\int_{B_R^Y(x)\cap T_j}
\sigma^{-k+\varepsilon}\,dV_\omega
&\leq
C(2^{-j}R)^{-k+\varepsilon}
R^{m-k+\tau}(2^{-j}R)^{k-\tau} \notag\\
& =C2^{-j(\varepsilon-\tau)}R^{m-k+\varepsilon},
\label{eq:Green-interior-layer}\\
\int_{\partial Y_{\mathrm{reg}}\cap B_R^Y(x)\cap T_j}
\sigma^{-k+1+\varepsilon}\,dA_\omega
&\leq
C(2^{-j}R)^{-k+1+\varepsilon}
R^{m-k+\tau}(2^{-j}R)^{k-1-\tau} \notag\\
& =C2^{-j(\varepsilon-\tau)}R^{m-k+\varepsilon}.
\label{eq:Green-boundary-layer}
\end{align}
On $\{r>R\}$, we have $\sigma\geq R/2$, so
Lemma~\ref{prop:localVolume} gives
\begin{align*}
\int_{B_R^Y(x)\cap\{r>R\}}\sigma^{-k+\varepsilon}\,dV_\omega
&\leq C R^{-k+\varepsilon}R^m
=C R^{m-k+\varepsilon},\\
\int_{\partial Y_{\mathrm{reg}}\cap B_R^Y(x)\cap\{r>R\}}
\sigma^{-k+1+\varepsilon}\,dA_\omega
&\leq C R^{-k+1+\varepsilon}R^{m-1}
=C R^{m-k+\varepsilon}.
\end{align*}
Since $\sum_{j=0}^\infty2^{-j(\varepsilon-\tau)}<\infty$,
summing these inequalities yields
\begin{equation}\label{eq:Green-source-ball}
\int_{B_R^Y(x)}\sigma^{-k+\varepsilon}\,dV_\omega
+\int_{\partial Y_{\mathrm{reg}}\cap B_R^Y(x)}
\sigma^{-k+1+\varepsilon}\,dA_\omega
\leq C R^{m-k+\varepsilon}.
\end{equation}
A finite cover of $Y$ by balls of a fixed radius less than $r_1$ gives
\[
\int_{Y_{\mathrm{reg}}}\sigma^{-k+\varepsilon}\,dV_\omega
+\int_{\partial Y_{\mathrm{reg}}}\sigma^{-k+1+\varepsilon}\,dA_\omega<\infty.
\]
Thus, after increasing $C$, \eqref{eq:Green-source-ball}
holds for every $R>0$.

Fix $x\in Y_{\mathrm{reg}}$ and set $d=\min\{r(x),r_1\}$.
For $y\in B_{d/2}^Y(x)$, the triangle inequality and
\eqref{eq:Green-sigma} give
\begin{equation}\label{eq:Green-local-sigma}
\left|\min\{r(y),r_1\}-d\right|<\frac d2,
\qquad
\frac d4\leq\sigma(y)\leq3d.
\end{equation}
For $0<s<r_1$, taking the upper bound of $G(x, y)$ from Lemma \ref{lemma:Green}(1) and summing over the annuli
$\{y: 2^{-\ell-1}s\leq d_Y(x,y)<2^{-\ell}s\}$, $\ell\geq0$,
it follows from  Lemma~\ref{prop:localVolume} that
\begin{align}
\int_{B_s^Y(x)}G(x,y)\,dV_\omega(y)
&\leq C\sum_{\ell=0}^\infty(2^{-\ell}s)^2
\leq Cs^2,
\label{eq:Green-local-volume-kernel}\\
\int_{\partial Y_{\mathrm{reg}}\cap B_s^Y(x)}G(x,y)\,dA_\omega(y)
&\leq C\sum_{\ell=0}^\infty2^{-\ell}s
\leq Cs.
\label{eq:Green-local-boundary-kernel}
\end{align}
Consequently,
\begin{align}
\int_{B_{d/2}^Y(x)}
G(x,y)\sigma(y)^{-k+\varepsilon}\,dV_\omega(y)
&\leq C d^{-k+\varepsilon}d^2
=C d^{-k+2+\varepsilon},
\label{eq:Green-near-interior}\\
\int_{\partial Y_{\mathrm{reg}}\cap B_{d/2}^Y(x)}
G(x,y)\sigma(y)^{-k+1+\varepsilon}\,dA_\omega(y)
&\leq C d^{-k+1+\varepsilon}d
=C d^{-k+2+\varepsilon}.
\label{eq:Green-near-boundary}
\end{align}

For $j\geq0$, set
\[
A_j=\{y:2^{j-1}d\leq d_Y(x,y)<2^jd\}.
\]
It follows from \eqref{eq:Green-source-ball} and Lemma~\ref{lemma:Green} that 
\begin{equation}\label{eq:Green-far-annulus}
\begin{aligned}
&\int_{A_j}G(x,y)\sigma(y)^{-k+\varepsilon}\,dV_\omega(y)
+\int_{\partial Y_{\mathrm{reg}}\cap A_j}
G(x,y)\sigma(y)^{-k+1+\varepsilon}\,dA_\omega(y)\\
&\qquad\leq C(2^jd)^{2-m}(2^jd)^{m-k+\varepsilon}
=C2^{-j(k-2-\varepsilon)}d^{-k+2+\varepsilon}.
\end{aligned}
\end{equation}
Combining equations 
\eqref{eq:Green-near-interior}--\eqref{eq:Green-far-annulus} gives
\begin{equation}\label{eq:Green-potential-upper}
u(x)\leq C d^{-k+2+\varepsilon}
\left(1+\sum_{j=0}^\infty2^{-j(k-2-\varepsilon)}\right)
\leq C\min\{r(x),r_1\}^{-k+2+\varepsilon}<\infty.
\end{equation}
This shows that $u$ is well defined and also completes the proof of part (1).

We next prove part (2) and (3). By standard approximation using smooth cutoff functions, with the
boundary integral justified by symmetry of $G$, we obtain smooth
approximating solutions. The bound \eqref{eq:Green-potential-upper}
and local elliptic regularity imply convergence to $u$ smoothly on
compact subsets of $Y_{\mathrm{reg}}$, including its regular boundary.
Thus $u$ is smooth up to $\partial Y_{\mathrm{reg}}$ and satisfies
\begin{equation}\label{eq:Green-potential-equations}
\begin{aligned}
(\mathcal L+1)u&=\sigma^{-k+\varepsilon}
&&\textup{ on }Y_{\mathrm{reg}},\\
\nu_{\partial Y}(u)&=\sigma^{-k+1+\varepsilon}
&&\textup{ on }\partial Y_{\mathrm{reg}}.
\end{aligned}
\end{equation}
Finally, \eqref{eq:Green-sigma} and
\eqref{eq:Green-potential-upper} give
\[
\frac{u(x)}{\sigma(x)^{-k+\varepsilon}}\leq Cr(x)^2
\qquad \forall 0<r(x)<r_1.
\]
Choose $0<r_0<r_1$ such that $Cr_0^2\leq1/2$.
For $0<r(x)<r_0$, \eqref{eq:Green-potential-equations} yields
\begin{equation}\label{eq:Green-Lu-bounds}
c r(x)^{-k+\varepsilon}
\leq \mathcal Lu(x)=\sigma(x)^{-k+\varepsilon}-u(x)
\leq C r(x)^{-k+\varepsilon}.
\end{equation}
For $x\in\partial Y_{\mathrm{reg}}$ with $0<r(x)<r_0$, we also have
\begin{equation}\label{eq:Green-normal-ratio}
\frac{\nu_{\partial Y}(u)(x)}{u(x)}
=\frac{\sigma(x)^{-k+1+\varepsilon}}{u(x)}
\geq\frac{c r(x)^{-k+1+\varepsilon}}
{C r(x)^{-k+2+\varepsilon}}
\geq\frac{c}{r(x)}.
\end{equation}
Together with \eqref{eq:Green-potential-lower} and
\eqref{eq:Green-potential-upper}, these prove the lemma.
\end{proof}

If the singular set $S$ of $Y$ is empty, then there is no need to perform the conformal change in Lemma \ref{lemma:conformalWeightedSpectralComplete} below. Assume henceforth that $S\neq\emptyset$, and set
\[
r(x)=d_Y(x,S).
\]
Choose $k=4$ and $\varepsilon=1/2$ in
Lemma~\ref{lemma:integralOfGreen}. There are constants
$c,C,r_0>0$ such that
\begin{equation}\label{eq:critical-blowup-input}
c r^{-3/2}\leq u\leq C r^{-3/2},\qquad
\frac{\mathcal Lu}{u}\geq\frac{c}{r^2},\qquad
\frac{\nu_{\partial Y}(u)}{u}\geq\frac{c}{r}
\end{equation}
for $0<r<r_0$.
Here we use the same $\mu$ as in \eqref{eq:conformal-mu-range}.

Choose a smooth cutoff $\chi$ on $Y_{\mathrm{reg}}$, smooth up to
$\partial Y_{\mathrm{reg}}$, satisfying
\[
0\leq\chi\leq1,\qquad
\chi=1\ \text{for }r<r_0/2,\qquad
\chi=0\ \text{for }r\geq3r_0/4,
\]
and define
\begin{equation}\label{eq:complete-conformal-factor}
W=1+\eta\chi u.
\end{equation}
for $\eta>0$.
 The metric $\hat g=W^{\frac 4 3}g$ on $ Y_{\mathrm{reg}} = Y\setminus S$ is complete by part (1) of Lemma \ref{lemma:integralOfGreen} for any $\eta>0$.
We have the following lemma. 

\begin{lemma}\label{lemma:conformalWeightedSpectralComplete}
	There exists a smooth function $W\in C^\infty(Y\setminus S)$ such that $\hat g=W^{\frac 4 3}g$ on $Y_{\mathrm{reg}}$ is complete. Moreover, there exists $\delta'>0$ such that, for every
$\varphi\in C_c^\infty(Y_{\mathrm{reg}})$, 
\[
\begin{aligned}
0\leq{}&
\int_{Y_{\mathrm{reg}}}\hat\rho\left[
|\nabla^{\hat g}\varphi|_{\hat g}^2
+\left(\frac12\Sc_{\mu,\hat f}(\hat g)-\frac{\delta}{8}\right)
\varphi^2\right]\,d\mathcal H_{\hat g}^m\\
&+\int_{\partial Y_{\mathrm{reg}}}\hat\rho\left[
H_{\partial Y}(\hat g)-\|d\hat F\|_{\operatorname{tr},\hat g}
+\hat\nu_{\partial Y}(\hat f)
-\frac{\delta}{2}-\delta'W^{-2/3}r^{-1}
\right]\varphi^2\,d\mathcal H_{\hat g}^{m-1}.
\end{aligned}
\]
Here
\[
\hat\rho=\rho W^{-2(m-2)/3},\qquad
\hat f=\log\hat\rho,\qquad
\hat\nu_{\partial Y}=W^{-2/3}\nu_{\partial Y},
\]
and $\mu$ satisfies \eqref{eq:conformal-mu-range}.
\end{lemma}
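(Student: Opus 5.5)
We take $W=1+\eta\chi u$ from \eqref{eq:complete-conformal-factor}, with $\eta>0$ small, and start from Lemma~\ref{lemma:conformalWeightedSpectral}. Completeness of $\hat g$ was already noted after \eqref{eq:complete-conformal-factor}: $W\gtrsim r^{-3/2}$ near $S$, so $\int W^{2/3}\,dr\gtrsim\int r^{-1}\,dr$ diverges. It therefore suffices to prove two pointwise bounds on $Y_{\mathrm{reg}}$ and $\partial Y_{\mathrm{reg}}$, after which the spectral inequality of Lemma~\ref{lemma:conformalWeightedSpectral} rearranges into the claimed one. The interior bound is
\[
-\frac{\delta}{4}W^{-4/3}-\frac{2}{3}W^{-4/3}\frac{\mathcal LW}{W}\leq-\frac{\delta}{8}.
\]
Here we use $-\Delta W+(m-2)(\mu-1)\langle\nabla f,\nabla W\rangle=\mathcal LW$, so the third line of Lemma~\ref{lemma:conformalWeightedSpectral} equals $-\tfrac23\int\hat\rho W^{-4/3}\tfrac{\mathcal LW}{W}\varphi^2$. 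The boundary bound is
\[
-\delta W^{-2/3}-\frac23 W^{-2/3}\frac{\nu_{\partial Y}(W)}{W}\leq-\frac{\delta}{2}-\delta' W^{-2/3}r^{-1}.
\]

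We split $Y_{\mathrm{reg}}$ into three regions.

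\emph{Region $\{r\geq 3r_0/4\}$.} Here $W=1$ and $\nabla W=0$. The interior bound holds since $\delta/4\geq\delta/8$. The boundary bound holds as $-\delta\leq-\delta/2-\delta' r^{-1}$, provided $\delta'\leq \delta r_0/4$, using $r\geq 3r_0/4$.

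\emph{Transition region $\{r_0/2\leq r\leq 3r_0/4\}$.} Here $u$, $\chi$ and their first two derivatives are bounded, with bounds independent of $\eta$. Hence $|\mathcal LW|/W$ and $|\nu_{\partial Y}W|/W$ are $O(\eta)$, and $W=1+O(\eta)$. Choosing $\eta$ small makes both bounds hold with room to spare, again with $\delta'$ small.

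\emph{Singular region $\{r<r_0/2\}$.} Here $\chi=1$, so $W=1+\eta u$, $\mathcal LW=\eta\mathcal Lu$ and $\nu_{\partial Y}W=\eta\nu_{\partial Y}u$. By \eqref{eq:critical-blowup-input},
\[
\frac{\mathcal LW}{W}=\frac{\eta u}{1+\eta u}\cdot\frac{\mathcal Lu}{u}\geq\frac{\eta u}{1+\eta u}\cdot\frac{c}{r^2}.
\]
We split further according to the size of $\eta u$.

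\emph{Where $\eta u\geq1$,} the prefactor is at least $\tfrac12$. Since $W\leq 2\eta u\lesssim \eta r^{-3/2}$, we get
\[
W^{-4/3}\frac{\mathcal LW}{W}\gtrsim W^{-4/3}r^{-2}\gtrsim \eta^{-4/3}r^{2}r^{-2}=\eta^{-4/3},
\]
which is large. The constant $\tfrac18$ is reached by $\tfrac{2}{3}W^{-4/3}\mathcal LW/W\geq\delta/8$ once $\eta$ is small.

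\emph{Where $\eta u<1$,} we have $W<2$, so the $\delta$ term supplies $\tfrac{\delta}{4}W^{-4/3}\geq\tfrac{\delta}{4}2^{-4/3}\geq\tfrac{\delta}{8}\cdot 2^{2/3}\cdot\ldots$. More carefully, $\tfrac{\delta}{4}\cdot 2^{-4/3}<\tfrac{\delta}{8}$, so here we also use $\mathcal LW\geq0$ and $W\leq 1+\eta u$. When $\eta u$ is small, $W$ is close to $1$. When $\eta u$ is comparable to $1$, the $\mathcal L$ term contributes $\gtrsim \eta u\, r^{-2}\gtrsim r^{-2}\geq r_0^{-2}$, which is large if $r_0$ is small; shrinking $r_0$ in Lemma~\ref{lemma:integralOfGreen} is allowed. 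This interpolation is the delicate part, and it is where we expect the constant juggling to concentrate.

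The boundary works the same way. We have $\frac{\nu W}{W}\geq \frac{\eta u}{1+\eta u}\cdot\frac{c}{r}$. Where $\eta u\geq1$ this gives $\frac23W^{-2/3}\frac{\nu W}{W}\geq c'W^{-2/3}r^{-1}$. This term absorbs $\delta'W^{-2/3}r^{-1}$ for $\delta'<c'$. It also dominates $\delta/2$, because $W^{-2/3}r^{-1}\gtrsim \eta^{-2/3}r\cdot r^{-1}=\eta^{-2/3}$. Where $\eta u<1$, we use $W\leq2$ together with $\eta u\gtrsim \eta r^{-3/2}$, so that $\frac{\eta u}{1+\eta u}\geq\frac{\eta u}{2}$. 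Then $\tfrac23W^{-2/3}\nu W/W\gtrsim \eta r^{-5/2}\geq \delta' r^{-1}+\delta/2$ for $r<r_0$ small relative to $\eta$. Since $\eta u<1$ forces $r\gtrsim\eta^{2/3}$, the order of choices matters: fix $\eta$ first, then take $\delta'$ small to cover the remaining compact range of $r$.

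Finally we fix the constants in order: $\delta'$ last, $\eta$ and $r_0$ first. We then substitute the pointwise bounds into Lemma~\ref{lemma:conformalWeightedSpectral}, keeping in mind that $\varphi^2\geq0$ and that the measures $\hat\rho\,d\mathcal H_{\hat g}$ are the same on both sides.
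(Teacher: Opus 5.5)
Your proposal is correct and is essentially the paper's proof. It uses the same $W=1+\eta\chi u$ and the same reduction via Lemma~\ref{lemma:conformalWeightedSpectral} to the pointwise inequalities $\frac{\delta}{4}+\frac23\frac{\mathcal LW}{W}\geq\frac{\delta}{8}W^{4/3}$ and $\delta+\frac23\frac{\nu_{\partial Y}(W)}{W}\geq\frac{\delta}{2}W^{2/3}+\frac{\delta'}{r}$. It also uses the same dichotomy: large $\eta u$, where $W\lesssim\eta r^{-3/2}$ together with the lower bounds on $\mathcal Lu/u$ and $\nu_{\partial Y}u/u$ wins, versus bounded $W$, where the $\delta$-terms win.

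The paper splits at $W=5/4$ instead of at $\eta u=1$. There the $\delta$-terms alone suffice on $\{W\leq 5/4\}$, with the global bounds $\frac23\mathcal LW/W\geq-\delta/16$ and $\frac23\nu_{\partial Y}(W)/W\geq-\delta/16$ covering the cutoff region. This removes your interior interpolation and any need to shrink $r_0$.

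In your boundary case $\eta u<1$, do not let $r_0$ depend on $\eta$. The cutoff $\chi$, and hence the admissible size of $\eta$, depend on $r_0$, so that ordering is circular. You don't need it anyway. Since $W<2$ and $\nu_{\partial Y}W\geq0$ there, you get $\delta W^{-2/3}\geq 2^{-2/3}\delta>\delta/2$. Moreover $\eta u<1$ forces $r>(c\eta)^{2/3}$, so a $\delta'$ chosen after $\eta$ absorbs $\delta'W^{-2/3}r^{-1}$.
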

\begin{proof}
The completeness of the metric $\hat g$ follows part (1) of Lemma \ref{lemma:integralOfGreen}.

By Lemma~\ref{lemma:conformalWeightedSpectral}, the spectral
inequality follows once we prove
\begin{align}
\frac{\delta}{4}+\frac23\frac{\mathcal LW}{W}
&\geq\frac{\delta}{8}W^{4/3},
\label{eq:critical-uniform-interior}\\
\delta+\frac23\frac{\nu_{\partial Y}(W)}{W}
&\geq\frac{\delta}{2}W^{2/3}+\frac{\delta'}r
\quad \textup{ on }\partial Y_{\mathrm{reg}}.
\label{eq:critical-uniform-boundary}
\end{align}

For $r<r_0/2$, we have
$\mathcal L(\chi u)=\mathcal Lu\geq0$ and
$\nu_{\partial Y}(\chi u)=\nu_{\partial Y}u\geq0$.
The set $\{r_0/2\leq r\leq3r_0/4\}$ is compact in
$Y_{\mathrm{reg}}$, and
$\chi u=0$ for $r\geq3r_0/4$.
After increasing the constant $C$ from \eqref{eq:critical-blowup-input}, we  have
\begin{equation}\label{eq:complete-cutoff-bounds}
\begin{aligned}
\mathcal L(\chi u)&\geq-C
&&\text{on }Y_{\mathrm{reg}},\\
\nu_{\partial Y}(\chi u)&\geq-C
&&\text{on }\partial Y_{\mathrm{reg}},\\
\chi u&\leq C
&&\text{when }r\geq r_0/2.
\end{aligned}
\end{equation}

Choose $\eta>0$ so small that
\[
\begin{gathered}
C\eta\leq\min\left\{\frac14,\frac{3\delta}{32}\right\},
\qquad
(4c\eta)^{2/3}<\frac{r_0}{2}, \qquad 
\frac{\delta}{8}(5C\eta)^{4/3}\leq\frac{2c}{15},
\qquad
\frac{\delta}{2}(5C\eta)^{2/3}\leq\frac{c}{15},
\end{gathered}
\]
and then choose
\[
0<\delta'\leq
\min\left\{\frac{c}{15},
\frac{\delta}{8}(4c\eta)^{2/3}\right\}.
\]
By \eqref{eq:complete-cutoff-bounds} and $W\geq1$, we have 
\begin{equation}\label{eq:complete-global-errors}
\frac23\frac{\mathcal LW}{W}\geq-\frac{\delta}{16},
\qquad
\frac23\frac{\nu_{\partial Y}(W)}{W}\geq-\frac{\delta}{16}.
\end{equation}

First suppose $W>5/4$.
The choice $C\eta\leq1/4$ and
\eqref{eq:complete-cutoff-bounds} imply $r<r_0/2$.
Hence $\chi=1$, $\eta u>1/4$, and
\[
W=1+\eta u\leq 5\eta u\leq 5C\eta r^{-3/2},
\qquad
\frac{\eta u}{W}\geq\frac15.
\]
Using \eqref{eq:critical-blowup-input} and the choices of
$\eta$ and $\delta'$, we obtain
\begin{align*}
\frac{\delta}{8}W^{4/3}
&\leq\frac{2c}{15r^2}
\leq\frac23\frac{\mathcal LW}{W},\\
\frac{\delta}{2}W^{2/3}+\frac{\delta'}r
&\leq\frac{2c}{15r}
\leq\frac23\frac{\nu_{\partial Y}W}{W}.
\end{align*}
Thus \eqref{eq:critical-uniform-interior} and
\eqref{eq:critical-uniform-boundary} hold when $W>5/4$.

Now suppose $W\leq5/4$.
If $r<r_0/2$, then
\[
c\eta r^{-3/2}\leq\eta u=W-1\leq\frac14,
\]
so $r\geq(4c\eta)^{2/3}$.
If $r\geq r_0/2$, then our choice of $\eta$ gives directly 
\[   r\geq \frac{r_0}{2} > (4c\eta)^{2/3}.\]
Consequently,
\[
\frac{\delta'}r\leq\frac{\delta}{8}.
\]
By \eqref{eq:complete-global-errors},
\begin{align*}
\frac{\delta}{4}+\frac23\frac{\mathcal LW}{W}
&\geq\frac{3\delta}{16}
\geq\frac{\delta}{8}\left(\frac54\right)^{4/3}
\geq\frac{\delta}{8}W^{4/3},\\
\delta+\frac23\frac{\nu_{\partial Y}W}{W}
&\geq\frac{15\delta}{16}
\geq\frac{\delta}{2}\left(\frac54\right)^{2/3}
+\frac{\delta}{8}
\geq\frac{\delta}{2}W^{2/3}+\frac{\delta'}r.
\end{align*}
This proves \eqref{eq:critical-uniform-interior} and
\eqref{eq:critical-uniform-boundary} also when $W\leq5/4$,
and completes the proof.
\end{proof}

\subsection{Modification of the map} 
We now modify the map $\hat F$ in \eqref{eq:boundarymap} so that it is constant outside a
compact set, while preserving its degree and a strict spectral
inequality. Throughout this subsection, write
\[
r(x)=d_Y(x,S),
\]
 If the singular set $S=\emptyset $, then one can proceed directly to
Lemma~\ref{lemma:completeSpectralMap}. So for Lemma \ref{lemma:modifiedMap} below, we assume $S\neq \emptyset$.

Since $S\cap\partial Y$ is compact and $\hat F\colon \partial Y \to \sph^{m-1}$ is Lipschitz,
Lemma~\ref{lemma:variation} gives
\[
\dim_{\mathcal H}\hat F(S\cap\partial Y)
\leq \dim_{\mathcal H}(S\cap\partial Y)
\leq m-4<m-1.
\]
Thus $\hat F(S\cap\partial Y)$ is a proper compact subset
of $\sph^{m-1}$. Choose a regular value $p_-$ outside $F(S\cap\partial Y)$ and let $p_+$ be its antipodal point. Choose $R_0>0$ and $\theta_0\in(0,\pi/2)$ such that
\begin{equation}\label{eq:map-pole-separation}
d_{\mathbb S^{m-1}}(\hat F(x),p_-)>\theta_0
\quad\textup{ for $x\in\partial Y$ with }  r(x)\leq R_0.
\end{equation}

\begin{lemma}\label{lemma:modifiedMap}
There is a smooth map
$F_1:\partial Y_{\mathrm{reg}}\to\mathbb S^{m-1}$ such that
\begin{enumerate}
\item $F_1=\hat F$ on $\{r\geq R_0\}$,
\item $F_1=p_+$ outside a compact subset of
$\partial Y_{\mathrm{reg}}$,
\item on $\partial Y_{\mathrm{reg}}$,
\begin{equation}\label{eq:modified-map-differential}
\|dF_1\|_{\operatorname{tr},\hat g}
\leq \|d\hat F\|_{\operatorname{tr},\hat g}
+\frac{\delta'}2 W^{-2/3}r^{-1},
\end{equation}
\item $\deg(F_1)=\deg(\hat F)\ne0$.
\end{enumerate}
\end{lemma}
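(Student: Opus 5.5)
We build $F_1$ by composing $\hat F$ with a degree-one "push to $p_+$" map of $\sph^{m-1}$, switched on gradually as one approaches $S$. For $t\in[0,1]$, let $\Phi_t\colon\sph^{m-1}\to\sph^{m-1}$ be the smooth family defined in polar coordinates $(\theta,z)$ about $p_-$ as follows:
\begin{itemize}
\item $\theta$ is the distance to $p_-$, and $\Phi_t(\theta,z)=(\beta_t(\theta),z)$;
\item $\beta_0=\mathrm{id}$, each $\beta_t$ fixes a neighbourhood of $0$ with $\beta_t'\geq0$;
\item $\beta_1(\theta)=\pi$ for $\theta\geq\theta_0$, so $\Phi_1$ sends the complement of the $\theta_0$-ball around $p_-$ to $p_+$.
\end{itemize}
Each $\Phi_t$ is homotopic to the identity, and $\Phi_t$ fixes a neighbourhood of $p_-$ for all $t$. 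We then set
\[
F_1(x)=\Phi_{\lambda(x)}(\hat F(x)),
\]
where $\lambda=\lambda(r)$ is a cutoff with $\lambda=0$ for $r\geq R_0$ and $\lambda=1$ for $r\leq r_2$, for some small $r_2>0$. Property (1) is then immediate. Property (2) follows from \eqref{eq:map-pole-separation}: on $\{r\leq r_2\}$ the image $\hat F(x)$ lies outside the $\theta_0$-ball around $p_-$, so $F_1=p_+$ there. The complement $\{r\geq r_2\}\cap\partial Y$ is compact in $\partial Y_{\mathrm{reg}}$. Here $\lambda$ should be smoothed, since $r$ is only Lipschitz; we replace $r$ by a regularized distance comparable to it, as in \eqref{eq:Green-sigma}.

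\textbf{Differential estimate.} We have
\[
dF_1=d\Phi_\lambda\circ d\hat F+\partial_t\Phi_\lambda\otimes d\lambda .
\]
The first term is a problem, because $\beta_t'$ can exceed $1$, which would destroy \eqref{eq:modified-map-differential}. To avoid this we require $\Phi_t$ to be $1$-Lipschitz on the relevant region. Concretely, we let $\beta_t$ satisfy $0\leq\beta_t'\leq1$ on $[\theta_0,\pi]$ with the extra rotation pushed into the radial parameter. The cleaner route is to choose $\Phi_t$ non-expanding in $\theta$ and to note that the angular factor $\sin\beta_t(\theta)/\sin\theta$ is at most $1$ near $\pi$. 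A short model computation should show that $\Phi_t$ can be arranged to be $1$-Lipschitz on $\{\theta\geq\theta_0\}$, which is the only region that matters when $\lambda>0$. Then
\[
\|dF_1\|_{\tr,\hat g}\leq\|d\hat F\|_{\tr,\hat g}+C\,|d\lambda|_{\hat g}.
\]
The trace norm of a rank-one term is at most its operator norm, which gives the constant $C$ here.

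\textbf{Main obstacle: bounding $|d\lambda|_{\hat g}$.} We need $C|d\lambda|_{\hat g}\leq\frac{\delta'}{2}W^{-2/3}r^{-1}$. Since $|d\lambda|_{\hat g}=W^{-2/3}|d\lambda|_g$, it suffices to have $|\lambda'(r)|\leq \delta'/(2Cr)$. This is a logarithmic cutoff: take
\[
\lambda(r)=\kappa\bigl(\log(R_0/r)/L\bigr)
\]
with $\kappa$ a fixed smooth step from $0$ to $1$ on $[0,1]$ and $L$ large, so that $|\lambda'|\leq\|\kappa'\|_\infty/(Lr)$. We choose $L\geq 2C\|\kappa'\|_\infty/\delta'$, and then $r_2=R_0e^{-L}>0$. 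The key point is that the allowed error $\delta' r^{-1}$ is scale invariant, so the cutoff can be spread over many dyadic scales while still terminating at a positive radius, giving a compact modification region.

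\textbf{Degree.} We compute $\deg F_1$ at the regular value $p_-$, which we may assume is also regular for $F_1$. Near $p_-$ every $\Phi_t$ is the identity, and by \eqref{eq:map-pole-separation} the preimage $F_1^{-1}(p_-)$ lies in $\{r\geq R_0\}$, where $F_1=\hat F$. Hence $F_1^{-1}(p_-)=\hat F^{-1}(p_-)$ with the same local orientations. By the definition of $\deg(\hat F)$ via the proper restriction, computed at this same regular value $p_-\notin\hat F(K)$, we get $\deg F_1=\deg\hat F=\deg F\neq0$ by Lemma~\ref{lemma:FcomposeProjection}. Here $F_1$ is proper in the sense that it is constant, equal to $p_+\neq p_-$, off a compact set, so its degree is well defined.
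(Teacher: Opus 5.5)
\textbf{There is a genuine gap.} It sits in the differential estimate. You claim that $\Phi_t$ can be arranged to be non-expanding on $D=\{\theta\ge\theta_0\}$. This is false, not merely unverified, so the bound $\|dF_1\|_{\tr,\hat g}\le\|d\hat F\|_{\tr,\hat g}+C|d\lambda|_{\hat g}$ is unavailable.

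In your rotationally symmetric form, $d\Phi_t$ has operator norm $\max\{|\beta_t'(\theta)|,\sin\beta_t(\theta)/\sin\theta\}$.
\begin{itemize}
\item From $\beta_t'\le 1$ on $[\theta,\pi]$ and $\beta_t(\pi)=\pi$ you get $\beta_t(\theta)\ge\theta$.
\item For $\theta\in[\theta_0,\pi/2)$, the condition $\sin\beta_t(\theta)\le\sin\theta$ then forces $\beta_t(\theta)\in\{\theta\}\cup[\pi-\theta,\pi]$, which is a disconnected set.
\item Continuity in $t$ therefore gives $\beta_t(\theta)=\theta$ for all $t$, contradicting $\beta_1(\theta)=\pi$.
\end{itemize}
Your observation that the angular factor is at most $1$ holds only for $\theta\ge\pi/2$.

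Dropping the symmetry does not help. Suppose $\|d\Phi_t\|\le L$ on $D$ for all $t$. Then $\Phi_t(\partial D)$ has diameter at most $L\pi\sin\theta_0$. If this is less than $\pi/2$, you can cone $\Phi_t$ off over $\{\theta<\theta_0\}$ inside an open hemisphere. That produces a homotopy of self-maps of $\mathbb S^{m-1}$ from a degree-one map to a constant map, which is impossible. So necessarily $L\ge 1/(2\sin\theta_0)$, and this is large because $\theta_0$ comes from compactness and is typically small. With $L=C>1$ you only get $\|dF_1\|_{\tr,g}\le C\|d\hat F\|_{\tr,g}+C|d\lambda|_g$. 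Nothing in your argument controls the surplus $(C-1)\|d\hat F\|_{\tr,g}$ by $\frac{\delta'}{2}r^{-1}$.

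\textbf{The missing input is that $\hat F$ is Lipschitz up to $S$.} That is, $\|d\hat F\|_{\tr,g}\le\ell$ on $\partial Y_{\mathrm{reg}}$, while the allowed error $\delta' r^{-1}$ blows up at $S$.

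The paper uses the stereographic contraction $H(t,z)=Z^{-1}((1-t)Z(z))$ toward $p_+$. This map only satisfies $\|\partial_tH\|,\|d(H(t,\cdot))\|\le C$ away from $B_{\theta_0}(p_-)$. The paper starts the logarithmic cutoff not at $R_0$ but at a radius $r_+<\min\{R_0/4,\delta'/(8C\ell)\}$, and ends it at $r_-$ with $\log(r_+/r_-)\ge 32C/\delta'$.
\begin{itemize}
\item On the modification region one has $r<2r_+$, so $C\ell\le\delta'/(8r_+)\le\delta'/(4r)$.
\item Together with $C\|d\chi_1\|_g\le\delta'/(4r)$, this bounds the whole of $\|dF_1\|_{\tr,g}$ by $\delta'/(2r)$.
\item Multiplying by $W^{-2/3}$ gives \eqref{eq:modified-map-differential}, and $F_1=\hat F$ elsewhere.
\end{itemize}
Shrinking the start radius of your cutoff in the same way is permitted by \eqref{eq:map-pole-separation}.

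Alternatively, you can keep your cutoff starting at $R_0$. Use $\|d\Phi_t\|\le 1+C't$ on $D$, which holds for a smooth family with $\Phi_0=\mathrm{id}$, together with $\lambda(r)\,r=O(R_0/L)$ on $\{\lambda<1\}$. The surplus is then at most $C'\lambda\ell\le\frac{\delta'}{4}r^{-1}$ once $L$ is large. This again works only because $\|d\hat F\|_{\tr,g}\le\ell$.

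With either change, your logarithmic cutoff, your verification of (1) and (2), and your degree count at $p_-$ go through and agree with the paper's argument.
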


\begin{proof}
Since $\hat F$ is
Lipschitz on $\partial Y$, there is $\ell\geq1$ such that
\begin{equation}\label{eq:map-original-bound}
\|d\hat F\|_{\operatorname{tr},g}\leq\ell
\quad\text{on }\partial Y_{\mathrm{reg}}.
\end{equation}
Since $\hat g=W^{\frac 4 3}g$, it follows that 
\[  \|d\hat F\|_{\operatorname{tr},\hat g}\leq W^{-2/3}\ell
\quad\text{on }\partial Y_{\mathrm{reg}}. \]

	Let $Z:\mathbb S^{m-1}\setminus\{p_-\}\to\mathbb R^{m-1}$
be stereographic projection, with $Z(p_+)=0$, and define
\[
H(t,z)=Z^{-1}\bigl((1-t)Z(z)\bigr),
\qquad 0\leq t\leq1.
\]
Then $H(0,z)=z$, $H(1,z)=p_+$, and $H(t,z)\ne p_-$.
On $[0,1]\times
(\mathbb S^{m-1}\setminus B_{\theta_0}(p_-))$, choose $C\geq1$ such that
\begin{equation}\label{eq:map-homotopy-bounds}
\|\partial_tH\|\leq C,\qquad
\|d(H(t,\cdot))\|\leq C.
\end{equation}

Recall that $r$ is the function given by $r(x) = d_Y(x, S)$. Choose $\sigma_1\in C^\infty(\partial Y_{\mathrm{reg}})$ satisfying
\begin{equation}\label{eq:map-smoothed-distance}
\frac12\min\{r,R_0\}\leq\sigma_1\leq2\min\{r,R_0\},
\qquad \|d\sigma_1\|_g\leq2.
\end{equation}

We choose $r_+$ and $r_-$ such that 
\begin{equation}\label{eq:map-cutoff-radii}
0<r_+<\min\left\{\frac{R_0}{4},\frac{\delta'}{8C\ell}\right\},
\qquad
0<r_-<r_+,\qquad
\log\frac{r_+}{r_-}\geq\frac{32C}{\delta'}.
\end{equation}
Let $\beta\in C^\infty(\mathbb R)$ satisfy
\[
0\leq\beta\leq1,\qquad
\beta=1\text{ on }(-\infty,0],\qquad
\beta=0\text{ on }[1,\infty),\qquad
|\beta'|\leq2,
\]
and set
\[
\chi_1(x)\coloneqq 
\beta\left(\frac{\log(\sigma_1(x)/r_-)}
{\log(r_+/r_-)}\right).
\]
Then $\chi_1=1$ when $\sigma_1\leq r_-$ and
$\chi_1=0$ when $\sigma_1\geq r_+$.
On $\{\sigma_1<r_+\}$, the inequality \eqref{eq:map-smoothed-distance} gives
$r<2r_+<R_0/2$, and
\begin{equation}\label{eq:map-cutoff-gradient}
\|d\chi_1\|_g
\leq\frac{4}{\sigma_1\log(r_+/r_-)}
\leq\frac{8}{r\log(r_+/r_-)}
\leq\frac{\delta'}{4Cr}.
\end{equation}

We define 
\[
F_1(x)=
\begin{cases}
H(\chi_1(x),\hat F(x)),&r(x)<R_0,\\
\hat F(x),&r(x)\geq R_0,
\end{cases}
\]
where $R_0$ is the constant from  \eqref{eq:map-pole-separation}.
The two expressions agree where $2r_+<r<R_0$, because $\chi_1=0$
there. Hence $F_1$ is smooth and equals $\hat F$ when $r\geq R_0$.
For $r\leq r_-/2$, we have $\sigma_1\leq r_-$ and hence $F_1=p_+$. In particular, this verifies part (1). 

Note that 
\[
\{x\in\partial Y:r(x)\geq r_-/2\}
\]
is a compact subset of $\partial Y_{\mathrm{reg}}$.
This verifies part (2).

For every $v\in T_x\partial Y_{\mathrm{reg}}$, the chain rule gives
\[
dF_1(v)=d\chi_1(v)\,\partial_tH+d_zH\bigl(d\hat F(v)\bigr).
\]
The map $v\mapsto d\chi_1(v)\,\partial_tH$ has rank at most one
and trace norm $\|\partial_tH\|\,\|d\chi_1\|_g$. Therefore, on $\{\sigma_1<r_+\}$, we have
\[
\|dF_1\|_{\operatorname{tr},g}
\leq \|\partial_tH\|\,\|d\chi_1\|_g
+\|d_zH\|\,\|d\hat F\|_{\operatorname{tr},g}
\leq C\|d\chi_1\|_g+C\|d\hat F\|_{\operatorname{tr},g}.
\]

Using \eqref{eq:map-original-bound},
\eqref{eq:map-cutoff-radii}, and
\eqref{eq:map-cutoff-gradient}, we obtain
\[
\|dF_1\|_{\operatorname{tr},g}
\leq\frac{\delta'}{4r}+C\ell
\leq\frac{\delta'}{4r}+\frac{\delta'}{8r_+}
\leq\frac{\delta'}{2r}.
\]
When $\sigma_1\geq r_+$, the map equals $\hat F$.
Multiplying by $W^{-2/3}$ proves
\eqref{eq:modified-map-differential}.

Finally, $\hat F^{-1}(p_-)$ is a finite subset of
$\partial Y_{\mathrm{reg}}$, since $p_-$ is a regular value
outside the image of the singular set.
The map $F_1$ agrees with $\hat F$ near every point of this set,
and the map $H$ does not take the value $p_-$.
Thus
\[
F_1^{-1}(p_-)=\hat F^{-1}(p_-),
\]
with the same local orientation signs. It follows that 
$\deg(F_1)=\deg(\hat F)\ne0$.
\end{proof}

The construction above, followed by a possible modification near the two poles as in Lemma \ref{lm:collapse} (also see \cite[Lemma 3.1]{WangWangZhu}), gives the following lemma. 
\begin{lemma}\label{lemma:completeSpectralMap}
There is a smooth map
$\tilde F\colon \partial Y_{\mathrm{reg}}\to\mathbb S^{m-1}$
of nonzero degree with 
$B_\pm\coloneqq \tilde F^{-1}(p_\pm)$ such that every connected component of
$B_-$ and $B_+$ has nonempty interior in $\partial Y_{\mathrm{reg}}$,
and $\partial Y_{\mathrm{reg}}\setminus B_+$ is relatively compact.
Moreover, for every $\varphi\in C_c^\infty(Y_{\mathrm{reg}})$,
\[
\begin{aligned}
0\leq{}&
\int_{Y_{\mathrm{reg}}}\hat\rho\left(
|\nabla^{\hat g}\varphi|_{\hat g}^2
+\frac12\bigl(\Sc_{\mu,\hat f}(\hat g)-\delta'' \bigr)\varphi^2
\right)\,d\mathcal H_{\hat g}^m\\
&+\int_{\partial Y_{\mathrm{reg}}}\hat\rho\left(
H_{\partial Y}(\hat g)-\|d\tilde F\|_{\operatorname{tr},\hat g}
+\hat\nu_{\partial Y}(\hat f)-\delta''
\right)\varphi^2\,d\mathcal H_{\hat g}^{m-1},
\end{aligned}
\]
where $\delta''=\delta/4$, $\hat f=\log\hat\rho$ and
$\mu$ is the constant fixed in \eqref{eq:conformal-mu-range}.
\end{lemma}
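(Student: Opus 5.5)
We obtain $\tilde F$ by post-composing the map $F_1$ of Lemma~\ref{lemma:modifiedMap} (or $\hat F$ itself when $S=\emptyset$) with a pole-collapsing map of $\sph^{m-1}$, as in Lemma~\ref{lm:collapse} and \cite[Lemma 3.1]{WangWangZhu}. Choose a self-map $\Phi\colon\sph^{m-1}\to\sph^{m-1}$ of degree one that sends small closed geodesic balls around $p_\pm$ onto $p_\pm$ and is a diffeomorphism elsewhere. The latitude reparametrization can be chosen so that $\|d\Phi\|\leq 1+\kappa$ for any prescribed $\kappa>0$, at the price of collapsing only a tiny cap. Since $p_-$ is a regular value of $F_1$ and $F_1^{-1}(p_-)$ is a finite set, we may also take the collapsed cap around $p_-$ to avoid the critical values of $F_1$ near $p_-$. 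Then the preimage of this cap is a finite union of small disks, each with nonempty interior.

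Set $\tilde F=\Phi\circ F_1$. Because $\Phi$ has degree one, $\deg\tilde F=\deg F_1=\deg F\neq0$. Lemma~\ref{lemma:modifiedMap}(2) says $F_1=p_+$ outside a compact set, so $\tilde F^{-1}(p_+)$ contains the complement of a compact set. Hence $\partial Y_{\mathrm{reg}}\setminus B_+$ is relatively compact. Components of $B_+$ that meet the noncompact region contain open sets. For the remaining components, we use that $\tilde F^{-1}(p_+)\supset F_1^{-1}(\text{cap})$, and the preimage of an open cap is open. So every component of $B_+$ contains a nonempty open piece, provided the cap is chosen so that $F_1^{-1}$ of its interior meets every component of $F_1^{-1}$ of the closed cap. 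Choosing the cap radius to be a regular value of $d(p_+,F_1(\cdot))$ makes the preimage of the closed cap the closure of the preimage of its interior, which settles this point. The same reasoning applies to $B_-$.

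For the spectral inequality, start from Lemma~\ref{lemma:conformalWeightedSpectralComplete}. We have pointwise
\[
\|d\tilde F\|_{\tr,\hat g}\leq(1+\kappa)\|dF_1\|_{\tr,\hat g}\leq(1+\kappa)\Bigl(\|d\hat F\|_{\tr,\hat g}+\tfrac{\delta'}{2}W^{-2/3}r^{-1}\Bigr).
\]
The $r^{-1}$ error is absorbed by the term $-\delta'W^{-2/3}r^{-1}$ if $\kappa\leq1$. The remaining extra term $\kappa\|d\hat F\|_{\tr,\hat g}\leq\kappa\ell W^{-2/3}\leq\kappa\ell$ is bounded, using \eqref{eq:map-original-bound} and $W\geq1$. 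So choosing $\kappa\leq\delta/(4\ell)$, the boundary coefficient becomes at least $H_{\partial Y}(\hat g)-\|d\tilde F\|_{\tr,\hat g}+\hat\nu_{\partial Y}(\hat f)-\tfrac{3\delta}{4}$. Since $-\tfrac{3\delta}{4}\leq-\tfrac{\delta}{4}=-\delta''$, we can replace $\delta/2$ by $\delta''$ on the boundary. In the interior, $-\tfrac{\delta}{8}=-\tfrac12\delta''$, which gives exactly the form $\tfrac12(\Sc_{\mu,\hat f}(\hat g)-\delta'')$.

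The main point to watch is this uniform absorption. It requires the Lipschitz bound on $\hat F$ with respect to $g$, and the fact that the conformal factor only decreases trace norms. Both are already available from the previous lemmas.
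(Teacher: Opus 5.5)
Your construction is the one the paper intends. The paper's proof is a single sentence: follow $F_1$ from Lemma~\ref{lemma:modifiedMap} by a pole-collapsing modification as in Lemma~\ref{lm:collapse}. Your degree-one $\Phi$ with $\|d\Phi\|\leq 1+\kappa$, the regular-value choice of the cap radii (which gives every component of $B_\pm$ nonempty interior), and the degree count supply exactly the details the paper omits.

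The one thing to fix is the direction of the final absorption. Write $a$ for the boundary coefficient in Lemma~\ref{lemma:conformalWeightedSpectralComplete}. Since enlarging a coefficient preserves $0\leq\cdots$, you need an \emph{upper} bound on $a$ by the target coefficient. The claim that the coefficient is ``at least $\ldots-\tfrac{3\delta}{4}$'' is a lower bound, so it gives nothing. It is also not what your estimate yields. From
$\|d\tilde F\|_{\tr,\hat g}\leq\|d\hat F\|_{\tr,\hat g}+\kappa\ell+\delta'W^{-2/3}r^{-1}$ you get
\[
a\leq H_{\partial Y}(\hat g)-\|d\tilde F\|_{\tr,\hat g}+\hat\nu_{\partial Y}(\hat f)-\frac{\delta}{2}+\kappa\ell
\leq H_{\partial Y}(\hat g)-\|d\tilde F\|_{\tr,\hat g}+\hat\nu_{\partial Y}(\hat f)-\delta''.
\]
So $\kappa\ell$ enters with a plus sign, and you land exactly on $-\delta''$ with no slack. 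The condition $\kappa\ell\leq\delta/4$ you imposed is precisely what is required. With this correction the argument is complete.
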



\subsection{From spectral inequality to pointwise inequality}\label{subsec:spectral-to-pointwise}
In this subsection, we turn the spectral positivity of Lemma~\ref{lemma:completeSpectralMap} into pointwise weighted positivity. This supplies the curvature hypotheses needed to apply Theorem~\ref{thm:noncompactScMeanWeighted} in the next lower dimension.

\begin{lemma}\label{lemma:spectralToPointwise}
Assume $m\geq3$ and the hypotheses of
Lemma~\ref{lemma:completeSpectralMap}. Then there exist
$\tilde f\in C^\infty(Y_{\mathrm{reg}})$ and
$\tilde\mu>(m-1)/m$ such that
\[
\Sc_{\tilde\mu,\tilde f}(\hat g)\geq\frac{\delta''}{2}
\quad\text{on }Y_{\mathrm{reg}},
\]
and
\[
H_{\partial Y}(\hat g)-|d\tilde F|_{\operatorname{tr},\hat g}
+\hat\nu_{\partial Y}(\tilde f)\geq\frac{\delta''}{2}
\quad\text{on }\partial Y_{\mathrm{reg}}.
\]
Thus these data satisfy the hypotheses of
Theorem~\ref{thm:noncompactScMeanWeighted} in dimension $m$, while
$\deg(\tilde F)\neq0$.
\end{lemma}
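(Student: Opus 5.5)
The plan is a Fischer-Colbrie--Schoen type argument: turn the spectral inequality of Lemma~\ref{lemma:completeSpectralMap} into a positive solution of the associated weighted Robin eigenvalue problem on the complete manifold $(Y_{\mathrm{reg}},\hat g)$. Then absorb $\log$ of that solution into the weight, which yields pointwise inequalities at the cost of lowering $\mu$ to some $\tilde\mu$. Write $\hat\nabla,\hat\Delta$ for the $\hat g$-connection and Laplacian, set $q:=\frac12(\Sc_{\mu,\hat f}(\hat g)-\delta'')$ and $\beta:=H_{\partial Y}(\hat g)-\|d\tilde F\|_{\tr,\hat g}+\hat\nu_{\partial Y}(\hat f)-\delta''$, and consider
\[
\mathcal P\varphi:=-\hat\rho^{-1}\operatorname{div}_{\hat g}(\hat\rho\hat\nabla\varphi)+q\varphi,\qquad \hat\nu_{\partial Y}(\varphi)+\beta\varphi=0 \text{ on }\partial Y_{\mathrm{reg}}.
\]
Lemma~\ref{lemma:completeSpectralMap} says exactly that the associated quadratic form is nonnegative on $C_c^\infty(Y_{\mathrm{reg}})$.

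\emph{Step 1 (positive solution).} Fix a base point and an exhaustion of $Y_{\mathrm{reg}}$ by compact domains $D_i$ with smooth relative boundary meeting $\partial Y_{\mathrm{reg}}$ transversally; completeness of $\hat g$ makes this possible. On each $D_i$, take the first eigenfunction $u_i>0$ of $\mathcal P$ with the Robin condition on $D_i\cap\partial Y_{\mathrm{reg}}$ and the Dirichlet condition on $\partial D_i\cap\mathring Y_{\mathrm{reg}}$. By the spectral inequality and domain monotonicity, the eigenvalues $\lambda_i\geq0$ decrease to some $\lambda\geq0$. Normalize $u_i$ at the base point. Interior and boundary Harnack inequalities for the Robin problem, together with Schauder estimates, give locally uniform $C^{2,\alpha}$ bounds up to $\partial Y_{\mathrm{reg}}$. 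A subsequence then converges to a smooth $u>0$ with
\[
\mathcal Pu=\lambda u\geq0,\qquad \hat\nu_{\partial Y}(u)=-\beta u.
\]
I expect this step to be the main technical point, specifically the mixed Dirichlet--Robin regularity at the corners $\partial D_i\cap\partial Y_{\mathrm{reg}}$. I would avoid the corner issue by working variationally in $H^1$ and invoking regularity only away from $\partial D_i$, which is all the limit needs.

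\emph{Step 2 (new weight).} Put $w=\log u$ and $\tilde f=\hat f+w$. From $\mathcal Pu\geq0$ we get
\[
\hat\Delta w+|\hat\nabla w|^2+\langle\hat\nabla\hat f,\hat\nabla w\rangle\leq\tfrac12\bigl(\Sc_{\mu,\hat f}(\hat g)-\delta''\bigr).
\]
Substituting this into $\Sc_{\tilde\mu,\tilde f}=\Sc-2\hat\Delta\hat f-2\hat\Delta w-\tilde\mu|\hat\nabla\hat f+\hat\nabla w|^2$ gives
\[
\Sc_{\tilde\mu,\tilde f}(\hat g)\geq\delta''+(\mu-\tilde\mu)|\hat\nabla\hat f|^2+2(1-\tilde\mu)\langle\hat\nabla\hat f,\hat\nabla w\rangle+(2-\tilde\mu)|\hat\nabla w|^2 .
\]
The quadratic form on the right is nonnegative provided $(1-\tilde\mu)^2\leq(\mu-\tilde\mu)(2-\tilde\mu)$. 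Writing $\tilde\mu=\mu-t$, this becomes $t\geq(1-\mu)^2/\mu$. We therefore take $\tilde\mu=\mu-(1-\mu)^2/\mu$, and this satisfies $\tilde\mu<\mu$.

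It remains to check $\tilde\mu>(m-1)/m$. The function $\mu\mapsto\mu-(1-\mu)^2/\mu=3-\mu-1/\mu$ is strictly increasing on $(0,1)$. At $\mu=m/(m+1)$ it equals exactly $(m-1)/m$. Since $\mu>m/(m+1)$ strictly, it follows that $\tilde\mu>(m-1)/m$. With this choice, $\Sc_{\tilde\mu,\tilde f}(\hat g)\geq\delta''\geq\delta''/2$.

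\emph{Step 3 (boundary).} On $\partial Y_{\mathrm{reg}}$ we have $\hat\nu_{\partial Y}(\tilde f)=\hat\nu_{\partial Y}(\hat f)+\hat\nu_{\partial Y}(u)/u=\hat\nu_{\partial Y}(\hat f)-\beta$. Hence
\[
H_{\partial Y}(\hat g)-\|d\tilde F\|_{\tr,\hat g}+\hat\nu_{\partial Y}(\tilde f)=\delta''\geq\delta''/2.
\]
Smoothness of $\tilde f$ follows from the regularity of $u$.

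\emph{Step 4 (conclusion).} Combined with the completeness of $\hat g$ from Lemma~\ref{lemma:conformalWeightedSpectralComplete} and the properties of $\tilde F$ from Lemma~\ref{lemma:completeSpectralMap}, the data $(Y_{\mathrm{reg}},\hat g,\tilde f,\tilde\mu,\tilde F)$ satisfy all the hypotheses of Theorem~\ref{thm:noncompactScMeanWeighted} in dimension $m$, with $\deg\tilde F\neq0$.
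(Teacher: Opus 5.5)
Your outline is the paper's own argument. You exhaust $Y_{\mathrm{reg}}$ by domains with Dirichlet data on the relative boundary and Robin data on $\partial Y_{\mathrm{reg}}$, take a positive limit of normalized first eigenfunctions, set $\tilde f=\hat f+\log u$, and complete the square, which works exactly when $\tilde\mu\le 2-1/\mu$. There is a small slip in Step 2: $\mu-(1-\mu)^2/\mu=2-1/\mu$, not $3-\mu-1/\mu$. With the correct expression, monotonicity holds on all of $(0,\infty)$ and the value at $\mu=m/(m+1)$ is indeed $(m-1)/m$, so your choice of $\tilde\mu$ stands.

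\textbf{The gap is in Steps 1 and 3.} Your Robin coefficient $\beta$ contains $\|d\tilde F\|_{\tr,\hat g}$, the sum of the singular values of $d\tilde F$. In general this is only locally Lipschitz on $\partial Y_{\mathrm{reg}}$: it has kinks wherever a singular value reaches zero, for instance along fold sets of $\tilde F$ where the Jacobian changes sign. With a merely Lipschitz Robin coefficient, boundary Schauder theory gives $u\in C^{1,\alpha}$ up to $\partial Y_{\mathrm{reg}}$, not the $C^{2,\alpha}$ bounds you claim. So $\tilde f=\hat f+\log u$ is not smooth up to the boundary, and the sentence ``smoothness of $\tilde f$ follows from the regularity of $u$'' fails. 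This matters because the lemma asserts $\tilde f\in C^\infty(Y_{\mathrm{reg}})$ up to the regular boundary. Theorem~\ref{thm:noncompactScMeanWeighted} needs a smooth weight in the next round of the induction: for the conformal change $e^{2\tilde f/(m-1)}\hat g$, for the capillary regularity theory, and for the $\nabla^2\tilde f(\nu,\nu)$ term in the second variation near the contact set.

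\textbf{The repair, as in the paper, is to trade boundary margin for smoothness.} Choose a smooth $h$ with $\|d\tilde F\|_{\tr,\hat g}\le h\le\|d\tilde F\|_{\tr,\hat g}+\delta''/4$, and use the smooth coefficient $\beta'=H_{\partial Y}(\hat g)+\hat\nu_{\partial Y}(\hat f)-h-\delta''/2$. Since $\beta'\ge\beta$, the quadratic form only increases, so your exhaustion runs with smooth coefficients and produces a smooth $u$. The boundary identity becomes $H_{\partial Y}(\hat g)-\|d\tilde F\|_{\tr,\hat g}+\hat\nu_{\partial Y}(\tilde f)=\delta''/2+h-\|d\tilde F\|_{\tr,\hat g}\ge\delta''/2$. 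This is why the lemma claims only $\delta''/2$ on the boundary, not the $\delta''$ you obtained. The paper also shifts part of $\delta''$ into the eigenvalue so that $\lambda\ge\delta''/4$, but that is cosmetic: your $\lambda\ge0$ with the full $\delta''$ in $q$ works equally well in the interior.
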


\begin{proof}
	Denote $\hat f=\log\hat \rho$. All differential operators, inner products, and norms in this proof are taken with respect to $\hat g$.

    By smooth approximation, choose
$h\in C^\infty(\partial Y_{\mathrm{reg}})$ such that
\[
\|d\tilde F\|_{\operatorname{tr},\hat g}
\leq h\leq \|d\tilde F\|_{\operatorname{tr},\hat g}
+\frac{\delta''}{4}.
\]
Define
\[
\begin{aligned}
\mathcal Q(\varphi)={}&
\int_{Y_{\mathrm{reg}}}\hat\rho\left(
|\nabla\varphi|^2+
\frac12\left(\Sc_{\mu,\hat f}(\hat g)
-\frac{\delta''}{2}\right)\varphi^2
\right)d\mathcal H_{\hat g}^m\\
&+\int_{\partial Y_{\mathrm{reg}}}\hat\rho\left(
H_{\partial Y}(\hat g)+\hat\nu_{\partial Y}(\hat f)
-h-\frac{\delta''}{2}\right)\varphi^2
d\mathcal H_{\hat g}^{m-1}.
\end{aligned}
\]
Lemma~\ref{lemma:completeSpectralMap} implies
\begin{equation}\label{eq:pointwise-form-gap}
\mathcal Q(\varphi)\geq
\frac{\delta''}{4}\int_{Y_{\mathrm{reg}}}
\hat\rho\varphi^2\,d\mathcal H_{\hat g}^m
+\frac{\delta''}{4}\int_{\partial Y_{\mathrm{reg}}}
\hat\rho\varphi^2\,d\mathcal H_{\hat g}^{m-1}
\end{equation}
for every compactly supported smooth $\varphi$, including those
whose support meets $\partial Y_{\mathrm{reg}}$.

Fix an interior
point $x_0$ and choose relatively open connected domains $U_j$ with
compact closures such that
\[
x_0\in U_1,\qquad
\overline U_j\subset U_{j+1},\qquad
\bigcup_j U_j=Y_{\mathrm{reg}},
\]
and furthermore 
$A_j=\overline{\partial U_j\setminus\partial Y_{\mathrm{reg}}}$
are smooth and transverse to $\partial Y_{\mathrm{reg}}$.
Let $\mathcal Q_j$ be $\mathcal Q$ with the integrals restricted to
$U_j$ and $U_j\cap\partial Y_{\mathrm{reg}}$, respectively, and define 
\[
\lambda_j=\inf_{0\ne \varphi\in V_j}
\frac{\mathcal Q_j(\varphi)}
{\displaystyle\int_{U_j}\hat\rho \varphi^2\,d\mathcal H_{\hat g}^m}.
\]
which is positive by \eqref{eq:pointwise-form-gap}. 

By the strong maximal principle, there is a positive function $v_j$ on $U_j$ such that 
\[
\begin{cases}
-\Delta v_j-\langle\nabla\hat f,\nabla v_j\rangle
+\dfrac12\left(\Sc_{\mu,\hat f}(\hat g)
-\dfrac{\delta''}{2}\right)v_j=\lambda_jv_j
&\textup { on }U_j\setminus\partial Y_{\mathrm{reg}},\\[2pt]
\hat\nu_{\partial Y}(v_j)+\left(
H_{\partial Y}(\hat g)+\hat\nu_{\partial Y}(\hat f)
-h-\dfrac{\delta''}{2}\right)v_j=0
&\textup{ on }U_j\cap\partial Y_{\mathrm{reg}},\\[2pt]
v_j=0&\textup{ on }A_j.
\end{cases}
\]
Note that 
\[
\frac{\delta''}{4}\leq\lambda_{j+1}\leq\lambda_j\leq\lambda_1.
\]
Let $\lambda = \lim_{j\to \infty} \lambda_j$. Then $\lambda \geq \delta''/4$.
Normalize $v_j(x_0)=1$. A  subsequence of $\{v_j\}$ converges to $v>0$ in $C^\ell(K)$
for every $K$ and $\ell$. Therefore, we have 
\begin{align}
-\Delta v-\langle\nabla\hat f,\nabla v\rangle
+\frac12\left(\Sc_{\mu,\hat f}(\hat g)
-\frac{\delta''}{2}\right)v
&=\lambda v,
\label{eq:pointwise-positive-solution}\\
\hat\nu_{\partial Y}(v)+\left(
H_{\partial Y}(\hat g)+\hat\nu_{\partial Y}(\hat f)
-h-\frac{\delta''}{2}\right)v
&=0.
\label{eq:pointwise-robin}
\end{align}

Let us choose $\tilde \mu$ such that
\[
\frac{m-1}{m}<\tilde\mu<2-\frac1\mu,
\]
which is possible because $\mu>m/(m+1)$, and set
$\tilde f=\hat f+\log v$. Dividing
\eqref{eq:pointwise-positive-solution} by $v$ gives
\[
\Sc_{\mu,\hat f}(\hat g)-2\Delta\log v
=2\lambda+\frac{\delta''}{2}
+2|\nabla\log v|^2
+2\langle\nabla\hat f,\nabla\log v\rangle.
\]
Consequently,
\[
\begin{aligned}
\Sc_{\tilde\mu,\tilde f}(\hat g)
={}&2\lambda+\frac{\delta''}{2}
+(2-\tilde\mu)|\nabla\log v|^2
+(\mu-\tilde\mu)|\nabla\hat f|^2\\
&+2(1-\tilde\mu)
\langle\nabla\hat f,\nabla\log v\rangle.
\end{aligned}
\]
Completing the square yields
\[
\begin{aligned}
\Sc_{\tilde\mu,\tilde f}(\hat g)
={}&2\lambda+\frac{\delta''}{2}
+(2-\tilde\mu)\left|
\nabla\log v+\frac{1-\tilde\mu}{2-\tilde\mu}\nabla\hat f
\right|^2\\
&+\frac{\mu(2-\tilde\mu)-1}{2-\tilde\mu}
|\nabla\hat f|^2
\geq\delta'',
\end{aligned}
\]
because $2-\tilde\mu>0$ and $\mu(2-\tilde\mu)-1>0$.
The boundary equation~\eqref{eq:pointwise-robin} yields
\[
H_{\partial Y}(\hat g)-\|d\tilde F\|_{\operatorname{tr},\hat g}
+\hat\nu_{\partial Y}(\tilde f)
=\frac{\delta''}{2}+h-\|d\tilde F\|_{\operatorname{tr},\hat g}
\geq\frac{\delta''}{2}.
\]
Note that we have only changed the weight function $\hat f$ to $\tilde f$ and the constant $\mu$ to $\tilde \mu$. The metric $\hat g$ and the map $\tilde F$ remain unchanged. In particular, the completeness
of $\hat g$ and all properties of $\tilde F$ from
Lemma~\ref{lemma:completeSpectralMap} are preserved. This completes the proof. 
\end{proof}

Let us summarize the proof of Theorem \ref{thm:noncompactScMeanWeighted}.

\begin{proof}[Proof of Theorem \ref{thm:noncompactScMeanWeighted}]
Starting with a counterexample in dimension $n\geq3$, the preceding
construction produces a counterexample in dimension $n-1$ whenever
$n-1\geq3$. Repeating the construction, we reach ambient dimension
three. Its minimizing bubble is a compact smooth surface by
Lemmas~\ref{lemma:existence} and~\ref{lemma:variation}.
On a connected component carrying nonzero  degree boundary map,
Lemma~\ref{lemma:2ndVariationImproved} implies the hypotheses of
Lemma~\ref{lemma:n=2}.
This is a contradiction and proves
Theorem~\ref{thm:noncompactScMeanWeighted} for $n\geq3$.
\end{proof}

\section{Quantitative estimates for the singular set}\label{sec:singular-set-estimates}
We establish the local volume, intrinsic packing, and boundary
tubular estimates used in Subsection~\ref{subsec:singular-set-inputs}.
Throughout this section, $Y$ is the compact capillary bubble from
Lemma~\ref{lemma:existence}, $S$ is its singular set, and
$m=\dim Y=n-1$. We retain the intrinsic metric $d_Y$, ambient balls
$B_r^M$, and covering numbers $N_r^M$ defined there.

Compactness of $Y$ and Arzel\`a--Ascoli, applied to curves of
bounded length, show that $d_Y$ is lower semicontinuous in the
ambient topology. In particular, intrinsic balls are Borel sets.

We first prove the volume bounds needed to convert ambient covers
into intrinsic packing estimates.

\begin{lemma}\label{prop:localVolume}
There are $c,C,r_0>0$ such that, for every $x\in Y$ and $0<r<r_0$,
\[
 cr^m\leq\mathcal H_g^m(B_r^Y(x))\leq Cr^m,
\]
and
\begin{equation}\label{eq:local-boundary-area}
 \mathcal H_g^{m-1}(B_r^Y(x)\cap\partial Y)\leq Cr^{m-1}.
\end{equation}
The constants are uniform, including at singular centers.
Moreover, $Y$ is compact in its intrinsic topology.
\end{lemma}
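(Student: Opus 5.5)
The plan is to pass to the unweighted capillary problem via $\widetilde g=\rho^{2/(n-1)}g$, as in the proof of Lemma~\ref{lemma:existence}. Under this change $\Omega$ minimizes
\[
\mathcal H^{n-1}_{\widetilde g}(\partial^*\Omega\cap\mathring M)-\int_\Omega\widetilde\Psi\,d\mathcal H^n_{\widetilde g}-\int_{\partial^*\Omega\cap\partial M}\mu_\partial\,d\mathcal H^{n-1}_{\widetilde g},
\]
where $\widetilde\Psi$ is bounded on a neighborhood of the compact set $Y$. Since $\rho$ is smooth and positive near $Y$, all the claimed estimates are invariant under this bi-Lipschitz change up to constants. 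So it suffices to prove them for a compact capillary almost-minimizer with bounded prescribed mean curvature and contact angle function satisfying $|\mu_\partial|\le 1-\tau$. The last condition is given by Lemma~\ref{lemma:existence}.

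\textbf{Ambient density bounds.} First I would prove the bounds for ambient balls $B_r^M(x)\cap Y$, $x\in Y$, $r<r_0$.

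For the upper bound, I compare $\Omega$ with $\Omega\setminus B_r^M(x)$ and $\Omega\cup B_r^M(x)$. This gives
\[
\mathcal H^m(Y\cap B_r^M)\le \mathcal H^m(\partial B_r^M\cap \mathring M)+\mathcal H^m(\partial M\cap B_r^M)+Cr^{n}\le Cr^m,
\]
using $|\mu_\partial|\le1$ and boundedness of $\widetilde\Psi$.

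For the lower bound, I use the standard capillary monotonicity and isoperimetric argument. Combining the relative isoperimetric inequality in $M$ near $\partial M$ with $|\mu_\partial|\le1-\tau$, which makes the boundary wetting term strictly dominated, the function $V(r)=\mathcal H^n(\Omega\cap B_r^M)$ (or of the complement) satisfies a differential inequality $V'\ge cV^{(n-1)/n}$. Integrating it yields $\mathcal H^m(Y\cap B_r^M(x))\ge cr^m$. These are the density estimates in \cite{Wu_capillarysurfaces} and \cite[Appendix~B]{WangWangZhu}, and they hold uniformly at singular points because they use only minimality.

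For the boundary area bound~\eqref{eq:local-boundary-area}, I would use the coarea formula and the first variation. Testing the first variation with a vector field tangent to $\partial M$ controls $\mathcal H^{m-1}(\partial Y\cap B_r^M)$ by $r^{-1}\mathcal H^m(Y\cap B_{2r}^M)+Cr^{m-1}$. Here the angle bound $\sin J\ge c(\tau)>0$ is essential. Alternatively, a cone comparison on $\partial M\cap\Omega$ gives the same bound.

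\textbf{Intrinsic versus ambient balls.} This is the main obstacle. The inclusion $B_r^Y(x)\subset B_r^M(x)\cap Y$ gives the upper bounds immediately. For the lower volume bound one needs the reverse: for some $\Lambda$, the connected piece of $Y\cap B_{r}^M(x)$ containing $x$ lies in $B^Y_{\Lambda r}(x)$. In other words, $Y$ should be locally uniformly quasiconvex.

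My first attempt would be a compactness argument. Suppose the claim fails along a sequence $x_i$, $r_i\to0$. Rescaling gives, by Allard-type and capillary compactness with uniform density bounds, a limiting minimizing cone: either a free-boundary/capillary cone or an interior area-minimizing cone. For minimizing boundaries the limit is connected in each ball, with singular set of codimension at least $4$ (this is Lemma~\ref{lemma:variation}(i) together with \cite{chodosh2024_improvedregularity}). Since the singular set has codimension at least $2$, the regular part of the cone is connected, and paths in the cone can avoid the singular set with controlled length. Smooth convergence on the regular part then transfers short paths back to $Y$, which contradicts the assumed failure.

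In practice I would run this through the ambient lower density bound together with the fact that the intrinsic distance is attained by curves through the regular part. Connectedness of the component follows because the singular set is too small to disconnect ($\mathcal H^{m-1}(S)=0$).

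Finally, to get intrinsic compactness, I cover $Y$ by finitely many ambient balls. Using $B^M_r\cap Y\subset B^Y_{\Lambda r}$ on components, together with the finiteness of the number of components, which follows from the uniform lower density bound, I obtain a finite intrinsic $\Lambda r$-net. This gives total boundedness, and lower semicontinuity of $d_Y$ gives completeness.
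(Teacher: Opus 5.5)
\textbf{Gap: the intrinsic lower bound.} The gap is in the step you yourself flag. You reduce the intrinsic lower bound $\mathcal H^m_g(B^Y_r(x))\ge cr^m$, and with it total boundedness of $(Y,d_Y)$, to a uniform local quasiconvexity of $Y$, and the compactness sketch does not establish it.

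\begin{enumerate}
\item Rescalings at moving centers $x_i$ and scales $r_i\to0$ converge to entire (possibly capillary) minimizers, not cones. The convergence is smooth only away from the singular set of the limit.
\item The points realizing the failure, and $x_i$ itself when $x_i\in S$, may converge into that singular set. Joining them to the region of smooth convergence by paths of controlled $d_{Y_i}$-length is exactly the statement you are proving, at smaller scales, so the argument is circular.
\item Intrinsic distance is only lower semicontinuous under this convergence, $d_{Y_\infty}\le\liminf d_{Y_i}$. Short paths in the limit therefore give no upper bound on $d_{Y_i}$.
\item The remark that $\mathcal H^{m-1}(S)=0$ prevents disconnection is qualitative and yields no uniform $\Lambda$.
\end{enumerate}

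Even granting that the component of $Y\cap B^M_r(x)$ through $x$ lies in $B^Y_{\Lambda r}(x)$, a second problem remains. Your ambient density bound controls all of $Y\cap B^M_r(x)$, which can contain other sheets far away in $d_Y$. You would need a lower bound for the component through $x$. At capillary boundary points the signed monotonicity does not localize to a component, because its correction term is the wetted region of $\Omega$.

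\textbf{The missing idea.} No intrinsic--extrinsic comparison is needed; the paper proves a Sobolev inequality directly on $Y$.
\begin{enumerate}
\item Michael--Simon gives $\|u\|_{L^{m/(m-1)}}\le C(\int_Y(|\nabla^Y u|+u)+\int_{\partial Y}u)$.
\item The boundary term is absorbed by the divergence theorem on $Y$ applied to $u\nu$, where $\nu$ is the normal of $\partial M$, since $\langle\nu,\nu_{\partial Y}\rangle=\sin J\ge c_\tau$. This is also the correct version of your boundary-area step. For fields tangent to $\partial M$, the conormal component is a multiple of $\cos J$, which may vanish; in the capillary first variation these boundary terms cancel by Young's law.
\item Both inequalities extend across $S$ by cutoffs $\chi_\ell$ with $\int_Y|\nabla^Y\chi_\ell|\to0$, using $\mathcal H^{m-1}(S)=0$ and the upper area bound.
\item Apply the result to cutoffs of the $1$-Lipschitz function $d_Y(y,\cdot)$ at a regular point $y$. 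This gives $V^{(m-1)/m}\le C(V'+V)$ for $V(t)=\mathcal H^m_g(B^Y_t(y))$, hence $V(t)\ge ct^m$.
\item Disjoint balls and finite area then give total boundedness of $(Y_{\mathrm{reg}},d_Y)$.
\item For singular $y$, take a $d_Y$-Cauchy sequence of regular points converging ambiently to $y$. It satisfies $d_Y(y_j,y)\to0$, so $B^Y_{t/2}(y_j)\subset B^Y_t(y)$. Your lower-semicontinuity argument works here as well as the paper's concatenation.
\end{enumerate}
Your completeness argument via lower semicontinuity of $d_Y$ is correct as it stands.
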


\begin{proof}
For $m\leq4$, the result follows from smoothness and compactness.
Assume $m\geq5$. Interior and boundary area estimates
\cite{DEGL,DPMreg} give
\begin{equation}\label{eq:Assouad-upper-area}
 \mathcal H_g^m(Y\cap B_t^M(x))\leq Ct^m
 \qquad(x\in Y)
\end{equation}
for all sufficiently small $t$. Since $d_M\leq d_Y$, this proves
the intrinsic upper bound.

\smallskip
\noindent\emph{Sobolev and trace inequalities.}
Put $\partial Y_{\mathrm{reg}}=Y_{\mathrm{reg}}\cap\partial M$. The submanifold Sobolev inequality
\cite{MichaelSimon,Brendle} gives, for nonnegative smooth $u$
supported away from $S$,
\begin{equation*}
 \|u\|_{L^{m/(m-1)}(Y)}
 \leq C\left(\int_Y(|\nabla^Yu|+u)\,d\mathcal H_g^m
 +\int_{\partial Y_{\mathrm{reg}}} u\,d\mathcal H_g^{m-1}\right).
\end{equation*}
Let $\nu_{\partial Y}$ be the outward normal of $\partial Y_{\mathrm{reg}}$ in $Y_{\mathrm{reg}}$ and $\nu$ be the 
outward normal $\nu$ of $\partial M$.
Young's law gives $\langle\nu,\nu_{\partial Y}\rangle=\sin J\geq c_\tau>0$.
The divergence formula applied to $u\nu$, using bounded mean
curvature and bounded derivatives of $\nu$, therefore yields
\begin{equation}\label{eq:boundary-trace}
 \int_{\partial Y}u\,d\mathcal H_g^{m-1}
 \leq C\int_Y(|\nabla^Yu|+u)\,d\mathcal H_g^m.
\end{equation}
Substitution gives
\begin{equation}\label{eq:Assouad-Sobolev}
 \|u\|_{L^{m/(m-1)}(Y)}
 \leq C\int_Y(|\nabla^Yu|+u)\,d\mathcal H_g^m.
\end{equation}
Both inequalities extend across $S$. Indeed,
$\mathcal H_g^{m-1}(S)=0$ and \eqref{eq:Assouad-upper-area}
provide cutoffs $\chi_\ell$ vanishing near $S$, converging to one
on $Y_{\mathrm{reg}}$, with $\int_Y|\nabla^Y\chi_\ell|\to0$.
Apply the inequalities to $u\chi_\ell$ and pass to the limit.
By approximation, they hold for bounded nonnegative functions
locally Lipschitz on $Y_{\mathrm{reg}}$ with integrable tangential gradient.
Applying \eqref{eq:boundary-trace} to an intrinsic cutoff between
$B_r^Y(x)$ and $B_{2r}^Y(x)$, with gradient bounded by $C/r$,
proves \eqref{eq:local-boundary-area}.

\smallskip
\noindent\emph{Lower volume bound.}
For $y\in Y_{\mathrm{reg}}$, write $V(t)=\mathcal H_g^m(B_t^Y(y))$.
Apply \eqref{eq:Assouad-Sobolev} to the cutoff that is one on
$B_t^Y(y)$ and decreases linearly to zero on
$B_{t+h}^Y(y)\setminus B_t^Y(y)$. Letting $h\to 0$ gives
\begin{equation*}
 V(t)^{(m-1)/m}\leq C\bigl(V'(t)+V(t)\bigr)
 \qquad\text{for a.e. }t.
\end{equation*}
Since $V(t)^{1/m}\leq Ct$, the last term is absorbed for
$t<t_0$, uniformly in $y$. Thus $(V^{1/m})'\geq c$ almost
everywhere. Monotonicity of $V^{1/m}$ makes its singular
distributional derivative nonnegative, so integration and
$V(t)\to0$ as $t\to0$ imply
\begin{equation}\label{eq:regular-noncollapsing}
 \mathcal H_g^m(B_t^Y(y))\geq ct^m
 \qquad(y\in Y_{\mathrm{reg}},\ 0<t<t_0).
\end{equation}

This bound and finite total area make $(Y_{\mathrm{reg}},d_Y)$ totally bounded:
the disjoint balls of radius $\min\{q/3,t_0/2\}$ centered at any
$q$-separated set have a uniform positive volume.
For $y\in S$, choose regular points $y_j\to y$ in the ambient
metric. A subsequence is  Cauchy. Passing to a further
subsequence, join successive points by curves with summable
lengths. Their concatenation extends to $y$, and its tail lengths
give $d_Y(y_j,y)\to0$. Consequently,
$B_{t/2}^Y(y_j)\subset B_t^Y(y)$ for large $j$, and
\begin{equation}\label{eq:Assouad-noncollapsing}
 \mathcal H_g^m(B_t^Y(y))\geq ct^m
 \qquad(y\in S,\ 0<t<t_0).
\end{equation}
This proves the volume bounds. The same concatenation argument
shows that every intrinsically Cauchy sequence converges in $Y$.
Together with total boundedness and intrinsic density of $Y_{\mathrm{reg}}$,
this proves intrinsic compactness. In particular,
$\{x\in Y:d_Y(x,S)\geq s\}$ is compact in $Y_{\mathrm{reg}}$ for each $s>0$.
\end{proof}

Metric-dimension statements are understood on the finite-distance
components of $d_Y$. The preceding lower volume bound and finite
total area imply that there are only finitely many such components.
We now prove the singular-set packing estimate. Recall that
$A\subset S$ is intrinsically $r$-separated if $d_Y(a,b)\geq r$
for distinct $a,b\in A$.

\begin{proposition}\label{prop:Assouad}
If $m\leq4$, then $S=\emptyset$. If $m\geq5$, then for every
$\varepsilon>0$ there are constants $C_\varepsilon>0$ and
$R_\varepsilon>0$ such that
\begin{equation}\label{eq:Assouad-packing}
 \#\bigl(A\cap B_R^Y(s)\bigr)
 \leq C_\varepsilon\left(\frac Rr\right)^{m-5+\varepsilon}
\end{equation}
whenever $s\in S$, $0<r<R<R_\varepsilon$, and $A\subset S$ is
$r$-separated with respect to $d_Y$. The constants are independent of
$s,r,R$, and $A$. In particular,
\begin{equation*}
 \dim_{\mathcal A}(S,d_Y)\leq m-5.
\end{equation*}
\end{proposition}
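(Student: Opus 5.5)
The plan is to combine the stratification-type regularity of capillary minimizers with a quantitative covering argument in the style of Cheeger--Naber and Naber--Valtorta. The output is an ambient covering bound for $S$ at all scales, which is then turned into an intrinsic packing bound using the volume estimates of Lemma~\ref{prop:localVolume}.

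For $m\leq 4$, the claim $S=\emptyset$ follows from the regularity in Lemma~\ref{lemma:variation}(i), improved by the footnote's reference \cite{WangZhangRegularity} (and \cite{chodosh2024_improvedregularity}). These give interior regularity up to dimension $7$ and boundary regularity of capillary minimizers with angle bounded away from $0,\pi$ (here $|\mu_\partial|\leq 1-\tau$ by Lemma~\ref{lemma:existence}) when $m\leq 4$. Passing to the unweighted problem for $\widetilde g$ as in Lemma~\ref{lemma:existence} makes these results directly applicable.

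For $m\geq 5$, the first step is an \emph{ambient} packing bound. For every $\varepsilon>0$ I want
\[ N_r^M\bigl(S\cap B_R^M(s)\bigr)\leq C_\varepsilon (R/r)^{m-5+\varepsilon}, \qquad s\in S,\ 0<r<R<R_\varepsilon. \]
This comes from quantitative stratification. After rescaling $B_R^M(s)$ to unit size, the rescaled minimizers form a compact class with uniform density bounds, by the monotonicity formula and the area bound \eqref{eq:Assouad-upper-area}. The class is compact because the weight $\rho$, the forcing term $\Psi$, and the angle function converge to constants at small scales. Cones in the blow-up limits have singular set of dimension at most $m-5$, using the dimension bound from \cite{WangZhangRegularity} for both interior cones and capillary half-space cones. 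One then argues by contradiction with a compactness lemma. Points that are not $(m-4,\eta)$-symmetric at scale $r$ have monotone-quantity drops, and the standard pinching and cone-splitting argument gives the Minkowski-type bound $N_r\leq C_\varepsilon r^{-(m-5+\varepsilon)}$ inside the unit ball. The mechanism is that each point has only boundedly many scales with a definite density drop. Scale invariance of this argument is exactly what gives the Assouad form, uniform in the center. The main obstacle is \emph{this} step near $\partial M$. It requires a boundary monotonicity formula for capillary minimizers with variable contact angle, and the $\varepsilon$-regularity statement that $(m-4)$-almost-symmetric points are regular. I would first try to reduce to the known boundary monotonicity after a reflection or angle-straightening change of coordinates, following \cite{DPMreg,DEGL}.

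The second step converts ambient to intrinsic packing. Let $A\subset S\cap B_R^Y(s)$ be $r$-separated in $d_Y$. Since $d_M\leq d_Y$, $A\subset B_R^M(s)$. Cover $S\cap B_R^M(s)$ by $N\lesssim (R/r)^{m-5+\varepsilon}$ ambient balls of radius $r/4$. It then suffices to bound uniformly the number of points of $A$ in one ambient ball $B_{r/4}^M(x)$. Points of $A$ in that ball have intrinsic balls $B_{r/2}^Y(a)$ that are pairwise disjoint. By Lemma~\ref{prop:localVolume}, each carries volume at least $c r^m$, and all lie in $Y\cap B_{r}^M(x)$, whose area is at most $C r^m$ by \eqref{eq:Assouad-upper-area}. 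Hence there are at most $C/c$ such points, and \eqref{eq:Assouad-packing} follows. The Assouad bound $\dim_{\mathcal A}(S,d_Y)\leq m-5$ is then immediate from Definition~\ref{def:Assouad}.
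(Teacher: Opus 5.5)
\textbf{Where it matches the paper.} Your overall architecture is the paper's. First, an ambient covering bound $N^M_r(S\cap B^M_R(s))\le C_\varepsilon(R/r)^{m-5+\varepsilon}$, uniform in the center and in both scales, obtained by Cheeger--Naber quantitative stratification. Second, a conversion to intrinsic packing using \eqref{eq:Assouad-upper-area} and the intrinsic lower volume bound. Your conversion step is correct. Counting points of $A$ per ambient ball via the disjoint intrinsic balls $B^Y_{r/2}(a)\subset Y\cap B^M_{r}(x)$ is equivalent to, and a bit more direct than, the paper's Step~4. The paper routes that step through the tubular estimate \eqref{eq:Assouad-tubular} because it reuses that estimate in the Green function construction. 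Your treatment of $m\le4$ via \cite{WangZhangRegularity} is fine.

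\textbf{The gap.} Your first step carries essentially all of the content, and you leave it as an acknowledged obstacle. Two ingredients are missing.

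\emph{First, the boundary monotone quantity.} Reflection across $\partial M$ yields a monotonicity formula only for contact angle $\pi/2$. When $\mu_\partial\neq0$, the surface and its reflection meet at a corner, so the union is not stationary. The paper instead uses the signed density of \cite{DEGL}: interface area minus the $\mu_\partial$-weighted wetted area, with the corrections in \eqref{eq:Assouad-density}. This quantity is monotone and uniformly bounded for boundary centers. The paper runs quantitative rigidity, capillary cone-splitting (vertices on $\partial\mathbb R^{m+1}_+$, symmetry along boundary directions), and the bad-scale count with it. Together with flatness of $(m-4)$-symmetric capillary cones and boundary $\varepsilon$-regularity, this gives $S\cap\partial M\subset\mathcal S^{m-5}_{\eta_0,r}$ and the boundary covering \eqref{eq:effective-stratum-packing}.

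\emph{Second, interior singular points near $\partial M$, which you do not address.} Your claim of uniformity in the center needs a monotone quantity on all scales in $[r,R]$ at every point being covered. At an interior point $x$ with $d(x)=d_M(x,\partial M)<R$, the interior monotonicity holds only at scales below $d(x)$. The signed formula applies only at boundary centers. Interior singular points may also accumulate at $\partial M$.

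The paper handles this with a layer decomposition. The nearest boundary point $\pi(x)$ lies in $\mathcal S^{m-5}_{\eta_0,Ad(x)}$: an approximation at $\pi(x)$ by an $(m-4)$-symmetric cone at some scale $t\ge Ad(x)$ would make $B^M_{ct}(\pi(x))\ni x$ regular. So the layer $\{d\sim 2^jr\}$ is covered by $C(R/d_j)^{m-5+\varepsilon}$ balls of radius comparable to $d_j$. Each of these is then covered by $C(d_j/r)^{m-7+\varepsilon}$ balls via the interior estimate \eqref{eq:interior-singular-packing}. The product gains a factor $2^{-2j}$, so the layers sum. The strip $\{d<r\}$ is handled by projecting onto the boundary stratum.

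Without these two pieces, the ambient bound, and hence \eqref{eq:Assouad-packing}, is not established.
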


\begin{proof}
For $m\leq4$, the conclusion follows from
Lemma~\ref{lemma:variation}. Assume henceforth that $m\geq5$.

\hyperlink{step:quantitative-rigidity}{\textbf{Step~1}} shows that a small
density increase implies approximation by a minimizing capillary cone.
\hyperlink{step:boundary-covering}{\textbf{Step~2}} combines this
approximation with cone-splitting to cover the boundary quantitative
strata. \hyperlink{step:ambient-covering}{\textbf{Step~3}} uses
regularity and interior estimates to obtain an ambient cover of all
of $S$. Finally,
\hyperlink{step:intrinsic-packing}{\textbf{Step~4}} uses the volume
bounds in Lemma~\ref{prop:localVolume} to convert this cover into
intrinsic packing.

The quantitative stratification argument follows Cheeger--Naber
(see \cite[Sections~7--8]{CheegerNaber}), with signed density and
capillary compactness at the boundary.

Brendle--Wang use the Cheeger--Naber Minkowski dimension bound for
interior $\mu$-bubbles in their conformal blow-up construction
(see \cite[Theorem~3.34 and Section~3.7]{BrendleWang}). Here we need
a bound uniform in the ratio $R/r$, including at the capillary
boundary, and its conversion to intrinsic packing.

Throughout the proof, constants may depend on the fixed geometric data
and the indicated parameters, but not on the centers or the radii
under consideration. Upper scale bounds are decreased when necessary.

\medskip
\noindent\hypertarget{step:quantitative-rigidity}{\textbf{Step 1. Quantitative rigidity from signed monotonicity.}}
We show that a small increase in the corrected signed density
(defined in \eqref{eq:Assouad-density} below) over a sufficiently wide
range of scales implies closeness to a minimizing capillary cone.

In the conformal metric $\widetilde g$, the capillary functional near
$Y$ is
\begin{equation}\label{eq:capillary-functional}
 \underbrace{P_{\widetilde g}(\Omega)}_{\text{interface area}}
 -\underbrace{\int_\Omega\widetilde\Psi\,d\mathcal H^n_{\widetilde g}}_{\text{volume potential}}
 -\underbrace{\int_{\partial^*\Omega\cap\partial M}
 \mu_\partial\,d\mathcal H^{n-1}_{\widetilde g}}_{\substack{\text{weighted area of}\\\text{the wetted boundary}}}.
\end{equation}
By Lemma~\ref{lemma:existence}, we may choose $\tau>0$ and a compact
neighborhood $K$ of $Y$, disjoint from the barriers, such that
$|\mu_\partial|\leq1-\tau$ on $K\cap\partial M$.
Since $Y$ is disjoint from the barriers, $\Omega$ minimizes
\eqref{eq:capillary-functional} under sufficiently small local
variations near $Y$.

All boundary centers below lie in $K\cap\partial M$. For each such
point $x$, use normalized
Fermi coordinates\footnote{Choose geodesic normal
coordinates $z'=(z^1,\ldots,z^m)$ for the induced metric
$\widetilde g|_{T\partial M}$, centered at $x$, using an orthonormal
basis of $T_x\partial M$. If $q(z')$ is the corresponding boundary
point and $\eta$ is the inward $\widetilde g$-unit normal, set
$\Psi_x(z',s)=\exp^{\widetilde g}_{q(z')}(s\eta_{q(z')})$ for $s\geq0$.
Thus $\Psi_x(0,0)=x$, the boundary is $s=0$, and the pulled-back metric
$G=\Psi_x^*\widetilde g$ satisfies $G(0)=I$, $G_{ss}=1$, and
$G_{as}=0$ for $1\leq a\leq m$. Here ``normalized'' refers to
$G(0)=I$. Moreover, $G_{ij}(z)=\delta_{ij}+O(|z|)$, uniformly in the
neighborhood $K$. The coordinate ball is
$B_t(x)=\Psi_x(\{(z',s):s\geq0,\ |z'|^2+s^2<t^2\})$.
After dilation by $t^{-1}$, the rescaled metric has coefficients
$G_{ij}(tz)$ and converges locally in $C^1$ to the Euclidean metric.} centered at $x$
and let $B_t(x)$ denote the coordinate ball of radius $t$.
These coordinates and the metrics $g$ and $\widetilde g$ are uniformly
comparable on $K$. Thus the coordinate covering estimates below
transfer to the ambient balls $B_t^M(x)$ after fixed changes of radii.
Following the signed boundary monotonicity formula of
De Masi--Edelen--Gasparetto--Li
(see \cite[Lemma~2.9]{DEGL}), we introduce the
\textbf{corrected signed density} by
\begin{equation}\label{eq:Assouad-density}
 \Phi_x(t)=(1+c_1t)^{m+1}t^{-m}
 \left(
 \mathcal H^m_{\widetilde g}(Y\cap B_t(x))
 -\int_{B_t(x)\cap\partial^*\Omega\cap\partial M}
 \mu_\partial\,d\mathcal H^m_{\widetilde g}
 \right)+c_2t+C_0.
\end{equation}
First, the signed boundary monotonicity formula
\cite[Lemma~2.9]{DEGL} implies that there exist constants $c_1,c_2>0$
such that $\Phi_x$ is nondecreasing at sufficiently small scales.
The local area bounds \cite{DPMreg} further imply that there exist
constants $C_0,\Lambda,r_{\mathrm{loc}}>0$ such that
$0\leq\Phi_x(t)\leq\Lambda$ for every $x\in K\cap\partial M$
and $0<t<r_{\mathrm{loc}}$.
All constants $c_1,c_2,C_0,\Lambda$, and $r_{\mathrm{loc}}$ can be
chosen independently of $x$.

In the Fermi chart $\Psi_x\colon U_x\subset\mathbb R^{m+1}_+\to M$
centered at $x$, rescale $\Omega$ by setting
\begin{equation*}
 E_{x,t}:=\{z\in t^{-1}U_x:\Psi_x(tz)\in\Omega\}.
\end{equation*}
Comparison cones $C\subset\mathbb R^{m+1}_+$ are centered at the origin
and minimize the Euclidean capillary functional with zero volume
potential and constant contact coefficient
$\sigma\in[-1+\tau/2,1-\tau/2]$.  They include
$\varnothing$ and $\mathbb R^{m+1}_+$.

For $R_0>1$ and $\theta>0$, the configuration at $x$ is
\emph{$\theta$-close to $C$ at scale $t$ on $B_{R_0}$} if
\begin{equation*}
 \mathcal H^{m+1}\bigl((E_{x,t}\mathbin\triangle C)\cap B_{R_0}\bigr)<\theta.
\end{equation*}
Here $B_{R_0}=B^{\mathbb R^{m+1}}_{R_0}(0)$, $\triangle$ denotes
symmetric difference, and $\mathcal H^{m+1}$ is Euclidean volume.

\begin{claim}\label{claim:conicalApproximation}
    For every $R_0>1$ and $\theta>0$, there exist
$a\in(0,R_0^{-2})$, $\delta>0$, and $r_0\in(0,r_{\mathrm{loc}})$
such that, for all $x\in K\cap\partial M$ and $0<t<ar_0$,
\begin{equation}\label{eq:Assouad-rigidity}
 \Phi_x(a^{-1}t)-\Phi_x(at)\leq\delta
\end{equation}
implies that $E_{x,t}$ is $\theta$-close on $B_{R_0}$ to some
comparison cone $C$.
\end{claim}

\begin{proof}[Proof of the claim]
Suppose the claim fails for some $R_0>1$ and $\theta>0$.
Choose $a_i\to 0$ with $a_i<R_0^{-2}$.
Taking $\delta_i=i^{-1}$ and
$r_{0,i}=\min\{i^{-1},r_{\mathrm{loc}}/2\}$ gives
$x_i\in K\cap\partial M$ and $0<t_i<a_i r_{0,i}$ such that
\begin{equation*}
 \Phi_{x_i}(a_i^{-1}t_i)-\Phi_{x_i}(a_it_i)\leq i^{-1},
\end{equation*}
but $E_i:=E_{x_i,t_i}$ satisfies
\begin{equation*}
 \mathcal H^{m+1}\bigl((E_i\mathbin\triangle C)\cap B_{R_0}\bigr)
 \geq\theta
 \qquad\text{for every comparison cone }C.
\end{equation*}
In particular, $a_i^{-1}t_i\to0$, while
$[a_i,a_i^{-1}]$ exhausts $(0,\infty)$.

Passing to a subsequence, assume $x_i\to x_\infty\in K\cap\partial M$.
Write $G_i=\Psi_{x_i}^*\widetilde g$. The rescaled metrics are
\begin{equation*}
 g_i=\sum_{a,b=1}^{m+1}(G_i)_{ab}(t_i z)\,dz^a\otimes dz^b.
\end{equation*}
Since $G_i(0)=I$ and the coordinate bounds are uniform on $K$,
$g_i\to g_{\mathrm{eucl}}$ locally in $C^1$. The rescaled contact
coefficients converge to $\sigma_\infty:=\mu_\partial(x_\infty)$,
and the rescaled potentials
$t_i\widetilde\Psi(\Psi_{x_i}(t_i z))$ tend to zero.
Capillary compactness \cite[Theorem~2.9]{DPMreg} and a diagonal
argument give a Euclidean capillary minimizer
$E_\infty\subset\mathbb R^{m+1}_+$ with coefficient $\sigma_\infty$
and zero volume potential. The sets and wetted traces converge
locally in $L^1$, and the interior perimeter varifolds converge locally.
Writing $V_\infty$ for the limiting perimeter varifold and $W_\infty$
for the wetted-trace area measure, set
\begin{equation*}
 \Theta_\infty(s)=s^{-m}
 \bigl(\|V_\infty\|(B_s)-\sigma_\infty W_\infty(B_s)\bigr)+C_0.
\end{equation*}

For all but countably many $s>0$,
\begin{equation*}
 \|V_\infty\|(\partial B_s)=W_\infty(\partial B_s)=0.
\end{equation*}
At these radii, measure convergence and
$(1+c_1t_is)^{m+1}\to1$, $c_2t_is\to0$ give
$\Phi_{x_i}(t_is)\to\Theta_\infty(s)$.
Choose two such radii $\alpha<\beta$. For large $i$,
$[\alpha,\beta]\subset[a_i,a_i^{-1}]$, so monotonicity yields
\begin{align*}
 0&\leq\Theta_\infty(\beta)-\Theta_\infty(\alpha)
 =\lim_i\bigl(\Phi_{x_i}(\beta t_i)-\Phi_{x_i}(\alpha t_i)\bigr)\\
 &\leq\lim_i\bigl(\Phi_{x_i}(a_i^{-1}t_i)
 -\Phi_{x_i}(a_it_i)\bigr)=0.
\end{align*}
Monotonicity therefore makes $\Theta_\infty$ constant on $(0,\infty)$.

The equality case of signed monotonicity \cite[Lemma~2.9]{DEGL}
shows that $E_\infty$ is a cone with vertex at the origin.
Since $\sigma_\infty\in[-1+\tau,1-\tau]$, it is a comparison cone.
Local $L^1$ convergence now gives
\[
 \mathcal H^{m+1}\bigl((E_i\mathbin\triangle E_\infty)
 \cap B_{R_0}\bigr)\longrightarrow0,
\]
contradicting the choice of $E_i$.
\end{proof}

Thus small density increase gives a uniform conical approximation.
In \hyperlink{step:boundary-covering}{\textbf{Step~2}}, we compare
these approximations at different centers to construct a cover.

\medskip
\noindent\hypertarget{step:boundary-covering}{\textbf{Step 2. Covering the boundary quantitative strata.}}
We combine the conical approximation from
\hyperlink{step:quantitative-rigidity}{\textbf{Step~1}} with
cone-splitting to prove \eqref{eq:effective-stratum-packing} below.

\smallskip
\noindent\emph{Definitions.}
Recall that a capillary cone is \emph{$k$-symmetric} if it is invariant
under translations along a $k$-dimensional linear subspace of
$\partial\mathbb R^{m+1}_+$. Equivalently, it splits off this subspace
as a Euclidean factor.

Fix $0\leq k\leq m-1$, $R_0>1$, and a sufficiently small $r_*>0$.
For $\eta>0$ and $0<r<r_*$, the
\emph{boundary quantitative $k$-stratum} $\mathcal S^k_{\eta,r}$
consists of all $x\in K\cap\partial M$ satisfying
\begin{equation*}
 \mathcal H^{m+1}\bigl((E_{x,t}\mathbin\triangle C)\cap B_{R_0}\bigr)
 \geq\eta
\end{equation*}
for every $t\in[r,r_*]$ and every $(k+1)$-symmetric comparison cone $C$.
This set may contain regular points and satisfies
\[
 \mathcal S^k_{\eta,r}\subset\mathcal S^k_{\eta,r'}
 \qquad\text{for }0<r\leq r'<r_*.
\]

\smallskip
\begin{claim}\label{claim:coneSplitting}
    Choose a uniform $L_0>2$ with
$t^{-1}\Psi_x^{-1}(B_t^M(x)\cap\partial M)\subset B_{L_0}$
for small $t$.
Given $\eta,\rho>0$ and $R_0>1$, there exist
$R_{\mathrm{cs}}>R_0+L_0+1$ and $\theta,t_{\mathrm{cs}}>0$ such that:
if $E=E_{x,t}$, $x\in K\cap\partial M$, $0<t<t_{\mathrm{cs}}$,
and minimizing capillary cones $C_i$ with vertices
$y_0=0,y_1,\ldots,y_{k+1}\in B_{L_0}\cap\partial\mathbb R^{m+1}_+$ satisfy
\begin{align*}
 \operatorname{dist}\bigl(y_i,\operatorname{Aff}
 (y_0,\ldots,y_{i-1})\bigr)&\geq\rho
 &&(i=1,\ldots,k+1),\\
 \mathcal H^{m+1}\bigl((E\mathbin\triangle C_i)
 \cap B_{R_{\mathrm{cs}}}\bigr)&<\theta
 &&(i=0,\ldots,k+1),
\end{align*}
then some $(k+1)$-symmetric minimizing capillary cone $C$ at the origin satisfies
\begin{equation*}
 \mathcal H^{m+1}\bigl((E\mathbin\triangle C)\cap B_{R_0}\bigr)<\eta.
\end{equation*}
Here $\operatorname{Aff}$ denotes affine span.
\end{claim}

\begin{proof}[Proof of the claim]
Suppose the claim fails for some fixed $\eta,\rho>0$ and $R_0>1$.
Following the compactness argument of \cite[Lemma~8.2]{CheegerNaber},
choose $R_j>R_0+L_0+1$ tending to infinity and $\theta_j\to 0$.
The failure of the claim gives $E_j=E_{x_j,t_j}$, with
$x_j\in K\cap\partial M$ and $0<t_j<j^{-1}$, together with
minimizing capillary cones $C_{i,j}$ centered at
$y_{0,j}=0,y_{1,j},\ldots,y_{k+1,j}\in B_{L_0}\cap\partial\mathbb R^{m+1}_+$
such that
\begin{align*}
 \operatorname{dist}\bigl(y_{i,j},\operatorname{Aff}
 (y_{0,j},\ldots,y_{i-1,j})\bigr)&\geq\rho
 &&(i=1,\ldots,k+1),\\
 \mathcal H^{m+1}\bigl((E_j\mathbin\triangle C_{i,j})\cap B_{R_j}\bigr)
 &<\theta_j &&(i=0,\ldots,k+1),
\end{align*}
whereas
\[
 \mathcal H^{m+1}\bigl((E_j\mathbin\triangle C)\cap B_{R_0}\bigr)\geq\eta
\]
for every $(k+1)$-symmetric minimizing capillary cone $C$ at the origin.

As $t_j\to0$, the rescaled metrics become Euclidean, the contact
coefficients become constant after passing to a subsequence, and the
volume potentials vanish. Capillary compactness
\cite[Theorem~2.9]{DPMreg} therefore gives, along a further subsequence,
$E_j\to E_\infty$ locally in $L^1$ and $y_{i,j}\to y_i^\infty$,
where $E_\infty$ minimizes the limiting capillary functional.

For each $i$, the approximation inequality gives
$C_{i,j}\to E_\infty$ locally in $L^1$, since $R_j\to\infty$
and $\theta_j\to0$. Passing dilation invariance to the limit shows
that $E_\infty$ is conical about every $y_i^\infty$, including $0$.
Composing dilations about $0$ and $y_i^\infty$ gives invariance under
translations along $y_i^\infty$. Hence $E_\infty$ is invariant under
translations along $V=\operatorname{span}\{y_1^\infty,\ldots,y_{k+1}^\infty\}$.

The separation condition ensures that $\dim V=k+1$: for
$i=1,\ldots,k+1$ and any fixed $a_1,\ldots,a_{i-1}\in\mathbb R$,
\[
 \left|y_i^\infty-\sum_{\ell<i}a_\ell y_\ell^\infty\right|
 =\lim_{j\to\infty}
 \left|y_{i,j}-\sum_{\ell<i}a_\ell y_{\ell,j}\right|\geq\rho.
\]
Thus each $y_i^\infty$ lies outside the span of the preceding vectors.

Consequently, $E_\infty$ is a $(k+1)$-symmetric minimizing capillary
cone at the origin. Taking $C=E_\infty$ in the counterexample inequality
contradicts local $L^1$ convergence:
\[
 0<\eta\leq
 \mathcal H^{m+1}\bigl((E_j\mathbin\triangle E_\infty)\cap B_{R_0}\bigr)
 \longrightarrow0.
\]
\end{proof}

\smallskip
\noindent\emph{Counting bad scales.}
Fix $0<\gamma<1/2$, to be chosen later.
Apply the Claim \ref{claim:coneSplitting}
with $\rho=\gamma$ to obtain $R_{\mathrm{cs}},\theta,t_{\mathrm{cs}}$,
then the Claim \ref{claim:conicalApproximation}
with radius $R_{\mathrm{cs}}+L_0+2$ and tolerance $\theta/4$
to obtain $a,\delta,r_{\mathrm{rig}}$. Set
\[
 r_j=\gamma^jR,\qquad j\geq0,\qquad
 0<R<\min\{r_*,ar_{\mathrm{rig}},t_{\mathrm{cs}}\}.
\]
For $x\in K\cap\partial M$, define
\[
 \Delta_j(x)=\Phi_x(a^{-1}r_j)-\Phi_x(ar_j).
\]
The scale $r_j$ is \emph{good at $x$} if $\Delta_j(x)\leq\delta$,
and \emph{bad} otherwise. At a good scale, $E_{x,r_j}$ is
$\theta/4$-close to a minimizing capillary cone on
$B_{R_{\mathrm{cs}}+L_0+2}$.

Following the quantitative differentiation argument of Cheeger--Naber
\cite[Section~4, (4.11)--(4.12)]{CheegerNaber}, we bound the number
of bad scales. Here the energy or mass density is replaced by the
corrected signed capillary density \eqref{eq:Assouad-density}.
The wetted-boundary contribution can have either sign, so we use
the corrections and uniform bounds established in
\hyperlink{step:quantitative-rigidity}{\textbf{Step~1}}. The additive constant $C_0$ cancels from $\Delta_j$.
The capillary conical approximation tests the intervals
$[ar_j,a^{-1}r_j]$. To account for their overlap, choose an integer
$L\geq1$ with $\gamma^L<a^2$.
The intervals $[ar_j,a^{-1}r_j]$ are disjoint within each residue
class modulo $L$, so monotonicity and $0\leq\Phi_x\leq\Lambda$ give
\[
 \delta\,\#\{j\geq0:\Delta_j(x)>\delta\}
 \leq\sum_{j\geq0}\Delta_j(x)\leq L\Lambda.
\]
Hence there are at most $Q:=\lceil L\Lambda/\delta\rceil$ bad scales
at any center, uniformly in $x$ and $R$.

\smallskip
\noindent\emph{Construction of the cover.}
We follow the pattern decomposition and covering argument of
Cheeger--Naber \cite[Section~4 and Lemma~8.1]{CheegerNaber}.
The boundary adaptation requires the capillary approximations at
nearby centers to be compared in one Fermi chart.
For $y\in B_{r_j}^M(z)\cap\partial M$, use the rescaled chart at $z$.
The coordinate transition is uniformly $C^1$-close, as $R\to0$, to
$u\mapsto v_y+O_yu$, where $v_y=r_j^{-1}\Psi_z^{-1}(y)\in B_{L_0}$
and $O_y$ is orthogonal and fixes the last coordinate vector.
This rigid map preserves the half-space and capillary minimality,
and places the cone vertex at $v_y$.
The extra radius $L_0+2$ covers $B_{R_{\mathrm{cs}}}$ in the chart
at $z$, while uniform local perimeter bounds control the additional
$L^1$ error. Shrinking $R$ therefore turns each original
$\theta/4$-approximation into a $\theta$-approximation on this common
ball, uniformly in the centers and $r_j\leq R$.
Fix $J\geq1$ and partition
$\mathcal S^k_{\eta,r_J}\cap B_R^M(x)$ into sets $E_T$ with the same
good--bad pattern $T=(T_0,\ldots,T_{J-1})$. Each nonempty pattern
has at most $Q$ bad entries. As in the cited covering argument,
refine balls of radius $r_j$ to radius $r_{j+1}$, keeping their
centers in $E_T$. The refinement bounds here are:

\begin{itemize}[leftmargin=*,itemsep=0.5ex]
\item If $T_j=\mathrm{good}$, the common-chart approximations and
Claim \ref{claim:coneSplitting}
confine $E_T\cap B_{r_j}^M(z)$ to a tube of width $Cr_{j+1}$ about
a $k$-plane in these coordinates. Otherwise, the usual successive
selection of $k+1$ separated directions gives a $(k+1)$-symmetric
approximation at $z$, contradicting $z\in\mathcal S^k_{\eta,r_J}$
and $r_j\in[r_J,r_*]$. Thus at most $C\gamma^{-k}$ balls suffice.

\item If $T_j=\mathrm{bad}$, boundary doubling gives at most
$C\gamma^{-m}$ balls of radius $r_{j+1}$.
\end{itemize}
In both cases, $C$ absorbs coordinate changes and recentering and is
independent of $\gamma$ and $J$. The upper bound on $R$ may depend on $\gamma$.

The standard iteration, starting from a uniformly bounded cover at
radius $R$, and summation over at most $2^J$ patterns give
\begin{equation*}
 N_{r_J}^M\bigl(\mathcal S^k_{\eta,r_J}\cap B_R^M(x)\bigr)
 \leq C(2C)^J\gamma^{-kJ}\gamma^{-(m-k)Q}.
\end{equation*}
Choose $\gamma$ with $2C\leq\gamma^{-\varepsilon}$, then fix the
approximation parameters and $Q$. Since $R/r_J=\gamma^{-J}$,
this gives the exponent $k+\varepsilon$ at the discrete radii.
For $r_{J+1}<r\leq r_J$, use
$\mathcal S^k_{\eta,r}\subset\mathcal S^k_{\eta,r_J}$ and boundary
doubling to pass to radius $r$. For $J=0$, apply doubling directly
to $B_R^M(x)$. Absorbing the factors depending on the fixed $\gamma$
and $Q$, we obtain
\begin{equation}\label{eq:effective-stratum-packing}
 N_r^M\bigl(\mathcal S^k_{\eta,r}\cap B_R^M(x)\bigr)
 \leq C_{\eta,\varepsilon}
 \left(\frac Rr\right)^{k+\varepsilon}.
\end{equation}
The constant is uniform in $x$ and $0<r<R$ for sufficiently small $R$.

\medskip
\noindent\hypertarget{step:ambient-covering}{\textbf{Step 3. Ambient covering of the full singular set.}}
We combine the boundary stratum estimate
\eqref{eq:effective-stratum-packing} with the standard interior
quantitative stratification estimate.

\smallskip
\noindent\emph{Boundary stratum inclusion.}
An $(m-4)$-symmetric minimizing capillary cone splits as
$C=C_0\times\mathbb R^{m-4}$, where $C_0$ has a four-dimensional
interface. Such cones are flat or trivial by
\cite[Theorem~1.16 and Corollary~1.17]{WangZhangRegularity}.
Capillary compactness and boundary $\varepsilon$-regularity
\cite{DPMreg} therefore give uniform constants $\eta_0,c>0$ such
that, after decreasing $r_*$, $\eta_0$-closeness to such a cone at
scale $t\leq r_*$ implies regularity throughout $B_{ct}^M(x)$.
For trivial cones, the density estimates exclude the interface
from this ball. In particular,
\begin{equation}\label{eq:critical-stratum-inclusion}
 S\cap\partial M\subset\mathcal S^{m-5}_{\eta_0,r}
 \qquad(0<r<r_*).
\end{equation}
Thus \eqref{eq:effective-stratum-packing} gives the required
covering bound for boundary singular points.

\smallskip
\noindent\emph{Interior estimate.}
Write $S_i=S\cap M^\circ$. For $m\leq6$, classical interior
regularity gives $S_i=\emptyset$, so the boundary estimate suffices.
For $m\geq7$, the standard interior estimate is
\begin{equation}\label{eq:interior-singular-packing}
 N_r^M\bigl(S_i\cap B_R^M(q)\bigr)
 \leq C_\varepsilon\left(\frac Rr\right)^{m-7+\varepsilon},
\end{equation}
provided $0<r<R<R_{\mathrm{int}}$ and
$B_{4R}^M(q)\Subset M^\circ$ lies in the fixed minimizing
neighborhood. The constants are uniform in $q\in K$.
This follows from the regularity argument of
\cite[Theorem~7.7]{CheegerNaber} and the bounded-mean-curvature
stratum estimate \cite[Theorem~1.3]{NaberValtorta}.
Here the smooth volume potential gives perimeter almost-minimality
and bounded mean curvature in $\widetilde g$, both with constants
$O(t)$ after rescaling by $t$. Compactness with perimeter convergence
\cite[Theorem~2.9]{DPMreg} and Allard regularity \cite{Allard1972}
therefore place $S_i$ in the $(m-7)$-stratum with a uniform tolerance:
every limiting $(m-6)$-symmetric minimizing cone is a
multiplicity-one plane. The area bound
\eqref{eq:Assouad-upper-area} and uniform metric comparison give
\eqref{eq:interior-singular-packing} in $g$.
See also \cite[Theorem~3.34]{BrendleWang} for the corresponding
Minkowski dimension bound for interior $\mu$-bubbles.

\smallskip
\noindent\emph{Combining the boundary and interior estimates.}
Assume $m\geq7$, fix $p\in K\cap\partial M$, and take $R$ small
enough that all enlarged scales below are admissible.
For $x\in S_i\cap B_R^M(p)$, let $d(x)=d_M(x,\partial M)$ and
let $\pi(x)$ be its nearest boundary point.
Choose $A>\max\{1,c^{-1}\}$. Then
\begin{equation*}
 \pi(x)\in\mathcal S^{m-5}_{\eta_0,Ad(x)}.
\end{equation*}
Indeed, an $\eta_0$-approximation by an $(m-4)$-symmetric cone at
any scale $t\in[Ad(x),r_*]$ would imply regularity in
$B_{ct}^M(\pi(x))$, which contains $x$, a contradiction.

For $d_j=2^jr<R$, set
\begin{equation*}
 E_j=\{x\in S_i\cap B_R^M(p):d_j\leq d(x)<2d_j\}.
\end{equation*}
By monotonicity of the strata in the lower cutoff,
\begin{equation*}
 \pi(E_j)\subset
 \mathcal S^{m-5}_{\eta_0,2Ad_j}\cap B_{2R}^M(p).
\end{equation*}
Applying \eqref{eq:effective-stratum-packing} at radius $2Ad_j$
with outer radius $4AR$ covers $\pi(E_j)$ by at most
$C_\varepsilon(R/d_j)^{m-5+\varepsilon/2}$ balls.
After recentering and doubling these balls, each lifted portion
of $E_j$ has diameter at most $C_A d_j$ and distance at least
$d_j$ from $\partial M$. Ambient doubling covers it by a bounded
number of balls $B_{c_0d_j}^M(q)$ with
$B_{4c_0d_j}^M(q)\Subset M^\circ$.
The interior estimate \eqref{eq:interior-singular-packing},
with loss $\varepsilon/2$, now gives
\begin{equation*}
 \begin{split}
 N_r^M(E_j)
 &\leq C_\varepsilon
 \left(\frac R{d_j}\right)^{m-5+\varepsilon/2}
 \left(\frac {d_j}r\right)^{m-7+\varepsilon/2}\\
 &=C_\varepsilon\left(\frac Rr\right)^{m-5+\varepsilon/2}2^{-2j},
 \end{split}
\end{equation*}
using doubling when $r$ is comparable to $d_j$.
Summing in $j$ covers the portion with $d\geq r$.

For the remaining strip $S\cap B_R^M(p)\cap\{d<r\}$,
all projections lie in $\mathcal S^{m-5}_{\eta_0,Ar}$, including
the boundary points by \eqref{eq:critical-stratum-inclusion}.
The boundary stratum estimate covers these projections by
$C_\varepsilon(R/r)^{m-5+\varepsilon}$ balls of radius $Ar$.
Enlarging by $r$ and applying ambient doubling covers the strip
by the same order of balls of radius $r$.

Finally, for $s\in S$, use the interior estimate if
$B_{4R}^M(s)\Subset M^\circ$. Otherwise choose $p\in\partial M$
with $d_M(s,p)\leq4R$ and apply the boundary-centered estimate
to $B_R^M(s)\subset B_{5R}^M(p)$.
Thus, uniformly for sufficiently small $0<r<R$,
\begin{equation}\label{eq:ambient-singular-packing}
 N_r^M\bigl(S\cap B_R^M(s)\bigr)
 \leq C_\varepsilon\left(\frac Rr\right)^{m-5+\varepsilon}.
\end{equation}

\medskip
\noindent\hypertarget{step:intrinsic-packing}{\textbf{Step 4. Intrinsic packing and the Assouad bound.}}
We use the volume bounds of Lemma~\ref{prop:localVolume}
to pass from ambient covering to intrinsic packing.
Fix $s\in S$ and $0<r<R<R_\varepsilon$, with $R_\varepsilon$
small enough that the estimates apply at all enlarged radii below.
By \eqref{eq:ambient-singular-packing}, after recentering a cover
by balls of radius $r/2$, we can cover $S\cap B_{3R}^M(s)$ by
at most $C_\varepsilon(R/r)^{m-5+\varepsilon}$ balls
$B_r^M(z_i)$ with $z_i\in S$.
Since $d_M\leq d_Y$,
\begin{equation*}
 B_{2R}^Y(s)\cap\{x:d_Y(x,S)<r\}
 \subset\bigcup_i B_{2r}^M(z_i).
\end{equation*}
The upper area bound \eqref{eq:Assouad-upper-area} therefore gives
\begin{equation}\label{eq:Assouad-tubular}
 \begin{split}
 &\mathcal H_g^m\bigl(B_{2R}^Y(s)
 \cap\{x:d_Y(x,S)<r\}\bigr)\\
 &\qquad\leq\sum_i\mathcal H_g^m(Y\cap B_{2r}^M(z_i))
 \leq C_\varepsilon R^{m-5+\varepsilon}r^{5-\varepsilon}.
 \end{split}
\end{equation}

Let $F\subset S\cap B_R^Y(s)$ be any finite intrinsically
$r$-separated set. The balls $B_{r/3}^Y(y)$, $y\in F$, are
pairwise disjoint and lie in the neighborhood in
\eqref{eq:Assouad-tubular}. By the intrinsic lower volume bound
\eqref{eq:Assouad-noncollapsing},
\begin{equation*}
 \#F\,c(r/3)^m
 \leq C_\varepsilon R^{m-5+\varepsilon}r^{5-\varepsilon},
 \qquad
 \#F\leq C_\varepsilon(R/r)^{m-5+\varepsilon}.
\end{equation*}
This proves \eqref{eq:Assouad-packing}. The disjoint-ball volume
comparison is standard (see for example
\cite[Section~3.7, following Theorem~3.34]{BrendleWang} for its
ambient version). Here the intrinsic lower volume bound gives
packing with respect to $d_Y$.

Taking a maximal intrinsically $r$-separated set gives the same
bound for the number of intrinsic $r$-balls covering
$S\cap B_R^Y(s)$. To extend it to all scales, fix
$0<\rho<R_\varepsilon/4$. The intrinsic lower volume bound and
$\mathcal H_g^m(Y)<\infty$ bound the cardinality of every
$\rho$-separated subset of $S$, so $S$ admits a finite intrinsic
$\rho$-net. For $r<\rho\leq R$, apply the local estimate in the
balls of this fixed cover. For $\rho\leq r<R$, the net itself
suffices. Thus the covering bound holds for all $0<r<R$, after
adjusting $C_\varepsilon$. Letting $\varepsilon\to 0$ gives
$\dim_{\mathcal A}(S,d_Y)\leq m-5$.
\end{proof}

\begin{remark}[The Assouad bound]
The Assouad bound follows from uniform control over both the center
and the scales, rather than from a fixed-ball Minkowski estimate
alone. Here the local area bounds, smooth geometric data, and
uniform separation of the contact coefficient from $\pm1$ make
the rescaled estimates uniform. The boundary regularity and layer
arguments preserve this uniformity, yielding
\eqref{eq:ambient-singular-packing}. The upper area and intrinsic
lower volume bounds then give the intrinsic packing estimate
\eqref{eq:Assouad-packing}.

The interior estimates of Cheeger--Naber \cite{CheegerNaber} and
Naber--Valtorta \cite{NaberValtorta} likewise imply local ambient
Assouad bounds when the rescaled hypotheses and regularity
thresholds are uniform. Our argument additionally treats the
capillary boundary and transfers the estimate to the intrinsic
metric.
\end{remark}

The ambient covering estimate also gives the following boundary
area bound near $S$.

\begin{corollary}[Boundary tubular estimate]\label{cor:boundary-tubular}
For every $\varepsilon>0$, there are $C_\varepsilon,R_\varepsilon>0$
such that
\begin{equation}\label{eq:boundary-tubular}
 \mathcal H_g^{m-1}\bigl(B_R^Y(x)\cap\partial Y
 \cap\{y:d_Y(y,S)\leq r\}\bigr)
 \leq C_\varepsilon R^{m-5+\varepsilon}r^{4-\varepsilon}
\end{equation}
for all $x\in S$ and $0<r<R<R_\varepsilon$.
\end{corollary}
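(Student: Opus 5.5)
The plan is to mirror the proof of the interior tubular estimate \eqref{eq:Assouad-tubular}, replacing the $m$-dimensional area bound on ambient balls with an $(m-1)$-dimensional bound for the trace $\partial Y$. The bound we need is
\begin{equation*}
 \mathcal H_g^{m-1}(\partial Y\cap B_t^M(z))\leq Ct^{m-1}
\end{equation*}
for $z\in Y$ and small $t$. We obtain it from the trace inequality \eqref{eq:boundary-trace}, extended across $S$ as in Lemma~\ref{prop:localVolume}. Apply it to an ambient cutoff $u$ equal to one on $B_t^M(z)$, supported in $B_{2t}^M(z)$, with $|\nabla^M u|\leq C/t$. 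Then $|\nabla^Y u|\leq C/t$, and the upper area bound \eqref{eq:Assouad-upper-area} gives
\begin{equation*}
 \mathcal H^{m-1}_g(\partial Y\cap B^M_t(z))\leq C\bigl(t^{-1}(2t)^m+(2t)^m\bigr)\leq Ct^{m-1}.
\end{equation*}
The only technical point is that an ambient Lipschitz cutoff is admissible in the extended trace inequality. It is bounded and locally Lipschitz on $Y_{\mathrm{reg}}$ with integrable tangential gradient, so this holds by the approximation remark in that proof.

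Next, fix $x\in S$ and $0<r<R<R_\varepsilon$. By the ambient covering estimate \eqref{eq:ambient-singular-packing}, after recentering a cover by balls of radius $r/2$ (as in Step~4 of Proposition~\ref{prop:Assouad}), $S\cap B_{3R}^M(x)$ is covered by at most $C_\varepsilon(R/r)^{m-5+\varepsilon}$ balls $B_r^M(z_i)$ with $z_i\in S$. We may assume $r<R/2$; otherwise the estimate follows directly from \eqref{eq:local-boundary-area}, since then $R^{m-1}\lesssim R^{m-5+\varepsilon}r^{4-\varepsilon}$.

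Let $y\in B_R^Y(x)\cap\partial Y$ with $d_Y(y,S)\leq r$. Choose $s\in S$ with $d_Y(y,s)\leq 2r$. Then $d_M(x,s)\leq d_Y(x,s)<R+2r<3R$, so $s\in B_r^M(z_i)$ for some $i$, and $d_M(y,z_i)\leq d_Y(y,s)+r\leq 3r$. Hence the set in \eqref{eq:boundary-tubular} lies in $\bigcup_i \partial Y\cap B^M_{3r}(z_i)$. Summing the ambient boundary area bound over $i$ gives
\begin{equation*}
 C_\varepsilon(R/r)^{m-5+\varepsilon}\,r^{m-1}=C_\varepsilon R^{m-5+\varepsilon}r^{4-\varepsilon},
\end{equation*}
which is the claimed estimate. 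If $m\leq4$, then $S=\emptyset$ and there is nothing to prove.

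The main obstacle is the first step: justifying the uniform $(m-1)$-dimensional ambient-ball bound for $\partial Y$ at centers near boundary singular points. If the extended trace inequality cannot be applied to the ambient cutoff cleanly, a fallback is to cite the boundary area estimates for capillary minimizers from \cite{DPMreg,DEGL}. These control the wetted region $\partial^*\Omega\cap\partial M$, whose topological boundary in $\partial M$ contains $\partial Y$. Combining them with the density lower bound near $\partial Y$ would then give the required bound via a covering argument.
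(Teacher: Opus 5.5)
Your proposal is correct and follows the paper's proof step for step. The paper likewise applies the extended trace inequality \eqref{eq:boundary-trace} to an ambient cutoff to get $\mathcal H_g^{m-1}(\partial Y\cap B_t^M(z))\leq Ct^{m-1}$, covers $S\cap B_{3R}^M(x)$ by $C_\varepsilon(R/r)^{m-5+\varepsilon}$ recentered balls $B_r^M(z_i)$ with $z_i\in S$ via \eqref{eq:ambient-singular-packing}, and places each relevant boundary point in some $B_{3r}^M(z_i)$. Your separate case $r\geq R/2$ is harmless but unnecessary, since $R+2r<3R$ already follows from $r<R$; the fallback via \cite{DPMreg,DEGL} is also not needed.
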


\begin{proof}
Apply the boundary trace inequality from the proof of
Lemma~\ref{prop:localVolume} to an ambient cutoff equal to
one on $B_t^M(z)$, supported in $B_{2t}^M(z)$, and with gradient
bounded by $C/t$. The inequality extends across $S$ by the cutoff
argument there, and $\mathcal H_g^{m-1}(S)=0$. Hence
\eqref{eq:Assouad-upper-area} gives
\begin{equation*}
 \mathcal H_g^{m-1}(\partial Y\cap B_t^M(z))
 \leq C(t^{-1}+1)\mathcal H_g^m(Y\cap B_{2t}^M(z))
 \leq Ct^{m-1}
\end{equation*}
uniformly for $z\in Y$ and sufficiently small $t$.

Fix $x\in S$ and $0<r<R<R_\varepsilon$, decreasing
$R_\varepsilon$ so that all enlarged radii are admissible.
By \eqref{eq:ambient-singular-packing}, after recentering a cover
by balls of radius $r/2$, we obtain
\begin{equation*}
 S\cap B_{3R}^M(x)\subset\bigcup_{i=1}^N B_r^M(z_i),
 \qquad N\leq C_\varepsilon(R/r)^{m-5+\varepsilon},
 \qquad z_i\in S.
\end{equation*}
If $y\in B_R^Y(x)$ and $d_Y(y,S)\leq r$, choose $z\in S$
with $d_Y(y,z)<2r$. Since $d_M\leq d_Y$, we have
$z\in B_{3R}^M(x)$ and $y\in B_{3r}^M(z_i)$ for some $i$.
Summing the boundary area bound over these balls yields
\begin{equation*}
 \begin{split}
 &\mathcal H_g^{m-1}\bigl(B_R^Y(x)\cap\partial Y
 \cap\{y:d_Y(y,S)\leq r\}\bigr)\\
 &\qquad\leq\sum_{i=1}^N
 \mathcal H_g^{m-1}(\partial Y\cap B_{3r}^M(z_i))
 \leq CNr^{m-1}
 \leq C_\varepsilon R^{m-5+\varepsilon}r^{4-\varepsilon}.
 \end{split}
\end{equation*}
\end{proof}

\smallskip
\noindent\emph{Extension to arbitrary centers.}
To obtain \eqref{eq:interior-tubular-summary} and
\eqref{eq:boundary-tubular-summary} for $x\in Y$, it suffices to
consider a nonempty intersection with $\{d_Y(\,\cdot\,,S)\leq r\}$.
Choose $s\in S$ with $d_Y(x,s)<3R$. Then 
$B_R^Y(x)\subset B_{4R}^Y(s)$. Apply
\eqref{eq:Assouad-tubular} and \eqref{eq:boundary-tubular} at $s$,
using $2r$ in the former to include the level set
$\{d_Y(\,\cdot\,,S)=r\}$. Adjusting $C_\varepsilon$ and
$R_\varepsilon$ absorbs these fixed enlargements.

\bibliographystyle{plain}
\bibliography{scalar_mean.bib}

@unpublished{Guo24,
	author = {Yifan Guo},
	eprint = {2404.00119},
	note = {arXiv:2404.00119},
	title = {Green functions on stationary varifolds},
	url = {https://arxiv.org/pdf/2404.00119.pdf},
	year = {2024}}

@article {MR4249776,
	AUTHOR = {Chen, Zhen-Qing and Kim, Panki and Kumagai, Takashi and Wang,
	Jian},
	TITLE = {Heat kernel upper bounds for symmetric {M}arkov semigroups},
	JOURNAL = {J. Funct. Anal.},
	FJOURNAL = {Journal of Functional Analysis},
	VOLUME = {281},
	YEAR = {2021},
	NUMBER = {4},
	PAGES = {Paper No. 109074, 40},
	ISSN = {0022-1236,1096-0783},
	MRCLASS = {60J35 (31C25 35K08 39B62 47D07 60J25)},
	MRNUMBER = {4249776},
	MRREVIEWER = {Zoran\ Vondra\v cek},
	DOI = {10.1016/j.jfa.2021.109074},
	URL = {https://doi.org/10.1016/j.jfa.2021.109074},
}

@article {MR898496,
	AUTHOR = {Carlen, E. A. and Kusuoka, S. and Stroock, D. W.},
	TITLE = {Upper bounds for symmetric {M}arkov transition functions},
	JOURNAL = {Ann. Inst. H. Poincar\'e{} Probab. Statist.},
	FJOURNAL = {Annales de l'Institut Henri Poincar\'e. Probabilit\'es et
	Statistique},
	VOLUME = {23},
	YEAR = {1987},
	NUMBER = {2},
	PAGES = {245--287},
	ISSN = {0246-0203},
	MRCLASS = {35K10 (60J35 60J60)},
	MRNUMBER = {898496},
	MRREVIEWER = {S.\ R. S. Varadhan},
	URL = {http://www.numdam.org/item?id=AIHPB_1987__23_2_245_0},
}

@article {MR675736,
    AUTHOR = {Kazdan, Jerry L.},
     TITLE = {Deformation to positive scalar curvature on complete
              manifolds},
   JOURNAL = {Math. Ann.},
  FJOURNAL = {Mathematische Annalen},
    VOLUME = {261},
      YEAR = {1982},
    NUMBER = {2},
     PAGES = {227--234},
      ISSN = {0025-5831,1432-1807},
   MRCLASS = {53C20 (58G30)},
  MRNUMBER = {675736},
MRREVIEWER = {V.\ I.\ Oliker},
       DOI = {10.1007/BF01456220},
       URL = {https://doi.org/10.1007/BF01456220},
}

@unpublished{ChaiWan24,
    author = {Xiaoxiang Chai and Xueyuan Wan},
	eprint = {2407.10212},
	note = {arXiv:2407.10212},
	title = {Scalar curvature rigidity of domains in a warped product},
	url = {https://arxiv.org/pdf/2407.10212.pdf},
	year = {2024}}

@unpublished{Simone24,
    author = {Simone Cecchini and Sven Hirsch and Rudolf Zeidler},
	eprint = {2404.17533},
	note = {arXiv:2404.17533},
	title = {Rigidity of spin fill-ins with non-negative scalar curvature},
	url = {https://arxiv.org/pdf/2404.17533.pdf},
	year = {2024}}

@article{Brendle,
  author        = {Brendle, Simon},
  title         = {The isoperimetric inequality for a minimal submanifold in
                   {Euclidean} space},
  journal       = {Journal of the American Mathematical Society},
  volume        = {34},
  number        = {2},
  year          = {2021},
  pages         = {595--603},
  doi           = {10.1090/jams/969}
}

@article{CheegerNaber,
  author        = {Cheeger, Jeff and Naber, Aaron},
  title         = {Quantitative Stratification and the Regularity of Harmonic
                   Maps and Minimal Currents},
  journal       = {Communications on Pure and Applied Mathematics},
  volume        = {66},
  number        = {6},
  year          = {2013},
  pages         = {965--990},
  doi           = {10.1002/cpa.21446}
}

@article{DEGL,
  author        = {De Masi, Luigi and Edelen, Nick and Gasparetto, Carlo and
                   Li, Chao},
  title         = {Regularity of minimal surfaces with capillary boundary
                   conditions},
  journal       = {Communications on Pure and Applied Mathematics},
  volume        = {78},
  number        = {12},
  year          = {2025},
  pages         = {2436--2502},
  doi           = {10.1002/cpa.70008}
}

@article{DPMreg,
  author        = {De Philippis, Guido and Maggi, Francesco},
  title         = {Regularity of Free Boundaries in Anisotropic Capillarity
                   Problems and the Validity of {Young}'s Law},
  journal       = {Archive for Rational Mechanics and Analysis},
  volume        = {216},
  number        = {2},
  year          = {2015},
  pages         = {473--568},
  doi           = {10.1007/s00205-014-0813-2}
}

@article{MichaelSimon,
  author        = {Michael, J. H. and Simon, L. M.},
  title         = {Sobolev and mean-value inequalities on generalized
                   submanifolds of {$\mathbb{R}^n$}},
  journal       = {Communications on Pure and Applied Mathematics},
  volume        = {26},
  number        = {3},
  year          = {1973},
  pages         = {361--379},
  doi           = {10.1002/cpa.3160260305}
}

@article{NaberValtorta,
  author        = {Naber, Aaron and Valtorta, Daniele},
  title         = {The singular structure and regularity of stationary
                   varifolds},
  journal       = {Journal of the European Mathematical Society},
  volume        = {22},
  number        = {10},
  year          = {2020},
  pages         = {3305--3382},
  doi           = {10.4171/jems/987}
}

@misc{WangZhangRegularity,
  author        = {Wang, Gaoming and Zhang, Xuwen},
  title         = {Regularity of stable capillary minimal hypersurfaces},
  howpublished  = {arXiv:2605.20964},
  year          = {2026},
  eprint        = {2605.20964},
  archivePrefix = {arXiv},
  primaryClass  = {math.DG},
  url           = {https://arxiv.org/abs/2605.20964}
}

@unpublished{WangXie26,
	author = {Jinmin Wang and Zhizhang Xie},
	eprint = {2608.15091},
	note = {arXiv:2608.15091},
	title = {A {$2$}-systolic inquality for {$\mathbb S^2\times P$}},
	url = {https://arxiv.org/pdf/2608.15091.pdf},
	year = {2026}}

@unpublished{WangWangXie,
	author = {Jian Wang and Jinmin Wang and Zhizhang Xie},
	eprint = {2606.21325},
	note = {arXiv:2606.21325},
	title = {{$L^\infty$}-metrics on tori and {S}choen's conjecture},
	url = {https://arxiv.org/pdf/2606.21325.pdf},
	year = {2026}}

@unpublished{WangXieConformal,
	author = {Jinmin Wang and Zhizhang Xie},
	eprint = {2512.05769},
	note = {arXiv:2512.05769},
	title = {Scalar-mean rigidity beyond warped product spaces},
	url = {https://arxiv.org/pdf/2512.05769.pdf},
	year = {2025}}

@unpublished{BHHSZ,
	author = {Yuchen Bi and Tianze Hao and Shihang He and Yuguang Shi and Jintian Zhu},
	eprint = {2603.02769},
	note = {arXiv:2603.02769},
	title = {A proof for the Riemannian positive mass theorem up to dimension 19},
	url = {https://arxiv.org/pdf/2603.02769.pdf},
	year = {2026}}

@unpublished{WangXie24,
	author = {Jinmin Wang and Zhizhang Xie},
	eprint = {2407.21312},
	note = {arXiv:2407.21312},
	title = {Scalar curvature rigidity of spheres with subsets removed and {$L^\infty$} metrics},
	url = {https://arxiv.org/pdf/2407.21312.pdf},
	year = {2024}}

@unpublished{ChuLeeZhu24,
	author = {Jianchun Chu and Man-Chun Lee and Jintian Zhu},
	eprint = {arXiv:2405.19724},
	note = {arXiv:2405.19724},
	title = {Llarull's theorem on punctured sphere with $L^\infty$-metric},
	url = {https://arxiv.org/pdf/2405.19724.pdf},
	year = {2024}}

@misc{WangWangZhu,
  author        = {Wang, Jinmin and Wang, Zhichao and Zhu, Bo},
  title         = {Scalar-mean rigidity theorem for compact manifolds with
                   boundary},
  howpublished  = {arXiv:2409.14503},
  year          = {2024},
  eprint        = {2409.14503},
  archivePrefix = {arXiv},
  primaryClass  = {math.DG},
  url           = {https://arxiv.org/abs/2409.14503}
}

@misc{KhuriWangWang,
  author        = {Khuri, Marcus and Wang, Jian and Wang, Jinmin},
  title         = {Riemannian Positive Mass Theorem in All Dimensions in the
                   Presence of Low-Codimension Singularities},
  howpublished  = {arXiv:2606.23529},
  year          = {2026},
  eprint        = {2606.23529},
  archivePrefix = {arXiv},
  primaryClass  = {math.DG},
  url           = {https://arxiv.org/abs/2606.23529}
}

@misc{BrendleWang,
  author        = {Brendle, S. and Wang, Y.},
  title         = {A dimension descent scheme for the positive mass theorem in
                   arbitrary dimension},
  howpublished  = {arXiv:2604.08473},
  year          = {2026},
  eprint        = {2604.08473},
  archivePrefix = {arXiv},
  primaryClass  = {math.DG},
  url           = {https://arxiv.org/abs/2604.08473}
}

@article {SY_pmt_1,
    AUTHOR = {Schoen, Richard and Yau, Shing Tung},
     TITLE = {On the proof of the positive mass conjecture in general
              relativity},
   JOURNAL = {Comm. Math. Phys.},
  FJOURNAL = {Communications in Mathematical Physics},
    VOLUME = {65},
      YEAR = {1979},
    NUMBER = {1},
     PAGES = {45--76},
      ISSN = {0010-3616,1432-0916},
   MRCLASS = {83C99 (53C20 58E20)},
  MRNUMBER = {526976},
MRREVIEWER = {J.\ L.\ Kazdan},
       URL = {http://projecteuclid.org/euclid.cmp/1103904790},
}

@article {Schoen_Yau_imcompressible,
    AUTHOR = {Schoen, R. and Yau, Shing Tung},
     TITLE = {Existence of incompressible minimal surfaces and the topology
              of three-dimensional manifolds with nonnegative scalar
              curvature},
   JOURNAL = {Ann. of Math. (2)},
  FJOURNAL = {Annals of Mathematics. Second Series},
    VOLUME = {110},
      YEAR = {1979},
    NUMBER = {1},
     PAGES = {127--142},
      ISSN = {0003-486X},
   MRCLASS = {58E12 (49F10 53C42)},
  MRNUMBER = {541332},
MRREVIEWER = {Jonathan\ Sacks},
       DOI = {10.2307/1971247},
       URL = {https://doi.org/10.2307/1971247},
}

@incollection {Schoen_Yau_psc_higher,
    AUTHOR = {Schoen, Richard and Yau, Shing-Tung},
     TITLE = {Positive scalar curvature and minimal hypersurface
              singularities},
 BOOKTITLE = {Surveys in differential geometry 2019. {D}ifferential
              geometry, {C}alabi-{Y}au theory, and general relativity.
              {P}art 2},
    SERIES = {Surv. Differ. Geom.},
    VOLUME = {24},
     PAGES = {441--480},
 PUBLISHER = {Int. Press, Boston, MA},
      YEAR = {[2022] \copyright 2022},
      ISBN = {978-1-57146-413-2},
   MRCLASS = {53C20 (58E12)},
  MRNUMBER = {4479726},
MRREVIEWER = {Harish\ Seshadri},
}

@article{Wu_capillarysurfaces,
  author        = {Wu, Yujie},
  title         = {Capillary Surfaces in Manifolds with Nonnegative Scalar
                   Curvature and Strictly Mean Convex Boundary},
  journal       = {International Mathematics Research Notices},
  volume        = {2025},
  number        = {9},
  year          = {2025},
  eid           = {rnaf106},
  note          = {Article No.~rnaf106},
  doi           = {10.1093/imrn/rnaf106},
  eprint        = {2405.03993},
  archivePrefix = {arXiv},
  primaryClass  = {math.DG},
  url           = {https://arxiv.org/abs/2405.03993}
}

@article{chodosh2024_improvedregularity,
  author        = {Chodosh, Otis and Edelen, Nick and Li, Chao},
  title         = {Improved regularity for minimizing capillary hypersurfaces},
  journal       = {Ars Inveniendi Analytica},
  year          = {2025},
  eid           = {2},
  pagetotal     = {27},
  note          = {Paper No.~2, 27 pp.},
  doi           = {10.15781/axnt-ww91},
  eprint        = {2401.08028},
  archivePrefix = {arXiv},
  primaryClass  = {math.DG},
  url           = {https://arxiv.org/abs/2401.08028}
}

@article{CWXZLlarull4,
  author        = {Cecchini, Simone and Wang, Jinmin and Xie, Zhizhang and Zhu,
                   Bo},
  title         = {Scalar curvature rigidity of the four-dimensional sphere},
  journal       = {Mathematische Annalen},
  volume        = {394},
  number        = {3},
  year          = {2026},
  eid           = {58},
  note          = {Article No.~58},
  doi           = {10.1007/s00208-026-03385-w},
  eprint        = {2402.12633},
  archivePrefix = {arXiv},
  primaryClass  = {math.DG},
  url           = {https://arxiv.org/abs/2402.12633},
  fjournal = {Mathematische Annalen},
  issn = {0025-5831,1432-1807},
  mrclass = {53C24 (53C21)},
  mrnumber = {5033745}
}

@article{Gromov_mean_light_scalar,
    AUTHOR = {Gromov, Misha},
     TITLE = {Mean curvature in the light of scalar curvature},
   JOURNAL = {Ann. Inst. Fourier (Grenoble)},
  FJOURNAL = {Universit\'e{} de Grenoble. Annales de l'Institut Fourier},
    VOLUME = {69},
      YEAR = {2019},
    NUMBER = {7},
     PAGES = {3169--3194},
      ISSN = {0373-0956,1777-5310},
   MRCLASS = {53C23 (53C21 53C42)},
  MRNUMBER = {4286832},
MRREVIEWER = {Harish\ Seshadri},
       DOI = {10.5802/aif.3347},
       URL = {https://doi.org/10.5802/aif.3347},
}

@article{Wang:2021tq,
	author = {Jinmin Wang and Zhizhang Xie and Guoliang Yu},
	eprint = {2112.01510},

	title = {On {G}romov's dihedral extremality and rigidity conjectures},
	url = {https://arxiv.org/pdf/2112.01510.pdf},
	year = {2021}}

@article {Cecchini2021:vs,
    AUTHOR = {Cecchini, Simone and Zeidler, Rudolf},
     TITLE = {Scalar and mean curvature comparison via the {D}irac operator},
   JOURNAL = {Geom. Topol.},
  FJOURNAL = {Geometry \& Topology},
    VOLUME = {28},
      YEAR = {2024},
    NUMBER = {3},
     PAGES = {1167--1212},
      ISSN = {1465-3060,1364-0380},
   MRCLASS = {53C27 (53C21 53C23 53C24 58J20)},
  MRNUMBER = {4746412},
MRREVIEWER = {Roger\ Nakad},
       DOI = {10.2140/gt.2024.28.1167},
       URL = {https://doi.org/10.2140/gt.2024.28.1167},
}

@article {MR3257837,
    AUTHOR = {Hijazi, Oussama and Montiel, Sebasti\'an},
     TITLE = {A holographic principle for the existence of parallel spinor
              fields and an inequality of {S}hi-{T}am type},
   JOURNAL = {Asian J. Math.},
  FJOURNAL = {Asian Journal of Mathematics},
    VOLUME = {18},
      YEAR = {2014},
    NUMBER = {3},
     PAGES = {489--506},
      ISSN = {1093-6106,1945-0036},
   MRCLASS = {53C27 (53C40 53C80 58J50)},
  MRNUMBER = {3257837},
MRREVIEWER = {Colette\ Ann\'e},
       DOI = {10.4310/AJM.2014.v18.n3.a6},
       URL = {https://doi.org/10.4310/AJM.2014.v18.n3.a6},
}

@article{Wang:2022vf,
	author = {Jinmin Wang and Zhizhang Xie},
	eprint = {2203.09511},
	title = {On {G}romov's flat corner domination conjecture and {S}toker's conjecture},
	url = {https://arxiv.org/pdf/2203.09511.pdf},
	year = {2022}}

@article{Lottboundary,
	author = {Lott, John},
	doi = {10.1090/proc/15551},
	fjournal = {Proceedings of the American Mathematical Society},
	issn = {0002-9939},
	journal = {Proc. Amer. Math. Soc.},
	mrclass = {53C21 (58J20)},
	mrnumber = {4305995},
	number = {10},
	pages = {4451--4459},
	title = {Index theory for scalar curvature on manifolds with boundary},
	url = {https://doi.org/10.1090/proc/15551},
	volume = {149},
	year = {2021}}

@article{Listing:2010te,
	author = {Mario Listing},
    note = {arXiv:1007.1832},
	eprint = {1007.1832},
	title = {Scalar Curvature on Compact Symmetric Spaces},
	url = {https://arxiv.org/pdf/1007.1832.pdf},
	year = {2010}}

@article{Llarull,
	author = {Llarull, Marcelo},
	doi = {10.1007/s002080050136},
	fjournal = {Mathematische Annalen},
	issn = {0025-5831},
	journal = {Math. Ann.},
	mrclass = {53C21 (58G10 58G25)},
	mrnumber = {1600027},
	mrreviewer = {Uwe Semmelmann},
	number = {1},
	pages = {55--71},
	title = {Sharp estimates and the {D}irac operator},
	url = {https://doi.org/10.1007/s002080050136},
	volume = {310},
	year = {1998}}

@incollection {Gromov_four_lectures,
    AUTHOR = {Gromov, Misha},
     TITLE = {Four lectures on scalar curvature},
 BOOKTITLE = {Perspectives in scalar curvature. {V}ol. 1},
     PAGES = {1--514},
 PUBLISHER = {World Sci. Publ., Hackensack, NJ},
      YEAR = {[2023] \copyright2023},
      ISBN = {978-981-124-998-3; 978-981-124-935-8; 978-981-124-936-5},
   MRCLASS = {53C23 (53-02 53C21)},
  MRNUMBER = {4577903},
}

@article{Shi_Tam,
  author        = {Shi, Yuguang and Tam, Luen-Fai},
  title         = {Positive Mass Theorem and the Boundary Behaviors of Compact
                   Manifolds with Nonnegative Scalar Curvature},
  journal       = {Journal of Differential Geometry},
  volume        = {62},
  number        = {1},
  year          = {2002},
  doi           = {10.4310/jdg/1090425530},
  pages         = {79--125},
  fjournal = {Journal of Differential Geometry},
  issn = {0022-040X,1945-743X},
  mrclass = {53C20 (53C21)},
  mrnumber = {1987378},
  mrreviewer = {Pengzi\ Miao},
  url = {http://projecteuclid.org/euclid.jdg/1090425530}
}

@article{Shi_Tam_extension,
  author        = {Eichmair, Michael and Miao, Pengzi and Wang, Xiaodong},
  title         = {Extension of a theorem of {Shi} and {Tam}},
  journal       = {Calculus of Variations and Partial Differential Equations},
  volume        = {43},
  number        = {1--2},
  year          = {2012},
  pages         = {45--56},
  doi           = {10.1007/s00526-011-0402-2},
  fjournal = {Calculus of Variations and Partial Differential Equations},
  issn = {0944-2669,1432-0835},
  mrclass = {53C21 (53C20 53C24)},
  mrnumber = {2860402},
  mrreviewer = {Lan-Hsuan\ Huang},
  url = {https://doi.org/10.1007/s00526-011-0402-2}
}

@misc{BiZhu,
  author        = {Bi, Yuchen and Zhu, Jintian},
  title         = {Positive Scalar Curvature Obstructions via Singular
                   Dimension Descent},
  howpublished  = {arXiv:2606.20528},
  year          = {2026},
  eprint        = {2606.20528},
  archivePrefix = {arXiv},
  primaryClass  = {math.DG},
  url           = {https://arxiv.org/abs/2606.20528}
}

@book{Fraser_2020,
	author = {Fraser, Jonathan M.},
	month = Oct,
	publisher = {Cambridge University Press},
	title = {Assouad Dimension and Fractal Geometry},
	year = {2020}}

@book{Fukushima_1994,
	author = {Fukushima, Masatoshi and Oshima, Yoichi and Takeda, Masayoshi},
	month = Dec,
	publisher = {DE GRUYTER},
	title = {Dirichlet Forms and Symmetric Markov Processes},
	year = {1994}}

@article{MR0657523,
	author = {Gr\"uter, Michael and Widman, Kjell-Ove},
	journal = {Manuscripta Math.},
	number = {3},
	pages = {303--342},
	title = {The {G}reen function for uniformly elliptic equations},
	volume = {37},
	year = {1982}}

@article{Allard1972,
	author = {Allard, William K.},
	journal = {The Annals of Mathematics},
	month = May,
	number = {3},
	pages = {417},
	title = {On the First Variation of a Varifold},
	volume = {95},
	year = {1972}}

@article{Reilly1977,
  author  = {Reilly, Robert C.},
  title   = {Applications of the Hessian operator in a
             Riemannian manifold},
  journal = {Indiana University Mathematics Journal},
  volume  = {26},
  number  = {3},
  pages   = {459--472},
  year    = {1977},
  doi     = {10.1512/iumj.1977.26.26036}
}

@article{Kasue1983,
  author  = {Kasue, Atsushi},
  title   = {Ricci curvature, geodesics and some geometric
             properties of Riemannian manifolds with boundary},
  journal = {Journal of the Mathematical Society of Japan},
  volume  = {35},
  number  = {1},
  pages   = {117--131},
  year    = {1983},
  doi     = {10.2969/jmsj/03510117}
}

@book{MRBS78,
	address = {New York},
	author = {Reed, Michael and Simon, Barry},
	publisher = {Academic Press [Harcourt Brace Jovanovich Publishers]},
	title = {Methods of modern mathematical physics. {IV}. {A}nalysis of operators},
	year = {1978}}

@article{MR138874,
	author = {Birman, M. \v{S}.},
	journal = {Vestnik Leningrad. Univ.},
	number = {1},
	pages = {22--55},
	title = {Perturbations of the continuous spectrum of a singular elliptic operator by varying the boundary and the boundary conditions},
	volume = {17},
	year = {1962}}
\nocite{WangZhangRegularity, WangXie26}
\end{document}